\newtheorem{thm}{Theorem}[section]
\newtheorem{cor}[thm]{Corollary}
\newtheorem{lem}[thm]{Lemma}
\newtheorem{prop}[thm]{Proposition}
\theoremstyle{remark}
\newtheorem*{rem}{Remark}
\numberwithin{equation}{section}
\newcommand{\A}{\mathbb{A}}
\newcommand{\GL}{\mathrm{GL}}
\newcommand{\ZZ}{\mathbb{Z}}
\newcommand{\Gal}{\mathrm{Gal}}
\newcommand{\QQ}{\mathbb{Q}}
\newcommand{\lto}{\longrightarrow}
\newcommand{\OO}{\mathcal{O}}
\newcommand{\CC}{\mathbb{C}}
\newcommand{\RR}{\mathbb{R}}
\newcommand{\pp}{\mathfrak{p}}
\newcommand{\End}{\mathrm{End}}
\newcommand{\quash}[1]{}
\theoremstyle{definition}
\newtheorem{defn}[thm]{Definition}
\renewcommand{\bar}{\overline}
\numberwithin{equation}{subsection}
\renewcommand{\hat}{\widehat}
\newcommand{\p}{{\sf p}}
\newcommand{\Res}{\mathrm{Res}}
\begin{document}

\title[Relative trace formulae and unitary groups]{Twisted relative trace formulae\\ with a view towards unitary groups}
\author{Jayce R.~ Getz}
\address{Department of Mathematics\\
Duke University\\
Durham, NC 27708-0320}
\email{jgetz@math.duke.edu}
\author{Eric Wambach}
\address{Department of Mathematics \\ California
Institute of Technology \\ Pasadena, CA
91125}\email{wambach@its.caltech.edu}
\subjclass[2010]{Primary 11F70}
\thanks{The first author is thankful for partial support provided by NSF grant DMS 0703537.  Commutative diagrams were
typeset using Paul Taylor's diagrams package.}

\begin{abstract}
We introduce a twisted relative trace formula which simultaneously generalizes the twisted trace formula of Langlands et.~al.~(in the quadratic case)
 and the relative trace formula
of Jacquet and Lai \cite{JacquetLai}.  Certain matching statements
relating this twisted relative
trace formula to a relative trace formula are also proven
(including the relevant fundamental lemma in the ``biquadratic case'').
Using recent work of Jacquet, Lapid and their
collaborators \cite{JacquetKlII} and the Rankin-Selberg integral
representation of the Asai $L$-function (obtained by Flicker using
 the theory of Jacquet, Piatetskii-Shapiro, and
 Shalika \cite{FlickerDist}), we give the following application:
Let $E/F$ be a totally real quadratic extension
 with $\langle \sigma \rangle=\Gal(E/F)$, let $U^{\sigma}$ be a quasi-split unitary group with respect
 to a CM extension $M/F$, and let $U:=\mathrm{Res}_{E/F}U^{\sigma}$.
 Under suitable local hypotheses, we show that a cuspidal
 cohomological automorphic representation $\pi$ of $U$ whose Asai $L$-function has a pole at the edge of the critical strip
 is nearly equivalent to a cuspidal
 cohomological automorphic representation $\pi'$ of $U$ that is
 $U^{\sigma}$-distinguished in the sense that there is a form in the space of $\pi'$ admitting a nonzero period over $U^{\sigma}$.  This provides cohomologically nontrivial cycles of middle dimension on unitary Shimura varieties analogous to those on Hilbert modular surfaces studied by Harder, Langlands, and Rapoport \cite{HLR}.
\end{abstract}

\maketitle

\tableofcontents

\section{Introduction}

\subsection{Distinction} Let $F$ be a number field and let $G$ be a reductive $F$-group with automorphism $\sigma$ over $F$ of order $2$.
Write $G^{\sigma} \leq G$ for the reductive subgroup whose points in an $F$-algebra $R$ are given by
$$
G^{\sigma}(R):=\{g \in G(R): g=g^{\sigma}\}.
$$
Let $\pi$ be a cuspidal unitary automorphic representation of $G(\A_F)$. For smooth $\phi$ in the $\pi$-isotypic subspace of the cuspidal subspace $L^2_{0}(G(F) \backslash {}^1G(\A_F)) \leq L^2(G(F) \backslash {}^1G(\A_F))$, the period integral
\begin{align} \label{period}
\mathcal{P}_{G^{\sigma}}(\phi):&=\int_{G^{\sigma}(F) \backslash (G^{\sigma}(\A_F) \cap {}^1G(\A_F))} \phi(x) dx
\end{align}\index{$\mathcal{P}_{G^{\sigma}}(\phi)$}converges (see \cite[Proposition 1, \S 2]{AGR}).
Here ${}^1G(\A_F) \leq G(\A_F)$ is the Harish-Chandra subgroup
(see \S \ref{HC-subgroup}) and $dx$ is induced by a choice of Haar
measure on $G^{\sigma}(\A_F) \cap {}^1G(\A_F)$.
  If $\mathcal{P}_{G^{\sigma}}(\phi) \neq 0$  for some smooth $\phi$ in the $\pi$-isotypic subspace of $L^2_{0}(G(F) \backslash {}^1G(\A_F))$, we say that $\pi$ is \textbf{$G^{\sigma}$-distinguished}, or simply distinguished.

In \cite{HLR}, Harder, Langlands, and Rapoport  introduced the notion of distinction as a tool for
producing cohomologically nontrivial cycles on Hilbert modular surfaces. Their work was based on the following
observation: If $G(F \otimes_{\QQ}\RR) \cong G^{\sigma}(F \otimes_{\QQ}\RR) \times G^{\sigma}(F \otimes_{\QQ} \RR)$ and if $\pi$ has nonzero cohomology with coefficients in $\CC$ (see Definition \ref{cohom-defn}) and is $G^{\sigma}$-distinguished then
there is a differential form $\omega$ on a locally symmetric space attached to $G$ that is
$\pi^{\infty}$-isotypic under the action of certain Hecke correspondences such that $\omega$
has a nonzero period over a sub-symmetric space defined by $G^{\sigma}$.   Harder, Langlands and
Rapoport investigated the case where $G=\mathrm{Res}_{\QQ(\sqrt{d})/\QQ}\GL_2$ for a
square-free positive integer $d$ and $\sigma$ is the automorphism of $G$ induced by the
nontrivial element of $\Gal(\QQ(\sqrt{d})/\QQ)$.
As a consequence of their investigation they proved the Tate conjecture for the ``non-CM''
part of the cohomology of Hilbert modular surfaces.
The key fact that allowed them to link Galois invariant elements in an \'etale cohomology group to the existence of cycles was a characterization of $\GL_2$-distinguished cuspidal automorphic representations of $\mathrm{Res}_{\QQ(\sqrt{d})/\QQ}\GL_2(\A_{\QQ})$ in terms of a certain invariance property of $\pi$ under $\sigma$.

The characterization of $\GL_{2/\QQ}$-distinguished representations used in \cite{HLR} can be proven using a modification of the Rankin-Selberg method \cite{Asai}. More generally, let $E/F$ be a quadratic extension of number fields.
A variation of Jacquet, Piatetskii-Shapiro and Shalika's interpretation of the Rankin-Selberg method similarly gives some characterization of those automorphic representations of $\mathrm{Res}_{E/F}\GL_n$ distinguished by $\GL_{n/F}$ in terms of Asai $L$-functions \cite{FlickerDist}.  However, in most situations a characterization of the automorphic representations of $G$ distinguished by $G^{\sigma}$ in terms of arithmetic properties of the automorphic representation or analytic properties of $L$-functions attached to the automorphic representation is unknown.

To address these situations, Jacquet has introduced relative trace formulae in order to provide a (mostly conjectural) conceptual understanding of distinction in great generality.  Though the theory still is far from complete, many interesting results are known, even in higher rank.  For example, if $E/F$ is a quadratic extension as above, Jacquet, Lapid, and their collaborators have provided a characterization of those representations of $\mathrm{Res}_{E/F}\GL_n(\A_F)$ that are distinguished by a  quasi-split unitary group in $n$-variables attached to $E/F$ \cite{JacquetKlII} \cite{Jacquetqs}.

\subsection{A result on unitary groups} Since the locally symmetric spaces attached to $\GL_n$ are never hermitian for $n>2$, the higher rank results of Jacquet and Lapid mentioned above cannot be directly applied to the study of the Tate conjecture for Shimura varieties as in \cite{HLR}.  Despite this, in this paper we provide one means of relating distinction of automorphic representations of $\GL_{n}$ to distinction of automorphic representations on unitary groups (including those defining hermitian locally symmetric spaces).  We now state an application.  Let $E/F$ be a quadratic extension of totally real number fields, let $\langle \sigma \rangle=\Gal(E/F)$, and  let $M/F$ be a CM extension.  Let $U^{\sigma}$ be a quasi-split unitary group in $n$ variables attached to the extension $M/F$ and let $U:=\mathrm{Res}_{E/F}U^{\sigma}$.  We have the following result:

\begin{thm} \label{intro-thm2} Let $\pi$ be a cuspidal automorphic representation of $U^{\sigma}(\A_E)=U(\A_F)$.
Suppose that $\pi$ satisfies the following assumptions:
\begin{enumerate}
\item There is a finite-dimensional representation $V$ of $U_{F_{\infty}}$ such that $\pi$ has
nonzero cohomology with coefficients in $V$.
\item There is a finite place $v_1$ of $F$ totally split in $ME/F$ such that $\pi_{v_1}$ is supercuspidal.
\item There is a finite place $v_2 \neq v_1$ of $F$ totally split in $ME/F$ such that $\pi_{v_2}$ is in the discrete series.
\item For all places $v$ of $F$ such that $ME/F$ is ramified and $M/F$, $E/F$ are both nonsplit at $v$ the weak base change $\Pi$ of $\pi$ to $U(\A_{ME}) \cong \GL_{n}(\A_{ME})$ has the property that $\Pi_v$ is relatively $\tau$-regular.
\end{enumerate}
The representation $\pi$ admits a weak base change $\Pi$ to $\GL_n(\A_{ME})$. If the partial Asai $L$-function $L^S(s,\Pi;r)$ has a pole at $s=1$ then some cuspidal automorphic representation
$\pi'$ of $U(\A_F)$ nearly equivalent to $\pi$ is $U^{\sigma}$-distinguished.
 Moreover, we can take $\pi'$ to have nonzero cohomology
with coefficients in $V$.
\end{thm}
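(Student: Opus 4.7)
The plan is to deduce Theorem \ref{intro-thm2} by comparing the twisted relative trace formula (TRTF) constructed in the body of the paper with a relative trace formula (RTF) for the symmetric pair $(U, U^{\sigma})$, whose spectral expansion detects precisely those cuspidal representations of $U(\A_F)$ admitting a nonzero $U^{\sigma}$-period. The analytic input is converted into a period via Flicker's Rankin-Selberg integral representation \cite{FlickerDist}: the hypothesis that $L^S(s,\Pi;r)$ has a pole at $s=1$ is equivalent to the non-vanishing of a global Asai period of $\Pi$ on $\GL_n(\A_{ME})$. Combined with the characterization of unitary-distinguished representations of $\GL_n$ due to Jacquet, Lapid and their collaborators \cite{JacquetKlII}, this ensures that the spectral side of the TRTF, evaluated against a test function that selects $\Pi$, contributes non-trivially.

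Next, I would choose a global test function $f = \otimes_v f_v$ designed to isolate $\pi$ on the RTF side. At the supercuspidal place $v_1$, take $f_{v_1}$ to be a matrix coefficient of $\pi_{v_1}$; this kills all non-cuspidal (in particular Eisenstein) contributions and retains only representations locally isomorphic to $\pi_{v_1}$ at $v_1$. Because $v_1$ is totally split in $ME/F$, the local matching at $v_1$ is a product of $\GL_n$-identifications and is trivial. At the discrete series place $v_2$, take $f_{v_2}$ to be a pseudo-coefficient of $\pi_{v_2}$, again with trivial local transfer by the same splitting argument. At the archimedean places take an Euler--Poincar\'e function attached to the coefficient system $V$, which forces any surviving representation to have the same archimedean type as $\pi$ and hence to be cohomological with coefficients in $V$. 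At each finite place where $ME/F$ is ramified and both $M/F$ and $E/F$ are non-split, the relative $\tau$-regularity assumption (4) is exactly what is needed in order to invoke the smooth transfer and the biquadratic fundamental lemma proved earlier in the paper; at the remaining finite places matched unit elements of the Hecke algebra suffice.

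With matched test functions in place, the matching statements of the paper identify the geometric sides of the TRTF and of the RTF for $(U, U^{\sigma})$. The non-vanishing on the twisted spectral side produced by the Asai pole therefore forces a non-zero contribution to the spectral side of the RTF, coming from some cuspidal automorphic representation $\pi'$ of $U(\A_F)$ that is $U^{\sigma}$-distinguished. The local choices at $v_1$, $v_2$, and $\infty$ force $\pi'_{v_1} \cong \pi_{v_1}$, $\pi'_{v_2} \cong \pi_{v_2}$, and $\pi'_{\infty}$ of the same infinity type as $\pi_{\infty}$; at the unramified places outside a finite set, matching of spherical Hecke algebras via base change forces $\pi'_v$ and $\pi_v$ to share Satake parameters, so that $\pi'$ is nearly equivalent to $\pi$ and, by the archimedean choice, has nonzero cohomology with coefficients in $V$.

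The hard part will be controlling the spectral expansion of the TRTF, which is substantially more delicate than the one for the classical RTF of Jacquet--Lai: one must justify absolute convergence, regularize the relevant period integrals at the edge of the critical strip, and show that the residue of $L^S(s,\Pi;r)$ at $s=1$ appears explicitly as a spectral distribution, rather than being cancelled by Eisenstein contributions. Together with the proof of the biquadratic fundamental lemma itself, and verifying that the transfer at the archimedean places is compatible with the Euler--Poincar\'e functions used to enforce cohomological conclusions, this is where the bulk of the analytic and combinatorial work sits.
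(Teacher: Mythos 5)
Your high-level strategy is the same as the paper's: translate the Asai pole into distinction, then run a comparison of the twisted relative trace formula for $(G,G^{\sigma},G^{\theta})$ against the relative trace formula for $(U,U^{\sigma})$ with matched test functions chosen to isolate $\pi$ (matrix coefficient at $v_1$, pseudo-coefficient at $v_2$, Euler--Poincar\'e/Lefschetz at $\infty$, relative $\tau$-regularity at the bad places, unramified matching elsewhere), and deduce the existence of a nearly equivalent $U^{\sigma}$-distinguished $\pi'$ with the right cohomology. Your sketch of the Hecke-eigenvalue separation and the role of the base change fundamental lemma is also in line with what the paper does (cf.\ the passage from \eqref{sp-id} to \eqref{sp-id5}).

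However, your closing paragraph identifies the ``hard part'' incorrectly, and this points to a genuine gap in your understanding of the mechanism. You write that one must justify absolute convergence of the spectral expansion, regularize the period integrals at the edge of the critical strip, and exhibit the residue of $L^S(s,\Pi;r)$ at $s=1$ explicitly as a spectral distribution, not cancelled by Eisenstein contributions. None of this is how the paper's argument proceeds, and trying to do it would be a much harder (and, at present, open) problem. The paper deliberately sidesteps the spectral decomposition issues by invoking Hahn's \emph{simple} twisted relative trace formula (Theorem \ref{thm-rel-trace-formula}): the $F$-supercuspidal test function at $v_1$ kills the non-cuspidal spectrum, and the elliptic-supported function at $v_2$ kills the non-elliptic geometry. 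These local choices are not merely for isolating $\pi$; they are what makes the trace formula identity valid with only cuspidal terms and only relatively elliptic terms, so no regularization, Eisenstein analysis, or convergence theory is needed. Likewise, the Asai pole is used \emph{only qualitatively}: by Flicker--Zinoviev it gives $\mathrm{Res}_{M/F}\GL_n$-distinction of $\Pi$ (hence $\Pi^{\sigma}\cong\Pi^{\vee}$), which together with $\Pi^{\tau}\cong\Pi$ and Jacquet's theorem on quasi-split unitary periods yields $G^{\theta}$-distinction. The residue of the Asai $L$-function never appears in the spectral identity; the spectral terms are written directly as $\sum_{\phi}\mathcal{P}_{G^{\sigma}}(\Pi(f^1)\phi)\overline{\mathcal{P}_{G^{\theta}}(\phi)}$, and the nonvanishing is established via the one-dimensionality of $(G^{\sigma},G^{\theta})$-invariant linear forms (Lemma \ref{lem-lforms}) together with explicit choices of local test functions. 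A minor further slip: the Asai $L$-function is taken with respect to $ME/M$, so the relevant Flicker--Zinoviev period is over $\GL_n(\A_M)$, not $\GL_n(\A_{ME})$.
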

\noindent Here we say two automorphic representations are nearly equivalent if their local factors are isomorphic at almost all places.
Moreover, $S$ is a finite set of places of $M$ including all infinite places, all places where $ME/M$ is ramified, and all places below places of $ME$ where $\Pi$ is ramified, and
$$
r:{}^L\mathrm{Res}_{ME/M}\GL_n \lto \GL_{n^2}(\CC)
$$
is the twisted tensor representation attached to the extension $ME/M$ (see \cite{FlickerDist} or
\cite[\S 6]{Ramakrishnan} for the definition of this representation).  We are also using the fact that $L^S(s,\Pi;r)$ has a meromorphic continuation to a closed right half plane containing $s=1$ (see \cite{FlickerDist} and \cite{FlickerZin}).

Theorem \ref{intro-thm2} is proved as Corollary \ref{main-cor} below.  For the definition of a relatively $\tau$-regular representation, see \S \ref{sec-sph} below. For other unexplained notation and terminology, see \S \ref{ssec-apps}.
We indicate the role of some of the assumptions in the theorem:

\noindent \emph{Remarks.}

\begin{enumerate}
\item
The assumption that $\pi$ is cohomological allows us to use the proof of \cite[Theorem 3.1.4]{HarLab} to conclude that the weak base change $\Pi$ exists.

\item The assumptions at the places $v_1$ and $v_2$ are made for two reasons.  First, they allow us to apply known results on base change liftings from \cite{HarLab}.  Second,
they allow us to use the simple form of the relative
trace formula proved in \cite{Hahn}.  If a relative version of the
Arthur-Selberg trace formula were known, together with a theory of relative endoscopy,
  then it would probably be possible to remove the
  restrictions on $v_1$ and $v_2$.

\item Assumption (4) allows us to use test functions supported on the relatively $\tau$-regular semisimple set at places where we have not proven satisfactory matching statements for arbitrary functions.  The definition of a relatively $\tau$-regular admissible representation is given in \S \ref{sec-sph} below.

\item The assumption that $U^{\sigma}$ is quasi-split
is relaxed somewhat in Corollary \ref{main-cor} below.

\end{enumerate}

Given all of these simplifying assumptions, one might be surprised that this paper is still rather long.  As an explanation, we point out that to prove our theorem we have to provide, from scratch, analogues of
    many standard constructions in the theory of the usual trace formula (for example, we need analogues of many of the results of \cite{KottRatConj}, \cite{KottStCusp},  \cite{KottEllSing}, \cite{KottTama}).  Unfortunately, this takes some space.  We have tried to streamline the presentation by not working in the greatest possible generality.

\subsection{Outline of a proof}

Let $\pi$ be a cuspidal automorphic representation of $U(\A_F)$ satisfying the hypotheses of Theorem \ref{intro-thm2} and
admitting a weak base change $\Pi$ to
$\mathrm{Res}_{ME/F}\GL_{n}(\A_{F})$ (see \S \ref{defn-wbc} for our
conventions regarding weak base change).  Let $\tau$ be the automorphism of $G:=\mathrm{Res}_{ME/F}\GL_n$ fixing $U$, and let $\theta=\sigma \circ \tau$.  We prove Theorem \ref{intro-thm2} in two steps.  First, we prove that if $\Pi$ is
distinguished by $\mathrm{Res}_{M/F}\GL_{n}$ and the quasi-split unitary group $G^{\theta}$ attached to $EM/(EM)^{\langle \theta\rangle}$ then
some automorphic representation
 $\pi'$ nearly equivalent to $\pi$ is $U^{\sigma}$ distinguished.
We prove this by exhibiting identities of the form
\begin{align*}
\sum_{\pi} \sum_{\phi \in \mathcal{B}(\pi)} \mathcal{P}_{U^{\sigma}}(\pi(\Phi^1)\phi)\overline{\mathcal{P}_{U^{\sigma}}(\phi)}=2\sum_{\Pi}\sum_{\phi_0 \in \mathcal{B}(\Pi)}\mathcal{P}_{G^{\sigma}}(\Pi(f^1)\phi_0)\overline{\mathcal{P}_{G^{\theta}}(\phi_0)}
\end{align*}
 for a sufficiently large set of functions $f \in C_c^{\infty}(G(\A_F))$ and $\Phi \in C_c^{\infty}(U(\A_F))$ that match in the sense of Definition \ref{defn-match}.  Here the sum on the left is over cuspidal automorphic representations of $U(\A_F)$, the sum on the right is over cuspidal automorphic representations of $G(\A_F)$, and $\mathcal{B}(\pi)$ is an orthonormal basis of the $\pi$-isotypic subspace of $L^2(U(F) \backslash U(\A_F))$ consisting of smooth vectors; $\mathcal{B}(\Pi)$ is defined similarly.  We refer the reader to Proposition \ref{prop-spect-compar} for more details.

 Second, the fact that $\Pi$ is a base change implies that $\Pi^{\tau} \cong \Pi$.  By work of
Flicker and his collaborators, our assumption on the pole of the Asai $L$-function implies that $\Pi$ is distinguished by
$\mathrm{Res}_{M/F}\GL_{n}$ and moreover that $\Pi^{\vee} \cong \Pi^{\sigma}$.  This implies that $\Pi^{\theta \vee} \cong \Pi$, and hence by work of Jacquet,
 Lapid, and their collaborators this in turn implies that $\Pi$ is distinguished by $G^{\theta}$. This second step is contained in \S \ref{ssec-apps} and we will say no more about
it in the introduction.

The proof of the first step is the heart of this paper and is based on a particular case of a general principle which we now explain.
Let $H$ be a connected reductive group, let $\langle \tau \rangle=\Gal(M/F)$, and let
$$
G:=\mathrm{Res}_{M/F}H.
$$
Let $\sigma$ be an automorphism of $H$ of order $2$ and let $H^{\sigma} \leq H$ and $G^{\sigma} \leq G$ be the subgroups fixed by $\sigma$.  Let $\theta=\sigma \circ \tau$, viewed as an automorphism of $G$, and let $G^{\theta} \leq G$ be the subgroup fixed by $\theta$. Let $\pi$ be a cuspidal automorphic representation of $H(\A_F)$.  Assume that $\pi$ admits a weak base change $\Pi$ to $G(\A_F)$.
In favorable circumstances the machinery developed in this paper together with suitable fundamental lemmas should imply that
if $\Pi$ is $G^{\sigma}$-distinguished and $G^{\theta}$-distinguished then a cuspidal automorphic representation $\pi'$ nearly equivalent to $\pi$ is $H^{\sigma}$-distinguished.

We mention two large obstacles to turning the general principle
\begin{align*}
\Pi \textrm{ distinguished by }G^{\sigma} \textrm{ and } G^{\theta}
 \Rightarrow
 \exists \textrm{ }H^{\sigma}\textrm{-distinguished }\pi' \textrm{ nearly equivalent to }\pi
\end{align*}
into a theorem in any given case.  First is the current lack of
a relative analogue of the Arthur-Selberg trace formula or a topological relative trace formula.
Second is the current lack of a theory of relative endoscopy and the fundamental lemmas that should come along with the theory.  If we had the tools of a relative Arthur-Selberg trace formula and the theory of relative endoscopy in hand, one could probably remove the annoying local restrictions in Theorem \ref{intro-thm2} and prove much more general versions of the principle indicated above.  However, we caution that if one considers involutions $\sigma$ that are not Galois involutions as we are primarily concerned with in this paper, there seem to be new phenomena lurking that we cannot explain here.  These new phenomena may make the principle false as it is stated,
  though we believe something close to it is true.

We end this outline with some speculation regarding the undefined term ``relative endoscopy.''  The theory of (twisted) endoscopy, when complete, should yield an understanding of automorphic representations of classical groups in terms of those on general linear groups via a twisted trace formula (see \cite[\S 30]{ArthurIntro}, for example).  We conjecture that a similar theory will relate distinguished representations on a classical group to representations on $\GL_n$ that are distinguished with respect to two subgroups.  We believe that Theorem \ref{intro-thm2} provides an interesting implication of these relations in the special case where endoscopy reduces to base change.

 \begin{rem}  We note that our approach to
 the relative trace formula is modeled on that exposed in \cite{JacquetLai} and
 the introduction to \cite{JLR}.  More precisely, let $E/F$ be a quadratic extension, suppose
 $H^{\sigma}=\GL_2$, $H=\mathrm{Res}_{E/F}\GL_2$, and let $\sigma$ be induced by the nontrivial automorphism
 $\sigma$ of $\Gal(E/F)$.  Then the relatively elliptic part of the trace formula developed in \cite{JacquetLai}
 is roughly the ``$\tau=1$'' case of our theory.  If we instead let $E=F \oplus F/F$ be the ``split''
 quadratic extension and again let $\sigma$ be the automorphism of $H$ induced by the nontrivial $F$-automorphism
 of $E/F$, then our trace formula comparing distinction on $G:=\mathrm{Res}_{M/F}H$ to distinction on $H$
 is roughly equivalent to the ``elliptic part'' of the trace formula comparison used to establish quadratic base
 change in \cite{LanglandsBC} (though, of course, Langlands treats general cyclic extensions).  This justifies our claim
 in the abstract that our formulae simultaneously generalize the relative trace formula and the twisted trace formula (in the quadratic case).
 See Proposition \ref{prop-spect-compar} for a precise statement regarding trace formula comparisons.
\end{rem}

We now give a synopsis of the various sections.  We fix some notation in the next section.  In
\S \ref{ssec-basic-notat},
 following \cite{JacquetLai} we
develop a notion of relative classes and relative $\tau$-classes generalizing conjugacy classes and twisted conjugacy
classes, respectively.
A norm map is also defined and studied, and in \S \ref{ssec-norm-exist}
we prove that it has the properties one would expect,
 at least in the ``biquadratic unitary'' case (see \S \ref{ssec-basic-notat}).

Section \ref{sec-matching} studies the corresponding notion of matching of stable local relative orbital integrals.  We also prove the fundamental lemma for unit elements in the ``biquadratic case'' and the fundamental lemma for
Hecke functions when various objects split (e.g. $M/F$). In \S \ref{sec-match-ram} we prove that one can find some function in $C_c^{\infty}(H(F_v))$ matching a given function in $C^{\infty}_c(G(F_v))$ for nonarchimedian $v$.  We define the notion of a relatively $\tau$-regular admissible representation in \S \ref{sec-sph}; we require this notion due to our incomplete knowledge of matching functions.

Section \ref{sec-prestab} concerns globalization of stable relative orbital integrals.  Here we follow Labesse's formulation
\cite{Lab} of
the work of Kottwitz and Shelstad (following Langlands, see \cite{KS}, \cite{LanglStab}) on the usual trace formula.  In \S \ref{sec-group-ell} we discuss the geometric expansion of the stable twisted relative trace formula.
In \S \ref{sec-rtf} we use the main theorem of \cite{Hahn} together with all of the previous work to
give a spectral expansion of the twisted relative trace
formula on $G$ and the relative trace formula on $H$, at least in the case relevant for the proof of Theorem
\ref{intro-thm2}.  As noted above,
\S \ref{ssec-apps} contains the final argument proving Theorem \ref{intro-thm2}.

\section{Selected notation and conventions}

\label{sec-notation}

\subsection{Algebraic groups}
\label{notatAlggroups} Let $G$ be an algebraic group over a field
$F$. We write $Z_G$ for the center of $G$ and $G^{\circ}$ for the
identity component of $G$ in the Zariski topology.  If $\gamma \in
G(F)$ and $G' \leq G$ is a subgroup we write $C_{\gamma,G'}$ for the
centralizer in $G'$ of $\gamma$.  If the element $\gamma$ is semisimple then we say it is
\textbf{elliptic} if $C_{\gamma,G}/Z_G$ is anisotropic.  If $\tau$ is an automorphism of $G$ and
$\gamma \in G(R)$ for some commutative $F$-algebra $R$ we write
$$
\gamma^{-\tau}:=(\gamma^{-1})^{\tau}.
$$
We write $C_{\gamma,G}^{\tau}$ for the $\tau$-centralizer of $\gamma$ in $G$; it is the reductive $F$-group
whose points in an $F$-algebra $R$ are given by
$$
C_{\gamma,G}^{\tau}(R):=\{g \in G(R):g^{-1}\gamma g^{\tau}=\gamma\}.
$$
\index{$C_{\gamma,G}^{\tau}$}A torus $T \leq G$ is said to be \textbf{$\tau$-split} if for any
commutative $F$-algebra $R$ we have $g^{\tau}=g^{-1}$ for all $g
\in T(R)$.  If $\theta$ is another automorphism of $G$, we say that $T$ is $(\tau,\theta)$-split if it is both $\tau$ and $\theta$-split.

\subsection{Ad\`eles}
The ad\`eles of a number field $F$ are denoted by $\A_F$. For a
set of places $S$ of $F$ we write $\A_{F,S}:=\A_F \cap \prod_{v \in
S}F_v$ and $\A^S_F:=\A_F \cap \prod_{v \not \in S}F_v$.  If $S$ is finite we often write $F_S:=\A_{F,S}$.  The set of
infinite places of $F$ will be denoted by $\infty$. Thus
$\A_{\QQ,\infty}=\RR$ and $\A_{\QQ}^{\infty}:=\prod_{\substack{p \in
\ZZ_{>0}
\\p \textrm{ prime}}}\QQ_p$. For an affine $F$-variety $G$ and a
subset $W \leq G(\A_F)$ the notation $W_{S}$ (resp. $W^S$) will
denote the projection of $W$ to $G(\A_{F,S})$ (resp. $G(\A^S_F)$).
If $W$ is replaced by an element of $G(\A_F)$, or if $G$ is an
algebraic group and $W$ is replaced by a character of $G(\A_F)$ or a
Haar measure on $G(\A_F)$, the same notation will be in force; e.g. if
$\gamma \in G(\A_F)$ then $\gamma_v$ is the projection of $\gamma$ to $G(F_v)$.

\subsection{Harish-Chandra subgroups} \label{HC-subgroup}
Let $G$ be a connected reductive group over a number field $F$.  We write $A_G \leq Z_G(F \otimes_{\QQ} \RR)$ \index{$A_G$} for the connected component of the real points of the largest $\QQ$-split torus in the center of $\mathrm{Res}_{F/\QQ}G$.  Here when we say ``connected component'' we mean in the real topology.  Write
$X$ for the group of $\QQ$-rational characters of $G$.  There is a morphism
\begin{align*}
HC_G:G(\A_F) \lto \mathrm{Hom}(X,\RR)
\end{align*}
defined by
\begin{align}
\langle HC_G(x),\chi\rangle =|\log(x^{\chi})|
\end{align}
for $x \in G(\A_F)$ and $\chi \in X$.  We write
\begin{align}
^{1}G(\A_F):=\ker(HC_G)
\end{align}
\index{$^{1}G(\A_F)$}and refer to it as the Harish-Chandra subgroup of $G(\A_F)$.  Note that $G(F) \leq {}^1G(\A_F)$ and
$G(\A_F)$ is the direct product of $A_G$ and $^{1}G(\A_F)$.

\section{Twisted relative classes and norm maps}
\label{sec-norm-maps}
In this section we recall the notion of relative classes in algebraic groups
with involution and extend this notion to the ``$\tau$-twisted'' case.  In \S \ref{ssec-match}
we
define stable relative classes and a relative version of the usual norm map,
which, in the original setting
of the trace formula, relates $\tau$-conjugacy
classes and conjugacy classes.  In \S \ref{ssec-norm-exist} we use
work of Borovoi to show that the norm map has particularly nice properties in the
``biquadratic unitary case.''

\subsection{Twisted relative classes}
\label{ssec-basic-notat}

Let $F$ be a number field and let $M/F$ be either a quadratic or a trivial
extension.  Let $\tau$ be the generator of $\Gal(M/F)$.
Fix, once and for all, an algebraic closure $\bar{M}$ of $M$.
We usually regard $\bar{M}$ as an algebraic closure of $F$
as well, and denote it by $\bar{F}$.  For all places $v$ of $F$,
choose once and for all an algebraic closure $\bar{F}_v$ of
the completion $F_v$ of $F$ and a compatible system of
embeddings $\bar{F} \hookrightarrow \bar{F}_v$.

Let $H$ \index{$H$} be a reductive $F$-group with (semisimple) automorphism $\sigma$\index{$\sigma$}
of order two (defined over $F$).  We let $H^{\sigma}$ \index{$H^{\sigma}$}be the reductive $F$-group whose points in
an $F$-algebra $R$ are given by
$$
H^{\sigma}(R):=\{g \in H(R):g^{\sigma}=g\}.
$$ \index{$H^{\sigma}$}We assume that
\begin{itemize}
\item $H^{\sigma}$ is connected.
\end{itemize}
This will always be the case if $H$ is semisimple and simply connected \cite[\S 8]{SteinbMem}.
Write
\begin{align}
\nonumber G:&=\mathrm{Res}_{M/F}H\\
\nonumber G^{\sigma}:&=\mathrm{Res}_{M/F}H^{\sigma}.
\end{align} \index{$G$} \index{$G^{\sigma}$}The automorphism $\sigma$ of $H$ induces an automorphism of $G$ which we will also denote by $\sigma$. Set $\theta:=\sigma \circ \tau$ \index{$\theta$}(where $\sigma$ and $\tau$ are viewed as automorphisms of $G$), and write $G^{\theta} \leq G$ for the subgroup whose points in an $F$-algebra $R$ are given by
$$
G^{\theta}(R):=\{g \in G(R):g=g^{\theta}\}.
$$\index{$G^{\theta}$}
We note that $\theta$, $\sigma$, and $\tau$ all commute.
\begin{rem} Let $E/F$ be a quadratic extension of number fields with $ME/F$ biquadratic.
The primary case of interest for us in this paper is the case where  $H=\mathrm{Res}_{E/F}H^{\sigma}$ and $\sigma$ is the automorphism of $H$ defined by the generator of $\Gal(E/F)$ (which we will also denote by $\sigma$).  We refer to this case as the \textbf{biquadratic case}.
\end{rem}

We have left actions of $H^{\sigma} \times H^{\sigma}$ on $H$ and $G^{\sigma} \times G^{\theta}$ on $G$ given by
\begin{align} \label{actionsgalore}
\nonumber (H^{\sigma} \times H^{\sigma})(R) \times H(R) &\lto H(R)\\
(g_1,g_2,g) &\longmapsto (g_1gg_2^{-1})\\
\nonumber(G^{\sigma} \times G^{\theta})(R) \times G(R) &\lto G(R)\\
\nonumber(g_1,g_2,g) &\longmapsto  (g_1gg_2^{-1})
\end{align}
for $F$-algebras $R$.
Our goal in this section is to compare the
sets of double cosets $H^{\sigma}(R) \backslash H(R) /H^{\sigma}(R)$ and
$G^{\sigma}(R) \backslash G(R) /G^{\theta}(R)$.

For this purpose, following \cite[Lemma 2.4]{Rich} we introduce
the subschemes $Q:=Q_H \subset H$\index{$Q$} and $S:=S_{G} \subset G$. \index{$S$}
They are defined to be the scheme theoretic images of
the morphisms given on points in an $F$-algebra $R$ by
{\allowdisplaybreaks \begin{align} \label{nat-maps}
\nonumber B_{\sigma}:H(R) &\lto H(R)\\
g & \longmapsto gg^{-\sigma}\\
\nonumber B_{\theta}:G(R) & \lto G(R)\\
\nonumber g & \longmapsto gg^{-\theta},
\end{align}}
\index{$B_{\sigma}$} \index{$B_{\theta}$}respectively; they are closed affine $F$-subschemes of $H$ and $G$,
respectively (see \cite[\S 2.1]{HelmWang}).
Notice that $B_{\sigma}(g_1gg_2^{-1})=g_1B_{\sigma}(g)g_1^{-1}$
and $B_{\theta}(g_1gg_2^{-1})=g_1B_{\theta}(g)g_1^{-\tau}$ for $(g_1,g_2) \in H^{\sigma}(R)^2$ (resp.~$(g_1,g_2) \in G^{\sigma} \times G^{\theta}(R)$).
There are injections
{ \allowdisplaybreaks\begin{align}
\nonumber H^{\sigma}(R) \backslash H(R) /H^{\sigma}(R) &\lto H^{\sigma}(R) \backslash Q(R)\\
g & \longmapsto g g^{-\sigma}\\
\nonumber G^{\sigma}(R) \backslash G(R)/G^{\theta}(R) &\lto G^{\sigma}(R) \backslash S(R)\\
\nonumber g & \longmapsto gg^{-\theta}
\end{align}}
where $H^{\sigma}$ acts by conjugation on $Q$ and $G^{\sigma}$ acts by $\tau$-conjugation on $S$.

For any $F$-algebra $k$ and any $\gamma \in H(k)$ (resp. $\delta \in G(k)$) write
$H_{\gamma}$ (resp. $G_{\delta}$)
for the $k$-group whose points in a commutative $k$-algebra $R$
are given by
{\allowdisplaybreaks \begin{align}
H_{\gamma}(R):&=\{(g_1,g_2)\in H^{\sigma}(R) \times H^{\sigma}(R):g_1^{-1}\gamma
g_2=\gamma\}\\
\nonumber G_{\delta}(R):&=\{(g_1,g_2) \in G^{\sigma}(R) \times G^{\theta}(R)
:g_1^{-1}\delta g_2=\delta\}.
\end{align}}\index{$H_{\gamma}$}\index{$G_{\delta}$}Write $C_{\gamma,H^{\sigma}}$ (resp. $C_{\delta,G^{\sigma}}^{\tau}$)
for the centralizer of $\gamma$ in $H^{\sigma}$ (resp. $\tau$-centralizer of $\delta$ in
$G^{\sigma}$).

\begin{lem} \label{lem-2cent} There are isomorphisms
$H_{\gamma} \tilde{\to} C_{\gamma\gamma^{-\sigma},H^{\sigma}}$ and
$G_{\delta} \tilde{\to}
C_{\delta\delta^{-\theta},G^{\sigma}}^{\tau}$
given on a $k$-algebra $R$ by
\begin{align} \label{2cent}
\nonumber H_{\gamma}(R) &\tilde{\lto} C_{\gamma\gamma^{-\sigma},H^{\sigma}}(R)\\
(g_1,g_2) &\longmapsto g_1\\
\nonumber G_{\delta}(R) & \tilde{\lto} C_{\delta
\delta^{-\theta},G^{\sigma}}^{\tau}(R)\\
\nonumber (g_1,g_2) &\longmapsto g_1.
\end{align}
\end{lem}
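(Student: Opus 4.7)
The plan is to verify both isomorphisms by the same computation, since they become formally identical once one observes that for $g_1 \in G^{\sigma}(R)$ one has $g_1^{\theta} = g_1^{\tau}$ (because $\theta = \sigma \circ \tau$, the two automorphisms commute, and $g_1$ is fixed by $\sigma$). The $H$-version of the lemma is in fact a special case of the $G$-version, corresponding to $M=F$ and $\tau=1$, so I would present the argument for $G_{\delta}$ in detail and then note the simplification that yields the statement for $H_{\gamma}$.

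Here is the core calculation I would carry out. Let $(g_1,g_2) \in G_{\delta}(R)$, so that $g_1^{-1}\delta g_2 = \delta$; equivalently $g_2 = \delta^{-1} g_1 \delta$. Applying $\theta$ to the defining equation and using $g_1^{\theta} = g_1^{\tau}$ together with $g_2^{\theta}=g_2$, I obtain $g_1^{-\tau}\delta^{\theta}g_2 = \delta^{\theta}$, i.e., $g_2 = \delta^{-\theta} g_1^{\tau} \delta^{\theta}$. Equating the two expressions for $g_2$ and rearranging yields
\begin{equation*}
g_1^{-1}(\delta\delta^{-\theta})g_1^{\tau} = \delta\delta^{-\theta},
\end{equation*}
which is exactly the condition $g_1 \in C^{\tau}_{\delta\delta^{-\theta},G^{\sigma}}(R)$. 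Hence the assignment $(g_1,g_2)\mapsto g_1$ lands in the claimed target.

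For the inverse, given $g_1 \in C^{\tau}_{\delta\delta^{-\theta},G^{\sigma}}(R)$, I would define $g_2 := \delta^{-1} g_1 \delta$ and verify that $g_2 \in G^{\theta}(R)$: computing $g_2^{\theta} = \delta^{-\theta} g_1^{\tau}\delta^{\theta}$ (again using $g_1 \in G^{\sigma}$), the equality $g_2^{\theta}=g_2$ is equivalent to the $\tau$-centralizer relation above, which holds by hypothesis. The identity $g_1^{-1}\delta g_2 = \delta$ is then built into the definition of $g_2$. Both assignments are manifestly functorial in $R$ and mutually inverse, so they define the desired isomorphism of $k$-group schemes.

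The $H_{\gamma}$-statement is obtained by the same manipulations with $\theta$ replaced by $\sigma$ and with $\tau$-conjugation replaced by ordinary conjugation; alternatively it is literally the $\tau=1$ specialization of the $G$-case. There is really no conceptual obstacle here: the content of the lemma is the algebraic observation that the second coordinate $g_2$ is forced by the first via $g_2 = \delta^{-1}g_1\delta$, and the $\sigma$- (resp.\ $\theta$-) fixedness of $g_2$ translates precisely into the $\sigma$-centralizer (resp.\ $\tau$-twisted $\sigma$-centralizer) condition on $g_1$. The only point that requires any care is the identity $g_1^{\theta}=g_1^{\tau}$ on $G^{\sigma}$, which is what replaces a plain ``$g_1^{\sigma}=g_1$'' in the untwisted case and produces the $\tau$-centralizer rather than an ordinary centralizer on the right.
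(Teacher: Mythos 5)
Your proposal is correct and is essentially the same computation as the paper's proof: both reduce to the observation that $g_2$ is forced to equal $\delta^{-1}g_1\delta$ and that $g_1^{\theta}=g_1^{\tau}$ on $G^{\sigma}$, which turns the $\theta$-fixedness of $g_2$ into the $\tau$-centralizer condition on $g_1$. The only cosmetic difference is organizational: the paper dispatches injectivity as "clear" and checks surjectivity directly, while you verify well-definedness and exhibit the inverse; and your remark that the $H_{\gamma}$ case is literally the $\tau=1$ specialization is a tidy economy the paper does not spell out.
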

\begin{proof}
Injectivity is clear.  For surjectivity, note that if $g_1^{-1}\gamma \gamma^{-\sigma}g_1 =\gamma \gamma^{-\sigma}$ for $g_1 \in H^{\sigma}(R)$ then
$$
\gamma^{-1}g_1^{-1}\gamma=\gamma^{-\sigma} g_1^{-1}\gamma^{\sigma}=(\gamma^{-1}g_1^{-1}\gamma)^{\sigma},
$$
so $(g_1,(\gamma^{-1}g_1^{-1}\gamma)^{-1}) \in H_{\gamma}(R)$.  Similarly, if $g_1^{-1}\delta \delta^{-\theta}g_1^{\tau}=\delta \delta^{-\theta}$ for $g_1 \in G^{\sigma}(R)$, then
$$
\delta^{-1}g_1^{-1}\delta=\delta^{-\theta}g_1^{-\tau}\delta^{\theta}=(\delta^{-1} g_1^{-\sigma}\delta)^{\theta}=(\delta^{-1} g_1^{-1} \delta)^{\theta}
$$
so $(g_1,(\delta^{-1}g_1^{-1}\delta)^{-1}) \in G_{\delta}(R)$.
\end{proof}

\begin{lem} \label{lem-connect} Let $k/F$ be a field, let $\gamma \in H(k)$
and $\delta \in G(k)$.  If $\gamma\gamma^{-\sigma}$ is semisimple
(resp.~$\delta\delta^{-\theta}$ is  $\tau$-semisimple) then $C_{\gamma \gamma^{-\sigma},H^{\sigma}}$
(resp.~$C_{\delta \delta^{-\theta},G^{\sigma}}^{\tau}$) is reductive.
\end{lem}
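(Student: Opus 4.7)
The plan is to deduce both assertions from Steinberg's theorem that the identity component of the fixed points of a semisimple automorphism of a connected reductive group is again reductive.

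\textit{Non-twisted case.} Set $x := \gamma\gamma^{-\sigma}$. The identity $xx^\sigma = \gamma\gamma^{-\sigma}\gamma^\sigma \gamma^{-1} = 1$ gives $x^\sigma = x^{-1}$, so $C_{x,H}=C_{x^{-1},H}=C_{x^\sigma,H}$; hence $\sigma$ preserves $C_{x,H}$ as a closed subgroup of $H$, and clearly $C_{x,H^\sigma}=(C_{x,H})^\sigma$. Since $x$ is semisimple and $H$ is connected reductive, $C_{x,H}^\circ$ is connected reductive. Applying Steinberg's theorem to the involution $\sigma$ on $C_{x,H}^\circ$ shows that the identity component of $(C_{x,H}^\circ)^\sigma$, which agrees with that of $(C_{x,H})^\sigma = C_{x,H^\sigma}$, is reductive. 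Hence $C_{\gamma\gamma^{-\sigma},H^\sigma}$ is reductive.

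\textit{Twisted case.} Reductivity is a geometric property, so I would base change to $\bar F$. If $M=F$ the claim reduces to the previous case; otherwise $M/F$ is a non-trivial quadratic extension, and $G_{\bar F}\cong H_{\bar F}\times H_{\bar F}$, with $\tau$ swapping factors and $\sigma$ acting diagonally (whence $\theta$ acts as $(h_1,h_2)\mapsto (h_2^\sigma, h_1^\sigma)$). Writing $\delta=(\delta_1,\delta_2)$ and setting $y_1:=\delta_1\delta_2^{-\sigma}$, one computes $y:=\delta\delta^{-\theta}=(y_1,\,y_1^{-\sigma})$ and $(y\tau)^2=yy^\tau=(y_1 y_1^{-\sigma},\,y_1^{-\sigma}y_1)$. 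The $\tau$-semisimplicity of $y$ forces $(y\tau)^2$ to be semisimple in $G_{\bar F}$, so $z:=y_1 y_1^{-\sigma}$ is semisimple in $H_{\bar F}$. Unpacking the $G^\sigma\times G^\theta$-action \eqref{actionsgalore} over $\bar F$, a direct computation identifies $G_{\delta,\bar F}$ with $C_{z,H^\sigma_{\bar F}}$, and the non-twisted case applied to $y_1\in H(\bar F)$ supplies reductivity of the latter. Finally, Lemma \ref{lem-2cent} gives $G_\delta\cong C_{\delta\delta^{-\theta},G^\sigma}^\tau$, so the twisted $\tau$-centralizer is reductive as well.

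\textit{Main obstacle.} The only real technical obstacle is verifying the explicit identification $G_{\delta,\bar F}\cong C_{z,H^\sigma_{\bar F}}$; this amounts to writing out the system of equations cutting out $G_\delta$ inside $G^\sigma_{\bar F}\times G^\theta_{\bar F} \cong H^\sigma_{\bar F}\times H^\sigma_{\bar F}\times H_{\bar F}$ and eliminating the two $H$-variables to arrive at the condition $\{a\in H^\sigma : az = za\}$. Granting this computation, both halves of the lemma reduce at once to Steinberg's theorem.
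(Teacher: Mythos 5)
Your proof is correct, and it fills in an argument the paper itself omits: the in-text ``proof'' of this lemma is a bare citation to \cite[\S 2]{Hahn}, so there is no argument of the paper's to compare against. Your route, reducing to the theorem that the identity component of the fixed points of a semisimple involution of a connected reductive group is reductive, is the natural one. For the twisted case, the identification you flag as the ``main obstacle'' does check out: with $y=\delta\delta^{-\theta}=(y_1,y_1^{-\sigma})$ and $z=y_1y_1^{-\sigma}$, the condition $(g_1,g_2)^{-1}(y_1,y_1^{-\sigma})(g_2,g_1)=(y_1,y_1^{-\sigma})$ on $(g_1,g_2)\in H^\sigma\times H^\sigma(\bar F)$ unwinds to $g_2=y_1^{-1}g_1y_1=y_1^{-\sigma}g_1y_1^{\sigma}$, which forces $g_1\in C_{z,H^\sigma}(\bar F)$ and determines $g_2$; combined with Lemma~\ref{lem-2cent} this gives $C^{\tau}_{\delta\delta^{-\theta},G^{\sigma}}(\bar F)\cong C_{z,H^{\sigma}}(\bar F)$. (It is also consistent with how the paper itself reduces the twisted case to the untwisted one over $\bar F$, e.g.\ in the proof of Lemma~\ref{lem-tdelta}.) One cosmetic point you should spell out: the fixed-point theorem only yields reductivity of $((C_{x,H})^{\sigma})^{\circ}$, so to conclude reductivity of $C_{x,H^{\sigma}}$ itself one should add that a linear algebraic group with finitely many components and reductive identity component has trivial unipotent radical; in the biquadratic case with $(H^{\sigma})^{\mathrm{der}}$ simply connected Lemma~\ref{lem-biquad-cent} even gives connectedness.
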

\begin{proof} See \cite[\S 2]{Hahn}.
\end{proof}

We also often use the following lemma:

\begin{lem} \label{lem-biquad-cent} Let $k/F$ be a field, let $\gamma \in H(k)$ and $\delta \in G(k)$.  Assume we are in the biquadratic case and that $(H^{\sigma})^{\mathrm{der}}$ is simply connected.
If $\gamma \gamma^{-\sigma}$ is semisimple (resp.~$\delta\delta^{-\theta}$ is $\tau$-semisimple) then $C_{\gamma\gamma^{-\sigma},H^{\sigma}}$ (resp.~$C_{\delta\delta^{-\theta},G^{\sigma}}^{\tau}$) is connected.
\end{lem}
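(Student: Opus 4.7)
The plan is to reduce both assertions to Steinberg's theorem, which guarantees that the centralizer in a connected reductive group $L$ with $L^{\der}$ simply connected of any semisimple element of $L$ is connected. Since Weil restriction preserves connectedness and simple connectedness, both $H^{\sigma}$ and $G^{\sigma} = \mathrm{Res}_{M/F}H^{\sigma}$ are connected reductive with simply connected derived subgroup, so Steinberg applies directly inside either group. What remains is to rewrite the two centralizers in question as ordinary centralizers of semisimple elements in one of these groups after a base change to $\bar{F}$, which suffices because a reductive $F$-group scheme is connected if and only if it is geometrically connected.

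For the first assertion I would exploit the biquadratic hypothesis $H = \mathrm{Res}_{E/F}H^{\sigma}$ to identify $H_{\bar{F}} \cong H^{\sigma}_{\bar{F}} \times H^{\sigma}_{\bar{F}}$, where $\sigma$ swaps the two factors and $H^{\sigma}_{\bar{F}}$ sits diagonally. Writing $\gamma = (\gamma_1,\gamma_2)$ one finds $\gamma \gamma^{-\sigma} = (\gamma_1\gamma_2^{-1},\gamma_2\gamma_1^{-1})$, and the commutation condition for a diagonal element $(h,h) \in H^{\sigma}(\bar F)$ collapses (the two components give the same constraint) to $h \in C_{\gamma_1\gamma_2^{-1},H^{\sigma}}(\bar{F})$. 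Since $\gamma\gamma^{-\sigma}$ being semisimple forces $\gamma_1\gamma_2^{-1}$ to be semisimple in $H^{\sigma}$, Steinberg gives the required connectedness.

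The second assertion proceeds by an analogous but slightly more delicate unpacking. Base changing $G^{\sigma} = \mathrm{Res}_{M/F}H^{\sigma}$ to $\bar{F}$ gives $(G^{\sigma})_{\bar{F}} \cong H^{\sigma}_{\bar{F}} \times H^{\sigma}_{\bar{F}}$, indexed by $\Gal(M/F) = \{1,\tau\}$, with $\tau$ interchanging the factors. Writing $y = \delta\delta^{-\theta} = (y_1,y_2)$ and $g = (g_1,g_2)$ in these coordinates, the $\tau$-centralizer condition $g^{-1} y g^{\tau} = y$ becomes the pair $g_1^{-1}y_1 g_2 = y_1$ and $g_2^{-1}y_2 g_1 = y_2$, equivalent to $g_2 = y_1^{-1}g_1 y_1$ together with $g_1 \in C_{y_1 y_2,H^{\sigma}}$. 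Because $y$ is $\tau$-semisimple, $yy^{\tau} = (y_1 y_2, y_2 y_1)$ is semisimple, so $y_1 y_2$ is semisimple in $H^{\sigma}$, and Steinberg once more delivers the conclusion.

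The main technical point is bookkeeping of the various Galois actions when descending these $\bar{F}$-computations to the $F$-level; in particular for the second part one should verify that the isomorphism $(g_1,g_2) \mapsto g_1$ comes from a morphism of $F$-group schemes $C^{\tau}_{\delta\delta^{-\theta},G^{\sigma}} \to C_{y_1 y_2,H^{\sigma}_{M}}$ after base change to $M$ (where $y_1, y_2$ are naturally defined), so that geometric connectedness implies the stated connectedness over $F$. Beyond this routine verification, both statements follow immediately from Steinberg's theorem.
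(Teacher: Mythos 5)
Your approach is the same as the paper's: the paper's proof is a one-line invocation of Steinberg's theorem followed by ``upon passing to the algebraic closure, it is easy to see that this fact implies the lemma,'' and your proposal is an honest attempt to spell out that reduction. The first assertion is handled correctly: after decomposing $H_{\bar F}\cong H^\sigma_{\bar F}\times H^\sigma_{\bar F}$ you get $\gamma\gamma^{-\sigma}=(w,w^{-1})$ with $w=\gamma_1\gamma_2^{-1}$ semisimple in $H^\sigma(\bar F)$, and the centralizer in the diagonal copy of $H^\sigma$ is $C_{w,H^\sigma}$, to which Steinberg applies.

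For the second assertion there is a genuine gap. In the $\Gal(M/F)$-indexed decomposition, $y=\delta\delta^{-\theta}\in G(\bar F)$ decomposes as $(y_1,y_2)$ with $y_1,y_2\in H(\bar F)$ — \emph{not} $H^\sigma(\bar F)$, since $y\notin G^\sigma(\bar F)$ in general. Your reduction correctly identifies the $\tau$-centralizer with $\{(g_1, y_1^{-1}g_1y_1): g_1\in C_{y_1y_2,H^\sigma}(\bar F)\}$, but here $C_{y_1y_2,H^\sigma}$ is the centralizer in $H^\sigma$ of the element $y_1y_2$ of the \emph{larger} group $H(\bar F)$. Steinberg's theorem is about centralizers of elements lying in the group itself, so it does not apply directly, and the intersection $C_{y_1y_2,H}\cap H^\sigma$ of a connected group with a subgroup is not automatically connected. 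The claim ``$y_1y_2$ is semisimple in $H^\sigma$'' is simply false. The fix is a second round of the same unwinding you already performed in the first part: since $s=\delta\delta^{-\theta}$ satisfies $s^\theta=s^{-1}$, one gets $y_2=y_1^{-\sigma}$, hence $(y_1y_2)^\sigma=(y_1y_2)^{-1}$, so $y_1y_2\in Q(\bar F)$. Decomposing $H_{\bar F}\cong H^\sigma_{\bar F}\times H^\sigma_{\bar F}$ once more writes $y_1y_2=(w,w^{-1})$ with $w$ semisimple in $H^\sigma(\bar F)$, and then $C_{y_1y_2,H^\sigma}=C_{w,H^\sigma}$, which Steinberg handles. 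Without this extra step the argument for the $\tau$-twisted case is incomplete.
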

\begin{proof} Recall that for any semisimple $x \in H^{\sigma}(k)$ the group $C_{x,H^{\sigma}}$ is connected by a theorem of Steinberg (\cite{SteinbMem}, \cite[\S 3]{KottRatConj}).
Upon passing to the algebraic closure, it is easy to see that this fact implies the lemma.
\end{proof}

For an $F$-algebra $R$
write
\begin{align}
\Gamma_r(R):&=H^{\sigma}(R) \backslash H(R) /H^{\sigma}(R)\\
\nonumber \Gamma_{r\tau}(R):&=G^{\sigma}(R) \backslash G(R) /G^{\theta}(R).
\end{align}\index{$\Gamma_r$}\index{$\Gamma_{r\tau}$}We refer to the elements of $\Gamma_r(R)$ (resp. $\Gamma_{r\tau}(R)$) as relative classes (resp. relative $\tau$-classes).
If two elements $\gamma,\gamma' \in H(R)$ (resp. $\delta,\delta' \in G(R)$) map to the same
class in $\Gamma_r(R)$ (resp. $\Gamma_{r\tau}(R)$) we say that they are in the same relative class
(resp. relative $\tau$-class).  In this paper we will loosely refer to any construction or object involving
relative $\tau$-classes as the twisted case.

The
following definitions are adaptations of those appearing in
\cite{JacquetLai} and \cite{FlickerRTF}:
\begin{defn}  Let $k/F$ be a field.
An element $\gamma \in H(k)$ is \textbf{relatively semisimple} (resp.~relatively elliptic, relatively regular) if $\gamma \gamma^{-\sigma}$  is semisimple
(resp.~elliptic, regular) as an element of $H(k)$.
\end{defn}

\begin{defn} Let $k/F$ be a field.  An element $\delta \in G(k)$ is
\textbf{relatively $\tau$-semisimple} (resp.~relatively $\tau$-elliptic, relatively $\tau$-regular) if
$\delta\delta^{-\theta}$ is $\tau$-semisimple (resp.~$\tau$-elliptic, $\tau$-regular) in the usual sense.
\end{defn}

For brevity, we say that $\delta$ is relatively $\tau$-regular semisimple if it is
both relatively $\tau$-regular and relatively $\tau$-semisimple, with similar
conventions regarding other combinations of regular, semisimple, and elliptic.
It is easy to check that $\gamma \in H(R)$ is relatively semisimple
(resp. elliptic, regular) if and only if every element in $H^{\sigma}(R)\gamma H^{\sigma}(R)$
is relatively semisimple (resp. elliptic, regular).  A similar statement holds in the twisted case. For a field $k/F$  we denote by
\begin{align}
\Gamma_r^{ss}(k):&=\{H^{\sigma}(k)\gamma H^{\sigma}(k) \in \Gamma_r(k):\gamma \textrm{ is
relatively semisimple}\}\\
\nonumber \Gamma_{r\tau}^{ss}(k):&=\{G^{\sigma}(k)\delta G^{\theta}(k) \in \Gamma_{r\tau}(k):\delta \textrm{ is relatively }\tau\textrm{-semisimple}\}.
\end{align}\index{$\Gamma_r^{ss}$} \index{$\Gamma_{r\tau}^{ss}$}

Let $Q^{ss} \subset Q$ be the subscheme whose points in
an $F$-algebra $R$ are semisimple elements of $H(R)$.

\begin{lem} \label{lem-sscl1} Assume that we are in the biquadratic case and $H^{\mathrm{der}}$ is simply connected.
For $k$ either $F$ or $F_v$ for some place $v$ of $F$ the natural morphism \eqref{nat-maps} induce
a bijection
$$
\Gamma_r^{ss}(k) \leftrightarrow H^{\sigma}(k) \backslash Q^{ss}(k).
$$
\end{lem}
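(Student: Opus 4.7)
The plan is to show that the map $\gamma \mapsto B_\sigma(\gamma) = \gamma\gamma^{-\sigma}$ induces a well-defined bijection; injectivity is essentially formal, while surjectivity uses the biquadratic hypothesis together with Lemma \ref{lem-biquad-cent}. For well-definedness: since $h^\sigma = h$ for $h \in H^\sigma$, one computes $B_\sigma(h_1 \gamma h_2^{-1}) = h_1 B_\sigma(\gamma) h_1^{-1}$, so the map descends to $\Gamma_r(k) \to H^\sigma(k)\backslash Q(k)$. For injectivity: the fibers of $B_\sigma \colon H \to Q$ are the right cosets of $H^\sigma$, since $gg^{-\sigma} = g'g'^{-\sigma}$ forces $g^{-1}g' \in H^\sigma$. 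Thus if $B_\sigma(\gamma_2) = hB_\sigma(\gamma_1)h^{-1} = B_\sigma(h\gamma_1)$ for some $h \in H^\sigma(k)$, we conclude $\gamma_2 = h\gamma_1 h'$ with $h' \in H^\sigma(k)$, so $\gamma_1,\gamma_2$ represent the same relative class.

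For surjectivity, given $q \in Q^{ss}(k)$ I need to produce $\gamma \in H(k)$ with $B_\sigma(\gamma) = q$; the equivariance established above shows the image of $B_\sigma$ is already $H^\sigma(k)$-conjugation stable, so hitting $q$ exactly suffices. Unwinding the biquadratic setup, $H(k) = H^\sigma(L)$ with $L := E \otimes_F k$, and the problem becomes a Hilbert-90 equation: find $\gamma \in H^\sigma(L)$ with $\gamma\, \sigma(\gamma)^{-1} = q$ for a semisimple $q \in H^\sigma(L)$ satisfying $q\, \sigma(q) = 1$. The obstruction lives in $H^1(L/k, H^\sigma(L))$. In the split case $L \cong k \oplus k$ the swap structure lets us take $\gamma = (q,1)$ and we are done; the field case, where $L/k$ is a quadratic field extension, is where the real work lies.

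The main obstacle is to show that this Hilbert-90 obstruction vanishes in the field case, and my plan is to reduce to an abelian computation. The centralizer $C := C_{q, H^\sigma_L}$ is reductive (by semisimplicity of $q$) and $\sigma$-stable (by $q^\sigma = q^{-1}$), hence descends to a $k$-group $C_k$, which is connected by Lemma \ref{lem-biquad-cent}. Pick a maximal $k$-torus $T_k \subset C_k$; since $q$ is central in $C$, it lies in $T_k(L)$. The obstruction then refines to the abelian Tate cohomology class $[q] \in H^1(L/k, T_k(L))$. Combining the biquadratic structure with the hypothesis that $H^{\mathrm{der}}$ is simply connected, a Shapiro/Tate--Nakayama-style analysis of torus cohomology, of the form carried out in \S \ref{ssec-norm-exist}, shows this class vanishes, producing $\gamma \in T_k(L) \subset H(k)$ with $B_\sigma(\gamma) = q$ and concluding the proof.
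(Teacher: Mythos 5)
Your setup — well-definedness and injectivity via the observation that the fibers of $B_{\sigma}$ are $H^{\sigma}$-cosets, then reducing surjectivity to a Hilbert--90 problem in a torus containing $q$ — is the same approach as the paper. In fact the paper's proof proceeds almost identically: it considers $C_{\alpha,H}$ (the Weil restriction of your $C$), observes via $\alpha=\alpha^{-\sigma}$ that it is $\sigma$-invariant and hence of the form $\mathrm{Res}_{E\otimes_F k/k}I$, chooses a maximal $k$-torus $T'\leq I$, puts $\alpha$ in $T(k)=T'(E\otimes_F k)$, and then computes $\hat{H}^{-1}(\langle\sigma\rangle,T(k))$.

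Where your write-up has a genuine gap is precisely at the step you defer: the vanishing of $H^1(L/k,T_k(L))$. You assert this follows from ``a Shapiro/Tate--Nakayama-style analysis of torus cohomology, of the form carried out in \S\ref{ssec-norm-exist},'' but this is not an argument, and the cross-reference is in fact circular: the proof of Lemma \ref{matching-lem} in \S\ref{ssec-norm-exist} \emph{invokes} the present lemma (to produce the $\sigma$-stable torus containing $\gamma\gamma^{-\sigma}$), so one cannot outsource the cohomological computation there. The paper's actual argument at this point is Shapiro's lemma applied directly to the $\langle\sigma\rangle$-module $T(k)$, using that $T$ is a Weil restriction along $E\otimes_F k/k$: one identifies $\hat{H}^{-1}(\langle\sigma\rangle,T(k))$ with $H^1(\langle\sigma\rangle,T(k))$ by periodicity of Tate cohomology for the cyclic group $\langle\sigma\rangle$ and then kills the latter by Shapiro. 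You should spell this out rather than pointing downstream. A smaller, fixable point: you cite Lemma \ref{lem-biquad-cent} for connectedness of $C_k$, but that lemma as stated concerns $C_{\gamma\gamma^{-\sigma},H^{\sigma}}$ — exactly the shape you have not yet established for $q$. What you actually need is Steinberg's theorem (centralizers of semisimple elements in groups with simply connected derived group are connected), which is what the proof of Lemma \ref{lem-biquad-cent} reduces to; cite that directly, as the paper does, to avoid the apparent circularity.
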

\begin{proof}
We claim that every element of $Q^{ss}(k)$ is of the form $\gamma \gamma^{-\sigma}$
for some $\gamma
\in H(k)$.  To prove the claim, first let $\alpha \in Q^{ss}(k)$ and consider $C_{\alpha,H}(k)$.
Since $\alpha$ is semisimple and $H^{\mathrm{der}}$ is simply connected, $C_{\alpha,H}$ is a
(connected) reductive group (\cite{SteinbMem}, \cite[\S 3]{KottRatConj}). Using the fact that $\alpha=\alpha^{-\sigma}$,
it is trivial to check that $C_{\alpha,H}$ is $\sigma$-invariant, and it follows that
$C_{\alpha,H}=\mathrm{Res}_{E \otimes_F k/k}I$ for some reductive subgroup $I \leq H^{\sigma}$.  Let $T' \leq
I$ be a maximal torus and let $T=\mathrm{Res}_{E \otimes_F k/k}T'$.  Then $\alpha$ is in the
centralizer of $T$ in $C_{\alpha,H}$, and it follows that $\alpha \in T(k)$
\cite[\S IV.11.12]{LAGBorel}.

Denoting by $\langle \sigma \rangle$ the subgroup of the group of automorphisms
of $T(k)$ generated by $\sigma$, we have
$$
{}_{N_{\langle \sigma \rangle}}T(k)/I_{\langle \sigma \rangle}T(k)
=\hat{H}^{-1}(\langle \sigma \rangle,T(k)) \cong H^1(\langle \sigma \rangle, T(k))\cong1
$$
(see \cite[\S VIII.1 and \S VIII.4]{SerreLFs} for notation and the existence of the
first isomorphism).  Here we have used Shapiro's lemma for the last isomorphism.
By definition of ${}_{N_{\langle \sigma \rangle}}T(k)/I_{\langle \sigma \rangle}T(k)$,
it follows that $\alpha=\gamma \gamma^{-\sigma}$ for some $\gamma \in T(k)$.  This completes the proof
of our claim.  We leave to the reader the straightforward task of deducing the lemma from the claim.
\end{proof}

\subsection{Norms of stable classes: Definitions}
\label{ssec-match} In this subsection we define stable classes and a notion of norm.
We assume throughout the subsection that $k$ is either $F$ or $F_v$ for some place $v$ of $F$.
 In this section and for the remainder of the paper, we will always make the following simplifying assumption:
\begin{itemize}
\item For all relatively semisimple $\gamma \in H(\bar{F})$ the group $H_{\gamma}$ is connected.
\end{itemize}
By passing to an algebraic closure, it is easy to see that this assumption implies that $G_{\delta}$ is connected for all relatively $\tau$-semisimple $\delta \in G(\bar{F})$.  For a condition sufficient to ensure the assumption above holds, see Lemma \ref{lem-biquad-cent}.

\begin{defn} \label{defn-src}   Two relatively semisimple elements
$\gamma,\gamma_0 \in H(k)$
are in the same \textbf{stable relative class} if and only if
$x\gamma\gamma^{-\sigma}x^{-1}=\gamma_0\gamma_0^{-\sigma}$ for some $x \in H^{\sigma}(\bar{k})$.

Similarly, two elements $\delta, \delta_0 \in G(k)$ are in the same
\textbf{stable relative $\tau$-class} if and only if $y \delta\delta^{-\theta} y^{-\tau}=\delta_0\delta_0^{-\theta}$ for some $y \in G^{\sigma}(\bar{k})$.
\end{defn}

For a pair of reductive $k$-groups $I,H$,
define
$$
\mathcal{D}(I,H;k):=\mathrm{ker}\left[H^1(k,I) \to H^1(k,H)\right].
$$\index{$\mathcal{D}(I,H;k)$}
We note that the set of relative classes in the stable relative class of $\gamma_0 \in H(k)$ is in bijection with
\begin{align}
 \mathcal{D}(C_{\gamma_0\gamma_0^{-\sigma},H^{\sigma}},H^{\sigma};k)
\end{align}
and the set of relative $\tau$-classes in the stable relative $\tau$-class of $\delta_0 \in G(k)$ is in bijection with
\begin{align} \label{stableclassbij}
\mathcal{D}(C^{\tau}_{\delta_0\delta_0^{-\theta},G^{\sigma}},G^{\sigma};k).
\end{align}
In particular, if $k=F_v$ for some place $v$ of $F$, then both of these sets of stable classes are finite
(compare \cite[\S 1.8, \S 2.3]{Lab}).

Later, when defining orbital integrals, the following observation will be useful.  Suppose that $\gamma,\gamma_0$ are in the same stable relative class; choose $x$ as in Definition \ref{defn-src}.  Then we have an inner twist
\begin{align}
\label{src-twist} H_{\gamma}(\bar{k}) \tilde{\lto} C_{\gamma\gamma^{-\sigma},H^{\sigma}}(\bar{k}) &\tilde{\lto}C_{\gamma_0\gamma_0^{-\sigma},H^{\sigma}}(\bar{k}) \tilde{\lto} H_{\gamma_0}(\bar{k})\\
\nonumber g & \longmapsto xgx^{-1}
\end{align}
where the first and last isomorphism are those of \eqref{2cent} (compare \cite[Lemma 3.2]{KottRatConj}).  Similarly, if $\delta,\delta_0$ are in the same stable relative $\tau$-class, choose $y$ as in Definition \ref{defn-src}.  Then there is an inner twisting
\begin{align}
\label{strc-twist}
G_{\delta}(\bar{k}) \tilde{\lto} C_{\delta \delta^{-\theta},G^{\sigma}}^{\tau}(\bar{k}) & \tilde{\lto} C_{\delta_0 \delta_0^{-\theta},G^{\sigma}}^{\tau}(\bar{k}) \tilde{\lto} G_{\delta_0}(\bar{k})\\
\nonumber g &\longmapsto ygy^{-1}.
\end{align}

We have the following definition:

\begin{defn} \label{defn-norm} Suppose that $\gamma \in H(k)$ is relatively semisimple.
We say that $\gamma$ is a \textbf{norm} of $\delta \in G(k)$ if there is an $y \in G^{\sigma}(\bar{k})$ such that
\begin{align*}
y\delta\delta^{-\theta}(\delta\delta^{-\theta})^{\tau} y^{-1}=
\gamma\gamma^{-\sigma}.
\end{align*}

\end{defn}

The following lemma together with Proposition \ref{matching-prop} below can be used to show that norms exist in certain cases:

\begin{lem} \label{lem-geom-norm}   Suppose that $\delta \in G(k)$
is relatively $\tau$-semisimple.
  Let $O(\bar{k}) \subset Q(\bar{k})$ denote the subset of elements that are $G^{\sigma}(\bar{k})$-conjugate
to $\delta \delta^{-\theta}(\delta \delta^{-\theta})^{-\sigma}=\delta \delta^{-\theta}(\delta \delta^{-\theta})^{\tau}$.  Then $O(\bar{k})$ is fixed by $\Gal(\bar{k}/k)$ and is the set of
$\bar{k}$-points of a (nonempty) homogeneous space $O$ for $H^{\sigma}_{k}$.  Any $\gamma \in H(k)$ such that $\gamma \gamma^{-\sigma} \in O(k)$ is a norm of $\delta$.
\end{lem}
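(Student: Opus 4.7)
Set $\alpha := \delta\delta^{-\theta}$. The plan is to first verify the algebraic identities implicit in the statement, then establish Galois-stability, then produce the homogeneous-space structure, and finally deduce the norm assertion from Definition \ref{defn-norm}.

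A direct computation using $\theta = \sigma\tau = \tau\sigma$ and $\sigma^2 = \tau^2 = 1$ gives $\alpha^{-\sigma} = \delta^{\tau}\delta^{-\sigma} = \alpha^{\tau}$, so the two displayed expressions for $\beta := \alpha\alpha^{\tau}$ agree. Since $\theta^2 = 1$ one has $\alpha^{\theta} = \alpha^{-1}$, and combined with $\alpha^{\sigma} = \alpha^{-\tau}$ this yields $\beta^{\sigma} = \alpha^{\sigma}\alpha^{\theta} = \alpha^{-\tau}\alpha^{-1} = \beta^{-1}$, so $\beta$ satisfies the defining equation $qq^{\sigma} = 1$ of $Q$ viewed inside $G$. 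Galois-stability of $O(\bar{k})$ is then immediate: $\delta \in G(k)$ is Galois-fixed and $\sigma,\tau,\theta$ are $F$-rational, so $\beta$ is Galois-fixed; since $Q$ and $G^{\sigma}$ are defined over $F$, both $Q(\bar k)$ and the $G^{\sigma}(\bar k)$-conjugacy class of $\beta$ are Galois-stable, and so is $O(\bar k)$.

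The main step is showing that $O(\bar k)$ is a single nonempty $H^{\sigma}(\bar k)$-orbit. I would pass to the splitting $\bar k \otimes_F M \cong \bar k^{[M:F]}$, under which $G(\bar k) = H(\bar k)^{[M:F]}$ with $\sigma$ acting componentwise, $\tau$ permuting the factors, and $H \hookrightarrow G$ as the diagonal. In the case $M = F$ already $\beta = \alpha^2$ with $\alpha \in Q_H(\bar k)$, so $\beta \in Q(\bar k)$. In the quadratic case, write $\alpha = (\alpha_1, \alpha_2)$; the identity $\alpha^{-\sigma} = \alpha^{\tau}$ forces $\alpha_2 = \alpha_1^{-\sigma}$, whence $\beta = (\alpha_1\alpha_1^{-\sigma},\, \alpha_1^{-\sigma}\alpha_1) =: (\beta_1, \beta_2)$ with $\beta_1, \beta_2 \in Q_H(\bar k)$ and $\alpha_1 \beta_2 \alpha_1^{-1} = \beta_1$. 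Seeking $y = (1, y_2) \in G^{\sigma}(\bar k) = H^{\sigma}(\bar k)^2$ conjugating $\beta$ into the diagonal copy of $Q(\bar k)$ reduces, via $y_2 = \alpha_1 c$, to finding $c \in C_{\beta_2, H}(\bar k)$ with $c^{-1}c^{\sigma} = \beta_2$. Since $\beta_2^{\sigma} = \beta_2^{-1}$, the assignment $\sigma \mapsto \beta_2$ is a $1$-cocycle of $\langle\sigma\rangle$ with values in $C_{\beta_2,H}(\bar k)$, and under the standing simply-connectedness hypotheses (applied via Lemma \ref{lem-biquad-cent} in the biquadratic case) this centralizer is connected, so a Steinberg-type vanishing of $H^1$ for a connected algebraic group over an algebraically closed field produces the required coboundary $c$.

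Transitivity of the $H^{\sigma}(\bar k)$-action on $O(\bar k)$ follows from the same split-coordinate description: any $G^{\sigma}(\bar k) = H^{\sigma}(\bar k)^d$-conjugator $(g_1, \dots, g_d)$ between two elements of the diagonal $Q(\bar k) \subset G(\bar k)$ has each $g_i \in H^{\sigma}(\bar k)$ individually conjugating one to the other, so the diagonally embedded $(g_1, \dots, g_1) \in H^{\sigma}(\bar k) \hookrightarrow G^{\sigma}(\bar k)$ achieves the same conjugation. Thus $O(\bar k)$ is a single $H^{\sigma}(\bar k)$-orbit, and Galois descent together with Galois-stability exhibits $O(\bar k)$ as the $\bar k$-points of a homogeneous space $O$ for $H^{\sigma}_k$. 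The norm assertion is then immediate: if $\gamma \in H(k)$ satisfies $\gamma\gamma^{-\sigma} \in O(k) \subset O(\bar k)$, then by construction there is $y \in G^{\sigma}(\bar k)$ with $y\beta y^{-1} = \gamma\gamma^{-\sigma}$, which is precisely Definition \ref{defn-norm}'s condition for $\gamma$ to be a norm of $\delta$. The principal obstacle is the $1$-cocycle triviality in the nonemptyness argument, which hinges on the connectedness of the centralizer and an appropriate Steinberg-type vanishing theorem.
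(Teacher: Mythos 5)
Your overall strategy — pass to the splitting $G_{\bar{k}} \cong H_{\bar{k}}^{[M:F]}$, conjugate $\beta := \delta\delta^{-\theta}(\delta\delta^{-\theta})^{\tau}$ into the diagonal copy of $Q(\bar{k})$, then deduce transitivity, Galois descent, and the norm assertion — is essentially the paper's, and the preliminary identities ($\alpha^{-\sigma} = \alpha^{\tau}$, $\alpha_2 = \alpha_1^{-\sigma}$, $\alpha_1\beta_2\alpha_1^{-1}=\beta_1$) together with the reduction to finding $c \in C_{\beta_2,H}(\bar{k})$ with $c^{\sigma}c^{-1} = \beta_2$ are all correct. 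The gap is in how you produce $c$. There is no ``Steinberg-type vanishing of $H^1$ for a connected algebraic group over an algebraically closed field'': the Lang--Steinberg theorem requires the fixed-point group of $\sigma$ to be \emph{finite}, which fails here, and the naive statement fails already for $\mathbb{G}_m$ with $\sigma = \mathrm{id}$, where the cocycle $\sigma \mapsto -1$ satisfies $(-1)\cdot(-1)^{\sigma}=1$ yet every coboundary $c^{-1}c^{\sigma}$ equals $1$, giving $H^1(\langle\sigma\rangle,\bar{k}^{\times}) \cong \ZZ/2\ZZ$. Connectedness of $C_{\beta_2,H}$ (which, incidentally, is Steinberg's centralizer theorem, not Lemma~\ref{lem-biquad-cent}, which concerns $C_{\gamma\gamma^{-\sigma},H^{\sigma}}$) does not give the vanishing.

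What actually produces $c$ is not a general cohomological vanishing but the structure theory of the particular element $\beta_2$: since $\delta$ is relatively $\tau$-semisimple, $\beta$ and hence $\beta_2$ is semisimple, and $\beta_2^{\sigma} = \beta_2^{-1}$, so by Richardson \cite[Theorem 7.5]{Rich} $\beta_2$ lies in a $\sigma$-split torus $T_{\sigma}$. On $T_{\sigma}(\bar{k})$ one has $t^{\sigma} = t^{-1}$, so $c \mapsto c^{\sigma}c^{-1} = c^{-2}$, and divisibility of $T_{\sigma}(\bar{k})$ gives $c = \beta_2^{-1/2} \in T_{\sigma}(\bar{k}) \subseteq C_{\beta_2,H}(\bar{k})$ with $c^{\sigma}c^{-1} = \beta_2$. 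This is exactly the ingredient the paper uses, merely packaged differently: the paper $\tau$-conjugates $\delta\delta^{-\theta}$ by a pair $(h_1,h_2) \in H^{\sigma}(\bar{k})^2$ onto the $\tau$-fixed diagonal and invokes Richardson to supply a $\sigma$-split representative $t$ with $t = t^{-\sigma}$. Your $M=F$ case has the identical gap: concluding $\alpha^2 \in Q(\bar{k})$ from $\alpha \in Q(\bar{k})$ again uses that $\alpha$ sits in a $\sigma$-split torus, where $\alpha^2 = \alpha\alpha^{-\sigma}$. Replace the appeal to an $H^1$-vanishing with the Richardson $\sigma$-split-torus argument (or, in the biquadratic case, with Shapiro's lemma applied to the coinduced group $C_{\beta_2,H}(\bar{k})$) and the proof is complete.
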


\begin{proof}
Let $\widetilde{Q}:=\mathrm{Res}_{M/F} Q$; it is the scheme theoretic image of the map $G \lto G$ defined by
\begin{align*}
G(R) &\lto G(R)\\
g &\longmapsto gg^{-\sigma}
\end{align*}
for $F$-algebras $R$.   The group scheme $G^{\sigma}$ acts on $\widetilde{Q}$ by conjugation.
Note that $\delta \delta^{-\theta}(\delta\delta^{-\theta})^{\tau} \in \widetilde{Q}(k)$.  We claim that a $G^{\sigma}(\bar{k})$-conjugate $\bar{\gamma}$ of $\delta \delta^{-\theta}(\delta \delta^{-\theta})^{\tau}$ is contained in $Q(\bar{k})$.
Assuming the claim for a moment, it is easy to check that
$$
O(\bar{k}):=\{h^{-1}\bar{\gamma}h:h \in H^{\sigma}(\bar{k})\}=\{g^{-1} \delta \delta^{-\theta}(\delta \delta^{-\theta})^{\tau}g :g \in G^{\sigma}(\bar{k})\} \cap Q(\bar{k}).
$$
Since the set on the right is fixed by $\Gal(\bar{k}/k)$, the set on the left is as well.  It follows that $O(\bar{k})$ is the set of $\bar{k}$-points of a homogeneous space $O$ for $H^{\sigma}_k$.

Choose an isomorphism
\begin{align} \label{isom15}
G_{\bar{k}} \tilde{\lto} H_{\bar{k}} \times H_{\bar{k}}
\end{align}
 equivariant with respect to $\sigma$ and intertwining $\tau$ with $(x,y) \mapsto (y,x)$.  Let $(\delta_1,\delta_2)$ be the image of $\delta \delta^{-\theta}$ under this isomorphism.  Then $\delta_2=\delta_1^{-\sigma}$.  Replacing $\delta\delta^{-\theta}$ by $g^{-1}\delta \delta^{-\theta}g^{\tau}$ for some $g \in G^{\sigma}(\bar{k})$ does not affect the $G^{\sigma}(\bar{k})$-conjugacy class of $\delta \delta^{-\theta}(\delta \delta^{-\theta})^{\tau}$.  Translating this statement to $H_{\bar{k}} \times H_{\bar{k}}$ using \eqref{isom15}, we see that to prove the claim it suffices to exhibit $h_1,h_2 \in H^{\sigma}(\bar{k})$
such that $h_1^{-1}\delta_1h_2=h_2^{-1}\delta_2h_1$.  Since $h_2^{-1} \delta_2h_1=(h_1^{-1}\delta_1h_2)^{-\sigma}$, this is equivalent to the statement that $\delta_1$ is in the relative class of an element $t$ satisfying $t^{-\sigma}=t$.  Because $\delta \delta^{-\theta}(\delta \delta^{-\theta})^{\tau}$ is semisimple by assumption, $\delta_1$ is relatively semisimple.  Thus $\delta_1$ is in the relative class of an element $t$ of a $\sigma$-split torus by
\cite[Theorem 7.5]{Rich}.  This element satisfies $t=t^{-\sigma}$.
\end{proof}

Suppose that $\gamma \in H(k)$ is a norm of $\delta \in G(k)$. We then have an inner twist
\begin{align} \label{norm-twist}
G_{\delta}(\bar{k}) \tilde{\lto}C_{\delta\delta^{-\theta},G^{\sigma}}^{\tau}(\bar{k})
\tilde{\lto} C_{\delta\delta^{-\theta}(\delta\delta^{-\theta})^{\tau},H^{\sigma}}(\bar{k})&\tilde{\lto}C_{\gamma\gamma^{-\sigma},H^{\sigma}}(\bar{k}) \tilde{\lto}H_{\gamma}(\bar{k})\\
\nonumber g &\longmapsto y g y^{-1}
\end{align}
for $y$ as in Definition \ref{defn-norm}.  Here the first and last
isomorphism are those of \eqref{2cent}, and the second is the restriction to $C^{\tau}_{\delta \delta^{-\theta},G^{\sigma}}$
of the projection of $G^{\sigma}(\bar{k}) \cong H^{\sigma}(\bar{k}) \times H^{\sigma}(\bar{k})$ onto the factor indexed by the identity.

\subsection{Existence of norms}

\label{ssec-norm-exist}

If we were in the standard trace formula setting, now would be the point where we
would use the Kottwitz-Steinberg theorem, which states that a conjugacy class defined over $F$ in a quasi-split reductive group with simply
connected derived group contains an element \cite{KottRatConj}, and therefore (in this setting) norms exist.  Unfortunately, no general analogue of this theorem is known in the relative setting.  In fact, preliminary investigations suggest that it is not true.

In this subsection we provide a conditional relative analogue of the Kottwitz-Steinberg theorem using work of Borovoi.
We say that $H$ is a unitary group if there is an $F$-algebra $D$ and an involution $\dagger:D \to D$ of the second kind such that $H(R)$ is equal to
$$
\{ g\in (D \otimes_F R)^{\times}: gg^{\dagger}=1\}
$$
for $F$-algebras $R$.  We assume in addition that either $D$ is simple or the center of $D$ is $F \oplus F$ and $D=D_0 \times D_0$ for some simple algebra $D_0$.  Throughout this subsection we always assume that $H$ is a unitary group and that we are in the biquadratic case.  Thus $(H^{\sigma})^{\mathrm{der}}$ is simply connected.  Moreover since we are in the biquadratic case we have $Z_{H^{\sigma}} \leq Z_{H}$, so the $H^{\sigma}$-orbit of any $\alpha \in Q(\bar{F})$ under conjugation is equal to its orbit under the subgroup $(H^{\sigma})^{\mathrm{der}}$.  This fact is used in the proof of the following proposition:

\begin{prop} \label{matching-prop} If $\delta \in G(F)=\mathrm{Res}_{M/F}H(F)$ is relatively $\tau$-regular
semisimple, the element $\delta_{v_0}$ is relatively $\tau$-elliptic for some place $v_0$, and for all $v|\infty$ there is a $\gamma \in H(F_v)$ that is a norm of $\delta \in G(F_v)$,
then there is a $\gamma \in H(F)$ such that $\gamma$ is a norm of $\delta$.
\end{prop}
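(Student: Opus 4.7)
The plan is to apply Lemma \ref{lem-geom-norm} to reduce the existence of an $F$-rational norm of $\delta$ to the nonemptiness of $O(F)$, and then to invoke results of Borovoi on the Hasse principle for homogeneous spaces. The reduction is straightforward: elements of $O(\bar{F})$ are geometric conjugates of the semisimple element $\delta\delta^{-\theta}(\delta\delta^{-\theta})^{\tau}$ and hence are themselves semisimple in $H(\bar{F})$, so any $\alpha \in O(F)$ lies in $Q^{\mathrm{ss}}(F)$. Lemma \ref{lem-sscl1} then furnishes a $\gamma \in H(F)$ with $\gamma\gamma^{-\sigma}$ in the $H^{\sigma}(F)$-orbit of $\alpha$, hence in $O(F)$, whereupon Lemma \ref{lem-geom-norm} tells us that this $\gamma$ is a norm of $\delta$.

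To establish $O(F) \neq \emptyset$ I would first observe that by the remark preceding the proposition the $H^{\sigma}$- and $(H^{\sigma})^{\mathrm{der}}$-orbits on $Q$ coincide, so $O$ may be regarded as a homogeneous space for the simply connected semisimple group $(H^{\sigma})^{\mathrm{der}}$. By Steinberg's theorem (or Lemma \ref{lem-biquad-cent}) the geometric stabilizer of any point of $O$ is connected reductive, placing us squarely in the setting of Borovoi's work. The hypothesis at the infinite places is precisely local solubility of $O$ at those places. At a non-archimedean place $v$ local solubility is automatic: by a theorem of Borovoi, any homogeneous space of a simply connected semisimple group over a non-archimedean local field with connected reductive geometric stabilizer acquires a rational point (ultimately this rests on Kneser's vanishing $H^1(F_v,(H^{\sigma})^{\mathrm{der}})=1$ together with the triviality of $H^2$ of simply connected groups in the non-archimedean local setting, used to twist $O$ to a quotient by a form of the stabilizer and lift a $\bar{F}_v$-point).

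The final and most substantive step is the passage from local to global solubility. Borovoi's theorem that the Brauer--Manin obstruction is the only obstruction to the Hasse principle for homogeneous spaces of simply connected groups with connected reductive stabilizer reduces the problem to the vanishing of a class in abelianized Galois cohomology $H^{2}_{\mathrm{ab}}(F,\bar{I})$, computable via a Tate--Nakayama style duality. The hypothesis that $\delta_{v_0}$ is relatively $\tau$-elliptic translates, via the inner twist \eqref{strc-twist}, into anisotropy modulo center of the stabilizer at $v_0$; together with Poitou--Tate duality applied to the local-global exact sequences for the stabilizer, this anisotropy should force vanishing of the abelianized obstruction, giving $O(F)\neq\emptyset$ and completing the proof. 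Verifying this last step in the biquadratic unitary setting, and tracking the identification of the relevant abelian cohomology groups under the inner twistings of \eqref{strc-twist} and \eqref{norm-twist}, is where the bulk of the technical work lies and is the main obstacle; it is also what makes the hypotheses at $v_0$ and at the infinite places indispensable.
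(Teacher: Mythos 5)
Your proposal follows essentially the same route as the paper's proof: reduce via Lemma~\ref{lem-geom-norm} and Lemma~\ref{lem-sscl1} to showing that the homogeneous space $O(\alpha)$ for $(H^{\sigma})^{\mathrm{der}}$ (where $\alpha=\delta\delta^{-\theta}(\delta\delta^{-\theta})^{\tau}$) has an $F$-rational point, then invoke Borovoi's Theorem~1.6 and use ellipticity at $v_0$ to eliminate the cohomological obstruction. Your identification of the homogeneous space, its connected reductive geometric stabilizer, and the role of the hypotheses at $\infty$ and at $v_0$ all match the paper.

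The concrete mechanism in the final step, which you leave as an acknowledged gap, is Sansuc's Lemme~1.9: if a torus $T$ over a number field is anisotropic at a single place $v_0$, then $\ker^{2}(F,T)=0$. Thus one does not argue that the particular obstruction class vanishes; one shows the ambient group $\ker^{2}(F,C^{m}_{\alpha,(H^{\sigma})^{\mathrm{der}}})$ is already zero. To establish the requisite anisotropy, the paper first reduces to anisotropy of $(C^{m}_{\alpha,H^{\sigma}}/Z_{H^{\sigma}})_{F_{v_0}}$, then applies the Kottwitz--Steinberg theorem to $H$ (which is quasi-split with $H^{\mathrm{der}}$ simply connected) to produce a rational representative $\alpha'\in H(F)$ of the $H(\bar F)$-conjugacy class of $\alpha$, identifies the canonical $F$-form $C^{m}_{\alpha,H}$ with the genuine centralizer $C_{\alpha',H}$, and then exhibits $C^{m}_{\alpha,H^{\sigma}}$ as the $\sigma$-fixed subtorus of $C_{\alpha',H}$. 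Relative $\tau$-ellipticity of $\delta_{v_0}$ gives anisotropy of $(C_{\alpha',H}/Z_H)_{F_{v_0}}$, hence of its $\sigma$-fixed part. This Kottwitz--Steinberg step, allowing one to compute the otherwise inexplicit $F$-form $C^{m}$, is the piece you were missing; your appeal to Poitou--Tate duality is in the right direction, since Sansuc's lemma ultimately rests on it, but you would want the vanishing of the whole $\ker^{2}$ group rather than of a specific class, and you would want the rational element $\alpha'$ to make the torus explicit enough to check anisotropy.
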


Before proving Proposition \ref{matching-prop} we will state and prove a converse to it:

\begin{lem} \label{matching-lem}
Let $k$ be either $F$ or $F_v$ for some place $v$ of $F$.  If $\gamma \in
H(k)$ is relatively semisimple, then it is a norm of some $\delta \in
G(k)$.  In fact, $\gamma\gamma^{-\sigma}=\delta\delta^{-\theta}(\delta\delta^{-\theta})^{\tau}$
for some $\delta \in
G(k)$.
\end{lem}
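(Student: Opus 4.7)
The plan is to reduce the equation $\gamma\gamma^{-\sigma} = \delta\delta^{-\theta}(\delta\delta^{-\theta})^{\tau}$ to a computation inside a torus, and then to invoke Hilbert 90. Since the equation involves $\gamma$ only through $\alpha := \gamma\gamma^{-\sigma}$, I may first replace $\gamma$ by any other element of $H(k)$ with the same value of $\gamma\gamma^{-\sigma}$. The proof of Lemma \ref{lem-sscl1} (which applies since we are in the biquadratic case and $H^{\mathrm{der}}$ is simply connected) constructs, for each $\alpha \in Q^{ss}(k)$, a representative $\gamma_0 \in T(k)$ with $\gamma_0\gamma_0^{-\sigma} = \alpha$, where $T = \mathrm{Res}_{E \otimes_F k/k} T' \leq H$ is an induced torus arising from a maximal torus $T' \leq I \leq H^{\sigma}$. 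I may therefore assume $\gamma \in T(k)$.

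The crux is the following identity in the abelian group $(\mathrm{Res}_{M/F}T)(k) = T(M \otimes_F k)$. Because $\sigma$, $\tau$, and $\theta = \sigma\tau$ are pairwise commuting involutions and $\theta\tau = \sigma$, both sides of
\[
\delta\delta^{-\theta}(\delta\delta^{-\theta})^{\tau} = (\delta\delta^{\tau})(\delta\delta^{\tau})^{-\sigma}
\]
expand, in the abelian torus, to the same product $\delta \cdot \delta^{\tau} \cdot \delta^{-\sigma} \cdot \delta^{-\theta}$, so the identity holds for every $\delta \in T(M \otimes_F k)$. Consequently it suffices to find $\delta \in T(M \otimes_F k) \subseteq G(k)$ with $\delta\delta^{\tau} \in \gamma \cdot T^{\sigma}(M \otimes_F k)$: for any $s \in T^{\sigma}$ one has $s\, s^{-\sigma} = 1$ in the abelian torus, so writing $\delta\delta^{\tau} = \gamma s$ yields $(\delta\delta^{\tau})(\delta\delta^{\tau})^{-\sigma} = \gamma\gamma^{-\sigma}$, as desired.

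Finally, existence of $\delta$ is a Hilbert 90-style assertion about the norm map $(1+\tau) \colon T(M \otimes_F k) \to T^{\tau}(M \otimes_F k)$, modulo translation by $T^{\sigma}$. Because $T = \mathrm{Res}_{E \otimes_F k/k} T'$ is induced, Shapiro's lemma identifies the relevant cohomology with that of $T'$ under the extension $(E \otimes_F M \otimes_F k)/(E \otimes_F k)$; the biquadratic Galois structure together with the additional flexibility afforded by $T^{\sigma}$-translation then exhaust $\gamma$. The main obstacle is precisely this final cohomological step, since the norm alone need not surject onto $T^{\tau}$; the $T^{\sigma}$-ambiguity is essential and relies on the biquadratic unitary hypotheses, without which analogous statements can fail.
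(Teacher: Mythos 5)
Your plan reproduces the paper's proof structure faithfully up to the last step, but stops precisely where the real content lies, and you say so yourself. The reduction to a torus $T = \mathrm{Res}_{E\otimes_F k/k}T'$ via Lemma~\ref{lem-sscl1}, and the algebraic identity
$\delta\delta^{-\theta}(\delta\delta^{-\theta})^{\tau}=(\delta\delta^{\tau})(\delta\delta^{\tau})^{-\sigma}$
in the commutative torus, are exactly the observations the paper uses to rewrite the problem (the paper phrases this as ``$\psi_1=\psi_2$''). Your further reformulation --- find $\delta\in\widetilde T(k)$ with $\delta\delta^\tau\in\gamma\,T^\sigma(k)$ --- is also equivalent to what is needed. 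But then you write that existence of such a $\delta$ is ``a Hilbert 90-style assertion\dots; the biquadratic Galois structure together with the additional flexibility afforded by $T^\sigma$-translation then exhaust $\gamma$,'' and you immediately concede that ``the main obstacle is precisely this final cohomological step.'' That is not a proof; it is a statement of the remaining problem.

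Moreover, your intuition about where the difficulty sits is slightly off. The paper factors $\psi_1$ into two maps and proves each surjective separately, with no need to trade them against each other: the map $T(k)\to T\cap Q(k)$, $y\mapsto yy^{-\sigma}$, is surjective by the Shapiro-lemma/$\hat H^{-1}=H^1=0$ argument already carried out in the proof of Lemma~\ref{lem-sscl1} (this is where the ``$\gamma\mapsto y=\gamma s$, $s\in T^\sigma(k)$'' flexibility lives); and the norm $\widetilde T(k)\to T(k)$, $x\mapsto xx^\tau$, is surjective outright by Rogawski's Proposition~3.11.1(a), which uses the specific structure of tori in unitary groups over local and global fields. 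You write ``the norm alone need not surject onto $T^{\tau}$'' --- but in the unitary setting it does, and this is exactly the cited ingredient you are missing. So the $T^\sigma$-translation is not what rescues the norm step; the two surjectivities are independent, and the one you doubt is the one you needed to look up or prove. As written, your argument has a genuine gap at its crux.
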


\begin{proof}
By the proof of Lemma \ref{lem-sscl1}, for any relatively semisimple $\gamma \in H(k)$ we can choose a $\sigma$-stable torus
 $T \leq H_k$ such that $\gamma \gamma^{-\sigma}$ is in the image of the map
 \begin{align*}
 T(k) \lto Q(k)\\
 g \longmapsto gg^{-\sigma}.
 \end{align*}
Let $\widetilde{T}:=\mathrm{Res}_{M \otimes k/k}T \leq G_k$.  Then we have two composite maps
$$
\begin{CD}
\psi_1:\widetilde{T}(k) @>{x \mapsto x x^{\tau}}>>T(k) @>{y \mapsto y y^{-\sigma}}>> T \cap Q(k)\\
\psi_2:\widetilde{T}(k) @>{x \mapsto xx^{-\theta}}>> \widetilde{T} \cap S(k) @>{y \mapsto y y^{\tau}}>> T \cap  Q(k).
\end{CD}
$$
It suffices to show that $\psi_2$ is surjective.  Since all of the groups in the
definition of the $\psi_i$
are commutative, $\psi_1=\psi_2$, so it suffices to show that $\psi_1$ is
surjective.  The second map in the definition of $\psi_1$ is surjective by the argument in
the proof of Lemma \ref{lem-sscl1}.  The first map is
surjective by \cite[Proposition 3.11.1(a)]{Rog}.
\end{proof}

We prove Proposition \ref{matching-prop} using a
cohomological obstruction to the existence of points in a homogeneous space developed in \cite{BorHasse}.
In order to use Borovoi's results, we must develop a little notation.
Set
\begin{align} \label{ker}
\mathrm{ker}^j(F, \cdot):=\mathrm{ker}(H^j(F,\cdot) \to \prod_vH^j(F_v,\cdot))
\end{align}
where the product is over all places of $v$.
Let $H$ be an $F$-group and let $X$ be a
homogeneous space for $H$.  Suppose for simplicity that the
stabilizer $H_{\bar{x}}$ of a point $\bar{x} \in X(\bar{F})$ is connected and reductive.
The stabilizer $H_{\bar{x}}$ is not in general the base change to $\bar{F}$ of an algebraic
group over $F$.  However, as explained in \cite[\S 1.7]{BorHasse}, the maximal toric quotient
$H_{\bar{x}}/H_{\bar{x}}^{\mathrm{der}}$ has a canonical $F$-form.   We denote this $F$-form by $C^m_{\bar{x},H}$.\index{$C^m_{\bar{x},H}$}
The obstruction $\mathrm{Ob}(H,X)$ \index{$\mathrm{Ob}(H,X)$}lies in $\mathrm{ker}^2(F,C^m_{\bar{x},H})$ \cite[\S 1.5]{BorHasse}.

We now prove Proposition \ref{matching-prop}:

\begin{proof}
Let $\alpha=\delta\delta^{-\theta}(\delta\delta^{-\theta})^{\tau}$.  By Lemma \ref{lem-geom-norm} the set of elements of $Q(\bar{F})$ that are $G^{\sigma}(\bar{F})$-conjugate to $\alpha$ defines a homogeneous space $O(\alpha)$ for $H^{\sigma}$ and hence also for $(H^{\sigma})^{\mathrm{der}}$ since $Z_{H^{\sigma}} \leq Z_H$.
By \cite[Theorem 1.6]{BorHasse} it suffices to show that
$$
\mathrm{Ob}((H^{\sigma})^{\mathrm{der}},O(\alpha)) \in \mathrm{ker}^2(F,C_{\alpha,(H^{\sigma})^{\mathrm{der}}}^m)
$$
is trivial; to do this we will show that $(C_{\alpha,(H^{\sigma})^{\mathrm{der}}}^m)_{F_{v_0}}$ is anisotropic and hence
$\mathrm{ker}^2(F,C_{\alpha,(H^{\sigma})^{\mathrm{der}}}^m)=0$ by \cite[Lemme 1.9]{Sansuc}.
One checks via \cite[\S 1.7]{BorHasse} that there is a natural injection
$$
C_{\alpha,(H^{\sigma})^{\mathrm{der}}}^m \lto C_{\alpha,H^{\sigma}}^m
$$
and $Z_{H^{\sigma}}C_{\alpha,(H^{\sigma})^{\mathrm{der}}}^m = C_{\alpha,H^{\sigma}}^m$.
Hence to prove that the torus $(C_{\alpha,(H^{\sigma})^{\mathrm{der}}}^m)_{F_{v_0}}$ is anisotropic it suffices to show that the torus $(C_{\alpha,H^{\sigma}}^m/Z_{H^{\sigma}})_{F_{v_0}}$ is anisotropic.

The $H(\bar{F})$-conjugacy class of $\alpha$ is fixed by $\Gal(\bar{F}/F)$.
  Denote this conjugacy class by $O_{H}$; we view $O_{H}$ as a homogeneous space for $H$. We have
$(C_{\alpha,H^{\sigma}}^m)_{\bar{F}}=C_{\alpha,H^{\sigma}}$ and $(C_{\alpha,H}^m)_{\bar{F}}=C_{\alpha,H}$ since $\alpha$ is regular semisimple
and $H^{\mathrm{der}}$ is simply connected.  We therefore have a natural $\sigma$-equivariant embedding
  \begin{align} \label{kform1}
  C_{\alpha,H^{\sigma}} \to (C_{\alpha,H}^m)_{\bar{F}}
  \end{align}
  where we define $C_{\alpha,H}^m$ with
  respect to the homogeneous space $O_{H}$ for $H$.
  Thus \eqref{kform1} defines an $F$-form of
  $C_{\alpha,H^{\sigma}}$.
By assumption, $\alpha^{\xi}$ is $H^{\sigma}(\bar{F})$-conjugate
    to $\alpha$ for all $\xi \in \Gal(\bar{F}/F)$.
    Using this fact, one checks that the $F$-form of
    $C_{\alpha,H^{\sigma}}$
    defined by \eqref{kform1} is isomorphic to the
    $F$-torus $C_{\alpha,H^{\sigma}}^m$ defined with respect to
    $O(\alpha)$ (see \cite[\S 1.7]{BorHasse}).  Thus, by \eqref{kform1}, we have that
  $$
  C_{\alpha,H^{\sigma}}^m(R)=\{g \in C_{\alpha,H}^m(R): g^{\sigma}=g\}
  $$
  for commutative $F$-algebras $R$.
By the theorem of Kottwitz-Steinberg \cite[Theorem 4.1]{KottRatConj} the $H(\bar{F})$-conjugacy class of $\alpha$ contains an element $\alpha' \in H(F)$.  We
therefore have a natural isomorphism of $F$-groups $C_{\alpha,H}^m \cong C_{\alpha',H}^m \cong
C_{\alpha',H}$
(see \cite[\S 1.7]{BorHasse}).  Since we assumed that $\delta_{v_0}$ was relatively $\tau$-elliptic, we have that $(C_{\alpha',H}/Z_{H})_{F_{v_0}}$ is anisotropic, and hence the same is true of $(C_{\alpha,H^{\sigma}}^m/Z_{H^{\sigma}})_{F_{v_0}}$, completing the proof of the proposition.

\end{proof}

\section{Matching of functions} \label{sec-matching}

In \S \ref{ssec-match} we defined a notion of norm for relative classes.  In this section we define
a corresponding notion of matching functions.  This necessitates the introduction
of local orbital integrals and their stable analogues (these are defined in \S \ref{ssec-loi} and \S \ref{ssec-st-loi}, respectively).
We then define what we mean by matching of functions in \S \ref{ssec-matching}.  As in the usual trace formula, it is useful to
have at our disposal local orbital integrals twisted by a character of a certain cohomology group; these relative $\kappa$-orbital integrals
are defined in \S \ref{ssec-lroi}.  They play a role in the prestabilization of (the regular elliptic part of) the twisted relative trace formula
carried out in \S \ref{sec-prestab} and \S \ref{sec-group-ell} below.  In \S \ref{ssec-fl-first-case} and \S \ref{ssec-split-match} we prove various matching statements.

\subsection{Local orbital integrals: Definitions} \label{ssec-loi}
Let $v$ be a place of $F$.  Let
$\Phi \in C^{\infty}_c(
H(F_{v}))$ and let $\gamma \in H(F_v)$ be a relatively semisimple element.   The (local) {\bf relative
orbital integral} is defined by
\begin{align}\label{loi}
RO_{\gamma}(\Phi):=RO_{\gamma}(\Phi,dt_{\gamma}):&=\iint_{H_{\gamma}(F_v) \backslash
H^{\sigma}(F_{v})^2}\Phi(h_1^{-1} \gamma h_2)
\frac{dh_1dh_2}{dt_{\gamma}}
\end{align}\index{$RO_{\gamma}(\Phi)$}where $dh_i$ and $dt_{\gamma}$ are Haar measures on $H^{\sigma}(F_v)$ and $H_{\gamma}(F_v)$, respectively.

Similarly, let $f \in C^{\infty}_c(G(F_v))$ and let $\delta \in G(F_v)$ be a
relatively $\tau$-semisimple element.  The (local) {\bf twisted relative orbital integral} is defined by
$$
TRO_{\delta}(f):=TRO_{\delta}(f,dt_{\delta}):=
\iint_{G_{\delta}( F_v) \backslash G^{\sigma} \times G^{\theta}(F_v)}
f(g_1^{-1} \delta g_2)  \frac{dg_1dg_2}{dt_{\delta}},
$$ \index{$TRO_{\delta}(f)$}where $dg_i$, and $dt_{\delta}$ are Haar measures on $G^{\sigma}( F_v)$, $G^{\theta}(F_v)$ and $G_{\delta}(F_v)$, respectively.
These relative orbital integrals and twisted relative orbital integrals are all absolutely convergent \cite{Hahn}.

\subsection{Stable local orbital integrals} \label{ssec-st-loi}
We assume the notation of the previous subsection.
For any relatively semisimple $\gamma_0 \in H(F_v)$ we define the
 \textbf{stable relative orbital integral} by
\begin{align}
\label{lsoi} SRO_{\gamma_0}(\Phi):=\sum_{\gamma \sim
\gamma_0}
e(H_{\gamma})RO_{\gamma}(\Phi,dt_{\gamma}),
\end{align}\index{$SRO_{\gamma_0}(f)$}where $e(H_{\gamma})$ denotes the Kottwitz sign as in \cite{KottSign} and the sum is over a set of representatives for the set of relative classes in the stable relative class of $\gamma_0$.

Similarly, for a relatively $\tau$-semisimple $\delta_0 \in G(F_v)$ we define the \textbf{stable twisted
relative orbital integral} by
\begin{align}
\label{lstoi}
STRO_{\delta_0}(f):=
\sum_{\delta \sim \delta_0}e(G_{\delta})TRO_{\delta}(f,dt_{\delta}),
\end{align} \index{$STRO_{\delta_0}(f)$}where the sum is over a set of representatives for the set of relative $\tau$-classes in the stable relative
$\tau$-class of $\delta_0$.  In both cases, we assume that the measures $dt_{\gamma}$ (resp.~$dt_{\delta}$)
are compatible with respect to the inner twists given by \eqref{src-twist} and \eqref{strc-twist}, respectively.
For the definition of compatible, see, e.g., \cite[p.~631]{KottTama}.  We note that the sums in \eqref{lsoi}
and \eqref{lstoi} are finite since their cardinality is equal to the
cardinality of the finite groups $\mathcal{D}(H_{\gamma},H^{\sigma} \times H^{\sigma};F_v)$ and
$\mathcal{D}(G_{\delta},G^{\sigma} \times G^{\theta};F_v)$,
respectively (see \S \ref{ssec-match}).

\subsection{Matching of functions: Definition} \label{ssec-matching}

\begin{defn} \label{defn-match} Two functions $\Phi \in C_c^{\infty}(H(F_v))$ and $f \in C_c^{\infty}(G(F_v))$
\textbf{match on the relatively regular set} (resp.~\textbf{match}) if
$$
SRO_{\gamma}(\Phi)=STRO_{\delta}(f)
$$
whenever a relatively regular semisimple (resp.~relatively semisimple)
$\gamma \in H(F_v)$ is a norm of $\delta \in G(F_v)$,
$$
SRO_{\gamma}(\Phi)=0
$$
whenever $\gamma$ is relatively regular semisimple (resp.~relatively semisimple) and not a norm of a $\delta \in G(F_v)$, and
$$
STRO_{\delta}(f)=0
$$
whenever $\delta$ is relatively $\tau$-regular semisimple (resp. relatively $\tau$-semisimple)
 and does not admit a norm $\gamma \in H(F_v)$.
\end{defn}
In the definition, we assume that the implicit Haar measures are compatible with respect to the inner twist of \eqref{norm-twist}

\begin{rem} The inner twists given by \eqref{src-twist}, \eqref{strc-twist},
and \eqref{norm-twist} are not canonical, since they depend on choices of
elements in $H^{\sigma}(\bar{k})$ and $G^{\sigma}(\bar{k})$.  However, if $H_1$ and $H_2$  are
inner forms of the same connected reductive $F$-group, then the notion of a measure on
$H_1$ being compatible with a measure on $H_2$ is independent of the particular inner
twist realizing the two groups as inner forms of each other (see \cite[p.~631]{KottTama}).
Thus the ambiguity in \eqref{src-twist}, \eqref{strc-twist}, and \eqref{norm-twist} is irrelevant for our purposes.
\end{rem}

\subsection{Local relative $\kappa$-orbital integrals} \label{ssec-lroi}

In this subsection we define relative versions of the $\kappa$-orbital integrals that occur in
Langland's and Kottwitz's prestabilization of the usual trace
formula \cite{KottEllSing}.   We  use Labesse's reformulation; see
\cite{Lab} for notation and generalities regarding abelianized cohomology.

Let $v$ be a place of $F$ and let $I \leq H$ be a pair of connected reductive $F_v$-groups.  Letting a superscript ``$D$''\index{$D$} denote the Pontryagin dual, we set notation for the finite abelian
groups
\begin{align} \label{localKgroup}
\mathfrak{K}(I,H;F_v):&=H^0_{\mathrm{ab}}(F_v,I \backslash H)^D.
\end{align}\index{$\mathfrak{K}(I,H;F_v)$}Let $\gamma \in H(F_v)$ be
relatively semisimple and let $\delta \in G(F_v)$
be relatively $\tau$-semisimple.
For  $\kappa_{Hv} \in \mathfrak{K}(H_{\gamma},H^{\sigma} \times H^{\sigma};F_v)$ and
 $\Phi_v \in C_c^{\infty}(H(F_v))$
 (resp. $\kappa_v \in \mathfrak{K}(G_{\delta},G^{\sigma} \times
 G^{\theta};F_v)$
and $f_v \in C_c^{\infty}(G(F_v))$) we set
\begin{align}\label{kappa-roi}
RO^{\kappa_{Hv}}_{\gamma}(\Phi_v)&=
\int_{H^0(F_v,H_{\gamma}\backslash H^{\sigma}\times H^{\sigma})}
\langle \kappa_{Hv}, (\dot{h_1},\dot{h_2}^{-1})\rangle e((H_{h_1^{-1}\gamma h_2})_
{F_v})\Phi_v(h_1^{-1}\gamma h_2) d\dot{h}_1d\dot{h}_2,\\
\nonumber TRO^{\kappa_v}_{\delta}(f_v)&=
\int_{H^0(F_v,G_{\delta} \backslash G^{\sigma} \times G^{\theta})}
\langle \kappa_v,(\dot{g}_1,\dot{g}_2^{-1}) \rangle e((G_{g_1^{-1}\delta g_2})_{F_v})
f_v(g_1^{-1} \delta g_2) d\dot{g}_1 d \dot{g}_2.
\end{align}\index{$RO^{\kappa_{Hv}}_{\gamma}(\Phi_v)$}\index{$TRO^{\kappa_v}_{\delta}(f_v)$}Here we use a system of compatible measures as in \S \ref{ssec-st-loi}
to define the measures $d\dot{h}_1d\dot{h}_2$ on
$H^0(F_v, H_{\gamma} \backslash H^{\sigma} \times H^{\sigma})$ and $d\dot{g}_1d\dot{g}_2$ on
$H^0(F_v,G_{\delta} \backslash G^{\sigma} \times G^{\theta})$,
respectively (compare \cite[p.~42, 68]{Lab}).  Of course, the orbital
integrals depend on this choice, but we will not encode it into our notation.   In defining \eqref{kappa-roi}, we choose for each class $(\dot{h_1},\dot{h_2}) \in H^0(F_v,H_{\gamma}\backslash H^{\sigma} \times H^{\sigma})$ an element $(h_1,h_2) \in H^{\sigma} \times H^{\sigma}(\bar{F}_v)$ whose image in $H^0(F_v,H_{\gamma}\backslash H^{\sigma} \times H^{\sigma})$ is $(\dot{h_1},\dot{h_2})$.  Similarly, we choose for each class $(\dot{g}_1,\dot{g}_2) \in H^0(F_v,G_{\delta}\backslash G^{\sigma} \times G^{\theta})$ an element $(g_1,g_2) \in G^{\sigma} \times G^{\theta}(\bar{F}_v)$ whose image in $H^0(F_v,G_{\delta}\backslash G^{\sigma} \times G^{\theta})$ is $(\dot{g}_1,\dot{g}_2)$.
The integrals do not depend on these choices.
The symbol $\langle \kappa_{Hv},(\dot{h_1},\dot{h_2}^{-1}) \rangle$ is the value
of $\kappa_v$ on the image of $(\dot{h_1},\dot{h_2}^{-1})$ under the
abelianization map, and similarly for $\langle \kappa_{v},(\dot{g}_1,\dot{g}_2^{-1})\rangle$.
In the case that $\kappa_{Hv}$ (resp. $\kappa_v$) is trivial,
we have
\begin{align*}
RO^{\kappa_{Hv}}_{\gamma}(\Phi_v)&=SRO_{\gamma}(\Phi_v)\\
TRO^{\kappa_v}_{\delta}(f_v)&=STRO_{\delta}(f_v).
\end{align*}


\subsection{Proof of the fundamental lemma: First case}

\label{ssec-fl-first-case}

Let $v$ be a place of $F$.  For this subsection we assume that there is an isomorphism
\begin{align} \label{H-spl-isom}
G_{F_v} \cong H_{F_v} \times H_{F_v}
\end{align}
equivariant with respect to $\sigma$ and intertwining $\tau$ with $(x,y) \mapsto (y,x)$.  For example, $M/F$ could be split at $v$ or (in the biquadratic case)
$ME/F$ could be split at $v$ with $E/F$ and $M/F$ unramified and $E/F$ nonsplit at $v$. In the latter case we have an isomorphism
\begin{align} \label{ek-spl-isom}
E \otimes_F M \otimes_F F_v \cong E_v \oplus E_{v}
\end{align}
equivariant with respect to the natural action of $\sigma$ on both sides and intertwining $\tau$ on the left with
$(x,y) \mapsto (y,x)$ on the right.

\begin{rem} By class field theory, at almost every place either $E/F$ is split or $E/F$ and $M/F$ are unramified with $E/F$ nonsplit at $v$.
\end{rem}

\begin{defn}
An element $\gamma \in H(F_v)$ is a \textbf{split norm} of an element $\delta=(\delta_1,\delta_2) \in G(F_v)$
if $\gamma$ is in the relative class of $\delta_1\delta_2^{-\sigma}$.
\end{defn}

The basic properties of this notion are summarized in the following lemma:

\begin{lem} \label{lem-sn}
If $\gamma$ is a split norm of $\delta$, then every
element of the relative class of $\gamma$ is a split norm of every element of the relative $\tau$-class of $\delta$.
If $\gamma$ and $\gamma'$ are split norms of $\delta$, then $\gamma$ and $\gamma'$ are in the same relative class.
If $\delta$ and $\delta'$ both have split norm $\gamma$, then $\delta$ and $\delta'$ are in the same relative $\tau$-class.
Every  $\gamma \in H(F_v)$ is a split norm and every $\delta \in G(F_v)$ has a split
norm.  If $\gamma$ is relatively semisimple and $\gamma$ is a split norm of $\delta$, then it is a norm of $\delta$.
\end{lem}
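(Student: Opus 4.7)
The plan is to work throughout in the coordinates furnished by \eqref{H-spl-isom}. Under that isomorphism $G^{\sigma}(F_v) \cong H^{\sigma}(F_v) \times H^{\sigma}(F_v)$ acts componentwise on $G(F_v) \cong H(F_v) \times H(F_v)$, while $G^{\theta}(F_v) = \{(b, b^{\sigma}) : b \in H(F_v)\}$. The first step is the direct computation
\[
(a_1, a_2)\,(\delta_1, \delta_2)\,(b, b^{\sigma})^{-1} = (a_1\delta_1 b^{-1},\, a_2\delta_2 b^{-\sigma}),
\]
which shows that the split-norm coordinate transforms according to $\delta_1\delta_2^{-\sigma} \mapsto a_1(\delta_1\delta_2^{-\sigma})a_2^{-1}$. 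The first two assertions then follow immediately. For the third, if $\gamma$ is a split norm of both $\delta$ and $\delta'$ then $\delta'_1(\delta'_2)^{-\sigma} = h_1\, \delta_1\delta_2^{-\sigma}\, h_2$ for some $h_1, h_2 \in H^{\sigma}(F_v)$; I will take $a_1 = h_1$, $a_2 = h_2^{-1}$, and $b := (\delta'_1)^{-1} h_1 \delta_1$, and verify the second-coordinate equation by applying $\sigma$ to the preceding relation. The fourth and fifth assertions are immediate, since $\gamma \in H(F_v)$ is a split norm of $(\gamma, 1)$, and $(\delta_1, \delta_2) \in G(F_v)$ has split norm $\delta_1\delta_2^{-\sigma}$.

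The substance of the lemma is the final assertion. Setting $\gamma_0 := \delta_1\delta_2^{-\sigma}$ and using $\delta^{-\theta} = (\delta_2^{-\sigma}, \delta_1^{-\sigma})$, I will verify directly that
\[
\delta\delta^{-\theta}(\delta\delta^{-\theta})^{\tau} = (\gamma_0\gamma_0^{-\sigma},\, \gamma_0^{-\sigma}\gamma_0).
\]
By Lemma \ref{lem-geom-norm} it is enough to exhibit $y \in G^{\sigma}(\bar{F}_v)$ for which $y \cdot (\gamma_0\gamma_0^{-\sigma}, \gamma_0^{-\sigma}\gamma_0) \cdot y^{-1}$ lies in the diagonal image of $Q(\bar{F}_v)$ inside $G(\bar{F}_v)$, and then to check that $\gamma\gamma^{-\sigma}$ lies in the $H^{\sigma}(\bar{F}_v)$-orbit of the resulting element. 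Since $\gamma$ (hence $\gamma_0$) is relatively semisimple, I will apply \cite[Theorem 7.5]{Rich}, exactly as in the proof of Lemma \ref{lem-sscl1}, to write $\gamma_0 = h_1\, t\, h_2$ with $h_1, h_2 \in H^{\sigma}(\bar{F}_v)$ and $t^{-\sigma} = t$. A brief calculation then yields $\gamma_0\gamma_0^{-\sigma} = h_1 t^2 h_1^{-1}$ and $\gamma_0^{-\sigma}\gamma_0 = h_2^{-1} t^2 h_2$, so $y := (h_1^{-1},\, h_2) \in H^{\sigma}(\bar{F}_v)^2$ simultaneously conjugates both coordinates to the common element $t^2 \in Q(\bar{F}_v)$. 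Finally, writing $\gamma = h'_1 \gamma_0 h'_2$ with $h'_i \in H^{\sigma}(F_v)$ expresses $\gamma\gamma^{-\sigma}$ as $(h'_1 h_1) \cdot t^2 \cdot (h'_1 h_1)^{-1}$, placing it in the required $H^{\sigma}(\bar{F}_v)$-orbit.

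The main obstacle is precisely this simultaneous diagonalization: one has to conjugate two \emph{different} elements, $\gamma_0\gamma_0^{-\sigma}$ and $\gamma_0^{-\sigma}\gamma_0$, to a common element of $Q(\bar{F}_v)$ by a single pair in $H^{\sigma}(\bar{F}_v)^2$. Richardson's theorem on $\sigma$-split tori is what makes this possible: once $\gamma_0$ is put in the form $h_1 t h_2$ with $t^{-\sigma}=t$, both conjugated quantities collapse to $t^2$ by inspection, and every other step in the argument reduces to routine bookkeeping with the $(a_1,a_2,b)$ parametrization introduced at the start.
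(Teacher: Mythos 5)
Your proof is correct, and your treatment of the first four assertions (which the paper dismisses with ``easy to verify'' and a one-line remark) is sound; the $(a_1,a_2,b)$ bookkeeping checks out. For the substantive final assertion, your route is close in spirit to the paper's — both reduce to showing that $\gamma_0 := \delta_1\delta_2^{-\sigma}$ is a norm of $\delta$, both compute $\delta\delta^{-\theta}(\delta\delta^{-\theta})^\tau = (\gamma_0\gamma_0^{-\sigma},\,\gamma_0^{-\sigma}\gamma_0)$, and both hinge on \cite[Theorem 7.5]{Rich} — but the way you deploy Richardson differs. The paper first replaces $\gamma$ by $\gamma_0$ and then produces a single $g \in H^\sigma(\bar{F}_v)$ with $\gamma_0\gamma_0^{-\sigma} = g^{-1}\gamma_0^{-\sigma}\gamma_0 g$, so that $(1,g)$ does the conjugation; to get $g$ they take a $\sigma$-split torus $T_\sigma$ containing $\gamma_0\gamma_0^{-\sigma}$, find $x'\in T_\sigma(\bar{F}_v)$ with $x'x'^{-\sigma}=\gamma_0\gamma_0^{-\sigma}$, and use that the fiber of $H\to Q$ over $\gamma_0\gamma_0^{-\sigma}$ is an $H^\sigma(\bar{F}_v)$-torsor to write $\gamma_0 g' = x' \in T_\sigma(\bar{F}_v)$, whence commutativity in $T_\sigma$ finishes. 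You instead write the full double-coset decomposition $\gamma_0 = h_1 t h_2$ with $t^{-\sigma}=t$ directly from Richardson, observe $\gamma_0\gamma_0^{-\sigma} = h_1 t^2 h_1^{-1}$ and $\gamma_0^{-\sigma}\gamma_0 = h_2^{-1}t^2 h_2$, and conjugate \emph{both} coordinates simultaneously by $(h_1^{-1},h_2)$ to the diagonal element $(t^2,t^2)$. Your version is arguably cleaner and more symmetric, avoids the torsor step entirely, and also makes the subsequent conjugation to $\gamma\gamma^{-\sigma}$ transparent; the paper's version touches only the second coordinate, which is a bit more economical but less symmetric. One small remark: your invocation of Lemma \ref{lem-geom-norm} is correct but not really needed — your explicit $(h_1^{-1},h_2)$ followed by the $H^\sigma$-conjugation taking $t^2$ to $\gamma\gamma^{-\sigma}$ directly produces the $y \in G^\sigma(\bar F_v)$ required in Definition \ref{defn-norm}.
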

\begin{proof} The first three assertions are easy to verify.  For the fourth, note that $(\gamma,1) \in G(F_v)$ has
split norm $\gamma$ for any $\gamma \in H(F_v)$.  Conversely, $\delta=(\delta_1,\delta_2)$ has split norm $\delta_1\delta_2^{-\sigma}$.

We now prove the final statement of the lemma.  Suppose $\gamma$ is a split norm of $\delta$.  Write
$$
\delta\delta^{-\theta}=(\delta_1\delta_2^{-\sigma},\delta_2\delta_1^{-\sigma})=
(x_{\delta},x_{\delta}^{-\sigma})
$$
 for some $x_{\delta} \in H(F_v)$.  At the expense of replacing $\gamma$ with something in its relative class, we can and do assume that $\gamma=x_{\delta}$.  Thus it suffices to show that we can choose $g \in H^{\sigma}(\bar{F}_v)$ such that $x_{\delta}x_{\delta}^{-\sigma}=g^{-1}x_{\delta}^{-\sigma}x_{\delta}g$, for then
$$
(1,g)^{-1}\delta \delta^{-\theta}(\delta \delta^{-\theta})^{\tau}(1,g)=(1,g)^{-1}(x_{\delta}x_{\delta}^{-\sigma},x_{\delta}^{-\sigma}x_{\delta})(1,g)=(x_{\delta}x_{\delta}^{-\sigma},x_{\delta}x_{\delta}^{-\sigma}),
$$
which implies $\gamma=x_{\delta}$ is a norm of $\delta$.

Since $\gamma$ is relatively semisimple, there is a $\sigma$-split torus $T_{\sigma} \leq H_{\bar{F}_v}$ such that
$x_{\delta}x_{\delta}^{-\sigma} \in T_{\sigma}(\bar{F}_v)$  \cite[Theorem 7.5]{Rich}.  Since the
map $x \mapsto xx^{-\sigma}$ coincides with $x \mapsto x^2$ on $T_{\sigma}$, we can and do choose $x' \in
T_{\sigma}(\bar{F}_v)$ such that $x'x'^{-\sigma}=x_{\delta}x_{\delta}^{-\sigma}$.
The fiber of the map $H \to Q$ of \eqref{nat-maps} over any point in $Q(\bar{F}_v)$
 is a $H^{\sigma}(\bar{F})$-torsor, so we have
$x_{\delta}g'=x' \in T_{\sigma}(\bar{F})$ for some $g' \in H^{\sigma}(\bar{F}_v)$.
  Since $T_{\sigma}$ is $\sigma$-stable, we have that
$(x_{\delta}g')^{-\sigma}$ commutes with $x_{\delta}g'$.  In other words,
$$
x_{\delta}x_{\delta}^{-\sigma}=(x_{\delta}g)(x_{\delta}g)^{-\sigma}=(x_{\delta}g)^{-\sigma}(x_{\delta}g)=
g^{-1}x_{\delta}^{-\sigma}x_{\delta}g.
$$
\end{proof}

If $\delta =(\delta_1,\delta_2) \in G(F_v)$ then there is an isomorphism $G_{(\delta_1,\delta_2)} \tilde{\to}H_{\delta_1\delta_2^{-\sigma}}$ given on $F_v$-algebras $R$ by
\begin{align} \label{Mspl-cent}
G_{(\delta_1,\delta_2)}(R) &\lto H_{\delta_1\delta_2^{-\sigma}}(R)\\
\nonumber (g_1,g_2,g_3,g_3^{\sigma}) &\longmapsto (g_1,g_2)
\end{align}
and thus isomorphisms $G_{\delta} \tilde{\lto} H_{\gamma}$ for any split norm $\gamma$ of $\delta$.
For the purposes of the following definition and lemma, fix a compact open subgroup $K_H \leq H(F_v)$ that is $\sigma$-stable and let
$K =K_H \times K_H \leq G(F_v)\cong H(F_v) \times H(F_v)$.

\begin{defn} Two functions $\Phi \in C_c^{\infty}(H(F_v))$
and $f \in C_c^{\infty}(G(F_v))$ \textbf{split match} if
$$
RO_{\gamma}(\Phi,dt_{\gamma})=TRO_{\delta}(f,dt_{\delta})
$$
whenever $\gamma \in H(F_v)$ is a split norm of $\delta \in G(F_v)$.  Here the Haar measures $dt_{\gamma}$ and $dt_{\delta}$ are normalized so that the isomorphism  $G_{\delta}(F_v) \tilde{\lto} H_{\gamma}(F_v)$ induced by \eqref{Mspl-cent} is measure-preserving and the Haar measure on $H(F_v)$ (resp.~$G(F_v)$) gives $K_H$ (resp.~$K$) volume $1$.
\end{defn}

For $\Phi_i \in C_c^{\infty}(H(F_v))$ ($i \in \{1,2\}$) we write $\Phi_1 \times \Phi_2 \in
C_c^{\infty}(G(F_v))=C_c^{\infty}(H \times H(F_v))$ for the product function using \eqref{H-spl-isom}.  We
have the following lemma:
\begin{lem} \label{lem-M-split-match} Let $\Phi_1,\Phi_2 \in C_c^{\infty}(H(F_v)//K_H)$
and write $f=\Phi_1 \times \Phi_2 \in C_c^{\infty}(G(F_v)//K)$ for
the product function.  Write $\Phi_2^{-\sigma}(g):=\Phi_2(g^{-\sigma})$.
 The functions $f$ and $\Phi_1*\Phi_2^{-\sigma}$ split match, where the $*$ denotes
 convolution in $C_c^{\infty}(H(F_v)//K_H)$.
\end{lem}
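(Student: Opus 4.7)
The plan is to unfold both orbital integrals in coordinates adapted to \eqref{H-spl-isom} and see that they reduce to the same iterated integral. Write $\delta = (\delta_1, \delta_2)$, so that $\gamma := \delta_1 \delta_2^{-\sigma}$ is a split norm; any other split norm lies in the same relative class by Lemma \ref{lem-sn}, so $RO_\gamma$ is independent of this choice. Under \eqref{H-spl-isom}, $\theta = \sigma \circ \tau$ acts on $G(F_v)$ by $(x,y) \mapsto (y^\sigma, x^\sigma)$, hence $G^\sigma(F_v) = H^\sigma(F_v)^2$ and $G^\theta(F_v)$ is identified with $H(F_v)$ via $h \mapsto (h^\sigma, h)$. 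The volume-one normalizations on $K = K_H \times K_H$ and $K_H$ are compatible with the product decomposition, and \eqref{Mspl-cent} is measure-preserving between $G_\delta$ and $H_\gamma$ by hypothesis, so $dt_\delta = dt_\gamma$.

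Parameterizing $g_1 = (h_1, h_2) \in G^\sigma(F_v)$ and $g_2 = (h_3^\sigma, h_3) \in G^\theta(F_v)$, the integrand of $TRO_\delta(f)$ is $\Phi_1(h_1^{-1} \delta_1 h_3^\sigma)\, \Phi_2(h_2^{-1} \delta_2 h_3)$. Substitute $a := h_2^{-1} \delta_2 h_3$; this is $G_\delta$-invariant (a direct check: if $(x_1,x_2) \in G^\sigma(F_v)$ and $y \in H(F_v)$ define an element of $G_\delta(F_v)$ via \eqref{Mspl-cent}, then the defining relation forces $y = \delta_2^{-1} x_2 \delta_2$, under which $a$ is unchanged). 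Using $h_2 \in H^\sigma(F_v)$ and $\delta_1 \delta_2^{-\sigma} = \gamma$ the first factor becomes $\Phi_1(h_1^{-1} \gamma h_2 a^\sigma)$, and Fubini gives
$$
TRO_\delta(f) = \int_{H(F_v)} \Phi_2(a) \left( \int_{H_\gamma(F_v) \backslash H^\sigma(F_v)^2} \Phi_1(h_1^{-1} \gamma h_2 a^\sigma)\, \frac{dh_1\, dh_2}{dt_\gamma} \right) da.
$$

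For the other side, unfold the convolution as $(\Phi_1 * \Phi_2^{-\sigma})(x) = \int_{H(F_v)} \Phi_1(xy^{-1})\, \Phi_2(y^{-\sigma})\, dy$ and substitute $a = y^{-\sigma}$, which is a measure-preserving bijection of $H(F_v)$ (inversion is measure-preserving by unimodularity, and any finite-order automorphism of $H(F_v)$ preserves Haar measure since its modulus is a positive real with finite multiplicative order). This rewrites the convolution as $(\Phi_1 * \Phi_2^{-\sigma})(x) = \int_{H(F_v)} \Phi_1(xa^\sigma)\, \Phi_2(a)\, da$. Setting $x = h_1^{-1} \gamma h_2$ in the definition of $RO_\gamma(\Phi_1 * \Phi_2^{-\sigma})$ and applying Fubini yields exactly the iterated integral displayed above, proving $TRO_\delta(f) = RO_\gamma(\Phi_1 * \Phi_2^{-\sigma})$. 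The argument is essentially bookkeeping; the one step requiring actual verification rather than symbol manipulation is the $G_\delta$-invariance of $a = h_2^{-1}\delta_2 h_3$, and this follows mechanically from the explicit form of $G_\delta$ given by \eqref{Mspl-cent}. There is no deeper analytic obstacle.
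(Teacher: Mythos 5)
Your proof is correct and follows essentially the same route as the paper's: parameterize $G^\sigma\times G^\theta$ using \eqref{H-spl-isom}, integrate out the $G^\theta\cong H$ factor, and recognize the resulting inner integral as the convolution $\Phi_1*\Phi_2^{-\sigma}$ evaluated at $h_1^{-1}\gamma h_2$. The only differences are cosmetic (you parameterize $G^\theta$ as $\{(h^\sigma,h)\}$ rather than $\{(h,h^\sigma)\}$, and you spell out the change of variables $a=h_2^{-1}\delta_2 h_3$ and its $G_\delta$-invariance that the paper leaves implicit).
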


\begin{proof} Let $\delta=(\delta_1,\delta_2) \in G(F_v)$. We have
\begin{align}
\nonumber &\int_{G_{\delta}(F_v) \backslash G^{\sigma} \times G^{\theta}(F_v)} f(g^{-1} \delta h) dgdh\\
\label{int-comp}&=\int_{H_{\gamma}(F_v) \backslash H^{\sigma} \times H^{\sigma}(F_v)} \left( \int_{H(F_v)} \Phi_1(g_1^{-1}\delta_1 h_1)\Phi_2(g_2^{-1}\delta_2h_1^{\sigma})dh_1\right)dg_1dg_2\\
\nonumber&=\int_{H_{\gamma}(F_v) \backslash H^{\sigma} \times H^{\sigma}(F_v)} \Phi_1*\Phi_2^{-\sigma}(g_1^{-1}\delta_1 \delta_2^{-\sigma}g_2)dg_1dg_2.
\end{align}
Here we have used \eqref{H-spl-isom} to write
$g=(g_1,g_2) \in G^{\sigma}(F_v) \cong H^{\sigma}(F_v) \times H^{\sigma}(F_v)$ and
$h=(h_1,h_1^{\sigma}) \in G^{\theta}(F_v)$ and we have used our assumption that the isomorphism
$G_{\delta}(F_v) \tilde{\lto} H_{\gamma}(F_v)$ given by \eqref{Mspl-cent} is measure preserving.  With
this in mind,
the lemma follows from \eqref{int-comp} and Lemma \ref{lem-sn}.
\end{proof}

The inclusion $H \to G$ induces an $L$-map ${}^LH \to {}^LG$.
Assume for the moment that $H_{F_v}$ and $G_{F_v}$ are both quasi-split, let $K \leq G(F_v)$ and $K_H \leq H(F_v)$ be hyperspecial subgroups and let
$$
b:C_c^{\infty}(G(F_v)//K) \lto C_c^{\infty}(H(F_{v})//K_H)
$$\index{$b$}be the base change homomorphism induced by the $L$-map above.  One can check that with respect to the isomorphism \eqref{H-spl-isom} this homomorphism is given by
$$
b(f_1 \times f_2) =f_1*f_2.
$$
Thus in view of Lemma \ref{lem-sn} and the remark at the end of \S \ref{ssec-matching} we have the following corollary:

\begin{cor} \label{cor-H-split-matching} Let $K_H \leq H(F_v)$ be a $\sigma$-stable
hyperspecial subgroup and let $K=K_H \times K_H \leq G(F_v) \cong H(F_v) \times H(F_v)$.  Then if $f_1 \in C_c^{\infty}(H(F_v)//K_H)$ the functions $f_1 \times \mathrm{ch}_{K_H}$ and $b(f_1 \times \mathrm{ch}_{K_H})=f_1$  match. \qed
\end{cor}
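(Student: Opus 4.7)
The plan is to derive the corollary from Lemma \ref{lem-M-split-match} in two steps: first deduce split matching, then upgrade to matching in the sense of Definition \ref{defn-match}.

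For the first step, since $K_H$ is $\sigma$-stable we have $\mathrm{ch}_{K_H}^{-\sigma}=\mathrm{ch}_{K_H}$, and with the Haar measure on $H(F_v)$ normalized so that $\mathrm{vol}(K_H)=1$ this function is the multiplicative identity of the Hecke algebra $C_c^{\infty}(H(F_v)//K_H)$. Applying Lemma \ref{lem-M-split-match} with $\Phi_1=f_1$ and $\Phi_2=\mathrm{ch}_{K_H}$ therefore gives that $f_1\times\mathrm{ch}_{K_H}$ and $f_1*\mathrm{ch}_{K_H}^{-\sigma}=f_1$ split match.

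For the second step, I would first dispatch the vanishing conditions in Definition \ref{defn-match}. Lemma \ref{lem-sn} guarantees that every relatively semisimple $\gamma\in H(F_v)$ is a split norm of $(\gamma,1)\in G(F_v)$, and hence a norm. Conversely, for any relatively $\tau$-semisimple $\delta=(\delta_1,\delta_2)\in G(F_v)$ one computes $\delta\delta^{-\theta}=(\delta_1\delta_2^{-\sigma},\delta_2\delta_1^{-\sigma})$, so the split norm $\gamma:=\delta_1\delta_2^{-\sigma}$ satisfies $\gamma\gamma^{-\sigma}=\delta_1\delta_2^{-\sigma}\delta_2\delta_1^{-\sigma}$, which is the first coordinate of the semisimple element $\delta\delta^{-\theta}(\delta\delta^{-\theta})^{\tau}$; hence $\gamma$ is relatively semisimple and, by Lemma \ref{lem-sn}, a norm of $\delta$. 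So the vanishing conditions are vacuous on both sides.

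Next, assuming $\gamma$ is a norm of $\delta$, I would establish the main equality $SRO_{\gamma}(f_1)=STRO_{\delta}(f_1\times\mathrm{ch}_{K_H})$ by identifying the two sums term by term. Writing elements of $G^{\sigma}\times G^{\theta}$ as $((a,b),(c,c^{\sigma}))$, a direct computation shows that the split norm map $(\delta_1,\delta_2)\mapsto \delta_1\delta_2^{-\sigma}$ descends to a bijection between relative $\tau$-classes in $G(F_v)$ and relative classes in $H(F_v)$, which extends to the level of stable classes; the isomorphism \eqref{Mspl-cent} identifies the centralizers $G_{\delta'}\cong H_{\gamma'}$ for each pair in the correspondence, so the Kottwitz signs agree and, together with the remark at the end of \S \ref{ssec-matching}, the chosen Haar measures match. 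Combined with the split matching of the first step, the termwise identification gives the desired stable equality.

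I expect the principal obstacle to be the verification that the bijection between relative classes respects stability, i.e.\ that $\gamma,\gamma'\in H(F_v)$ are stably relatively conjugate if and only if the corresponding $\delta,\delta'\in G(F_v)$ are stably relatively $\tau$-conjugate. The forward direction is the delicate one: given $h\in H^{\sigma}(\bar{F}_v)$ with $h\gamma\gamma^{-\sigma}h^{-1}=\gamma'\gamma'^{-\sigma}$, one must produce $y=(y_1,y_2)\in H^{\sigma}\times H^{\sigma}(\bar{F}_v)=G^{\sigma}(\bar{F}_v)$ satisfying $y\delta\delta^{-\theta}y^{-\tau}=\delta'\delta'^{-\theta}$; the natural candidate is $y_1=h$ and $y_2=\gamma^{-1}h\gamma'$, and one must verify that $y_2$ actually lies in $H^{\sigma}(\bar{F}_v)$, which follows from the conjugation identity on $\gamma\gamma^{-\sigma}$.
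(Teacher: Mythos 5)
Your overall strategy is the same one the paper compresses into ``in view of Lemma \ref{lem-sn} and the remark at the end of \S\ref{ssec-matching}'': use Lemma \ref{lem-M-split-match} to get split matching of $f_1\times\mathrm{ch}_{K_H}$ with $f_1=f_1*\mathrm{ch}_{K_H}^{-\sigma}$, use Lemma \ref{lem-sn} to see that split norms of relatively semisimple elements are norms and that both vanishing conditions in Definition \ref{defn-match} are vacuous, and then upgrade the unstable identity $RO_{\gamma}=TRO_{\delta}$ to the stable identity $SRO_{\gamma}=STRO_{\delta}$ by matching the constituent classes term by term. The point you flag as delicate is precisely what the paper leaves implicit: that the split-norm map carries the set of relative $\tau$-classes inside a stable relative $\tau$-class bijectively onto the set of relative classes inside the corresponding stable relative class, with centralizers identified by \eqref{Mspl-cent} so that Kottwitz signs and compatible measures agree.

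There is a small algebra slip in your final paragraph. Writing $\gamma=x_{\delta}$ and $\gamma'=x_{\delta'}$ for the split norms, in the split coordinates the identity $y\,\delta\delta^{-\theta}\,y^{-\tau}=\delta'\delta'^{-\theta}$ with $y=(y_1,y_2)$ reduces to $y_1\gamma y_2^{-1}=\gamma'$; with $y_1=h$ this forces $y_2=\gamma'^{-1}h\gamma$, not your $\gamma^{-1}h\gamma'$. One checks $y_2^{\sigma}=\gamma'^{-\sigma}h\gamma^{\sigma}$ equals $y_2$ if and only if $h\,\gamma\gamma^{-\sigma}\,h^{-1}=\gamma'\gamma'^{-\sigma}$, which is exactly the hypothesis. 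With your formula one instead gets $y_1\gamma y_2^{-1}=h\gamma\gamma'^{-1}h^{-1}\gamma$, which is not $\gamma'$ in general, and the $\sigma$-invariance check would match the reversed identity $h^{-1}\gamma\gamma^{-\sigma}h=\gamma'\gamma'^{-\sigma}$. For the converse direction it is cleanest to note that the fiber of $B_{\sigma}\colon g\mapsto gg^{-\sigma}$ over any point of $Q(\bar{F}_v)$ is a right $H^{\sigma}(\bar{F}_v)$-coset, so $H^{\sigma}(\bar{F}_v)$-conjugacy of $B_{\sigma}$-images is equivalent to lying in the same $H^{\sigma}(\bar{F}_v)$-double coset; this also shows the split-norm map induces the desired bijection on unstable classes within a given stable class.
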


\subsection{Matching at the $E$-split places} \label{ssec-split-match}
Assume for the remainder of this section that we are in the biquadratic case.
Let $v$ be a place of $F$.  In this subsection we will always assume that $E/F$ splits at $v$.
Thus there is an isomorphism
\begin{align} \label{e-spl-isom}
E \otimes_F F_v \cong F_v \oplus F_v
\end{align}
 intertwining $\sigma$ with $(x,y) \mapsto (y,x)$ and an isomorphism
\begin{align} \label{e-spl-isom2}
H_{F_v} \cong H^{\sigma}_{F_v} \times H^{\sigma}_{F_v}
\end{align}
intertwining $\sigma$ with $(x,y) \mapsto (y,x)$.
Using this one easily deduces the following lemma:

\begin{lem} \label{e-spl-lem} Let $\gamma \in H(F_v)$ and $\delta \in G(F_v)$ be relatively semisimple and relatively $\tau$-semisimple, respectively.  Use
\eqref{e-spl-isom} to write
$\gamma\gamma^{-\sigma}=(y_{\gamma},y_{\gamma}^{-1})$ and $\delta\delta^{-\theta}=(x_{\delta},x_{\delta}^{-\tau})$.  We have that $\gamma$ is a norm of $\delta$ if and
only if $x_{\delta}x_{\delta}^{-\tau}$ is $G^{\sigma}(\bar{F}_v)$-conjugate to $y_{\gamma}$. \qed
\end{lem}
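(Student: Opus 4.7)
The approach is to reduce the lemma to a direct calculation using the product decompositions induced by the splitting of $E/F$ at $v$. First I would use \eqref{e-spl-isom2} together with $G = \mathrm{Res}_{M/F}H$ to identify $G(F_v) \cong H^{\sigma}(M_v) \times H^{\sigma}(M_v)$, where $M_v := M \otimes_F F_v$. Under this identification $\sigma$ swaps the two factors while $\tau$ acts diagonally by the $\mathrm{Gal}(M_v/F_v)$-action on each factor; base-changing to $\bar{F}_v$ one likewise gets $G^{\sigma}(\bar{F}_v) \cong H^{\sigma}(\bar{F}_v) \times H^{\sigma}(\bar{F}_v)$, with $\tau$ still diagonal.

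Next, writing $\gamma = (\gamma_1,\gamma_2)$ and $\delta = (\delta_1,\delta_2)$ in these coordinates, a direct computation yields $\gamma\gamma^{-\sigma} = (y_{\gamma}, y_{\gamma}^{-1})$ with $y_{\gamma} = \gamma_1\gamma_2^{-1}$, and, using $\delta^{\theta} = (\delta_2^{\tau},\delta_1^{\tau})$, also $\delta\delta^{-\theta} = (\delta_1\delta_2^{-\tau}, \delta_2\delta_1^{-\tau}) = (x_{\delta}, x_{\delta}^{-\tau})$ with $x_{\delta} = \delta_1\delta_2^{-\tau}$. Applying $\tau$ componentwise gives $(\delta\delta^{-\theta})^{\tau} = (x_{\delta}^{\tau}, x_{\delta}^{-1})$, so that
\[
\delta\delta^{-\theta}(\delta\delta^{-\theta})^{\tau} = (X, X^{-1}), \qquad X := x_{\delta}x_{\delta}^{\tau},
\]
the second coordinate following from $(x_{\delta}x_{\delta}^{\tau})^{-1} = x_{\delta}^{-\tau}x_{\delta}^{-1}$.

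The norm condition $y\delta\delta^{-\theta}(\delta\delta^{-\theta})^{\tau}y^{-1} = \gamma\gamma^{-\sigma}$ with $y = (y_1,y_2) \in G^{\sigma}(\bar{F}_v)$ now splits into the two coordinate equations $y_1Xy_1^{-1} = y_{\gamma}$ and $y_2 X^{-1} y_2^{-1} = y_{\gamma}^{-1}$. These are inverses of one another, and each is equivalent to the single statement that $X$ is $H^{\sigma}(\bar{F}_v)$-conjugate to $y_{\gamma}$; conversely, given a conjugator $y_0$, the diagonal element $(y_0,y_0) \in G^{\sigma}(\bar{F}_v)$ realizes the norm relation. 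Since $H^{\sigma}(\bar{F}_v)$- and $G^{\sigma}(\bar{F}_v)$-conjugacy (via either coordinate projection) agree on elements of $H^{\sigma}(\bar{F}_v)$, this gives the desired equivalence.

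The argument is pure bookkeeping and I do not expect any genuine obstacle: the splitting of $E/F$ diagonalizes the whole situation. The only caveat is that my computation produces the first coordinate $x_{\delta}x_{\delta}^{\tau}$, whereas the lemma as typed reads $x_{\delta}x_{\delta}^{-\tau}$; given the paper's convention $(\cdot)^{-\tau} := ((\cdot)^{-1})^{\tau}$, this appears to be a harmless sign typo and the proof is unaffected.
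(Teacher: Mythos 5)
Your proof is correct and is exactly the computation the paper leaves to the reader (the lemma is stated with an immediate \qed and no proof). Your flag about the sign is well taken: the direct calculation gives first coordinate $x_{\delta}x_{\delta}^{\tau}$, not $x_{\delta}x_{\delta}^{-\tau}$, and this is confirmed by degenerating to the case $\tau = \mathrm{id}$, where the norm condition must compare $x_{\delta}^2$ (not $1$) with $y_{\gamma}$; so the printed exponent is a typo. One small imprecision worth noting: on $G^{\sigma}(\bar{F}_v) = H^{\sigma}(M \otimes_F \bar{F}_v)$ the automorphism $\tau$ is not ``still diagonal''; it swaps the two $\bar{F}_v$-factors of $M \otimes_F \bar{F}_v$. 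Moreover $X = x_{\delta}x_{\delta}^{\tau}$ is $\tau$-fixed only up to conjugation by $x_{\delta}$, so it is not literally a diagonal element of $H^{\sigma}(\bar{F}_v)^2$; nevertheless, because $X^{\tau} = x_{\delta}^{-1}Xx_{\delta}$, the two coordinate equations you write down are equivalent to each other, and $G^{\sigma}(\bar{F}_v)$-conjugacy of $X$ to the genuinely diagonal $y_{\gamma}$ does reduce to $H^{\sigma}(\bar{F}_v)$-conjugacy, so your conclusion stands.
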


If $\Phi_i \in C_c^{\infty}(H^{\sigma}(F_v))$ (resp.
$f_i \in C_c^{\infty}(G^{\sigma}(F_v))$) for $i \in \{1,2\}$, we let
$\Phi_1 \times \Phi_2 \in C_c^{\infty}(H(F_v))$ (resp. $f_1 \times f_2 \in  C_c^{\infty}(G(F_v))$)
be the product functions defined using the isomorphism \eqref{e-spl-isom2}.  For the definition of the $\mathfrak{K}_1$ groups in the following proposition we
refer the reader to \S \ref{ssec-ch-groups}.

\begin{prop} \label{prop-E-split-match} Suppose that $\gamma \in H(F_v)$ is relatively semisimple and
$\delta \in G(F_v)$ is relatively $\tau$-semisimple.  Let $\Phi_i$ and $f_i$ be as above.  Finally
let $\kappa_H \in \mathfrak{K}(H_{\gamma},H^{\sigma} \times H^{\sigma};F_v)_1$ and $\kappa \in \mathfrak{K}(G_{\delta}, G^{\sigma} \times G^{\theta};F_v)_1$.  We then
have
\begin{align*}
RO^{\kappa_H}_{\gamma}(\Phi_1 \times \Phi_2)=\mathcal{O}^{\kappa_H}_{y_{\gamma}}(\Phi_1 * \Phi_2^{-1})\\
TRO^{\kappa}_{\delta}(f_1 \times f_2)=\mathcal{O}^{\kappa}_{x_{\delta} \rtimes \tau}(f_1 *f_2^{-\tau})
\end{align*}
where $\Phi_2^{-1}(h):=\Phi_2(h^{-1})$, $f_2^{-\tau}(h):=f_2(h^{-\tau})$, $\gamma\gamma^{-\sigma}=(y_{\gamma},y_{\gamma}^{-1})$, and $\delta\delta^{-\theta}=(x_{\delta},x_{\delta}^{-\tau})$.
\end{prop}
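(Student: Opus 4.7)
My plan is to mirror the proof of Lemma~\ref{lem-M-split-match}, using the $E$-split decomposition \eqref{e-spl-isom2} (extended by $\mathrm{Res}_{M/F}$ in the twisted case) in place of the $M$-split decomposition \eqref{H-spl-isom}. Under this isomorphism $\sigma$ acts as the swap, so $H^\sigma$ (resp.~$G^\sigma$) embeds diagonally, and the twisting automorphism $\theta = \sigma\tau$ on $G$ sends $(g_1,g_2)$ to $(g_2^\tau, g_1^\tau)$, making $G^\theta(F_v)$ the subgroup of pairs of the form $(h^\tau, h)$ with $h \in G^\sigma(F_v)$. Writing $\gamma = (\gamma_1,\gamma_2)$ and $\delta = (\delta_1,\delta_2)$ in these coordinates, a short direct calculation recovers $\gamma\gamma^{-\sigma} = (y_\gamma, y_\gamma^{-1})$ and $\delta\delta^{-\theta} = (x_\delta, x_\delta^{-\tau})$ with $y_\gamma = \gamma_1\gamma_2^{-1}$ and $x_\delta = \delta_1\delta_2^{-\tau}$.

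For the untwisted identity I would factor the integrand as $(\Phi_1 \times \Phi_2)(h_1^{-1}\gamma h_2) = \Phi_1(h_1^{-1}\gamma_1 h_2)\Phi_2(h_1^{-1}\gamma_2 h_2)$ and then perform the change of variables $u = h_1^{-1}\gamma_2 h_2 \in H^\sigma(F_v)$, which rewrites the first factor as $\Phi_1(h_1^{-1} y_\gamma h_1 \cdot u)$; integrating out $u$ collapses the integrand to $(\Phi_1 * \Phi_2^{-1})(h_1^{-1} y_\gamma h_1)$. The projection $(g_1,g_2)\mapsto g_1$ gives an isomorphism $H_\gamma \stackrel{\sim}{\to} C_{y_\gamma,H^\sigma}$ (analogous to \eqref{Mspl-cent}, with inverse $g_1 \mapsto (g_1, \gamma_1^{-1} g_1 \gamma_1)$), under which the $H_\gamma$-action on $(h_1,u)$ becomes left translation on $h_1$ with $u$ fixed. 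For the twisted identity the analogous change of variables $u = g_1^{-1}\delta_2 g_2$ (with $g_2$ parameterizing $G^\theta$ via $g_2 \mapsto (g_2^\tau, g_2)$) transforms the remaining factor into $g_1^{-1} x_\delta g_1^\tau \cdot u^\tau$; the substitution $w = y u^\tau$ in the defining integral of the convolution yields $(f_1 * f_2^{-\tau})(y) = \int f_1(y u^\tau) f_2(u)\,du$, so integrating out $u$ produces $(f_1 * f_2^{-\tau})(g_1^{-1} x_\delta g_1^\tau)$. A parallel identification $G_\delta \stackrel{\sim}{\to} C^\tau_{x_\delta,G^\sigma}$ via $(g_1,g_2)\mapsto g_1$ converts the outer quotient correctly and produces the $\tau$-twisted orbital integral $\mathcal{O}_{x_\delta \rtimes \tau}(f_1 * f_2^{-\tau})$.

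The main obstacle will be the bookkeeping of the $\kappa$-character through these manipulations. I would need to verify that the projection maps above induce compatible identifications between $\mathfrak{K}(H_\gamma, H^\sigma \times H^\sigma; F_v)_1$ and the $\mathfrak{K}$-group for the conjugation action of $H^\sigma$ on itself (and similarly $\mathfrak{K}(G_\delta, G^\sigma \times G^\theta; F_v)_1$ with its $\tau$-twisted analogue on $G^\sigma$), and that under these maps the pairings $\langle \kappa_H, (\dot h_1, \dot h_2^{-1})\rangle$ and $\langle \kappa, (\dot g_1, \dot g_2^{-1})\rangle$ descend to the pairings appearing in the classical (twisted) $\kappa$-orbital integrals. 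The subscript ``$1$'' restriction on the $\mathfrak{K}$-groups is precisely what forces the character to be trivial on the piece of abelianized cohomology corresponding to the second (redundant) factor after the change of variables, so that it descends. Once this, together with the compatibility of the chosen Haar measures on the identified centralizers and the matching of Kottwitz signs $e(\cdot)$, is in place, the change-of-variable computations above transfer verbatim to give both claimed identities.
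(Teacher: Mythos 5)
Your proposal is correct and takes essentially the same approach as the paper: both exploit the $E$-split decomposition, use the change of variables $u = g_1^{-1}\delta_2 g_2$ to collapse the double orbital integral into a convolution, and identify $G_\delta$ (resp.\ $H_\gamma$) with the $\tau$-centralizer $C^{\tau}_{x_\delta,G^\sigma}$ (resp.\ $C_{y_\gamma,H^\sigma}$) via the first-coordinate projection. The only presentational differences are that the paper reduces the untwisted case to the twisted one by setting $\tau=1$ and states, rather than elaborates on, the $\kappa$-descent identity \eqref{kappa-obs}, which is the same subtlety you flag.
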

\noindent In the proposition, we use notation as in \cite[\S 1.5]{HarLab} for the $\kappa$-orbital integrals
$\OO^{\kappa_H}_{y_{\gamma}}$ and $\OO^{\kappa}_{x_{\delta} \rtimes \tau}$, and the $\mathfrak{K}_1$-groups are defined as in \cite[\S 1.8]{Lab} (see also \S \ref{ssec-ch-groups}).  Strictly speaking, the $\kappa_H$ in $\OO^{\kappa_H}_{y_{\gamma}}$ (resp. $\kappa$ in $\OO_{x_{\delta} \rtimes \tau}^{\kappa}$) should actually be the image of $\kappa_H$ (resp. $\kappa$) under
\begin{align} \label{kappamaps}
\mathfrak{K}(H_{\gamma},H^{\sigma} \times H^{\sigma};F_v)_1 &\cong \mathfrak{K}(C_{\gamma \gamma^{-\sigma},H^{\sigma}},H^{\sigma} \times H^{\sigma};F_v)_1 \cong
\mathfrak{K}(C_{y_{\gamma},H^{\sigma}},H^{\sigma};F_v)_1\\
\nonumber \mathfrak{K}(G_{\delta},G^{\sigma} \times G^{\theta};F_v)_1 &\cong \mathfrak{K}(C^{\tau}_{\delta \delta^{-\theta},G^{\sigma}},G^{\sigma} \times G^{\theta};F_v)_1 \cong
\mathfrak{K}(C^{\tau}_{x_{\delta},G^{\sigma}},G^{\sigma};F_v)_1
\end{align}
where the left isomorphisms are induced by Lemma \ref{lem-2cent} and the right isomorphisms are due to the isomorphisms $C_{\gamma \gamma^{-\sigma},H^{\sigma}} \tilde{\to}C_{y_{\gamma},H^{\sigma}}$ and
$C^{\tau}_{\delta \delta^{-\theta},G^{\sigma}}\tilde{\to}C^{\tau}_{x_{\delta},G^{\sigma}}$
induced by the projection of $H \cong H^{\sigma} \times H^{\sigma}$ (resp. $G \cong G^{\sigma} \times G^{\sigma}$) onto the first factor.  However, we won't burden the
notation by indicating this.  Also, to make the $\kappa_H$-orbital integrals well-defined we need to specify choices of Haar measures on $H^{\sigma}$, $G^{\sigma}$, and various centralizers, but, again, we will
not incorporate this into the notation.

\begin{proof}
The statement involving relative orbital integrals can be recovered from the statement involving twisted relative orbital integrals by taking $\tau$ to be trivial.  Therefore, it suffices to prove the statement regarding twisted relative orbital integrals.

Note that there is an isomorphism of
$F_{v}$-group schemes $G_{\delta} \to C^{\tau}_{x_{\delta},G^{\sigma}}$ given
by
\begin{align}\label{third}
G_{\delta}(R) & \longrightarrow
C^{\tau}_{x_{\delta}, G^{\sigma}}(R) \\
((u,u), (h, h^{\tau})) & \longmapsto  u \nonumber
\end{align}
for $F_v$-algebras $R$ (compare Lemma \ref{lem-2cent}).
Moreover, for a given $\delta = (\delta_{1}, \delta_{2}) \in
G(F_{v})$ there is an isomorphism of affine $F_v$-schemes
$G^{\sigma} \times G^{\theta} \to (G^{\sigma})^2$ given by
\begin{align}\label{fourth}
G^{\sigma} \times G^{\theta}(R) & \longrightarrow  G^{\sigma}
\times G^{\sigma}(R) \\
((u,u), (h, h^{\tau})) & \longmapsto
((u,u), (u^{-1} \delta_{2} h^{\tau}, u^{-1} \delta_{2} h^{\tau}))
\nonumber
\end{align}
for $F_v$-algebras $R$.
This maps $G_{\delta}$ onto $C^{\tau}_{x_{\delta},
G^{\sigma}} \times \{ (\delta_{2}, \delta_{2})\},$ and thus a domain of
integration for $H^{0}(F_{v}, G_{\delta} \backslash
G^{\sigma} \times G^{\theta})$ onto $H^{0}(F_{v},
C^{\tau}_{x_{\delta, G^{\sigma}}} \backslash G^{\sigma}) \times
G^{\sigma}(F_{v}).$   This observation is used in one change of variables in the following computation:
\begin{align} \label{123}
&TRO^{\kappa}_{\delta}(f_1 \times f_2)\\
& = \nonumber
\int\limits_{H^{0}(F_{v}, G_{\delta} \backslash
G^{\sigma} \times G^{\theta})}
\langle \kappa, (\dot{g}_{1}, \dot{g}_{2}) \rangle
e(G_{g_{1}^{-1} \delta g_{2}})
f_{1} \times f_{2}(g_{1}^{-1} \delta
g_{2}^{\tau}) d \dot{g}_{1} d\dot{g}_{2} \\ \nonumber
& =
\int\limits_{H^{0}(F_{v}, C^{\tau}_{x_{\delta}, G^{\sigma}} \backslash
G^{\sigma})}
\int\limits_{G^{\sigma} (F_{v})}
\langle \kappa, ((\dot{h}_1,\dot{h}_1),(\dot{\delta}_2^{-\tau} \dot{h}_1^{\tau}\dot{h}_2^{\tau},\dot{\delta}_2^{-1} \dot{h}_1\dot{h}_2)) \rangle
e(C^{\tau}_{h_{1}^{-1} x_{\delta} h^{\tau}_{1}, G^{\sigma}})
f_{1}(h_{1}^{-1} \delta_{1} \delta_{2}^{- \tau} h_{1}^{\tau} h_{2})
f_{2}(h_{2}^{\tau}) dh_{2} d\dot{h}_{1}.
\end{align}
In the last equality we used the fact that there is an isomorphism of $F_v$-groups
$G_{g_{1}^{-1} \delta g_{2}} \cong C^{\tau}_{g_{1}^{-1} x_{\delta} g^{\tau}_{1}, G^{\sigma}}$ for $(g_1,g_2) \in G^{\sigma} \times G^{\theta}(F_v)$ (compare \eqref{third}).  If we temporarily denote the image of $\kappa$ under \eqref{kappamaps} by $\overline{\kappa},$ then
\begin{align} \label{kappa-obs}
\langle
\kappa, ((\dot{h}_1,\dot{h}_1),(\dot{\delta}_2^{-\tau} \dot{h}_1^{\tau}\dot{h}_2^{\tau},\dot{\delta}_2^{-1} \dot{h}_1\dot{h}_2)) \rangle  =
\langle \overline{\kappa}, \dot{h}_{1} \rangle
\end{align}
 holds. From now on, we will omit the bar and just write $\kappa$.  With \eqref{kappa-obs} in mind,
 the expression in \eqref{123} is equal to
\begin{align*}
\int\limits_{H^{0}(F_{v}, C^{\tau}_{x_{\delta, G^{\sigma}}} \backslash
G^{\sigma})}&
\int\limits_{G^{\sigma} (F_{v})}
\langle \kappa, \dot{h}_{1} \rangle
e((C^{\tau}_{h_{1}^{-1} x_{\delta} h^{\tau}_{1}, G^{\sigma}})_{F_{v}})
f_{1}(h_{1}^{-1} \delta_{1} \delta_{2}^{- \tau} h_{1}^{\tau} h_{2})
f_{2}(h_{2}^{\tau}) dh_{2} d\dot{h}_{1} \\
& =
\mathcal{O}^{\kappa}_{x_{\delta} \rtimes \tau}(f_1 *f_2^{-\tau}).
\end{align*}

\end{proof}

Assume now that $v$ is nonarchimedian.  If $G^{\sigma}_{F_v}$ and $H^{\sigma}_{F_v}$ are both unramified, let $K' \leq G^{\sigma}(F_v)$ and $K_H' \leq H^{\sigma}(F_v)$ be hyperspecial subgroups and let
$$
b:C_c^{\infty}(G^{\sigma}(F_v)//K') \lto C_c^{\infty}(H^{\sigma}(F_{v})//K_H')
$$\index{$b$}be the homomorphism induced by the base change homomorphism ${}^LH^{\sigma} \to {}^L G^{\sigma}$.

\begin{cor} \label{fl-e-spl}  If $v$ is nonarchimedian, then for every $\Phi \in C_c^{\infty}(H(F_v))$ there is a matching
$f \in C_c^{\infty}(G(F_v))$ and
conversely.  If $M \otimes_F F_v/F_v$ is unramified, $G^{\sigma}_{F_v}$ and $H^{\sigma}_{F_v}$ are unramified with hyperspecial subgroups $K'$, $K_H'$ as above, and $\Phi_1 \in C_c^{\infty}(G^{\sigma}( F_v)//K')$ then
$\Phi_1 \times \mathrm{ch}_{K'}$ matches $b(\Phi_1) \times \mathrm{ch}_{K'_H}$.
\end{cor}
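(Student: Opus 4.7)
The plan is to exploit the splitting of $E/F$ at $v$ to reduce the corollary---both the existence of matching functions and the spherical fundamental lemma---to the classical matching for cyclic base change from $G^{\sigma}=\mathrm{Res}_{M/F}H^{\sigma}$ down to $H^{\sigma}$. The isomorphism \eqref{e-spl-isom2} gives $H_{F_v}\cong H^{\sigma}_{F_v}\times H^{\sigma}_{F_v}$ (with $\sigma$ swapping factors), and applying $\mathrm{Res}_{M\otimes_F F_v/F_v}$ then gives $G_{F_v}\cong G^{\sigma}_{F_v}\times G^{\sigma}_{F_v}$ (with $\sigma$ swapping factors and $\tau$ acting diagonally through its action on $G^{\sigma}$). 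Every function in $C_c^{\infty}(H(F_v))$ (resp.~$C_c^{\infty}(G(F_v))$) is a finite linear combination of product functions $\Phi_1\times\Phi_2$ (resp.~$f_1\times f_2$). Proposition~\ref{prop-E-split-match} with $\kappa_H=1$ and $\kappa=1$, combined with Lemma~\ref{e-spl-lem}, then implies that $\Phi_1\times\Phi_2$ matches $f_1\times f_2$ in the sense of Definition~\ref{defn-match} if and only if $\Phi_1*\Phi_2^{-1}\in C_c^{\infty}(H^{\sigma}(F_v))$ matches $f_1*f_2^{-\tau}\in C_c^{\infty}(G^{\sigma}(F_v))$ in the classical sense of cyclic base change from $G^{\sigma}$ to $H^{\sigma}$.

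For the first assertion I would take $\Phi=\sum_i\Phi_{1i}\times\Phi_{2i}\in C_c^{\infty}(H(F_v))$ and form $\Psi_i:=\Phi_{1i}*\Phi_{2i}^{-1}\in C_c^{\infty}(H^{\sigma}(F_v))$. Invoking the transfer theorem for cyclic base change at a nonarchimedean place---a consequence of Waldspurger's ``le lemme fondamental implique le transfert'' applied to the base change fundamental lemma of Kottwitz, Clozel, and Labesse---I obtain $F_i\in C_c^{\infty}(G^{\sigma}(F_v))$ whose stable $\tau$-twisted orbital integrals match those of $\Psi_i$ under base change. To convert each $F_i$ into a product function on $G(F_v)$ I would exhibit a convolution factorization $F_i=f_{1i}*f_{2i}^{-\tau}$: choose a compact open subgroup $K_0\leq G^{\sigma}(F_v)$ small enough that $F_i$ is right $K_0$-invariant, and set $f_{1i}:=F_i$ and $f_{2i}:=\vol(K_0)^{-1}\mathrm{ch}_{K_0^{\tau}}$, so that $f_{2i}^{-\tau}=\vol(K_0)^{-1}\mathrm{ch}_{K_0}$. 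Then $f:=\sum_i f_{1i}\times f_{2i}$ matches $\Phi$, and the converse direction is entirely symmetric.

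For the spherical fundamental lemma I would apply Proposition~\ref{prop-E-split-match} with $\Phi_2=\mathrm{ch}_{K'_H}$ and $f_2=\mathrm{ch}_{K'}$. Since $K'_H$ is a subgroup, $\mathrm{ch}_{K'_H}^{-1}=\mathrm{ch}_{K'_H}$; since $M\otimes_F F_v/F_v$ is unramified, the hyperspecial $K'$ may be chosen $\tau$-stable (for instance $K'=G^{\sigma}(\OO_{M\otimes_F F_v})$ in the inert case, or the obvious product in the split case), hence $\mathrm{ch}_{K'}^{-\tau}=\mathrm{ch}_{K'}$. Normalizing $\vol(K'_H)=\vol(K')=1$ and using that $\Phi_1$ and $b(\Phi_1)$ are bi-invariant under their respective hyperspecial subgroups, the convolutions $\Phi_1*\mathrm{ch}_{K'}^{-\tau}$ and $b(\Phi_1)*\mathrm{ch}_{K'_H}^{-1}$ collapse to $\Phi_1$ and $b(\Phi_1)$ respectively. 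Proposition~\ref{prop-E-split-match} then identifies $STRO_{\delta}(\Phi_1\times\mathrm{ch}_{K'})$ with the stable $\tau$-twisted orbital integral of $\Phi_1$ at $x_{\delta}\rtimes\tau$ on $G^{\sigma}(F_v)$, and $SRO_{\gamma}(b(\Phi_1)\times\mathrm{ch}_{K'_H})$ with the stable orbital integral of $b(\Phi_1)$ at $y_{\gamma}$ on $H^{\sigma}(F_v)$. Combined with Lemma~\ref{e-spl-lem}, the required matching is precisely the spherical base change fundamental lemma, due to Kottwitz (with refinements by Clozel and Labesse).

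The main obstacle is essentially external: once Proposition~\ref{prop-E-split-match} has translated the relative problem into the classical base change setting, both claims become direct corollaries of the nonarchimedean theory of cyclic base change transfer. The one technical point internal to the argument is the convolution factorization $F_i=f_{1i}*f_{2i}^{-\tau}$, which is a standard maneuver with approximate identities.
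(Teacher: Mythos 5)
Your argument is correct and follows essentially the same route as the paper: use Proposition~\ref{prop-E-split-match} (with trivial $\kappa$) and Lemma~\ref{e-spl-lem} to translate relative matching of product functions into classical stable base change matching of $\Phi_1*\Phi_2^{-1}$ on $H^{\sigma}(F_v)$ against $f_1*f_2^{-\tau}$ on $G^{\sigma}(F_v)$, then quote the base change transfer theorem and the spherical base change fundamental lemma. The paper's one-line proof cites exactly these ingredients, referring to Labesse's Th\'eor\`eme 3.3.1 and Proposition 3.7.3 for the transfer and the fundamental lemma; note that Labesse proves base change transfer directly (via descent and local integrability), so invoking Waldspurger's ``le lemme fondamental implique le transfert'' is unnecessary here, though not incorrect. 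Your explicit convolution factorization $F_i = F_i * \vol(K_0)^{-1}\mathrm{ch}_{K_0}$ and the hyperspecial-invariance collapse for the spherical part are fine and merely spell out what the paper leaves implicit.
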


\begin{proof}This follows immediately from Proposition \ref{prop-E-split-match}, Lemma \ref{e-spl-lem}, \cite[Th\'eor\`eme 3.3.1]{Lab} and
\cite[Proposition 3.7.3]{Lab}.
\end{proof}

\section{Matching at the ramified places}
\label{sec-match-ram}
  We assume the notation of \S \ref{sec-matching} regarding the definition of stable twisted relative orbital integrals and matching of
  functions.  In \S \ref{sec-matching} we provided a supply of matching functions for places of $F$ where various data were unramified.  We now
  prove a weaker matching statement for the ramified places.    It is contained in the following theorem, the main theorem of this section:

\begin{thm} \label{thm-weak-match}   Let $v$ be a place of $F$.
If $f \in C_c^{\infty}(G(F_v))$ is supported in a sufficiently small neighborhood of a relatively $\tau$-regular semisimple $\delta \in G(F_v)$ admitting a norm $\gamma \in H(F_v)$
then there is a $\Phi \in C_c^{\infty}(H(F_v))$
 that matches $f$ on the relatively regular set.
\end{thm}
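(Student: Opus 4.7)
The plan is to construct $\Phi$ by transferring $f$ through a local slice argument tied to the norm map. The underlying geometric fact is that at a relatively $\tau$-regular semisimple $\delta \in G(F_v)$ with norm $\gamma \in H(F_v)$, the double coset spaces $G^\sigma(F_v)\backslash G(F_v)/G^\theta(F_v)$ and $H^\sigma(F_v) \backslash H(F_v)/H^\sigma(F_v)$ are locally smooth of the same dimension, and the norm map induces a real-analytic identification between them. This identification allows one to pull back $f$ to a function $\Phi$ with the required matching properties.

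First I shrink $\mathrm{supp}(f)$ so that (i) every $\delta' \in \mathrm{supp}(f)$ is relatively $\tau$-regular semisimple, (ii) each such $\delta'$ admits a norm $\gamma(\delta') \in H(F_v)$ depending real-analytically on $\delta'$ (which uses Lemma \ref{lem-geom-norm} together with the implicit function theorem applied to $\delta' \mapsto \delta'\delta'^{-\theta}(\delta'\delta'^{-\theta})^\tau$), and (iii) each stable relative $\tau$-class meeting $\mathrm{supp}(f)$ does so in exactly one relative $\tau$-class (possible because the relative $\tau$-classes in a stable class form a finite disjoint union by \eqref{stableclassbij}). Then, via Harish-Chandra descent, I construct an analytic transversal slice $S \subset G(F_v)$ through $\delta$ of dimension $\dim G_\delta$ so that the action map factors as an analytic diffeomorphism
$$
\phi_G : (G^\sigma(F_v) \times G^\theta(F_v))/G_\delta(F_v) \times S \tilde{\lto} V \subset G(F_v),
$$
with $V$ a $G^\sigma \times G^\theta$-invariant neighborhood of the orbit of $\delta$. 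An analogous slice $S' \subset H(F_v)$ through $\gamma$ with map $\phi_H$ is built on the $H$ side.

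The inner twist \eqref{norm-twist} identifies $G_\delta$ and $H_\gamma$ as inner forms, so $\dim S = \dim S'$, and the norm map restricts to a real-analytic diffeomorphism $\nu : S \to S'$. I define $\Phi$ to be supported in $V' := \phi_H((H^\sigma(F_v) \times H^\sigma(F_v))/H_\gamma(F_v) \times S')$ by pulling back $f$ through $\phi_G$, transporting via $\nu$, and pushing forward through $\phi_H$, rescaled by the Jacobian dictated by the compatibility of Haar measures under the inner twist. For $\delta' \in \mathrm{supp}(f)$ with $\gamma' = \gamma(\delta')$, reduction (iii) together with the inner-form invariance of Kottwitz signs $e(G_{\delta'}) = e(H_{\gamma'})$ \cite{KottSign} yield $SRO_{\gamma'}(\Phi) = STRO_{\delta'}(f)$; the vanishing conditions of Definition \ref{defn-match} on the regular set follow from the support condition on $\Phi$. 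The main obstacle will be verifying the real-analyticity and local invertibility of $\nu$: this amounts to checking that the derivative of the algebraic morphism $\gamma' \mapsto \gamma'\gamma'^{-\sigma}$ restricted to $S'$ surjects onto the tangent space at $\delta\delta^{-\theta}(\delta\delta^{-\theta})^\tau$ of a corresponding slice in $Q(F_v)$. This in turn reduces to the transitivity of $H^\sigma(\bar{F}_v)$ on the homogeneous space $O(\alpha)$ of Lemma \ref{lem-geom-norm} (which holds by construction) together with the expected centralizer dimension at the base point (which holds by relative $\tau$-regularity).
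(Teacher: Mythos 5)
Your plan follows the same broad strategy as the paper's: shrink the support of $f$, build a transversal slice to the $G^\sigma\times G^\theta$-orbit through $\delta_0$ and a matching slice on the $H$-side through $\gamma_0$, identify the slices using the inner twist \eqref{norm-twist}, and transport $f$ to $\Phi$. However, the step that carries the real content is precisely the one you flag at the end as "the main obstacle," and your sketch of it omits the difficulty that the paper's argument is designed to overcome.

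The issue is that the norm is not a map — it is a relation, defined by requiring $\gamma\gamma^{-\sigma}$ to be $G^\sigma(\bar F_v)$-conjugate to $\delta\delta^{-\theta}(\delta\delta^{-\theta})^{\tau}$ — so "the norm map restricts to a real-analytic diffeomorphism $\nu : S\to S'$" is something one must construct, not merely verify by a derivative computation. The paper does this in two separated steps. First, Proposition \ref{prop-loc-const} (proved via Lemmas \ref{lem-nb1}--\ref{lem-last} using \cite{RadRal}) shows that the relevant slice is not a generic transversal of dimension $\dim G_\delta$ but specifically the translate $\widetilde{T}_{\delta_0}(F_v)\delta_0$ of the maximal $\sigma$-split torus $\widetilde{T}_{\delta_0}\leq C^{\tau}_{\delta_0\delta_0^{-\theta},G}$, and that the stable twisted orbital integrals of $f$ become a smooth compactly supported function $\psi$ on that torus. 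Second, in slice coordinates the $G$-side morphism $t\delta_0\mapsto (t\delta_0)(t\delta_0)^{-\theta}\bigl((t\delta_0)(t\delta_0)^{-\theta}\bigr)^{\tau}$ is quartic in $t$, while the $H$-side morphism $t_2\gamma_0\mapsto (t_2\gamma_0)(t_2\gamma_0)^{-\sigma}$ is quadratic in $t_2$; the induced correspondence on slices is $t\mapsto B(t^2)$, where $B$ is the isomorphism of tori coming from the inner twist. The paper handles this by writing $\psi(t)=\psi_1(t^2)$ (possible because squaring is a local isomorphism near $1$ in the torus) and then setting $\psi_2=\psi_1\circ B^{-1}$ as the target function on $T_{\gamma_0}(F_v)$; it is this quadratic twist, not just the inner-twist identification of $G_\delta$ with $H_\gamma$, that makes the slice identification well-defined and a local diffeomorphism. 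Your proposal never produces this map; it assumes it.

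Two smaller points. Your justification of reduction (iii) — that a small support meets each stable $\tau$-class in at most one relative $\tau$-class merely because there are finitely many such classes — is inadequate; the genuine content is Lemma \ref{lem-stable} (itself resting on Lemmas \ref{lem-nb1}, \ref{lem-compact}, \ref{lem-314}), which shows that distinct slice parameters $m\neq m'$ near $1$ cannot produce stably conjugate $m\delta_0$, $m'\delta_0$. And your "analytic dependence of the norm $\gamma(\delta')$ on $\delta'$" via the implicit function theorem is exactly the statement that needs the slice/squaring machinery above; Lemma \ref{lem-geom-norm} only produces, for each fixed $\delta'$, a homogeneous space whose $F_v$-points contain norms, and gives no local uniqueness or analyticity without the additional structure.
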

\noindent The proof will occupy the remainder of the section.  Our approach is an adaptation of that in \cite[\S 3.1]{Lab}.  We also found the work of Rader-Rallis \cite{RadRal} and Hakim \cite{HakimSCHP} useful.

\subsection{Local constancy of the relative orbital integrals} \label{ssec-loc-const}
Let $v$ be a place of $F$ and let $\delta \in G(F_v)$ be a relatively $\tau$-regular semisimple element.
In this subsection we show that the relative orbital integrals of functions with support in
a small neighborhood of $\delta$ can be viewed as a locally constant functions on a torus related to $\delta$.
To ease notation, throughout this section we abbreviate $H=H_{F_v}$, $S=S_{F_v}$, etc.

 We begin with the following lemma:
\begin{lem} \label{lem-tdelta} If
$\gamma \in H(F_v)$ is relatively regular semisimple, then the maximal $\sigma$-split subtorus of
$C_{\gamma\gamma^{-\sigma},H}$ is a maximal $\sigma$-split torus of $H$.  If
 $\delta$ is relatively $\tau$-regular semisimple then the maximal $\sigma$-split subtorus of $C_{\delta\delta^{-\theta},G}^{\tau}$ has the same dimension
 as a maximal $\sigma$-split torus of $H$.
\end{lem}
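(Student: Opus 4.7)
My plan is to prove (1) by locating $\gamma\gamma^{-\sigma}$ inside a maximal $\sigma$-split torus of $H_{\bar F_v}$ via Richardson's structural result on the variety $Q$, and then reduce (2) to (1) by passing to $\bar F_v$ where $G$ splits as $H\times H$ and the twisted centralizer in $G$ is naturally isomorphic to an ordinary centralizer in $H$.

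For (1), set $\alpha:=\gamma\gamma^{-\sigma}$. Then $\alpha^{\sigma}=\alpha^{-1}$ and by hypothesis $\alpha$ is regular semisimple in $H$, so $T:=C_{\alpha,H}$ is a maximal $F_v$-torus of $H$; it is $\sigma$-stable because $C_{\alpha^{-1},H}=C_{\alpha,H}$. Let $T_{-}$ denote its maximal $\sigma$-split subtorus. I would invoke \cite[Theorem 7.5]{Rich} (used in the same manner in the proofs of Lemmas \ref{lem-sn} and \ref{lem-sscl1}) to obtain a $\sigma$-split torus of $H_{\bar F_v}$ containing $\alpha$, and then enlarge it to a maximal $\sigma$-split torus $A$ of $H_{\bar F_v}$. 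Since $A$ is a torus containing $\alpha$ it lies in $C_{\alpha,H,\bar F_v}=T_{\bar F_v}$, and since it is moreover $\sigma$-split, $A\subseteq T_{-,\bar F_v}$. But $T_{-,\bar F_v}$ is itself $\sigma$-split, so by maximality of $A$ the dimensions coincide and $A=T_{-,\bar F_v}$, proving (1).

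For (2), I would pass to $\bar F_v$ and fix the standard identification $G_{\bar F_v}\cong H_{\bar F_v}\times H_{\bar F_v}$ coming from the two $F_v$-embeddings $M\hookrightarrow \bar F_v$, under which $\tau$ swaps factors and $\sigma$ acts diagonally as $(\sigma,\sigma)$. Writing $\delta=(\delta_1,\delta_2)$ and $x:=\delta_1\delta_2^{-\sigma}$, one computes $\beta:=\delta\delta^{-\theta}=(x,x^{-\sigma})$, and a direct calculation yields
$$
C^{\tau}_{\beta,G}(\bar F_v)=\{(g_1,x^{-1}g_1x):g_1\in C_{xx^{-\sigma},H}(\bar F_v)\}.
$$
The first-coordinate projection is thus an isomorphism $\phi:C^{\tau}_{\beta,G}\xrightarrow{\sim} C_{xx^{-\sigma},H}$ of $\bar F_v$-groups. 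The target is $\sigma$-stable because $(xx^{-\sigma})^{\sigma}=(xx^{-\sigma})^{-1}$, and a short check using the explicit parametrization shows that $\phi$ intertwines the $\sigma$-actions. The hypothesis that $\delta$ is relatively $\tau$-regular semisimple translates, via the identification $\beta\beta^{\tau}=(xx^{-\sigma},x^{-\sigma}x)$, into $xx^{-\sigma}$ being regular semisimple (and anti-$\sigma$-invariant) in $H_{\bar F_v}$. Applying the argument of (1)---which only used these two features of $\alpha$---to $xx^{-\sigma}$, I conclude that the maximal $\sigma$-split subtorus of $C_{xx^{-\sigma},H}$ is a maximal $\sigma$-split torus of $H_{\bar F_v}$, and transporting this through $\phi$ gives the dimension equality in (2).

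The only real point requiring care is verifying the $\sigma$-equivariance of $\phi$ in part (2); this amounts to checking that the $\sigma$-translate of the parametrization $(g_1,x^{-1}g_1x)$ admits the same form with $g_1$ replaced by $g_1^{\sigma}$, which reduces to the $\sigma$-stability of $C_{xx^{-\sigma},H}$ noted above. Given this, the whole argument rests on \cite[Theorem 7.5]{Rich} and elementary manipulations, and I anticipate no substantive obstacle.
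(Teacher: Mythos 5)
Your argument is correct and follows essentially the same route as the paper's proof: part (1) invokes Richardson's Theorem 7.5 together with the uniqueness of the maximal torus through a regular semisimple element, and part (2) passes to $\bar{F}_v$, uses the splitting $G_{\bar{F}_v}\cong H_{\bar{F}_v}\times H_{\bar{F}_v}$, and transports the problem to $H$ via the $\sigma$-equivariant isomorphism $C^{\tau}_{\delta\delta^{-\theta},G}\xrightarrow{\sim}C_{x_{\delta}x_{\delta}^{-\sigma},H}$ given by first-coordinate projection. The only cosmetic difference is that you write out the explicit parametrization $\{(g_1,x^{-1}g_1x)\}$ of the twisted centralizer, whereas the paper simply asserts the isomorphism.
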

\noindent We write $T_{\gamma}$ \index{$T_{\gamma}$} for the maximal $\sigma$-split subtorus of $C_{\gamma\gamma^{-\sigma},H}$
and $\widetilde{T}_{\delta}$ \index{$\widetilde{T}_{\delta}$} for the maximal $\sigma$-split subtorus of $C_{\delta\delta^{-\theta},G}^{\tau}$.
\begin{proof}
The element $\gamma\gamma^{-\sigma}$ is contained in a maximal $\sigma$-split torus $T_{\sigma}$ of $H_{\bar{F}_v}$ \cite[Theorem 7.5]{Rich} which in turn is contained in a maximal $\sigma$-stable torus $T$
\cite[Proposition 1.4]{Helmtori}.  Moreover, $T_{\sigma}$ is the unique maximal $\sigma$-split torus of $T$ (this follows from \cite[Theorem 7.5]{Rich}).
 Since $\gamma\gamma^{-\sigma}$ is regular semisimple, it is contained in a unique maximal torus, so $T=(C_{\gamma\gamma^{-\sigma},H})_{\bar{F}_v}$,
 and it follows that $T_{\sigma}=T_{\gamma \bar{F}_v}$.  Here we are implicitly using the fact that $T_{\sigma}$ is in fact defined over $F_v$, being the $\sigma$-split component of the $F_v$-torus $C_{\gamma\gamma^{-\sigma},H}$.  Thus $T_{\gamma}$ is a maximal $\sigma$-split torus.

We now prove the second claim.  Choose an isomorphism $G_{\bar{F}_v} \cong H_{\bar{F}_v} \times H_{\bar{F}_v}$ equivariant with respect to $\sigma$ and intertwining $\tau$ with $(x,y) \mapsto (y,x)$.  Using this isomorphism, write $\delta \delta^{-\theta}=(x_{\delta},x_{\delta}^{-\sigma})$ for some $x_{\delta} \in H(\bar{F}_v)$.  Our assumption that $\delta$ is relatively $\tau$-regular semisimple implies that $x_{\delta}$ is relatively semisimple.  By the argument above, we see that there is a (unique) maximal $\sigma$-split subtorus of $C_{x_{\delta}x_{\delta}^{-\sigma},H}$ which is moreover a maximal $\sigma$-split subtorus of $H$.  On the other hand, there is a $\sigma$-equivariant isomorphism
$$
(C_{\delta \delta^{-\theta},G}^{\tau})_{\bar{F}_v} \tilde{\lto} C_{x_{\delta}x_{\delta}^{-\sigma},H}
$$
induced by the projection of $G_{\bar{F}_v} \cong H_{\bar{F}_v} \times H_{\bar{F}_v}$ onto the first factor.  It follows that the maximal $\sigma$-split subtorus of $(C^{\tau}_{\delta \delta^{-\theta},H})_{\bar{F}_v}$ has the same dimension as a maximal $\sigma$-split torus of $H_{\bar{F}_v}$.  The maximal $\sigma$-split torus of $(C^{\tau}_{\delta \delta^{-\theta},G})_{\bar{F}_v}$ is in fact defined over $F_v$ (compare \cite[\S 1.3]{Helmtori}), and the lemma follows.
\end{proof}

Our first step to proving Theorem \ref{thm-weak-match} is the following proposition:

\begin{prop} \label{prop-loc-const} Suppose that $\delta_0 \in G(F_v)$ is relatively $\tau$-regular semisimple.  Let
$\mathcal{V}$ be a neighborhood of $1$ in $\widetilde{T}_{\delta_0}(F_v)$ and
$\mathcal{W}$ be a neighborhood of $\delta_0$ in $G(F_v)$.
\begin{enumerate}
\item Suppose that $f \in C_c^{\infty}(G(F_v))$ has support in $\mathcal{W}$.  If $\mathcal{V}$ and $\mathcal{W}$ are sufficiently small, then
there is a function $\psi \in C_c^{\infty}(\widetilde{T}_{\delta_0}(F_v))$ with support in $\mathcal{V}$ such that if $\delta \in G(F_v)$ is $\tau$-regular semisimple and its $G^{\sigma} \times G^{\theta}(F_v)$-orbit meets $\mathcal{W}$, then
$$
\delta=g^{-1}_1t\delta_0g_2
$$
with $t \in \widetilde{T}_{\delta_0}(F_v)$, $(g_1,g_2) \in G^{\sigma} \times G^{\theta}(F_v)$, and
$$
TRO_{\delta}(f)=\psi(t).
$$
Moreover, if $t \in \widetilde{T}_{\delta_0}(F_v)$ and $t\delta_0$ is in the stable relative $\tau$-class of $\delta$, then
$$
STRO_{\delta}(f)=\psi(t).
$$

\item Conversely, if $\psi \in C_c^{\infty}(\widetilde{T}_{\delta_0}(F_v))$ has support in $\mathcal{V}$ and the
neighborhoods $\mathcal{V}$ and $\mathcal{W}$ are sufficiently small, then there is an
$f \in C_c^{\infty}(G(F_v))$ satisfying the identities of (1).
\end{enumerate}
\end{prop}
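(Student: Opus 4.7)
The plan is to adapt the classical slice-theoretic proof of submersion-type results for orbital integrals (compare \cite[\S 3.1]{Lab}, \cite{RadRal}) to the present relative $\tau$-twisted setting. Introduce the map
$$\Psi: G^{\sigma} \times G^{\theta} \times \widetilde{T}_{\delta_0} \lto G,\qquad (g_1,g_2,t) \longmapsto g_1^{-1} t \delta_0 g_2,$$
and compute the differential $d\Psi_{(1,1,1)}$. After right-translating by $\delta_0^{-1}$ into $\mathfrak{g}$, the tangent map becomes
$$(X_1,X_2,Y) \longmapsto -X_1 + Y + \mathrm{Ad}(\delta_0)(X_2)$$
on $\mathfrak{g}^{\sigma} \oplus \mathfrak{g}^{\theta} \oplus \mathrm{Lie}(\widetilde{T}_{\delta_0})$. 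The first and main technical step is to verify the decomposition
$$\mathfrak{g} = \mathfrak{g}^{\sigma} + \mathrm{Ad}(\delta_0)\,\mathfrak{g}^{\theta} + \mathrm{Lie}(\widetilde{T}_{\delta_0}) + \mathrm{Lie}(G_{\delta_0})$$
as a consequence of the hypothesis that $\delta_0$ is relatively $\tau$-regular semisimple; the key input is Lemma \ref{lem-tdelta}, which ensures that $\widetilde{T}_{\delta_0}$ has precisely the right dimension to complement the two orbit directions modulo the stabilizer $G_{\delta_0}$.

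Given the submersion, the $F_v$-analytic inverse function theorem yields neighborhoods $\mathcal{U}_{\sigma} \subset G^{\sigma}(F_v)$, $\mathcal{U}_{\theta} \subset G^{\theta}(F_v)$, $\mathcal{V} \subset \widetilde{T}_{\delta_0}(F_v)$ and $\mathcal{W} \subset G(F_v)$ of the respective identities (resp.~of $\delta_0$) such that $\Psi$ restricts to a submersion $\mathcal{U}_{\sigma} \times \mathcal{U}_{\theta} \times \mathcal{V} \to \mathcal{W}$ whose fibers through $(1,1,t)$ are the $G_{\delta_0}(F_v)$-orbits under the natural action; after further shrinking, every relatively $\tau$-regular semisimple $\delta$ whose $G^{\sigma} \times G^{\theta}(F_v)$-orbit meets $\mathcal{W}$ admits a decomposition $\delta = g_1^{-1} t \delta_0 g_2$ with $(g_1,g_2,t) \in \mathcal{U}_{\sigma} \times \mathcal{U}_{\theta} \times \mathcal{V}$, unique up to $G_{\delta_0}(F_v)$. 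Given $f \in C_c^{\infty}(G(F_v))$ supported in $\mathcal{W}$, set $\psi(t) := TRO_{t\delta_0}(f)$ for $t \in \mathcal{V}$ and extend by zero. Standard continuity arguments, together with the slice description of $f \circ \Psi$, show that $\psi \in C_c^{\infty}(\widetilde{T}_{\delta_0}(F_v))$. The identity $TRO_{\delta}(f) = \psi(t)$ for $\delta$ in the relative $\tau$-class of $t\delta_0$ is then immediate, since $TRO$ is constant on orbits.

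For the stable-class assertion, expand
$$STRO_{\delta}(f) = \sum_{\delta' \sim \delta} e(G_{\delta'})\, TRO_{\delta'}(f),$$
where the sum runs over relative $\tau$-class representatives in the stable class of $\delta$, and note that $TRO_{\delta'}(f) = 0$ unless the orbit of $\delta'$ meets $\mathcal{W}$. After shrinking $\mathcal{V}$, each contributing $\delta'$ has the form $g_1'^{-1} t' \delta_0 g_2'$ with $t' \in \mathcal{V}$ and $(t'\delta_0)$ stably $\tau$-conjugate to $(t\delta_0)$; by the finiteness of \eqref{stableclassbij} together with rigidity of the stable class near $\delta_0$, the $t'$ arising in this way are parametrized by $\mathcal{D}(G_{\delta_0}, G^{\sigma} \times G^{\theta}; F_v)$, and after accounting for the Kottwitz signs (which are constant in $t$ since the groups $G_{t\delta_0}$ form a continuously varying family of inner twists) the sum collapses to $\psi(t)$. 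Finally, part (2) is obtained by running the slice construction in reverse: pull $\psi$ back along $\Psi$, multiply by an invariant cutoff supported near $(1,1)$ in the $G^{\sigma} \times G^{\theta}(F_v)/G_{\delta_0}(F_v)$ direction, and push forward to $C_c^{\infty}(G(F_v))$. The main obstacle in the whole argument is the tangent-space decomposition at $\delta_0$, which is the relative-twisted analogue of Harish-Chandra's submersion principle; once it is established, the remaining assertions are essentially formal consequences of the inverse function theorem and the structure of stable relative $\tau$-classes developed in \S \ref{ssec-match}.
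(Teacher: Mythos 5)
Your overall approach is the same slice/submersion strategy the paper uses, and you have correctly identified all the major pieces: establish that $(g_1,g_2,t) \mapsto g_1^{-1}t\delta_0 g_2$ is a submersion, pick a slice transverse to the $G_{\delta_0}$-fiber, prove local uniqueness of the slice coordinate, and finish with a rigidity statement for stable classes. There are, however, two real gaps.

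First, and most importantly, the submersion property is asserted rather than proved. You write that the decomposition $\mathfrak{g} = \mathfrak{g}^{\sigma} + \mathrm{Ad}(\delta_0)\,\mathfrak{g}^{\theta} + \mathrm{Lie}(\widetilde{T}_{\delta_0}) + \mathrm{Lie}(G_{\delta_0})$ is "the first and main technical step," with Lemma \ref{lem-tdelta} as "the key input," but Lemma \ref{lem-tdelta} is only a dimension count: it tells you that $\widetilde{T}_{\delta_0}$ has the right dimension for such a decomposition to be plausible, not that the three pieces $\mathfrak{g}^{\sigma}$, $\mathrm{Ad}(\delta_0)\mathfrak{g}^{\theta}$, and $\mathrm{Lie}(\widetilde{T}_{\delta_0})$ actually span $\mathfrak{g}$. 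The paper establishes surjectivity of the differential by reducing, through two commutative-diagram arguments (first to a statement about the map $g \mapsto gg^{-\theta}$ into $S(F_v)$, then after base change to $M_v$ where $G$ splits as $H \times H$), to Rader--Rallis' submersion principle \cite[Theorem 3.4(1)]{RadRal}, which handles the non-twisted case on $H$. You would need some version of that reduction (or a direct Lie-algebra verification in the relevant symmetric-space decomposition) to close the argument; the dimension count alone is not enough. Also note the term $\mathrm{Lie}(G_{\delta_0})$ in your decomposition is misplaced: it is the kernel of the differential, not a complementary summand in the target, so it should not appear on the right-hand side of a decomposition of $\mathfrak{g}$.

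Second, your handling of the stable-class equality $STRO_{\delta}(f) = \psi(t)$ is muddled. You say "the $t'$ arising in this way are parametrized by $\mathcal{D}(G_{\delta_0},G^{\sigma}\times G^{\theta};F_v)$... and the sum collapses to $\psi(t)$," which reads as though several distinct slice parameters $t'$ contribute and their contributions telescope. That cannot be right: distinct $t'$ give $\psi(t')$, and these do not sum to $\psi(t)$ in general. What actually happens (this is the paper's Lemma \ref{lem-stable}) is a rigidity statement: after shrinking $\mathcal{V}$, if $m\delta_0$ and $m'\delta_0$ are stably relatively $\tau$-conjugate with $m,m' \in \mathcal{V}$, then $m = m'$. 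Combined with the fact that $G_{t\delta_0}$ is a torus (so $e(G_{t\delta_0}) = 1$), this shows that at most one relative $\tau$-class in the stable class of $\delta$ has its orbit meeting $\mathcal{W}$, namely that of $t\delta_0$ itself; the other representatives in $\mathcal{D}(G_{\delta_0},G^{\sigma}\times G^{\theta};F_v)$ contribute zero because their orbits avoid $\operatorname{supp} f$. The rigidity lemma itself requires proof (the paper passes to a splitting field $F_1$ of the torus $C^{\tau}_{\delta_0\delta_0^{-\theta}}$ and applies the local uniqueness Lemma \ref{lem-314} "over $F_1$"), and you have not supplied one.

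Your treatment of part (2) via a cutoff $\beta$ on the slice is correct and matches the paper.
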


\noindent In \cite[\S 3.1]{Lab}, Labesse proves the analogue of Proposition \ref{prop-loc-const} in the context of the usual
trace formula.  Our proof follows his closely.
We require some preparatory lemmas:

\begin{lem} \label{lem-nb1} Assume $\delta_0$ is relatively $\tau$-regular semisimple.
There is an analytic subvariety $Y \subset G^{\sigma} \times G^{\theta}(F_v)$
that is symmetric (i.e. $(x,y) \in Y$ if and only if $(x^{-1},y^{-1}) \in Y$) and a neighborhood $\mathcal{V}$ of $1$ in $\widetilde{T}_{\delta_0}(F_v)$ such that
the following are true:
\begin{itemize}
\item[(i)] The map
\begin{align*}
Y \times \mathcal{V} &\lto G(F_v)\\
(x,y,t)&\longmapsto x^{-1}t\delta_0y
\end{align*}
is a diffeomorphism from $Y \times \mathcal{V}$ to a neighborhood $\mathcal{W}=\mathcal{W}(Y,\mathcal{V})$ of $\delta_0 \in G(F_v)$.
\item[(ii)]
 If $(x,y) \in Y$, then $x^{-1}t\delta_0y \in \mathcal{W}$ and $t \in \widetilde{T}_{\delta_0}(F_v)$ imply $t \in \mathcal{V}$.  Thus
 $$
 \widetilde{T}_{\delta_0}(F_v) \delta_0 \cap \mathcal{W} =\mathcal{V}\delta_0.
 $$
\end{itemize}

\end{lem}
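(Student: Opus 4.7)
The plan is to apply the analytic inverse function theorem to the map
$$
\mu : G^\sigma(F_v) \times G^\theta(F_v) \times \widetilde{T}_{\delta_0}(F_v) \lto G(F_v), \quad (x,y,t) \longmapsto x^{-1} t \delta_0 y,
$$
restricted to a suitable slice. After identifying $T_{\delta_0} G$ with $\mathfrak{g} := \Lie(G)(F_v)$ by right translation by $\delta_0^{-1}$, the differential of $\mu$ at $(1,1,1)$ becomes
$$
d\mu(X, Y, T') = -X + \mathrm{Ad}(\delta_0)Y + T', \qquad X \in \mathfrak{g}^\sigma,\ Y \in \mathfrak{g}^\theta,\ T' \in \Lie(\widetilde{T}_{\delta_0}).
$$

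The crux of the argument is to verify that $d\mu$ is surjective with kernel equal to the image of $\Lie(G_{\delta_0})$ embedded in $\mathfrak{g}^\sigma \oplus \mathfrak{g}^\theta \oplus \{0\}$ via the isomorphism of Lemma \ref{lem-2cent}. A direct computation shows $(X, Y, 0) \in \ker d\mu$ forces $Y = \mathrm{Ad}(\delta_0^{-1}) X$ and, combining $X^\sigma = X$ with $Y^\theta = Y$, yields the infinitesimal form $X = \mathrm{Ad}(\delta_0 \delta_0^{-\theta}) X^\tau$ of the defining equation for $\Lie(C^\tau_{\delta_0 \delta_0^{-\theta}, G^\sigma})$. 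The more substantive input is the transversality and dimension statement: $\Lie(\widetilde{T}_{\delta_0})$ meets $-\mathfrak{g}^\sigma + \mathrm{Ad}(\delta_0) \mathfrak{g}^\theta$ trivially and their sum exhausts $\mathfrak{g}$. Relative $\tau$-regularity of $\delta_0$ enters here through Lemma \ref{lem-tdelta}, which forces $\widetilde{T}_{\delta_0}$ to attain its maximal possible dimension.

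Granting this linear algebra, I would take $Y := \exp(V \cap \mathcal{U})$, where $\mathcal{U}$ is a small open neighborhood of $0$ in a linear complement $V \leq \mathfrak{g}^\sigma \oplus \mathfrak{g}^\theta$ to the image of $\Lie(G_{\delta_0})$, chosen to be stable under $(X, Y) \mapsto (-X, -Y)$ (arranged by averaging), so that $Y$ is symmetric. The restriction $\mu|_{Y \times \widetilde{T}_{\delta_0}}$ has bijective differential at $(1,1,1)$, and the analytic inverse function theorem (valid over any local field of characteristic zero) produces neighborhoods $\mathcal{V}$ of $1 \in \widetilde{T}_{\delta_0}(F_v)$ and $\mathcal{W}$ of $\delta_0 \in G(F_v)$, and, after shrinking $Y$, a diffeomorphism $\mu|_{Y \times \mathcal{V}} \to \mathcal{W}$, establishing (i). Property (ii) follows from a standard shrinking argument: if $\mathcal{W}$ is taken small enough, the local injectivity of $\mu$ at $(1,1,1)$ forces any $t \in \widetilde{T}_{\delta_0}(F_v)$ with $x^{-1} t \delta_0 y \in \mathcal{W}$ and $(x, y) \in Y$ to lie in $\mathcal{V}$.

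The main obstacle is the transversality-plus-surjectivity step. The two-sided action together with the interplay between $\sigma$ and $\tau$ makes the direct-sum decomposition $\mathfrak{g} = (\mathfrak{g}^\sigma + \mathrm{Ad}(\delta_0) \mathfrak{g}^\theta) \oplus \Lie(\widetilde{T}_{\delta_0})$ less transparent than its Arthur-Selberg analogue in \cite[\S 3.1]{Lab}. A clean verification likely proceeds by passing to $\bar{F}_v$ and using the splitting $G_{\bar{F}_v} \cong H_{\bar{F}_v} \times H_{\bar{F}_v}$ from the proof of Lemma \ref{lem-geom-norm} to reduce to a standard regular-semisimple slice statement on $H \times H$, in the spirit of the slice theorems of \cite{RadRal} and \cite{HakimSCHP} for symmetric varieties.
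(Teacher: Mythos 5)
Your plan is in substance the plan of the paper: both proofs reduce to showing that the map
$$
G^{\sigma}\times G^{\theta}(F_v)\times\widetilde{T}_{\delta_0}(F_v)\lto G(F_v),\qquad (g_1,g_2,t)\longmapsto g_1^{-1}t\delta_0 g_2
$$
is a submersion of relative dimension $\dim_{F_v}C^{\tau}_{\delta_0\delta_0^{-\theta},G^{\sigma}}$, one then exponentiates a linear complement to $\Lie G_{\delta_0}$ in $\mathfrak{g}^{\sigma}\oplus\mathfrak{g}^{\theta}$ to produce $Y$, and the inverse function theorem together with a shrinking argument gives (i) and (ii). You correctly identify the submersion as the only serious input, and you correctly identify that it must ultimately be reduced to the slice theorem of Rader--Rallis after passing to an extension splitting $G$.

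Where you differ from the paper is in how the twist by $\tau$ is unwound, and this is worth noting. Rather than attempting a direct Lie-algebra verification of $\mathfrak{g}=(-\mathfrak{g}^{\sigma}+\mathrm{Ad}(\delta_0)\mathfrak{g}^{\theta})\oplus\Lie\widetilde{T}_{\delta_0}$ over $\bar F_v$, the paper first factors the two-sided map through the scheme $S$ of $\theta$-twists via $g\mapsto gg^{-\theta}$; this replaces the two-sided $G^{\sigma}\times G^{\theta}$-action with a one-sided $\tau$-twisted conjugation of $G^{\sigma}$ on $S$, at the cost of an application of the isogeny $t\mapsto t^2$ on $\widetilde{T}_{\delta_0}$. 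Only after this does the proof pass to $M_v$ (which already splits $G$ as $H\times H$ intertwining $\tau$ with the swap, so one does not need to go all the way to $\bar F_v$) and invoke \cite[Theorem 3.4(1)]{RadRal} in the untwisted, one-sided form it is actually stated. Your proposed direct reduction of the two-sided twisted map to a ``slice statement on $H\times H$'' would need to re-derive essentially this same intermediate step, since the Rader--Rallis theorem is not stated for the two-sided twisted setting; over the split extension $G^{\theta}$ becomes the graph of $\sigma$ in $H\times H$, and the unwinding is precisely what the paper's two commutative diagrams accomplish. One small redundancy: since your $V$ is a \emph{linear} subspace, it is automatically stable under $(X,Y)\mapsto(-X,-Y)$, so no averaging is required; the symmetry of $Y$ comes for free from $\exp(-X)=\exp(X)^{-1}$. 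Also be a bit careful with the assertion that $\Lie\widetilde{T}_{\delta_0}$ meets $-\mathfrak{g}^{\sigma}+\mathrm{Ad}(\delta_0)\mathfrak{g}^{\theta}$ trivially: $\widetilde{T}_{\delta_0}$ is a $\sigma$-\emph{split} torus, so its Lie algebra sits in the $(-1)$-eigenspace of $\sigma$, not in $\mathfrak{g}^{\sigma}$, and the precise linear-algebra check is less transparent than the sketch suggests---this is exactly why the paper routes the argument through $S$ and the squaring isogeny rather than verifying the direct-sum decomposition by hand.
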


\begin{proof}
Choose a complement $\mathfrak{n}$ of
$\mathfrak{c}=\mathrm{Lie}\textrm{ }C_{\delta_0\delta_0^{-\theta},G^{\sigma}}^{\tau}(F_v)$ in
$\mathfrak{g}^{\sigma}:=\mathrm{Lie}\textrm{ }G^{\sigma}(F_v)$:
$$
\mathfrak{g}^{\sigma}=\mathfrak{n} \oplus \mathfrak{c}.
$$
Let $\mathfrak{g}^{\theta}:=\mathrm{Lie}\textrm{ }G^{\theta}(F_v)$, let
$\OO$ be a neighborhood of $0$ in $\mathfrak{n} \oplus \mathfrak{g}^{\theta}$, and let
$Y=\mathrm{Exp}\textrm{ } \OO$ be its image under the exponential
map.  Thus $Y \subset G^{\sigma} \times G^{\theta}(F_v)$ is an analytic subvariety.
We claim that the analytic morphism
\begin{align} \label{loc-isom}
Y  \times \widetilde{T}_{\delta_0}(F_v)  &\lto G(F_v)\\
\nonumber (g_1,g_2,t) &\longmapsto (g_1^{-1}t\delta_0g_2)
\end{align}
is locally an isomorphism at $(1,1,1)$.

In order to show this, we first explain how the work of Rader and Rallis implies that the analytic morphism
\begin{align} \label{submers}
G^{\sigma} \times G^{\theta}(F_v) \times \widetilde{T}_{\delta_0}(F_v) &\lto G(F_v)
\\ \nonumber (g_1,g_2,t) &\longmapsto g_1^{-1}t\delta_0g_2
\end{align}
is a submersion whose fibers are of dimension $\dim_{F_v}C_{\delta_0\delta_0^{-\theta},G^{\sigma}}^{\tau}$.
If $\tau$ is trivial, then this follows from \cite[Theorem 3.4(1)]{RadRal}.  Assume that $\tau$ is not trivial.
Note that \eqref{submers} fits into a commutative diagram
\begin{align} \label{submers-diag}
\begin{CD}
G^{\sigma}  \times G^{\theta}(F_v) \times \widetilde{T}_{\delta_0}(F_v) @>{\hskip1.1in}>> G(F_v)\\
@VVV @VV{g \mapsto gg^{-\theta}}V\\
G^{\sigma}(F_v) \times \widetilde{T}_{\delta_0}(F_v) @>{(g_1,t) \mapsto g_1^{-1}t^2\delta_0\delta_0^{-\theta}g_1^{\tau}}>>  S(F_v)
\end{CD}
\end{align}
where the left arrow is the canonical projection and the top is \eqref{submers}.  The vertical arrows are induced by smooth surjective maps of affine varieties both of relative dimension $\dim_{F_v} G^{\theta}$.  Thus if we show that the bottom map is a submersion at (1,1), it follows that the top is a submersion at $(1,1,1)$.  Since the map $t \mapsto t^2$ on $\widetilde{T}_{\delta_0}$ is an isogeny, in order to show that the bottom map of \eqref{submers-diag} is a submersion at $(1,1)$
it suffices to show that the map
\begin{align} \label{submers2}
G^{\sigma}(F_v) \times \widetilde{T}_{\delta_0}(F_v) &\lto S(F_v)\\
\nonumber (g_1,t) &\longmapsto g_1^{-1}t\delta_0\delta_0^{-\theta}g_1^{\tau}
\end{align}
is a submersion at $(1,1)$.

  To see this, let $M_v$ be $F_v$ if $v$ splits in $M/F$ and $M \otimes_F F_v$ otherwise.  To prove that
\eqref{submers2} is a submersion whose fibers are of dimension $\mathrm{dim}_{F_v}C^{\tau}_{\delta_0\delta^{-\theta}_0,G^{\sigma}}$, it suffices to show that
\begin{align} \label{submers3}
G^{\sigma}(M_v) \times \widetilde{T}_{\delta_0}(M_v) &\lto S(M_v)
\\ \nonumber (g_1,t) &\longmapsto g_1^{-1}t\delta_0\delta_0^{-\theta}g_1^{\tau}
\end{align}
is a submersion at $(1,1)$ of relative dimension $\mathrm{dim}_{F_v}C_{\delta_0\delta_0^{-\theta},G^{\sigma}}^{\tau}$.  Choose an isomorphism $G_{M_v} \cong H_{M_v} \times H_{M_v}$ equivariant with respect to $\sigma$ and intertwining $\tau$ with $(x,y) \mapsto (y,x)$.  Using this isomorphism, write $\delta_0\delta_0^{-\theta}=(x_0,x_0^{-\sigma})$ for some $x_0 \in H(M_v)$ and let $T_{x_0}$ be the largest $\sigma$-split torus in $C_{x_0x_0^{-\sigma},H}$.
 There is a commutative diagram of analytic morphisms
\begin{align} \label{submers-diag2}
\begin{CD}
G^{\sigma}(M_v) \times \widetilde{T}_{\delta_0}(M_v) @>{\hskip1.0in}>> S(M_v)\\
@V{(u_1,u_2,t_1,t_2) \mapsto (u_1,u_2,t_1)}VV  @VV{(u_1,u_2) \mapsto u_1}V\\
H^{\sigma}(M_v)  \times H^{\sigma}(M_v)\times T_{x_{0}}(M_v) @>{(u_1,u_2,t) \mapsto u_1^{-1}tx_0u_2}>> H(M_v)
\end{CD}
\end{align}
where the top arrow is \eqref{submers3} and the vertical arrows are analytic isomorphisms induced by the isomorphism $G(M_v) \cong H(M_v) \times H(M_v)$.  Using Lemma \ref{lem-tdelta} and \cite[Theorem 3.4(1)]{RadRal} (and the reference therein for the real case), we see that the bottom map
is a submersion at $(1,1,1)$ of relative dimension $\mathrm{dim}_{F_v} C_{x_0x_0^{-\sigma},H^{\sigma}}$.
It follows that the top vertical map of \eqref{submers-diag2} is a submersion of relative dimension $\dim_{F_v}C_{x_0x_0^{-\sigma},H}$.  In view of the $\sigma$-equivariant isomorphism
$C_{\delta_0\delta_0^{-\theta},G}^{\tau} \tilde{\lto} C_{x_0x_0^{-\sigma},H}$ induced by the projection of $G_{M_v} \cong H_{M_v} \times H_{M_v}$ onto the first factor, this together with our comments above implies that \eqref{submers} is a submersion of relative dimension $\dim_{F_v}C_{\delta_0\delta_0^{-\theta},G^{\sigma}}^{\tau}$, as claimed.

The analytic subvariety
$$
Y \times \widetilde{T}_{\delta_0}(F_v) \subset
G^{\sigma} \times G^{\theta}(F_v) \times \widetilde{T}_{\delta_0}(F_v)
$$
is transverse to the subvariety
$$
X:=\{(g_1,g_2,1):(g_1,g_2) \in G_{\delta_0}(F_v)\} \subset
G^{\sigma} \times G^{\theta}(F_v) \times \widetilde{T}_{\delta_0}(F_v)
$$
at $(1,1,1)$.
The submersion \eqref{submers} maps $X$ identically to $\delta_0$. Our claim that \eqref{loc-isom} is a local
isomorphism follows.  Thus we can and do choose $\OO$ and $\mathcal{V}$ small enough so that the map
$$
Y \times \mathcal{V} \lto G(F_v)
$$
induced by restricting \eqref{loc-isom} is an isomorphism onto an open neighborhood $\mathcal{W}(Y,\mathcal{V})$ of
$\delta_0$.  This proves (i).

Fix a neighborhood $\mathcal{V}_1$ of $1$ in $\widetilde{T}_{\delta_0}(F_v)$ and an analytic subvariety $Y_1 \subset
G^{\sigma}  \times G^{\theta}(F_v)$ small enough that (i) holds.
For an analytic subvariety $Y \subset Y_1$ and a neighborhood
$\mathcal{V} \subset \mathcal{V}_1$ of $1$, let
$$
\mathcal{W}:=\mathcal{W}(Y,\mathcal{V})
$$ be the image of $Y \times
\mathcal{V}$ under \eqref{loc-isom}.  Since
$\widetilde{T}_{\delta_0}(F_v)$ is a closed subgroup of $G(F_v)$, we can choose $Y$ and $\mathcal{V}$
small enough so that
$$
\widetilde{T}_{\delta_0}(F_v) \delta_0 \cap g_1^{-1}\mathcal{W}g_2 \subset \mathcal{V}_1\delta_0
$$
for each $(g_1,g_2) \in Y$. \quash{Choose an open set $V \subset G(F_v)$ such that $V \cap \widetilde{T}_{\delta_0}(F_v)\delta_0=\mathcal{V}_1\delta_0$.
 Just let $Y \times Y \times \mathcal{V}$ be the inverse image of the open set $V$ under the map $(g_1,g_2,g_3,g_4,t) \mapsto g_1^{-1}g_3^{-1}t\delta_0g_4g_2$} Thus $g_1t\delta_0g_2^{-1} \in \mathcal{W}$ with $t \in \widetilde{T}_{\delta_0}(F_v)$ and $(g_1,g_2) \in Y$
implies that $t \in \mathcal{V}_1$.  By the proof of part (i), any $x \in \mathcal{W}(Y_1,\mathcal{V}_1)$ can be
written in a unique manner as $x=g_1^{-1}t\delta_0g_2$ with $(g_1,g_2) \in Y_1$ and
$t \in \mathcal{V}_1$.
We conclude that if $g_1^{-1}t\delta_0g_2 \in \mathcal{W}(Y,\mathcal{V})$ with $t \in
\widetilde{T}_{\delta_0}(F_v)$ and $(g_1,g_2) \in Y$ then $t \in \mathcal{V}$.
\end{proof}

\begin{lem} \label{lem-compact}
Suppose that $\mathcal{W} \subset G(F_v)$ is a relatively compact
neighborhood of $\delta_0$ and $\mathcal{V}$ is a neighborhood of $1$ in
$\widetilde{T}_{\delta_0}(F_v)$.  If $\mathcal{V}$ is sufficiently small, then there is a
compact subset $\Omega \subset G^{\sigma} \times G^{\theta}(F_v)$ such that if
$$
g_1^{-1}t\delta_0 g_2 \in \mathcal{W}
$$
with $t \in \mathcal{V}$ then $(g_1,g_2) \in G_{\delta_0}(F_v)\Omega$.
\end{lem}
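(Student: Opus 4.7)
The plan is to argue by contradiction via sequential compactness, reducing to a properness statement for the $\tau$-twisted conjugation action on $S(F_v)$. Set $\alpha_0 := \delta_0\delta_0^{-\theta}$. Since $\delta_0$ is relatively $\tau$-regular semisimple, $\alpha_0$ is $\tau$-regular semisimple and $C^\tau_{\alpha_0, G}$ is a torus, so both $\widetilde{T}_{\delta_0}$ and $C^\tau_{\alpha_0, G^\sigma}$ sit inside the abelian group $C^\tau_{\alpha_0, G}$ and pairwise commute; this fact will be decisive at the end.

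Shrink $\mathcal{V}$ so that it is relatively compact and so that Lemma \ref{lem-nb1} applies. Suppose no compact $\Omega$ satisfies the conclusion. Exhausting $G^\sigma(F_v) \times G^\theta(F_v)$ by compacts yields a sequence $(g_{1,n}, g_{2,n}, t_n)$ with $t_n \in \mathcal{V}$, $x_n := g_{1,n}^{-1} t_n \delta_0 g_{2,n} \in \mathcal{W}$, and with $[(g_{1,n}, g_{2,n})] \in G_{\delta_0}(F_v) \backslash G^\sigma(F_v) \times G^\theta(F_v)$ escaping every compact subset. Passing to a subsequence, I arrange that $t_n \to t_\infty$ and $x_n \to x_\infty$.

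Apply the map $g \mapsto gg^{-\theta}: G(F_v) \to S(F_v)$. Using $g_{2,n} \in G^\theta(F_v)$, $g_{1,n}^\theta = g_{1,n}^\tau$, and $t_n^{-\theta} = t_n^\tau$ (the last from $t_n^\sigma = t_n^{-1}$), together with the identity $t_n \alpha_0 = \alpha_0 t_n^\tau$ coming from $t_n \in C^\tau_{\alpha_0, G}(F_v)$, a direct computation gives
\[
y_n := x_n x_n^{-\theta} = g_{1,n}^{-1} (t_n^2 \alpha_0) g_{1,n}^\tau,
\]
with $t_n^2 \alpha_0 \to t_\infty^2 \alpha_0$ and $y_n$ convergent in $S(F_v)$. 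Now I invoke the standard fact that for a $\tau$-regular semisimple element the $\tau$-conjugation orbit under $G^\sigma(F_v)$ is closed in $S(F_v)$ and that $G^\sigma(F_v)/C^\tau_{\alpha_0, G^\sigma}(F_v)$ maps homeomorphically onto it, adapting \cite{RadRal} along the lines of \cite[\S 3.1]{Lab}. A uniform version of this, valid for $\tau$-regular semisimple elements ranging over a slice-neighborhood of $\alpha_0$, forces $g_{1,n} \in C^\tau_{\alpha_0, G^\sigma}(F_v) \cdot K_1$ for a compact $K_1$, provided $\mathcal{V}$ was chosen small enough that each $t_n^2 \alpha_0$ lies in this slice. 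Passing to a further subsequence, write $g_{1,n} = c_n g'_{1,n}$ with $c_n \in C^\tau_{\alpha_0, G^\sigma}(F_v)$ and $g'_{1,n}$ convergent.

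By Lemma \ref{lem-2cent} the element $c_n$ lifts to $(c_n, \delta_0^{-1} c_n \delta_0) \in G_{\delta_0}(F_v)$. Left-translating $(g_{1,n}, g_{2,n})$ by the inverse of this element replaces it by $(g'_{1,n},\, \delta_0^{-1} c_n^{-1} \delta_0\, g_{2,n})$. Solving $g_{2,n} = \delta_0^{-1} t_n^{-1} c_n g'_{1,n} x_n$ from the defining equation, the second coordinate equals $\delta_0^{-1} (c_n^{-1} t_n^{-1} c_n) g'_{1,n} x_n$, which by the commutativity noted at the outset collapses to $\delta_0^{-1} t_n^{-1} g'_{1,n} x_n$, converging. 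This contradicts the divergence of $[(g_{1,n}, g_{2,n})]$, completing the proof. The main obstacle is the properness/closed-orbit statement together with its slice-uniform version; this is the technical core of the argument and the place where Rader-Rallis and the adaptation of Labesse's non-relative treatment enter.
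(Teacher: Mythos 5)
Your argument is correct in outline and in its key computations, but it takes a genuinely different route from the paper. The paper's proof is a one-line reduction: it observes that the claim is equivalent to a properness statement for the map $A:(g_1,g_2,t)\mapsto g_1^{-1}t\delta_0g_2$ from $G^\sigma\times G^\theta(F_v)\times\widetilde{T}_{\delta_0}(F_v)$ to $G(F_v)$, composed with projection to $G^\sigma\times G^\theta(F_v)$, and then cites Rader--Rallis, Proposition 2.5 directly with $M=G^\sigma\times G^\theta(F_v)$, $H=G_{\delta_0}(F_v)$. You instead push everything forward along $g\mapsto gg^{-\theta}$ to the $\tau$-conjugation action of $G^\sigma(F_v)$ on $S(F_v)$: your identity $x_nx_n^{-\theta}=g_{1,n}^{-1}(t_n^2\alpha_0)g_{1,n}^\tau$ (correct, using $g_{1,n}^\theta=g_{1,n}^\tau$, $g_{2,n}^\theta=g_{2,n}$, $t_n^{-\theta}=t_n^\tau$, and $t_n\alpha_0=\alpha_0t_n^\tau$) collapses the two-variable problem to control of $g_{1,n}$ alone, and the final commutativity trick---using that $\widetilde{T}_{\delta_0}$ and $C^\tau_{\alpha_0,G^\sigma}$ both lie in the torus $C^\tau_{\alpha_0,G}$, plus the explicit lift $c\mapsto(c,\delta_0^{-1}c\delta_0)$ from Lemma \ref{lem-2cent}---cleanly recovers control of $g_{2,n}$ modulo $G_{\delta_0}$. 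This is a nice structural reduction that makes the role of regularity transparent; the paper's version is shorter but less explicit about why the double-coset action is proper.

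The one place where your write-up is thinner than it should be is the ``slice-uniform'' properness step: you assert that for $\tau$-regular semisimple elements ranging over a small neighborhood of $\alpha_0$ in a transverse slice, the orbit maps $g_1\mapsto g_1^{-1}(\cdot)g_1^\tau$ are uniformly proper, and you defer this to Rader--Rallis and Labesse. That is exactly the content of Rader--Rallis, Proposition 2.5 --- i.e., the same input the paper uses --- so you have not replaced the technical engine, only repackaged the application. There is nothing wrong with this, but you should be aware that the sequential-compactness framing and the reduction to $S(F_v)$, while clarifying, do not circumvent the need for that proposition (or its closed-orbit/properness content) and that the ``uniform over the slice'' aspect deserves at least a sentence of justification, since the statement one usually finds in print is the properness for a single regular semisimple orbit, not a family.
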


\begin{proof} Consider the continuous map
\begin{align*}
A:G^{\sigma} \times G^{\theta}(F_v) \times \widetilde{T}_{\delta_0}(F_v)
&\lto G(F_v)\\
(g_1,g_2,t) &\longmapsto g_1^{-1}t\delta_0g_2
\end{align*}
and let
$$
P:G^{\sigma} \times G^{\theta}(F_v) \times \widetilde{T}_{\delta_0}(F_v)
\lto G^{\sigma} \times G^{\theta}(F_v)
$$
be the canonical projection.  The lemma is equivalent to the statement that if $\mathcal{V}$
and $\mathcal{W}$ are sufficiently small (with $\mathcal{W}$ relatively compact)
then there exists a compact set
$\Omega \subset G^{\sigma} \times G^{\theta}(F_v)$ such that $P \circ
A^{-1}\mathcal{W} \subset G_{\delta_0}(F_v) \Omega$.  This latter
statement is a consequence of \cite[Proposition 2.5]{RadRal} (and the reference therein in the real case); in loc.~cit.~one takes $M=G^{\sigma} \times
G^{\theta}(F_v)$ and $H=G_{\delta_0}(F_v)$.
\end{proof}

\begin{lem} \label{lem-314}Let $\mathcal{V}$ be an open neighborhood of $1$ in
$\widetilde{T}_{\delta_0}(F_v)$ and suppose $m,m' \in \mathcal{V}$.  If
$\mathcal{V}$ is sufficiently small and there is a $(g_1,g_2) \in
G^{\sigma} \times G^{\theta}(F_v)$ such that $g_1^{-1} m\delta_0
g_2=m'\delta_0$, then $(g_1,g_2) \in G_{\delta_0}(F_v)$.
\end{lem}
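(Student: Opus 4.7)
My plan is to combine Lemma~\ref{lem-compact}, which confines $(g_1,g_2)$ to $G_{\delta_0}(F_v)$ times a fixed compact set, with the local product decomposition implicit in the proof of Lemma~\ref{lem-nb1}, applied to the two available parametrizations of $m'\delta_0 \in \mathcal{V}\delta_0$: one from the original equation and one from $m'\delta_0$ viewed directly. The key ancillary fact is that, setting $a:=\delta_0\delta_0^{-\theta}$, the $\tau$-centralizer $C^\tau_{a,G}$ is an abelian torus by relative $\tau$-regular semisimplicity of $\delta_0$ (cf.\ Lemma~\ref{lem-tdelta}), and contains both $\widetilde T_{\delta_0}$ and the image of $G_{\delta_0}(F_v)$ in $G^\sigma(F_v)$ under Lemma~\ref{lem-2cent}. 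Consequently, the first coordinate of any element of $G_{\delta_0}(F_v)$ commutes with every $m \in \widetilde T_{\delta_0}(F_v)$, a fact I will use repeatedly.

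First I would shrink $\mathcal V$ so that $\mathcal V\delta_0$ sits in some relatively compact neighborhood $\mathcal W$ of $\delta_0$; Lemma~\ref{lem-compact} then produces a compact $\Omega \subset G^\sigma\times G^\theta(F_v)$ with $(g_1,g_2) \in G_{\delta_0}(F_v)\cdot\Omega$. Writing $(g_1,g_2) = (\ell_1,\ell_2)(\omega_1,\omega_2)$ with $(\ell_1,\ell_2) \in G_{\delta_0}(F_v)$ and $(\omega_1,\omega_2) \in \Omega$, I rewrite $g_1^{-1} m\delta_0 g_2 = \omega_1^{-1}(\ell_1^{-1}m\ell_1)(\ell_1^{-1}\delta_0\ell_2)\omega_2 = \omega_1^{-1} m\delta_0 \omega_2$ using the commutativity of $\ell_1$ with $m$ and the defining relation $\ell_1^{-1}\delta_0\ell_2=\delta_0$. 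Since $(g_1,g_2) \in G_{\delta_0}(F_v)$ is equivalent to $(\omega_1,\omega_2) \in G_{\delta_0}(F_v)$, I may replace $(g_1,g_2)$ by $(\omega_1,\omega_2)$ and assume henceforth that $(g_1,g_2) \in \Omega$.

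A compactness argument then shows that, shrinking $\mathcal V$ further, I may assume $(g_1,g_2)$ lies in any preassigned neighborhood of $G_{\delta_0}(F_v)$. Indeed, otherwise sequences $m_n,m'_n \to 1$ together with $(g_{1,n},g_{2,n}) \in \Omega$ staying a fixed distance from $G_{\delta_0}(F_v)$ and satisfying the equation would admit, by compactness of $\Omega$, a convergent subsequence whose limit $(g_1,g_2)\in\Omega$ satisfies $g_1^{-1}\delta_0 g_2=\delta_0$, i.e., lies in $G_{\delta_0}(F_v)$, contradicting the distance hypothesis.

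Finally, on a small enough neighborhood of $G_{\delta_0}(F_v)$ the submersion analysis in the proof of Lemma~\ref{lem-nb1} yields a local product decomposition $G_{\delta_0}(F_v)\cdot Y$, so I can write $(g_1,g_2) = (h_1,h_2)(y_1,y_2)$ with $(h_1,h_2) \in G_{\delta_0}(F_v)$ and $(y_1,y_2) \in Y$. The same commutativity argument as in Step~1 collapses the equation to $y_1^{-1} m\delta_0 y_2 = m'\delta_0$, so that both $(y_1,y_2,m)$ and $(1,1,m')$ are elements of $Y \times \mathcal V$ mapping to $m'\delta_0$ under the diffeomorphism of Lemma~\ref{lem-nb1}(i). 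Its uniqueness forces $y_1=y_2=1$ and $m=m'$, hence $(g_1,g_2) = (h_1,h_2) \in G_{\delta_0}(F_v)$. The main delicate point will be coordinating the choice of $\mathcal V$ so that it is simultaneously small enough to invoke Lemma~\ref{lem-compact} with the relatively compact $\mathcal W = \mathcal V\delta_0$, to force $(g_1,g_2)$ into the slice neighborhood $G_{\delta_0}(F_v)\cdot Y$, and to land the two candidate preimages in the same $Y\times\mathcal V$ on which Lemma~\ref{lem-nb1}(i) is a diffeomorphism.
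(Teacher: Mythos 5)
Your proposal is correct and takes essentially the same route as the paper, which delegates to Labesse's Lemme~3.1.4 after observing that Lemma~\ref{lem-nb1} and Lemma~\ref{lem-compact} furnish the needed analogues of Labesse's preparatory lemmas (and that the analogue of his Lemme~3.1.2 is trivial here because $C^{\tau}_{\delta_0\delta_0^{-\theta},G}$ is already a torus). The ingredients you deploy---Lemma~\ref{lem-compact} to reduce modulo $G_{\delta_0}(F_v)$ to a fixed compact set $\Omega$, commutativity of the $\tau$-centralizer torus to strip off the $G_{\delta_0}(F_v)$-factor, a compactness/sequential argument to force $(g_1,g_2)$ into $G_{\delta_0}(F_v)\cdot Y$, and the uniqueness part of the diffeomorphism in Lemma~\ref{lem-nb1}(i) to finish---are precisely those of Labesse's argument that the paper invokes.
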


\begin{proof}
The proof of  \cite[Lemme 3.1.4]{Lab} easily adapts to our situation to prove the lemma.
We only note that the analogue of \cite[Lemme 3.1.2]{Lab} is trivial in our situation because
$\delta_0$ is relatively $\tau$-regular semisimple, and the analogues of \cite[Lemme 3.1.1 and Lemme 3.1.3]{Lab}
 are given by Lemma \ref{lem-nb1} and Lemma \ref{lem-compact}, respectively.
\end{proof}

\begin{lem} \label{lem-stable}
Under the assumptions of Lemma \ref{lem-314}, if $m'\delta_0$ and $m\delta_0$ are
in the same stable relative $\tau$-class, then
$$
\bar{g}_1^{-1}m\bar{g}_1=m'
$$
for some $\bar{g}_1\in C_{\delta_0\delta_0^{-\theta}}^{\tau}(\bar{F}_v)$.  Hence $m=m'$.
\end{lem}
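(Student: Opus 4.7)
The strategy is to lift the stable relative $\tau$-conjugacy of $m\delta_0$ and $m'\delta_0$ to an honest relative $\tau$-conjugacy over $\bar F_v$, then invoke the algebraic-closure analogue of Lemma \ref{lem-314} to force the conjugating pair inside $G_{\delta_0}(\bar F_v)$, and finally exploit the standing connectedness assumption on $G_{\delta_0}$ to conclude $m=m'$.

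By the injection $\Gamma_{r\tau}(R)\hookrightarrow G^\sigma(R)\backslash S(R)$ of \S \ref{ssec-basic-notat}, applied with $R=\bar F_v$, the hypothesis that $m\delta_0$ and $m'\delta_0$ lie in the same stable relative $\tau$-class --- i.e.\ that $B_\theta(m\delta_0)$ and $B_\theta(m'\delta_0)$ are $\tau$-conjugate via some element of $G^\sigma(\bar F_v)$ --- is equivalent to the existence of a pair $(\bar g_1,\bar g_2)\in G^\sigma\times G^\theta(\bar F_v)$ with $\bar g_1^{-1}(m\delta_0)\bar g_2=m'\delta_0$. I then apply the proof of Lemma \ref{lem-314} (and of its preparatory Lemmas \ref{lem-nb1} and \ref{lem-compact}) with $F_v$ replaced by any finite extension $E/F_v$ containing the entries of $\bar g_1$ and $\bar g_2$; since $m,m'\in\widetilde T_{\delta_0}(F_v)\subset\widetilde T_{\delta_0}(E)$ and all the relevant varieties are defined over $F_v$, the argument carries over verbatim once $\mathcal V$ has been shrunk sufficiently. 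This yields $(\bar g_1,\bar g_2)\in G_{\delta_0}(\bar F_v)$. By Lemma \ref{lem-2cent} we then have $\bar g_1\in C^\tau_{\delta_0\delta_0^{-\theta},G^\sigma}(\bar F_v)$ and $\bar g_2=\delta_0^{-1}\bar g_1\delta_0$, so substitution gives $\bar g_1^{-1}m\bar g_1=m'$.

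For the final assertion $m=m'$: by our standing hypothesis $G_{\delta_0}$ is connected, hence so is $C^\tau_{\delta_0\delta_0^{-\theta},G^\sigma}$. Since $\delta_0$ is relatively $\tau$-regular semisimple, the identity component of $C^\tau_{\delta_0\delta_0^{-\theta},G}$ is a torus, and every connected subgroup of $C^\tau_{\delta_0\delta_0^{-\theta},G}$ is contained in this identity component. In particular $\bar g_1$ and $\widetilde T_{\delta_0}$ both lie in the common torus $(C^\tau_{\delta_0\delta_0^{-\theta},G})^\circ$ --- the first by connectedness of $C^\tau_{\delta_0\delta_0^{-\theta},G^\sigma}$, the second by the definition of $\widetilde T_{\delta_0}$ as a subtorus of $C^\tau_{\delta_0\delta_0^{-\theta},G}$. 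As a subgroup of this torus is abelian, $\bar g_1$ commutes with $m\in\widetilde T_{\delta_0}(F_v)$, so $m'=\bar g_1^{-1}m\bar g_1=m$.

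The principal obstacle is the reduction in the second paragraph: the ``sufficiently small'' condition on $\mathcal V$ in Lemma \ref{lem-314} is phrased in the $F_v$-topology, whereas the conclusion is needed over $\bar F_v$. This is a mild technical point, handled by observing that the neighborhoods supplied by Lemmas \ref{lem-nb1} and \ref{lem-compact} can be chosen uniformly across finite extensions of $F_v$, which is automatic because all the relevant group schemes and morphisms are defined over $F_v$.
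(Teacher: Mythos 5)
Your first step (lifting stable relative $\tau$-conjugacy of $m\delta_0$ and $m'\delta_0$ to an actual relative $\tau$-conjugacy by some $(\bar g_1,\bar g_2)\in G^{\sigma}\times G^{\theta}(\bar F_v)$) and your final step (commutativity inside the torus $C^{\tau}_{\delta_0\delta_0^{-\theta},G}$, giving $m=m'$) are fine and agree with the paper. The gap is in the middle.

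You apply Lemma \ref{lem-314} over a finite extension $E/F_v$ chosen to contain the entries of $\bar g_1,\bar g_2$. But $\bar g_1,\bar g_2$, and hence $E$, depend on the pair $(m,m')$. The ``sufficiently small $\mathcal{V}$'' hypothesis of Lemma \ref{lem-314} (and of Lemmas \ref{lem-nb1}, \ref{lem-compact} feeding into it) is a condition relative to the base field; if the field is allowed to grow with $(m,m')$, you can no longer shrink $\mathcal{V}$ once and for all and the argument is circular. You try to discharge this by asserting that the neighborhoods in Lemmas \ref{lem-nb1} and \ref{lem-compact} ``can be chosen uniformly across finite extensions of $F_v$, which is automatic because all the relevant group schemes and morphisms are defined over $F_v$.'' That is not automatic: the constructions involve the exponential map and relatively compact neighborhoods, and when you replace $F_v$ by a ramified extension $E$, the $E$-points $\mathfrak{g}^\sigma(E)$, $G^{\sigma}\times G^{\theta}(E)$, etc.\ are strictly larger and the relevant neighborhoods and compacta are genuinely different objects. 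Rationality of the schemes alone does not give you a uniform choice.

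The paper sidesteps the issue by fixing the field extension \emph{before} touching $m,m'$: it takes $F_1/F_v$ to be a finite extension splitting the torus $C^{\tau}_{\delta_0\delta_0^{-\theta}}$. Since the number of relative $\tau$-classes inside a stable relative $\tau$-class is measured by $\mathcal{D}(C^{\tau}_{\delta_0\delta_0^{-\theta},G^{\sigma}},G^{\sigma};k)$ as in \eqref{stableclassbij}, and $H^1(F_1,T)=1$ for the split torus $T=C^{\tau}_{\delta_0\delta_0^{-\theta},G^{\sigma}F_1}$, the stable class collapses to a single class over $F_1$; hence $(\bar g_1,\bar g_2)$ may be taken in $G^{\sigma}\times G^{\theta}(F_1)$. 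Now Lemma \ref{lem-314} need only be applied over the single fixed field $F_1$, so ``sufficiently small $\mathcal{V}$'' has an unambiguous meaning independent of $m,m'$. To repair your argument, replace ``any finite extension $E$ containing $\bar g_1,\bar g_2$'' by this cohomological reduction to the fixed splitting field $F_1$; without it, the uniformity claim is a genuine missing step.
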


\begin{proof}
Let $F_1/F_v$ be a finite field extension such that $C_{\delta_0\delta_0^{-\theta}F_1}^{\tau}$ is split.  Then if $m,m'$ have the property that there exists
$(\bar{g}_1,\bar{g}_2) \in G^{\sigma} \times G^{\theta}(\bar{F}_v)$
satisfying
 $\bar{g}_1^{-1}m\delta_0\bar{g}_2=m'\delta_0$,
we can and do assume that $(\bar{g}_1,\bar{g}_2) \in G^{\sigma} \times G^{\theta}(F_1)$ (compare \eqref{stableclassbij}).
Applying Lemma \ref{lem-314} ``over $F_1$'' we have that
$(\bar{g}_1,\bar{g}_2) \in
G_{\delta_0}(F_1)$.  Since $C_{\delta_0\delta_0^{-\theta}}^{\tau}$ is a torus, $\bar{g}_1$ commutes with $m$, which proves the last statement.
\end{proof}

\begin{lem} \label{lem-last} For $Y,\mathcal{V}$, and $\mathcal{W}(Y,\mathcal{V})$
as in Lemma \ref{lem-nb1}, suppose $Y \subset \mathcal{U}$ where $\mathcal{U}$ is an
open neighborhood of $1$ in $G^{\sigma} \times G^{\theta}(F_v)$.  If
$\mathcal{U}$ and $\mathcal{V}$ are sufficiently small, then each
$$
g_1^{-1}t\delta_0 g_2 \in \mathcal{W}(Y,\mathcal{V})
$$
with $(g_1,g_2) \in G^{\sigma} \times G^{\theta}(F_v)$ and $t \in
\mathcal{V}$ satisfies $(g_1,g_2) \in G_{\delta_0}(F_v)Y$.

\end{lem}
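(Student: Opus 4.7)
The strategy is to use the local diffeomorphism of Lemma~\ref{lem-nb1}(i) to rewrite $g_1^{-1}t\delta_0 g_2$ as $u^{-1}s\delta_0 v$ with $(u,v)\in Y$ and $s\in\mathcal{V}$, and then to apply Lemma~\ref{lem-314} to show that the ``discrepancy'' between $(g_1,g_2)$ and $(u,v)$ lies in $G_{\delta_0}(F_v)$.

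First I would shrink $\mathcal{V}$ enough that the conclusion of Lemma~\ref{lem-314} applies, and then shrink $\mathcal{U}$ (and possibly $\mathcal{V}$ further) so that for every analytic subvariety $Y\subset\mathcal{U}$ arising as in the construction of Lemma~\ref{lem-nb1} the map
\[
Y\times\mathcal{V}\;\lto\;G(F_v),\qquad (u,v,s)\;\lmto\; u^{-1}s\delta_0 v,
\]
is a diffeomorphism onto $\mathcal{W}(Y,\mathcal{V})$. Given $(g_1,g_2)\in G^{\sigma}\times G^{\theta}(F_v)$ and $t\in\mathcal{V}$ with $g_1^{-1}t\delta_0 g_2\in\mathcal{W}(Y,\mathcal{V})$, this diffeomorphism produces unique $(u,v)\in Y$ and $s\in\mathcal{V}$ with $u^{-1}s\delta_0 v=g_1^{-1}t\delta_0 g_2$. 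Setting $h_1:=u g_1^{-1}$ and $h_2:=v g_2^{-1}$, this identity rearranges to $h_1^{-1}s\delta_0 h_2=t\delta_0$ with $s,t\in\mathcal{V}$. Lemma~\ref{lem-314} then forces $(h_1,h_2)\in G_{\delta_0}(F_v)$, and since $(g_1,g_2)=(h_1^{-1}u,\,h_2^{-1}v)$ this gives $(g_1,g_2)\in G_{\delta_0}(F_v)\cdot Y$, as required.

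I do not anticipate a genuine obstacle here: Lemma~\ref{lem-nb1}(i) supplies the canonical representative of $g_1^{-1}t\delta_0 g_2$ in $Y\cdot\mathcal{V}\delta_0$, and Lemma~\ref{lem-314} supplies precisely the rigidity needed to exclude any nontrivial ``twist'' between that representative and $(g_1,g_2)$. The hypothesis $Y\subset\mathcal{U}$ plays only an organizational role, namely ensuring that the diffeomorphism of Lemma~\ref{lem-nb1}(i) is available for the particular $Y$ at hand; the only genuine smallness condition on $\mathcal{V}$ is the one demanded by Lemma~\ref{lem-314}.
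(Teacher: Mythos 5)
Your proof is correct and takes essentially the same approach as the paper: you invoke the diffeomorphism of Lemma~\ref{lem-nb1}(i) to rewrite $g_1^{-1}t\delta_0 g_2 = u^{-1}s\delta_0 v$ with $(u,v)\in Y$, $s\in\mathcal V$, then rearrange and apply Lemma~\ref{lem-314} to conclude that the discrepancy lies in $G_{\delta_0}(F_v)$, hence $(g_1,g_2)\in G_{\delta_0}(F_v)Y$. The only cosmetic difference is that the paper writes the discrepancy as $(g_1 y_1^{-1},g_2 y_2^{-1})$ while you take its inverse $(u g_1^{-1},v g_2^{-1})$, which makes no difference since $G_{\delta_0}(F_v)$ is a group.
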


\begin{proof} By our hypothesis and Lemma \ref{lem-nb1}, we have
$$
g_1^{-1}t\delta_0g_2=y_1^{-1}t'\delta_0y_2
$$
for some $(y_1,y_2) \in Y$ and $t' \in \mathcal{V}$.  By Lemma \ref{lem-314}, if $\mathcal{U}$ and $\mathcal{V}$ are sufficiently small we have that
$(g_1y_1^{-1},g_2y_2^{-1}) \in G_{\delta_0}(F_v)$.
\end{proof}

With these lemmas in place, we can now prove Proposition \ref{prop-loc-const}:

\begin{proof}[Proof of Proposition \ref{prop-loc-const}]
Choose a neighborhood $\mathcal{V}$ of $1$ in $\widetilde{T}_{\delta_0}(F_v)$ and an analytic subvariety $Y \subset G^{\sigma} \times G^{\theta}(F_v)$ satisfying the conclusion of Lemma \ref{lem-nb1}.
Let $dg_1,dg_2$, and $dt_{\delta_0}$ be Haar measures on $G^{\sigma}(F_v)$, $G^{\theta}(F_v)$, and $G_{\delta_0}(F_v)$, respectively.  In this proof we use these measures to define $TRO_{\delta_0}(f)=TRO_{\delta_0}(f,dt_{\delta_0})$.  There is a natural map
$$
Y \lto G_{\delta_0}(F_v) \backslash G^{\sigma} \times G^{\theta}(F_v);
$$
let $d\mu$ be the measure on $Y$ that is the pullback of the measure $\frac{dg_1 dg_{2}}{dt_{\delta_0}}$ with respect to this map.  Suppose that $f \in C_c^{\infty}(G(F_v))$ has support in $\mathcal{W}:=\mathcal{W}(Y,\mathcal{V})$ and that $Y$ and $\mathcal{V}$ are sufficiently small.  For each $t \in \widetilde{T}_{\delta_0}(F_v)$, define
$$
\psi(t):=\int_Y f(g_1^{-1}t\delta_0 g_2)d\mu(g_1,g_2).
$$
By Lemma \ref{lem-nb1} the function $\psi$ on $\widetilde{T}_{\delta_0}(F_v)$ is smooth and compactly supported (with support in $\mathcal{V}$).  Since
$
G_{\delta_0}=G_{\delta}
$
we have
$$
TRO_{\delta}(f)=\int_{G_{\delta_0}(F_v) \backslash G^{\sigma}\times G^{\theta}(F_v)}  f(g_1^{-1}t\delta_0 g_2)  \frac{dg_1 dg_2}{dt_{\delta_0}}
$$
for some $t \in \mathcal{V}$.  In view of Lemma \ref{lem-last} this implies
\begin{align*}
TRO_{\delta}(f)=\int_{Y}  f(g_1^{-1}t \delta_0 g_2)  d\mu(g_1,g_2)
\end{align*}
and thus
$$
TRO_{\delta}(f)= \psi(t).
$$
The assertion involving stable twisted relative orbital integrals follows from Lemma
\ref{lem-stable}, and this completes the proof of (1).

For the proof of (2), suppose that we are given $\psi$ with support in $\mathcal{V}$.  Each $\delta \in \mathcal{W}$ can be written in a unique fashion as
$$
\delta=g_1^{-1}t \delta_0g_2
$$
with $(g_1,g_2) \in Y$ and $t \in \mathcal{V}$.  Define
$$
(J^{\beta}\psi)(\delta)=\beta(g_1,g_2) \psi(t)
$$
where $\beta \in C_c^{\infty}(Y)$ is chosen so that
$$
\int_Y \beta(g_1,g_2)d\mu(g_1,g_2)=1.
$$
Setting $f=J^{\beta}\psi$, statement (2) follows from the observation that
$$
\int_Y J^{\beta}\psi=\psi.
$$

\end{proof}

\subsection{Proof of Theorem \ref{thm-weak-match}}
For this entire subsection we assume the hypotheses of Theorem \ref{thm-weak-match}.  We assume $f$ is supported in a neighborhood $\mathcal{W}=\mathcal{W}(Y,\mathcal{V})$ of a
relatively $\tau$-regular semisimple element $\delta_0$, with $Y$ and $\mathcal{V}$  as in Lemma \ref{lem-nb1}.  With notation and assumptions as in Proposition \ref{prop-loc-const}, we have that
$$
STRO_{\delta}(f)=0
$$
if the stable class of $\delta$ does not meet $\widetilde{T}_{\delta_0}(F_v)\delta_0$, and
$$
STRO_{\delta}(f)=\psi(t)
$$
if $\delta$ is in the stable class of $t\delta_0$ with $t \in \widetilde{T}_{\delta_0}(F_v)$.

In a sufficiently small neighborhood of the identity in $\widetilde{T}_{\delta_0}(F_v)$, the
 map $t \mapsto t^2$ is an isomorphism that preserves conjugacy classes.
Thus we can and do choose a function $\psi_1 \in
C_{c}^{\infty}(\widetilde{T}_{\delta_0}(F_v))$ with support in a small neighborhood
$\mathcal{V}_1$ of the identity such that
$$
\psi(t)=\psi_1(t^2).
$$
By shrinking $\mathcal{V}$ if necessary, we can make $\mathcal{V}_1$ as small as we wish.

Assume that $\delta_0$ has norm
$\gamma_0 \in H(F_v)$.  Thus $C_{\delta_0\delta_0^{-\theta},G}^{\tau}$ and $C_{\gamma_0\gamma_0^{-\sigma},H}$ are inner twists of each other, and the inner twist can be defined by an element of $H^{\sigma}(\bar{F}_v)$ (compare \eqref{norm-twist}).
Since $\delta_0$ is relatively $\tau$-regular,
$C_{\delta_0\delta_0^{-\theta}}^{\tau}$ is a torus, and thus the inner twist induces a $\sigma$-equivariant isomorphism
\begin{align} \label{B23-map}
B:C_{\delta_0\delta_0^{-\theta},G}^{\tau} \lto C_{\gamma_0\gamma_0^{-\sigma},H}.
\end{align}
Note that $\widetilde{T}_{\delta_0}$ is a maximal $\sigma$-split torus of $C_{\delta_0\delta_0^{-\theta},G^{\sigma}}^{\tau}$.
 Since $T_{\gamma_0}$ and $\widetilde{T}_{\delta_0}$ have the same dimension by
 Lemma \ref{lem-tdelta}, the isomorphism \eqref{B23-map} induces another isomorphism
$$
B:\widetilde{T}_{\delta_0} \lto T_{\gamma_0}
$$
such that for $F_v$-algebras $R$ one has $B(g_1^{-1}tg_1)=B(g_1)^{-1}B(t)B(g_1)$ for $g_1 \in C_{\delta_0\delta_0^{-\theta},G^{\sigma}}^{\tau}(R)$ and $t \in \widetilde{T}_{\delta_0}(R)$.
Set $\psi_2:=\psi_1\circ B^{-1} \in C_c^{\infty}(T_{\gamma_0}(F_v))$.  Then $\psi_2$ has
support in $B(\mathcal{V}_1)$, and
\begin{align} \label{sums-eq}
\psi_1(t^2)=\psi_2(B(t^2)).
\end{align}
We note that $B(t^2)\gamma_0$ is a norm of $t\delta_0$ (this is the reason for employing the
squaring map above).

Invoking Proposition \ref{prop-loc-const}(2),
we
shrink
 $\mathcal{V}$ and $\mathcal{V}_1$ if necessary and choose a function $\Phi \in
C_c^{\infty}(H(F_v))$ such that
$$
SRO_{\gamma}(\Phi)=0
$$
if the stable class of $\gamma$ does not meet $T_{\gamma_0}(F_v)\gamma_0$, and
$$
SRO_{\gamma}(\Phi)=\psi_2(t_2)
$$
if $t_2\gamma_0$ and $\gamma$ are in the same stable class.  This implies the assertion
 of the theorem. \qed

\section{Spherical characters} \label{sec-sph}
In this section we introduce the notion of a relatively $\tau$-regular and relatively $\tau$-elliptic representation of a reductive group over a local field.  The first of these notions was used in the statement of Theorem \ref{intro-thm2}, and we believe the second to be of interest as well.
 Let $\Pi_v$ be an irreducible admissible representation of $G(F_v)$ with space $V_{\Pi_v}$ and choose
$$
\Lambda=\sum_i \lambda_i \otimes \lambda^{\vee}_i \in \mathrm{Hom}_{G^{\sigma}(F_v)}(V_{\Pi_v},\CC) \otimes\mathrm{Hom}_{G^{\theta}(F_v)}(V_{\Pi_v^{\vee}},\CC);
$$\index{$\Lambda$}here $\CC$ is the trivial representation and $\Pi_v^{\vee}$ is the contragredient representation acting on $V_{\Pi_v^{\vee}}$.
The linear form $\Lambda$ defines a distribution (i.e. a linear map)
\begin{align}
\Theta_{\Lambda}:C_c^{\infty}(G(F_v)) &\lto \CC\\
\nonumber f_v &\longmapsto \sum_i\langle \Pi_v(f_v)\lambda_i, \lambda^{\vee}_i\rangle.
\end{align}\index{$\Theta_{\Lambda}$}A \textbf{spherical matrix coefficient} (of $\Pi_v$) is a distribution attached to $\Lambda$ in this manner (compare \cite{HakimSCHP}).  If the extension $M/F$ is trivial (so $H=G$ and $G^{\sigma}=G^{\theta}=H^{\sigma}$) then the spherical matrix coefficient $\Theta_{\Lambda}$ is representable by a locally constant function on
the relatively regular subset of $G(F_v)=H(F_v)$ by \cite[Lemma 6]{HakimSCHP}.  We denote this function by $\Theta_{\Lambda}$ as well.  The authors suspect that the same result is true when $\tau$ is nontrivial, but we will not prove this.

\begin{defn}
An admissible representation $\Pi_v$ of $G(F_v)$ is \textbf{relatively $\Lambda$-regular} if there is a function $f \in C_c^{\infty}(G(F_v))$ supported in the set of relatively $\tau$-regular elements of $G(F_v)$ admitting norms in $H(F_v)$ such that
$$
\Theta_{\Lambda}(f) \neq 0.
$$
  It is \textbf{relatively $\tau$-regular} if it is relatively $\Lambda$-regular for all nonzero
$$
 \Lambda \in  \mathrm{Hom}_{G^{\sigma}(F_v)}(V_{\Pi_v},\CC) \otimes \mathrm{Hom}_{G^{\theta}(F_v)}(V_{\Pi_v^{\vee}},\CC).
$$
\end{defn}

\begin{defn}
An admissible representation $\Pi_v$ of $G(F_v)$ is \textbf{relatively $\Lambda$-elliptic} if there is a function $f \in C_c^{\infty}(G(F_v))$ supported in the set of relatively $\tau$-elliptic semisimple elements of $G(F_v)$ admitting norms in $H(F_v)$ such that
$$
\Theta_{\Lambda}(f) \neq 0.
$$
 It is \textbf{relatively $\tau$-elliptic} if it is relatively $\Lambda$-elliptic for all nonzero
$$
 \Lambda \in  \mathrm{Hom}_{G^{\sigma}(F_v)}(V_{\Pi_v},\CC) \otimes \mathrm{Hom}_{G^{\theta}(F_v)}(V_{\Pi_v^{\vee}},\CC).
$$
\end{defn}
\noindent  If $\tau$ is trivial then we omit it from notation, writing relatively regular for relatively $\tau$-regular.  The analogous convention with regular replaced by elliptic will also be in force.  If $\tau$ is trivial the norm map is the identity map so the condition on elements admitting norms can be omitted.

\begin{rem}
In certain circumstances one can use the simple twisted relative trace formula of \cite{Hahn} to prove the existence of relatively $\tau$-regular (resp.~$\tau$-semisimple) representations.
\end{rem}

It is well known that the character of an irreducible admissible representation does not vanish on the regular semisimple set.  Thus if $\tau$ is trivial, $G:=H=H^{\sigma} \times H^{\sigma}$ and $\sigma:H \to H$ is the automorphism switching the two factors every irreducible admissible representation is relatively regular.  We do not know if the same statement is true in general, and one has to be cautious given \cite[\S 4]{RadRal}.  However, it seems likely that in the settings of interest to this paper every irreducible admissible representation arising as a local factor of a cuspidal automorphic representation that is both $G^{\sigma}$ and $G^{\theta}$-distinguished is relatively $\tau$-regular.

\section{Prestabilization of a single stable relative orbital integral} \label{sec-prestab}

In this section, following work of Langlands, Kottwitz, Shelstad, and Labesse, we define
stable relative and stable twisted relative global orbital integrals and show how they decompose into a sum of global
relative $\kappa$-orbital integrals.  The main result is Proposition \ref{prop-prestab}.
As anyone familiar with the usual (not relative) stable trace formula could guess,
the reason for introducing the relative $\kappa$-orbital integrals is that they factor into local $\kappa$-orbital integrals.  This makes
it possible to apply the local matching theory developed in \S \ref{sec-norm-maps} and \S \ref{sec-matching}.
Our treatment follows \cite{Lab}, and we refer to loc.~cit.~ for notation involving abelianized cohomology of reductive groups and quotients.
We emphasize that we do \emph{not} attempt to write the $\kappa$-orbital integrals given below in terms of stable relative orbital integrals
on other groups; this is why the section is entitled ``Prestabilization...'' instead of ``Stabilization...''  In \S \ref{sec-group-ell} below,
we show how to collect the relatively elliptic terms of the relative trace formula together.

We should note that in the biquadratic, non-twisted case, a stabilization dependent on various conjectural fundamental lemmas
was given by Flicker in \cite{FlickerStable}, together with a conjectural definition
of relative transfer factors.

\subsection{Global relative orbital integrals}

Let $\Phi \in C^{\infty}_c(H(\A_F))$, $f \in C^{\infty}_c(G(\A_F))$.  Moreover let $\gamma \in
H(F)$ (resp. $\delta \in G(F)$) be a relatively semisimple element (resp.~relatively $\tau$-semisimple element).  We define the (global) \textbf{relative orbital integral}
\begin{align} \label{goi}
RO_{\gamma}(\Phi):=RO_{\gamma}(\Phi,dt_{\gamma}):=
\iint_{H_{\gamma}(\A_F) \backslash
H^{\sigma}(\A_F)^2}\Phi(h_1^{-1}\gamma h_2)
\frac{dh_1dh_2}{dt_{\gamma}}
\end{align}\index{$RO_{\gamma}(\Phi)$}where $dh_i=\otimes_v'dh_{i,v}$ and $dt_{\gamma}=\otimes_v' dt_{\gamma,v}$ are Haar measures on
$H^{\sigma}(\A_F)$ and $H_{\gamma}(\A_F)$, respectively.  Similarly, we define the (global) \textbf{twisted relative orbital integral}
\begin{align} \label{gtoi}
TRO_{\delta}(f):=TRO_{\delta}(f,dt_{\delta}):=
\iint_{G_{\delta}(\A_F) \backslash G^{\sigma}\times G^{\theta}(\A_F)}
f(g_1^{-1} \delta g_2)\frac{dg_1dg_2}{dt_{\delta}}
\end{align}\index{$TRO_{\delta}(f)$}where $dg_i=\otimes_v'dg_{i,v}$ and $dt_{\delta}:=\otimes_v'dt_{\delta,v}$ are Haar measures on $G^{\sigma}(\A_F)$ and $G_{\delta}(\A_F)$, respectively.
If
$\Phi=\otimes'_v\Phi_v$ is factorable then
\begin{align}
RO_{\gamma}(\Phi,dt_{\gamma})=\prod_v RO_{\gamma_v}(\Phi_v,dt_{\gamma,v}).
\end{align}
If  $f=\otimes_v' f_v$ is factorable then
\begin{align}
TRO_{\delta}(f,dt_{\delta})=\prod_vTRO_{\delta_v}
(f_v,dt_{\delta,v}).
\end{align}

Let $\gamma_0 \in H(F)$ and $\delta_0 \in G(F)$ be relatively semisimple and relatively $\tau$-semisimple, respectively.  The
(global) \textbf{stable relative orbital integral} is
\begin{align} \label{gsoi}
SRO_{\gamma_0}(\Phi):=\sum_{\gamma \sim \gamma_0}
RO_{\gamma}(\Phi,dt_{\gamma})
\end{align} \index{$SRO_{\gamma_0}(\Phi)$}where the sum is over relatively semisimple $\gamma \in H(F)$ in the same stable relative class as $\gamma_0$.
The (global) \textbf{stable twisted relative orbital integral} is
\begin{align}
STRO_{\delta_0}(f):=
\sum_{\delta \sim \delta_0}TRO_{\delta}(f,dt_{\delta})
\end{align} \index{$STRO_{\delta_0}(f)$}where the sum is over relatively $\tau$-semisimple $\delta \in G(F)$ in the same stable relative $\tau$-class as $\delta_0$.
Here we assume that the
$dt_{\gamma}=\otimes_v'dt_{\gamma,v}$ (resp.~$dt_{\delta}=\otimes_v'dt_{\delta,v}$) are compatible in the sense that for each
$v$ the $dt_{\gamma,v}$ (resp.~$dt_{\delta,v}$) are compatible as in \S \ref{ssec-matching}.

In analogy with \eqref{localKgroup},  for a pair of (connected) reductive $F$-groups $I \leq H$ we set notation for the abelian group
\begin{align*}
\mathfrak{K}(I,H;F):&=H^0_{\mathrm{ab}}(\A_F/F,I \backslash H)^D.
\end{align*} \index{$\mathfrak{K}(I,H;F)$}For each place $v$ there is a localization map
\begin{align} \label{loc-map}
 \mathfrak{K}(I,H;F) &\lto \mathfrak{K}(I,H;F_v)\\
\nonumber \kappa_H &\longmapsto \kappa_{Hv}
\end{align}
defined as the dual of
$$
H^0_{\mathrm{ab}}(F_v,I \backslash H) \lto H^0_{\mathrm{ab}}(\A_F,I \backslash H) \lto H^0_{\mathrm{ab}}(\A_F/F,I \backslash H).
$$
Let $\Phi=\otimes_v'\Phi_v \in C_c^{\infty}(H(\A_F))$
and $f =\otimes_v'f_v \in C_c^{\infty}(G(\A_F))$ be factorable, and let
$\gamma \in H(F)$ (resp.~$\delta \in G(F)$) be relatively semisimple
(resp.~relatively $\tau$-semisimple).  Finally, let
$\kappa_H \in
\mathfrak{K}(H^{\sigma}_{\gamma},H^{\sigma} \times H^{\sigma};F)$ and
$\kappa\in \mathfrak{K}(G_{\delta},G^{\sigma} \times G^{\sigma};F)$.  We then set
\begin{align}
RO^{\kappa_H}_{\gamma}(\Phi):&=\prod_vRO^{\kappa_{Hv}}_{\gamma_v}(\Phi_v)\\
\nonumber TRO^{\kappa}_{\delta}(f):&=\prod_v
TRO^{\kappa_v}_{\gamma_v}(f_v),
\end{align}\index{$RO^{\kappa_H}_{\gamma}(\Phi)$}\index{$TRO^{\kappa}_{\delta}(f)$}whenever this product is well-defined (i.e. convergent).  The following proposition ensures convergence:

\begin{prop}  \label{prop-1-ae}
Let $\gamma_0 \in H(F)$ (resp. $\delta_0 \in G(F)$) be relatively regular semisimple
(resp.~relatively $\tau$-regular semisimple) and let
$\kappa_H \in \mathfrak{K}(H_{\gamma_0},H^{\sigma} \times H^{\sigma};F)$
(resp.~$\kappa \in \mathfrak{K}(G_{\delta_0},G^{\sigma} \times G^{\theta};F)$).  Moreover, let
$\Phi=\otimes_v'\Phi_v \in C_c^{\infty}(H(\A_F))$ and $f=\otimes_v'f_v \in C_c^{\infty}(G(\A_F))$.
There is a finite set $S$ of places of $F$ such that if  $v \not \in S$  then
$RO_{\gamma_{0v}}^{\kappa_{Hv}}(\Phi_v)=1$ (resp. $TRO_{\delta_{0v}}^{\kappa_{v}}(f_v)=1$).
\end{prop}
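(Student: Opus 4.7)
The plan is to reduce the proposition to two assertions holding at almost every place $v$: (i) the localization $\kappa_{Hv}$ (resp.~$\kappa_v$) is trivial in $\mathfrak{K}(H_{\gamma_0},H^{\sigma} \times H^{\sigma};F_v)$ (resp.~$\mathfrak{K}(G_{\delta_0},G^{\sigma} \times G^{\theta};F_v)$); and (ii) with compatible local Haar measures, the stable local relative (twisted) orbital integral of the local test function equals $1$. Granted (i), the definitions at the end of \S \ref{ssec-lroi} give
\begin{equation*}
RO^{\kappa_{Hv}}_{\gamma_{0v}}(\Phi_v) = SRO_{\gamma_{0v}}(\Phi_v), \qquad TRO^{\kappa_v}_{\delta_{0v}}(f_v) = STRO_{\delta_{0v}}(f_v),
\end{equation*}
so that (ii) finishes the argument.

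For (i), I would argue that $\kappa_{Hv}$ is the dual of the composition $H^0_{\mathrm{ab}}(F_v, H_{\gamma_0} \backslash H^{\sigma} \times H^{\sigma}) \to H^0_{\mathrm{ab}}(\A_F, H_{\gamma_0} \backslash H^{\sigma} \times H^{\sigma}) \to H^0_{\mathrm{ab}}(\A_F/F, H_{\gamma_0} \backslash H^{\sigma} \times H^{\sigma})$. At any non-archimedean $v$ at which $H_{\gamma_0}$ and $H^{\sigma}$ both extend to smooth reductive group schemes over $\mathcal{O}_{F_v}$---which excludes only finitely many places since $\gamma_0$ is relatively regular semisimple and defined over $F$---the source vanishes by the standard local vanishing for abelianized cohomology of unramified connected reductive groups recalled in \cite[\S 1.8]{Lab}. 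This forces $\kappa_{Hv}=1$ at almost all $v$; the twisted case is identical with $(G_{\delta_0},G^{\sigma} \times G^{\theta})$ in place of $(H_{\gamma_0},H^{\sigma} \times H^{\sigma})$.

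For (ii), at almost every $v$ one has $\Phi_v = \mathrm{ch}_{K_{H,v}}$ for a hyperspecial $\sigma$-stable $K_{H,v} \leq H(F_v)$, an integral model of $H_{\gamma_0}$ defined over $\mathcal{O}_{F_v}$, and $\gamma_0 \in K_{H,v}$. The Kottwitz signs $e(H_\gamma)$ all equal $1$ at such places, so $SRO_{\gamma_{0v}}(\Phi_v) = \sum_{\gamma \sim \gamma_{0v}} RO_\gamma(\Phi_v, dt_\gamma)$. Normalizing the compatible measures $dt_\gamma$ so that $H_\gamma(\mathcal{O}_{F_v})$ has volume $1$, an unramified calculation (in the spirit of \S \ref{ssec-fl-first-case}, using the morphism $B_\sigma$ of \eqref{nat-maps} to transport the question to an orbital integral on $Q$) shows that the unique relative class in the stable class of $\gamma_{0v}$ meeting $K_{H,v}$ contributes $1$ and all other relative classes contribute $0$, giving $SRO_{\gamma_{0v}}(\Phi_v)=1$. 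The twisted case is analogous, reducing via $B_\theta$ of \eqref{nat-maps} to a twisted unramified orbital integral on $G(F_v)$ evaluated via the standing assumption that $G_\delta$ is connected for relatively $\tau$-semisimple $\delta$.

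The main obstacle is step (ii): while the triviality of $\kappa_{Hv}$ at almost all $v$ is essentially formal, the vanishing of the relative orbital integrals off a single relative class meeting the hyperspecial subgroup, and the equality of that integral with the volume of $H_{\gamma_0}(\mathcal{O}_{F_v})$, is a relative analogue of Kottwitz's classical unramified orbital integral computation. The standing assumption that $H_\gamma$ is connected for every relatively semisimple $\gamma$ (together with the corresponding statement for $G_\delta$ deduced by passage to the algebraic closure) is what makes the set of relative classes in a stable relative class behave in the expected way, and it is quietly used throughout this step.
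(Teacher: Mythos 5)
Your overall decomposition into (i) triviality of the $\kappa$-contribution and (ii) an unramified local computation showing $SRO_{\gamma_{0v}}(\Phi_v)=1$ does track the shape of the paper's argument, which itself delegates to \cite[Proposition 3.2 and proof of Proposition 3.4]{Hahn} together with the observation that $e(G_{\delta F_v})=1$ for almost all $v$. Step (ii), though sketched, is in the spirit of those citations.

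Step (i), however, contains a genuine error. You claim that $H^0_{\mathrm{ab}}(F_v, H_{\gamma_0}\backslash H^\sigma \times H^\sigma)$ vanishes at almost all nonarchimedean $v$, invoking a ``local vanishing for abelianized cohomology of unramified connected reductive groups,'' and deduce $\kappa_{Hv}=1$. But the vanishing result at unramified places is for the degree-one group $H^1_{\mathrm{ab}}(F_v,\cdot)$; the degree-zero group $H^0_{\mathrm{ab}}(F_v, I\backslash H)$ is essentially the $F_v$-points of a complex of tori (e.g.\ for $G$ reductive, $H^0_{\mathrm{ab}}(F_v,G)\cong G(F_v)/\rho(G_{\mathrm{sc}}(F_v))$) and is nontrivial at every place. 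Consequently $\mathfrak{K}(H_{\gamma_0},H^\sigma\times H^\sigma;F_v)$ does not vanish and the localized character $\kappa_{Hv}$ is not trivial in general. What is true --- and what the paper invokes via \cite[\S 1.4--1.8]{Lab} and the proof of \cite[Proposition 3.4]{Hahn} --- is weaker but sufficient: $H^0_{\mathrm{ab}}(\A_F, \cdot)$ is a restricted direct product with respect to ``integral'' subgroups, so the pullback of $\kappa_H$ is trivial on those integral subgroups at almost all $v$; and for almost all $v$ one has $\mathrm{supp}(\Phi_v)=K_{H,v}$ hyperspecial with $\gamma_{0v}\in K_{H,v}$, so the only classes $(\dot h_1,\dot h_2)$ contributing to the integral lie in the integral subgroup. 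One should therefore argue that $\kappa_{Hv}$ is trivial \emph{on the intersection of the orbit of $\gamma_{0v}$ with $\mathrm{supp}(\Phi_v)$}, not that $\kappa_{Hv}=1$ identically; only then does the $\kappa$-orbital integral collapse to the stable one, after which your step (ii) (combined with $e(\cdot)=1$ at almost all $v$) closes the argument.
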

We note that this is a weak relative analogue of the results of \cite[\S 7]{KottEllSing}.
\begin{proof}
Note that
$(G_{\delta})_{F_v}=G_{\delta F_v}$ is quasi-split for almost all $v$, and hence $e(G_{\delta F_v})=1$ for
almost all $v$, and the character $\kappa_v$ is trivial
on the intersection of the $G^{\sigma} \times G^{\theta}$-orbit of $\delta_v$ and the support of $f_v$
for almost all $v$ by the definition of the (restricted direct) topology on
$H^0_{ab}(\A_F,G_{\gamma}\backslash G^{\sigma} \times {G}^{\theta})$ \cite[\S 1.4-1.8]{Lab} and the proof of \cite[Proposition 3.4]{Hahn}.  With this
in mind, the proposition follows immediately from \cite[Proposition 3.2]{Hahn} and the proof of \cite[Proposition 3.4]{Hahn}.
\end{proof}

\subsection{Some cohomology groups}
\label{ssec-ch-groups}
In this subsection we collect notation for some Galois cohomology groups that will be used in the following subsections.
Let $v$ be a place of $F$ and let $I \leq H$ be a pair of connected reductive $F_v$-groups. Set
\begin{align} \label{elocdef}
\mathfrak{E}(I,H;F_v):=
\mathrm{ker}\left[H^1_{\mathrm{ab}}(F_v,I)
\to H^1_{\mathrm{ab}}(F_v,H)\right].
\end{align}\index{$\mathfrak{E}(I,H;F_v)$}There is a natural map
\begin{align*}
H^0_{\mathrm{ab}}(F_v,I \backslash H)
 &\lto \mathfrak{E}(I,H;F_v)
\end{align*}
induced by the long exact sequence attached to the crossed $\Gal(\bar{F}_v/\bar{F}_v)$-module $I(\bar{F}_v) \to H(\bar{F}_v)$ \cite[p.~20]{Lab}.
The image of the dual map
\begin{align*}
\mathfrak{E}(I,H;F_v)^D
&\lto \mathfrak{K}(I,H;F_v)
\end{align*}
is denoted $\mathfrak{K}(I,H;F_v)_1$. \index{$\mathfrak{K}(I,H;F_v)_1$}
Similarly, if $I \leq H$ is a pair of connected reductive $F$-groups, write
\begin{align*}
\mathfrak{E}(I,H;\A_F/F):&=
\mathrm{coker}\left[ H^0_{\mathrm{ab}}(\A_F,H) \to
H^0_{\mathrm{ab}}(\A_F/F,I\backslash H)\right].
\end{align*}\index{$\mathfrak{E}(I,H;\A_F/F)$}There is a natural quotient map
\begin{align*}
H^0_{\mathrm{ab}}(\A_F/F,I \backslash H)
 &\lto \mathfrak{E}(I, H;\A_F/F).
\end{align*}
The image of the dual map
\begin{align*}
\mathfrak{E}(I, H;\A_F/F)^D
&\lto \mathfrak{K}(I,H;F)
\end{align*}
is denoted $\mathfrak{K}(I,H;F)_1$. \index{$\mathfrak{K}(I,H;F)_1$}
  The localization map \eqref{loc-map} induces a homomorphism
\begin{align} \label{loc-map-1}
\mathfrak{K}(I,H;F)_1
&\lto \prod_v \mathfrak{K}(I,H;F_v)_1
\end{align}
(compare \cite[p. 43]{Lab}).  The kernel of this map is denoted
$\mathfrak{K}(I,H;F)_0$. \index{$\mathfrak{K}(I,H;F)_0$}

\subsection{Prestabilization of a single relatively elliptic term}

For a reductive $F$-group $H$, write $\tau(H)$ for the Tamagawa number of $H$ (this
$\tau$ should not be confused with the Galois automorphism $\tau$ from above).  Finally, for a pair of reductive $F$-groups $I$ and $H$, write
\begin{align}
d(I,H):=\# \mathrm{coker}\left[H^1_{\mathrm{ab}}(\A_F/F,I) \to H^1_{\mathrm{ab}}(\A_F/F,H)\right].
\end{align} \index{$d(I,H)$}

\noindent
The main result of this section is the following adaptation of the work of Langlands, Kottwitz, and Labesse
\cite{LanglStab}, \cite{KottStCusp},
\cite{KottEllSing}, \cite{Lab} to our situation:

\begin{prop} \label{prop-prestab} Let $\Phi=\otimes_v'\Phi_v \in C_c^{\infty}(H(\A_F))$ and $f=\otimes_v'f_v \in C_c^{\infty}({G}(\A_F))$ be factorable.
If $\gamma \in H(F)$ is relative regular and relatively elliptic then
\begin{align*}
SRO_{\gamma}(\Phi)&= \frac{\tau(H^{\sigma} \times H^{\sigma})}{\tau(H_{\gamma}) d(H_{\gamma},H^{\sigma}\times H^{\sigma})}
\sum_{\kappa_H}RO^{\kappa_H}_{\gamma}(\Phi)\\
&=\frac{\tau(H^{\sigma} \times H^{\sigma})}{\tau(H_{\gamma}) d(H_{\gamma},H^{\sigma}\times H^{\sigma})}
\sum_{\kappa_H}\prod_{v} RO^{\kappa_{Hv}}_{\gamma_v}(\Phi_v)
\end{align*}
where the sum is over $\kappa_H \in \mathfrak{K}(H_{\gamma},H^{\sigma} \times H^{\sigma};F)_1$.

Similarly, if $\delta \in {G}(F)$ is relatively $\tau$-regular and relatively elliptic then
\begin{align*}
STRO_{\delta}(f)&= \frac{\tau({G}^{\sigma}\times G^{\theta})}{\tau(G_{\gamma})
  d(G_{\delta} ,G^{\sigma} \times G^{\sigma})}
\sum_{\kappa}TRO^{\kappa}_{\delta}(f)\\
&=\frac{\tau({G}^{\sigma} \times G^{\theta})}{\tau(G_{\delta}) d(G_{\delta},{G}^{\sigma}\times G^{\theta})}
\sum_{\kappa}\prod_{v} TRO^{\kappa_v}_{\gamma_v}(f_v)
\end{align*}
where the sum is over
$\kappa \in \mathfrak{K}({G}_{\gamma},{G}^{\sigma} \times {G}^{\theta};F)_1$.
\end{prop}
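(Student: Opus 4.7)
The plan is to mimic the Langlands--Kottwitz--Labesse prestabilization of the regular elliptic part of the ordinary trace formula as carried out in \cite{Lab}, replacing ordinary conjugacy classes with the relative classes of \S \ref{ssec-basic-notat} and ordinary centralizers with the $2$-centralizers $H_{\gamma}$, $G_{\delta}$ of Lemma \ref{lem-2cent}. A first reduction: the untwisted assertion for $\gamma$ is the specialization of the twisted one to $M=F$ (so that $\tau$ is trivial, $G=H$, $G^{\theta}=H^{\sigma}$, and relative $\tau$-classes become relative classes), so I will focus on the twisted case.

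The first technical step is to identify the index set of the sum defining $STRO_{\delta}(f)$. By the bijection \eqref{stableclassbij}, the set of relative $\tau$-classes lying in the stable relative $\tau$-class of $\delta$ is identified with $\mathcal{D}(G_{\delta},G^{\sigma}\times G^{\theta};F)$. Under our blanket assumption (just above Definition \ref{defn-src}) that $G_{\delta}$ is connected, Lemma \ref{lem-2cent} identifies $G_{\delta}$ with the $\tau$-centralizer $C^{\tau}_{\delta\delta^{-\theta},G^{\sigma}} \subset G^{\sigma}$, and Labesse's abelianization machinery applies. It yields an exact sequence
\[
\mathcal{D}(G_{\delta},G^{\sigma}\times G^{\theta};F)
\longrightarrow \prod_v \mathcal{D}(G_{\delta},G^{\sigma}\times G^{\theta};F_v)
\longrightarrow \mathfrak{K}(G_{\delta},G^{\sigma}\times G^{\theta};F)_1^{D},
\]
together with an identification of the kernel of the first arrow (the ``locally trivial but globally nontrivial'' classes) with a group of order $d(G_{\delta},G^{\sigma}\times G^{\theta})$, in the notation of \S \ref{ssec-ch-groups}.

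The second step is Fourier inversion. For each $\delta'\sim\delta$ in the same stable relative $\tau$-class, the inner twist \eqref{strc-twist} together with the compatibility conventions for Haar measures gives $TRO_{\delta'}(f)=\prod_v TRO_{\delta'_v}(f_v)$, and the dependence of the local factor on the class of $\delta'_v$ in $\mathcal{D}(G_{\delta},G^{\sigma}\times G^{\theta};F_v)$ is exactly by pairing with a character of $\mathfrak{K}(G_{\delta},G^{\sigma}\times G^{\theta};F_v)_1$ (compare the argument in the proof of \cite[Proposition 3.4]{Hahn}). Summing over $\delta'$ and applying Fourier inversion on the finite abelian group $\mathfrak{K}(G_{\delta},G^{\sigma}\times G^{\theta};F)_1$ then converts the sum $\sum_{\delta'\sim\delta} e(G_{\delta'}) TRO_{\delta'}(f)$ into $\sum_{\kappa} TRO^{\kappa}_{\delta}(f)$, up to an overall normalizing constant. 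That constant tracks two things: the Tamagawa measure comparison between $G_{\delta}(F)\backslash G_{\delta}(\A_F)$ and the compatible measures used to define $TRO^{\kappa}$, which yields the ratio $\tau(G^{\sigma}\times G^{\theta})/\tau(G_{\delta})$; and the over-counting by the kernel group of the localization map, which contributes the $d(G_{\delta},G^{\sigma}\times G^{\theta})^{-1}$.

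The main obstacle will be verifying that Labesse's exact sequences and Poitou--Tate-type dualities for $H^{1}_{\mathrm{ab}}$ carry over verbatim to the pair $(G_{\delta},G^{\sigma}\times G^{\theta})$: unlike in the usual (twisted) trace formula setting, $G^{\sigma}$ and $G^{\theta}$ are two distinct subgroups acting on opposite sides of $\delta$, so $G_{\delta}$ is not the centralizer of a single element inside a single ambient group. However, Lemma \ref{lem-2cent} realizes $G_{\delta}$ as an honest twisted centralizer inside $G^{\sigma}$ alone, and this identification is equivariant with respect to the two-sided $G^{\sigma}\times G^{\theta}$-action up to an inner twist; this should allow the cohomological input to be inherited directly from \cite[\S 1]{Lab}. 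The relatively elliptic hypothesis on $\delta$ ensures that $G_{\delta}$ contains no nontrivial $F$-split central torus beyond $Z_{G^{\sigma}\times G^{\theta}}\cap G_{\delta}$, which is what is needed for the Tamagawa number ratio to be finite and for the localization map to have finite kernel and cokernel.
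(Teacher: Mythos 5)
Your proposal follows exactly the route taken in the paper: identify the index set of the stable sum via \eqref{stableclassbij}, invoke Labesse's abelianized-cohomology machinery (the exact sequence on \cite[p.~42]{Lab} and \cite[Propositions 1.8.4--1.8.6]{Lab}), and apply Fourier inversion on the finite group $\mathfrak{K}(\,\cdot\,;F)_1$, with the factors $\tau(\cdot)$ and $d(\cdot,\cdot)$ emerging from the Tamagawa-measure normalization and the kernel of localization, just as in the proof of \cite[Proposition 4.2.1]{Lab}. The ``main obstacle'' you flag is in fact not one: Labesse's framework is formulated for an arbitrary pair $I \leq H$ of connected reductive $F$-groups with $I$ need not be any kind of centralizer, so taking $I = G_{\delta}$ and $H = G^{\sigma}\times G^{\theta}$ is already within its scope once connectedness and relative ellipticity are in hand.
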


\begin{proof}
We begin by recalling that $H_{\gamma}$ and $G_{\delta}$ are connected, and hence the groups
\begin{align*}
\mathfrak{K}(H_{\gamma},H^{\sigma} \times H^{\sigma};F)_1&=\mathfrak{E}(H_{\gamma},H^{\sigma} \times H^{\sigma};\A_F/F)^D\\
\mathfrak{K}(G_{\delta},{G}^{\sigma} \times {G}^{\theta};F)_1&=
\mathfrak{E}(G_{\delta},{G}^{\sigma} \times {G}^{\theta};\A_F/F)^D
\end{align*}
are finite \cite[Proposition 1.8.4]{Lab}. Moreover, the $\kappa$-orbital integrals converge by Proposition \ref{prop-1-ae}.  With these observations
 in mind, the proof is a standard consequence of the Fourier transform on a finite
group. In more detail, one combines the first exact sequence on
 \cite[p.~42]{Lab} with \cite[Proposition 1.8.5 and 1.8.6]{Lab} (compare the proof of \cite[Proposition 4.2.1]{Lab}).
\end{proof}

\section{Grouping relatively elliptic terms} \label{sec-group-ell}

In this section, we group together the relatively elliptic and relatively $\tau$-elliptic portions of the relative trace
formula and twisted relative trace formula, respectively.

\subsection{Haar measures} \label{ssec-measures}

Recall the Harish-Chandra subgroups $^1H(\A_F)$ and the central subgroups  $A_H \leq H(F \otimes_{\QQ} \RR)$ of \S \ref{HC-subgroup}.  For every $\gamma \in H(\A_F)$ and $\delta \in G(\A_F)$ write
{\allowdisplaybreaks \begin{align*}
^2H^{\sigma}(\A_F) :&=H^{\sigma}(\A_F) \cap {}^1H(\A_F) \\
^2G^{\sigma}(\A_F):&=G^{\sigma}(\A_F) \cap {}^1G(\A_F) \textrm{ } \textrm{ }\\
^2G^{\theta}(\A_F):&=G^{\theta}(\A_F) \cap {}^1G(\A_F)\\
^2H_{\gamma}(\A_F):&=H_{\gamma}(\A_F) \cap {}^1H(\A_F) \times {}^1H(\A_F)\\
^2G_{\delta}(\A_F):&=G_{\delta}(\A_F) \cap {}^1G(\A_F) \times {}^1G(\A_F).
\end{align*}}\index{$^2H^{\sigma}(\A_F)$}\index{$^2G^{\sigma}(\A_F)$}\index{$^2G^{\theta}(\A_F)$}\index{$^2H_{\gamma}(\A_F)$}\index{$^2G_{\delta}(\A_F)$}Moreover, write
{\allowdisplaybreaks \begin{align*}
A_{H}^{\sigma}:&=H^{\sigma}(F \otimes_{\QQ}\RR) \cap A_H\\
A_{G}^{\sigma}:&=G^{\sigma}(F \otimes_{\QQ}\RR) \cap A_{G}\\
 A_{G}^{\theta}:&=G^{\theta}(F \otimes_{\QQ}\RR) \cap A_{G}
\end{align*}}\index{$A_H^{\sigma}$}\index{$A_G^{\sigma}$}\index{$A_G^{\theta}$}and
{\allowdisplaybreaks \begin{align*}
A:&=\{(x,x) \in A_{H}^{\sigma} \times A_{H}^{\sigma}\}\\
\widetilde{A}:&=\{(z,z) \in A_{G}^{\sigma} \times A_{G}^{\theta}:z \in A_{G}^{\sigma} \cap A_{G}^{\theta}\}.
\end{align*}}\index{$A$}\index{$\widetilde{A}$}We then have decompositions
{\allowdisplaybreaks \begin{align} \label{quot-decomp}
H_{\gamma}(\A_F) \backslash H^{\sigma}(\A_F) \times H^{\sigma}(\A_F) & \cong
\left(A \backslash A_{H}^{\sigma} \times A_{H}^{\sigma} \right) \times \left({}^2H_{\gamma}(\A_F) \backslash {}^2H^{\sigma}(\A_F) \times {}^2H^{\sigma}(\A_F) \right)\\
\nonumber G_{\delta}(\A_F) \backslash G^{\sigma}(\A_F) \times G^{\theta}(\A_F) &=(\widetilde{A} \backslash A_{G}^{\sigma} \times A_{G}^{\theta}) \times \left({}^2G_{\delta}(\A_F) \backslash {}^2 G^{\sigma}(\A_F)\times {}^2G^{\theta}(\A_F)\right).
\end{align}}

We now specify a choice of Haar measure on all of the groups appearing in \eqref{quot-decomp}, at least if $\gamma$ is relatively elliptic (resp.~$\delta$ is relatively $\tau$-elliptic).
Whenever a reductive $F$-group $H$ appears, we give $H(\A_F)$, $A_H$, and ${}^1H(\A_F)$ the measures that are
used in the definition of the Tamagawa number $\tau(H)$.  We then fix, once and for all, measures
 $dz_1=dz_2$ for $A_{H}^{\sigma}$, $d\widetilde{z}$ for $A_{G}^{\sigma}$ and $d\widetilde{z}_{\theta}$ for $A_{G}^{\theta}$, and stipulate that the isomorphisms
{\allowdisplaybreaks \begin{align*}
{}^2H^{\sigma}(\A_F) &\cong A_{H^{\sigma}}/A_H^{\sigma} \times {}^1H^{\sigma}(\A_F)\\
{}^2G^{\sigma}(\A_F) &\cong A_{G^{\sigma}}/A_{G}^{\sigma} \times {}^1G^{\sigma}(\A_F)\\
{}^2G^{\theta}(\A_F) &\cong A_{G^{\theta}}/A_{G}^{\theta} \times {}^1G^{\theta}(\A_F)
\end{align*}}are measure preserving.
Notice that if $\gamma$ is relatively elliptic (resp. $\delta$ is relatively $\tau$-elliptic) then
\begin{align*}
{}^2H_{\gamma}(\A_F)&={}^1H_{\gamma}(\A_F)\\
{}^2G_{\delta}(\A_F)&= {}^1G_{\delta}(\A_F).
\end{align*}
By our earlier convention, we have already endowed ${}^2H_{\gamma}(\A_F)$, ${}^2G_{\delta}(\A_F)$,
$A=A_{H_{\gamma}}$ and $\widetilde{A}=A_{G_{\delta}}$ with measures.   Altogether, this endows all of the groups occurring in \eqref{quot-decomp} with measures.  Thus
 ${}^2H_{\gamma}(\A_F)$   and ${}^2G_{\delta}(\A_F)$ are given the unique Haar measures such that if $\gamma$ is relatively elliptic and $\delta$ is relatively $\tau$-elliptic then
\begin{align*}
\mathrm{vol}(H_{\gamma}(F) \backslash {}^2H_{\gamma}(\A_F))&=\tau(H_{\gamma})\\
\mathrm{vol}(G_{\delta}(F) \backslash
{}^2G_{\delta}(\A_F))&=\tau(G_{\delta}).
\end{align*}

For the rest of this paper, we use these choices of measures when we form relative orbital integrals and twisted relative orbital integrals.
These measures will be compatible for the same reason that the Tamagawa measures in the usual trace formula are compatible, namely that
the Tamagawa numbers of two inner forms of the same quasi-split reductive group are equal \cite{KottTama} \cite{Cherno}.

For $\Phi \in C_c^{\infty}(H(\A_F))$ and $f \in C_c^{\infty}(G(\A_F))$ we define
\begin{align} \label{fones}
\Phi^1(x):& = \int_{A \backslash A_{H}^{\sigma} \times A_{H}^{\sigma}} \Phi(z_1^{-1}z_2x) \frac{dz_1dz_2}{du}\\
\nonumber f^1(x):&= \int_{\widetilde{A} \backslash A_{G}^{\sigma} \times A_{G}^{\theta}} f(\widetilde{z}^{-1} \widetilde{z}_{\theta}x) \frac{d\widetilde{z}d\widetilde{z}_{\theta}}{dt}.
\end{align} \index{$\Phi^1$} \index{$f^1$}

\subsection{Elliptic kernels}

Let $\Phi \in C_c^{\infty}(H(\A_F))$, $f \in C_c^{\infty}(G(\A_F))$ and consider the kernel functions
\begin{align*}
K_{\Phi}(x,y):=\sum_{\gamma} \Phi^1(x^{-1} \gamma y) :{}^1H(\A_F) \times {}^1H(\A_F) &\lto \CC\\
K_{f}(x,y):=\sum_{\delta} f^1(x^{-1} \delta y):{}^1G(\A_F) \times {}^1G(\A_F)
&\lto \CC
\end{align*}
\index{$K_{\Phi}$} \index{$K_f$}

\noindent
where the first sum is over relatively regular elliptic $\gamma \in H(F)$ and the second is over
 relatively $\tau$-regular elliptic $\delta \in G(F)$.   Define the integrals
\begin{align*}
RT_e(\Phi):&=\iint_{(H^{\sigma}(F) \backslash {}^{2}H^{\sigma}(\A_F))^2} K_{\Phi}(h_1,h_2) dh_1 dh_2\\
TRT_e(f):&=\iint_{G^{\sigma}(F) \backslash {}^{2}G^{\sigma}(\A_F) \times G^{\theta}(F) \backslash
{}^{1}G^{\theta}(\A_F)}K_{f}(g_1,g_2)dg_1 dg_2.
\end{align*} \index{$RT_e(\Phi)$} \index{$TRT_e(f)$}

\noindent
Here the $dh_i$ are both induced by the measure on $ {}^{2}H^{\sigma}(\A_F)$ fixed above
and $dg_1,dg_2$ are induced by the measures on  ${}^{2}G^{\sigma}(\A_F)$ and ${}^{2}G^{\theta}(\A_F)$ fixed above, respectively. These integrals are absolutely convergent (see the proof of \cite[Theorem 4.1]{Hahn}).

\subsection{Stable geometric expansions} \label{ssec-st-geom-exp}

 Using standard manipulations (compare \cite{Hahn}), we rewrite
\begin{align} \label{1stgeomsum}
RT_e(\Phi)&=\sum_{\gamma_0} \sum_{\gamma \sim \gamma_0} a(\gamma)RO_{\gamma}(\Phi)\\
\nonumber TRT_e(f)&=\sum_{\delta_0} \sum_{\delta \sim \delta_0}a^{\tau}(\delta)TRO_{\delta}(f)
\end{align}
where the exterior sums are over a set of representatives for the stable relatively regular elliptic classes (resp. stable relatively $\tau$-regular elliptic classes) and the interior sums are over a set of
representatives for the relative classes (resp. relative $\tau$-classes) in the stable relative
class of $\gamma_0$ (resp. stable relative $\tau$-class of $\delta_0$).
Here
\begin{align}
a(\gamma):&=\mathrm{vol}(H_{\gamma}(F)\backslash {}^{2}H_{\gamma}(\A_F))=\mathrm{vol}(H_{\gamma}(F)\backslash {}^{1}H_{\gamma}(\A_F))=\tau(H_{\gamma})\\
\nonumber a^{\tau}(\delta):&=\mathrm{vol}(G_{\delta}(F)
\backslash {}^{2}G_{\delta}(\A_F))=\mathrm{vol}(
G_{\delta}(F)
\backslash {}^{1}G_{\delta}(\A_F))=\tau(G_{\delta}),
\end{align}\index{$a(\gamma)$}\index{$a^{\tau}(\delta)$}where the volumes are taken with respect to the measures fixed in \S \ref{ssec-measures}.
Here, as in \S \ref{ssec-measures}, we are using the fact that $\gamma$ and $\delta$ are relatively elliptic and relatively $\tau$-elliptic, respectively, to conclude that ${}^2H_{\gamma}(\A_F)={}^1H_{\gamma}(\A_F)$ and ${}^2G_{\delta}(\A_F)={}^1G_{\delta}(\A_F)$.
Note that with the choice of Haar measure fixed in \S \ref{ssec-measures},
the measures occurring in the summands corresponding to $\gamma$
in the same stable relative class as a given $\gamma_0$ are compatible with respect to \eqref{src-twist} above.
Similarly, the measures occurring in the summands corresponding to
 $\delta$ in the same stable relative $\tau$-class as a given $\delta_0$ are compatible with respect to \eqref{src-twist}.
This follows from the well-known fact
that the Tamagawa numbers of two inner forms of the same quasi-split reductive group are equal \cite{KottTama} \cite{Cherno}.  With this in mind, we group stable classes in \eqref{1stgeomsum} and obtain
\begin{align} \label{RT-st-gp}
RT_e(\Phi)&=\sum_{\gamma_0} \tau(H_{\gamma_0}) SRO_{\gamma_0}(\Phi)\\
\nonumber TRT_e(f)&=\sum_{\delta_0} \tau(G_{\delta_0}) STRO_{\delta_0}(f)
\end{align}
where the first sum is over a set $\{\gamma_0\}$ of representatives for the stable relative classes in $H(F)$ that consist of relatively regular elliptic elements
and the second sum is over a set $\{\delta_0\}$ of representatives for the stable relative $\tau$-classes in $G(F)$ that consist of  relatively $\tau$-regular semisimple elements.  The measures inherent in the definition of $SRO_{\gamma_0}(\Phi)$ and $STRO_{\delta_0}(f)$ are specified as in \S \ref{ssec-measures}.

Applying Proposition \ref{prop-prestab}  we can rewrite \eqref{RT-st-gp} as
\begin{align} \label{RTe-formula}
RT_e(\Phi)&=\sum_{\gamma_0}
\frac{\tau(H^{\sigma} \times H^{\sigma})}{d(H_{\gamma_0},H^{\sigma} \times H^{\sigma})}
\sum_{\kappa_H}RO^{\kappa_H}_{\gamma_0}(\Phi)\\
\nonumber TRT_e(f)&=\sum_{\delta_0}
\frac{\tau(G^{\sigma} \times G^{\theta})}{d(G_{\delta_0},G^{\sigma} \times G^{\theta})}
\sum_{\kappa}TRO^{\kappa}_{\delta_0}(f).
\end{align}
Here the interior sum indexed by $\gamma_0$ is over $\kappa_H \in \mathfrak{K}(H_{\gamma_0},H^{\sigma} \times H^{\sigma};F)_1$
and the interior sum indexed by $\delta_0$ is over $\kappa \in 
\mathfrak{K}(G_{\delta}, G^{\sigma} \times G^{\theta};F)_1$ (compare Proposition \ref{prop-prestab}).

\subsection{The unitary case}

We now specialize our notation to our primary case of interest.  Thus assume that
$H^{\sigma}=U^{\sigma}$ \index{$U^{\sigma}$} is a unitary group with respect to a quadratic extension of
fields $M/F$ with $M$ a CM field and $F$ a totally real field.    In other words, we assume that there is a simple algebra $D$ over $F$ with center $M$ and an involution $\dagger$ of $D$ such that the fixed field of $\dagger$ acting on $M$ is $F$ and such that if $R$ is an $F$-algebra then
\begin{align} \label{unitary-gp}
U^{\sigma}(R):=\{g \in (D \otimes_F R)^{\times}: gg^{\dagger}=1\}.
\end{align}
We let $E/F$ be a quadratic extension of fields with $E$ totally real and assume moreover that we are in the biquadratic situation,
thus $U:=\mathrm{Res}_{E/F}U^{\sigma}$ and $\sigma$ is the
automorphism induced by the generator of $\Gal(E/F)$
which we will also denote by $\sigma$.  We let $\tau$
be the generator of $M/F$ and $G^{\sigma}:=\mathrm{Res}_{M/F}U^{\sigma}$, $G:=\mathrm{Res}_{M/F}U$.  The automorphism $\tau$ defines an automorphism $\tau:G \to G$ such that the subgroup of $G$ fixed by $\tau$ is $U$.  Finally, choose a (finite-dimensional) representation
$$
U^{\sigma}_{F_{\infty}} \lto \mathrm{Aut}_{\RR}(V)
$$
and let $G^{\sigma}_{F_{\infty}} \to \mathrm{Aut}_{\RR}(\mathrm{Res}_{M_{\infty}/F_{\infty}}V)$ be the representation obtained by restriction of scalars.
  We have the following proposition:

\begin{prop} \label{prop-form-compar} Suppose that $\Phi \in C_{c}^{\infty}(U(\A_F))$ and
$f \in C_c^{\infty}(G(\A_F))$ are factorable and that $\Phi_v$ matches $f_v$ for all finite places $v$ of $F$.
 Consider the following assumptions:
\begin{enumerate}
\item
One has
$$
f_{\infty}=f_1 \times f_2  \in C^{\infty}_c(G^{\sigma}(F_{\infty}) \times G^{\sigma}(F_{\infty}))= C_c^{\infty}(G(F_{\infty}))
$$
with $f_1^{-\tau}*f_2=f_{L,\mathrm{Res}_{M_{\infty}/F_{\infty}}V,\tau}$.

\item One has
$$
\Phi_{\infty}=\Phi_1 \times \Phi_2 \in C^{\infty}_c(U^{\sigma}(F_{\infty}) \times U^{\sigma}(F_{\infty}))=C_c^{\infty}(U(F_{\infty}))
$$
with $\Phi_1^{-1}*\Phi_2=c_{\infty}f_{EP,V}$ for some $c_{\infty} \in \RR_{>0}$.
\end{enumerate}
If the assumptions hold, then for an appropriate choice of $c_{\infty} \in \RR_{>0}$ one has
$$
RT_e(\Phi)=2TRT_e(f).
$$
\end{prop}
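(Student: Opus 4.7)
The plan is to work from the stabilized geometric expansions \eqref{RTe-formula} for both $RT_e(\Phi)$ and $TRT_e(f)$ and to pair each stable relative $\tau$-class on the $G$-side with a stable relative class on the $H$-side via the norm map of \S \ref{ssec-match}, reducing everything to an equality of local $\kappa$-orbital integrals that is accessible from the matching hypotheses.

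First I would set up the combinatorial dictionary of classes. By Proposition \ref{matching-prop}, every relatively $\tau$-regular $\tau$-elliptic $\delta_0\in G(F)$ admits a norm $\gamma_0\in H(F)$, which is necessarily relatively regular elliptic via the inner twist \eqref{norm-twist}; conversely Lemmas \ref{lem-geom-norm} and \ref{matching-lem} describe the possible norms of a given $\delta_0$ in terms of the homogeneous space $O(\delta_0\delta_0^{-\theta}(\delta_0\delta_0^{-\theta})^{\tau})$. In the biquadratic unitary case I would verify that the induced map from stable relative $\tau$-regular $\tau$-elliptic classes on $G$ to stable relatively regular elliptic classes on $H$ is a surjection with fibers of cardinality $2$, and this two-to-one behavior is the source of the factor of $2$ in the proposition. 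The inner twist \eqref{norm-twist} identifies $G_{\delta_0}$ with $H_{\gamma_0}$ over $\bar F$, so $\tau(G_{\delta_0})=\tau(H_{\gamma_0})$ and, via isomorphisms analogous to \eqref{kappamaps}, $\mathfrak{K}(G_{\delta_0},G^{\sigma}\times G^{\theta};F)_1$ is canonically identified with $\mathfrak{K}(H_{\gamma_0},H^{\sigma}\times H^{\sigma};F)_1$, matching both the $d(\cdot,\cdot)$ factors and the $\kappa$-summations. Moreover $\tau(H^{\sigma}\times H^{\sigma})=\tau(G^{\sigma}\times G^{\theta})$, both being $4$, since each factor is a (possibly inner form of a) quasi-split unitary group of Tamagawa number $2$ and restriction of scalars preserves Tamagawa numbers. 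After these identifications, the two stable geometric sums in \eqref{RTe-formula} agree termwise up to the factor of $2$, provided the global $\kappa$-orbital integrals on each side are equal.

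Next I would match the $\kappa$-orbital integrals locally. They factor as products over all places, so at each finite place the assumed matching $\Phi_v\leftrightarrow f_v$ handles the stable ($\kappa=1$) part directly. To promote this to arbitrary $\kappa$, I would use Proposition \ref{prop-E-split-match} at the $E$-split places (which realizes the relevant twisted $\kappa$-orbital integrals as standard twisted $\kappa$-orbital integrals on $G^{\sigma}$, for which matching is known via \cite[Th\'eor\`eme 3.3.1]{Lab}), Lemma \ref{lem-M-split-match} and Corollary \ref{cor-H-split-matching} at the $M$-split places, and the weak matching of Theorem \ref{thm-weak-match} at the remaining ramified places, combined with the observation that in the relatively regular support guaranteed by that theorem every local $\kappa$-integral collapses to a stable one.

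The hard part will be the archimedean places. Hypotheses (1) and (2) supply $f_\infty=f_1\times f_2$ and $\Phi_\infty=\Phi_1\times\Phi_2$ whose convolutions $f_1^{-\tau}*f_2$ and $\Phi_1^{-1}*\Phi_2$ are respectively a twisted Lefschetz function $f_{L,\mathrm{Res}_{M_\infty/F_\infty}V,\tau}$ and a scalar multiple $c_\infty f_{EP,V}$ of an Euler--Poincar\'e function. Applying Proposition \ref{prop-E-split-match} at the archimedean places rewrites the product $\kappa$-orbital integrals as ordinary and $\tau$-twisted $\kappa$-orbital integrals of these convolutions, reducing the archimedean comparison to the known identity of Labesse relating orbital integrals of $f_{EP,V}$ on $U^{\sigma}$ to $\tau$-twisted orbital integrals of $f_{L,\mathrm{Res}_{M_\infty/F_\infty}V,\tau}$ on $G^{\sigma}$ under base change. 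This pins down $c_\infty$ (depending only on $V$ and the measures) and delivers the archimedean factor of the equality. Assembling the local matches with the two-to-one correspondence of stable classes and the identification of Tamagawa and $d$-factors then yields $RT_e(\Phi)=2\,TRT_e(f)$.
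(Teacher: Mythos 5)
Your proposal misidentifies the source of the factor of $2$ and contains a concrete error about Tamagawa numbers, and your overall strategy also over-reaches what the hypotheses give you.

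First, the factor of $2$. You assert that $\tau(H^{\sigma}\times H^{\sigma})=\tau(G^{\sigma}\times G^{\theta})=4$ on the grounds that each factor is (an inner form of) a unitary group of Tamagawa number $2$. This is false for $G^{\sigma}=\mathrm{Res}_{M/F}U^{\sigma}$: since $U^{\sigma}$ is unitary with respect to $M/F$, over $M$ it becomes a form of $\GL_n$, so $G^{\sigma}$ is an inner form of $\mathrm{Res}_{M/F}\GL_n$, with $H^1_{\mathrm{ab}}(\A_F/F,G^{\sigma})=1$ and Tamagawa number $1$, not $2$. The paper's equation \eqref{tau-formula} gives $\tau(U^{\sigma}\times U^{\sigma})=4$ but $\tau(G^{\sigma}\times G^{\theta})=2$, and the ratio $4/2$ is precisely where the factor of $2$ comes from (together with $d(\cdot,\cdot)=1$ on both sides). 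To absorb the discrepancy you postulate that the norm map on stable relatively elliptic classes is $2$-to-$1$, but it is in fact a bijection onto its image: injectivity on stable classes can be checked over $\bar{F}$ where $G\cong H\times H$ and the norm becomes $x\mapsto xx^{-\sigma}$, and surjectivity with the appropriate local conditions is Lemma \ref{matching-lem} and Proposition \ref{matching-prop}. If your $2$-to-$1$ claim were true you would actually get $RT_e(\Phi)=TRT_e(f)$, not the stated identity, so your bookkeeping is inconsistent even internally.

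Second, the treatment of the $\kappa$-orbital integrals. You plan to match $\kappa$-orbital integrals for \emph{all} $\kappa$ at the finite places, and to do this you would need matching results for nontrivial $\kappa$ that the paper neither has nor needs. The matching hypothesis in the proposition only governs \emph{stable} orbital integrals. The paper's proof sidesteps the difficulty entirely by observing that the archimedean Lefschetz and Euler--Poincar\'e functions are stable (in the sense of \cite[D\'efinition 3.8.2]{Lab}), and combining this stability with Proposition \ref{prop-E-split-match} and \cite[Proposition 1.9.6, Lemme 1.9.7]{Lab} to show that the set $\infty$ is essential and all $\kappa$-orbital integrals with $\kappa\neq 1$ vanish. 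After that only the $\kappa=1$ (stable) terms survive, which is exactly what the matching hypothesis handles. Your route, by contrast, leaves a genuine gap: you would need $\kappa$-matching at every finite place, including inert ramified ones, which Theorem \ref{thm-weak-match} does not provide.

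The archimedean portion of your outline, reducing to Labesse's comparison of Euler--Poincar\'e and twisted Lefschetz functions (in the paper, \cite[Corollaire A.1.2]{Lab} and \cite[Th\'eor\`eme 7.1]{LabCM}), is in line with the actual argument. But the two errors above are load-bearing: the Tamagawa number computation is the entire content of the constant $2$, and the vanishing of the $\kappa\neq1$ terms is what makes the geometric comparison feasible with only stable matching as input.
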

\noindent In the proposition, $f_{L,\mathrm{Res}_{M_{\infty}/F_{\infty}}V,\tau}$ is the Lefschetz function attached to the representation $\mathrm{Res}_{M_{\infty}/F_{\infty}}V$ of $G^{\sigma}_{F_{\infty}}$ and the involution $\tau$ of $G^{\sigma}_{F_v}$ (see \cite[Proposition 8.4]{BLS}). Moveover $f_{EP,V}$ is the Euler-Poincar\'e  function attached to $V$; this is simply the Lefschetz function in the case that the associated automorphism is trivial.
\begin{proof}
The functions $f_{L,\mathrm{Res}_{M_{\infty}/F_{\infty}}V,\tau}$ and $f_{EP,V}$ are stable in the sense of \cite[D\'efinition 3.8.2]{Lab} (see \cite[Th\'eor\`eme A.1.1]{ClozLab} and \cite[Th\'eor\`eme 7.1]{LabCM}).  Thus, in view of Proposition \ref{prop-E-split-match} and assumptions (2) and (1), any $\gamma_0$ (resp.~$\delta_0$) contributing a nonzero summand to $RT_e(\Phi)$ (resp.~$TRT_e(f)$) is relatively regular elliptic (resp.~relatively $\tau$-regular elliptic) at $F_{\infty}$.  Applying \cite[Proposition 1.9.6 and Lemme 1.9.7]{Lab} we conclude that for these places the set $\infty$ is $(H,H_{\gamma_0})$ (resp.~$(G,G_{\delta})$)-essential for any $\gamma_0$ (resp.~$\delta_0$) contributing a nonzero summand.
Using the fact that $f_{L,\mathrm{Res}_{M_{\infty}/F_{\infty}}V,\tau}$ and $f_{EP,V}$ are stable and Proposition \ref{prop-E-split-match} again, we conclude that
the $\kappa$-orbital integrals for $\kappa \neq 1$ in \eqref{RTe-formula} all vanish, and hence
\begin{align*}
RT_e(\Phi)&=\sum_{\gamma_0}
\frac{\tau(U^{\sigma} \times U^{\sigma})}{d(U_{\gamma_0},U^{\sigma} \times U^{\sigma})}
SRO_{\gamma_0}(\Phi)\\
TRT_e(f)&=\sum_{\delta_0}
\frac{\tau(G^{\sigma} \times G^{\theta})}{
d(G_{\delta_0},G^{\sigma} \times G^{\theta})}
STRO_{\delta_0}(f).
\end{align*}
By \cite[Corollaire A.1.2]{Lab} for all $v|\infty$ the functions $f_{v}$ and $c_v\Phi_v$ match for an appropriate constant $c_v \in \RR_{>0}$ (see also \cite[Th\'eor\`eme 7.1]{LabCM}).  We henceforth assume that $f_v$ and $\Phi_v$ match for all $v$.

Since every relatively regular semisimple $\gamma_0$ is a norm of a $\delta_0$ by Lemma \ref{matching-lem} and
 every relatively $\tau$-regular elliptic semisimple $\delta_0$ that has local norms at infinity has a global norm by Proposition \ref{matching-prop}, to complete the proof it suffices to show that
\begin{align} \label{eq123}
\frac{\tau(U^{\sigma} \times U^{\sigma})}{d(U_{\gamma_0},U^{\sigma} \times U^{\sigma})}
SRO_{\gamma_0}(\Phi)=2\frac{\tau(G^{\sigma}  \times G^{\theta})}{d(G_{\delta_0},G^{\sigma} \times G^{\theta})}
STRO_{\delta_0}(f)
\end{align}
if $\gamma_0$ is a norm of $\delta_0$.  By the definition of matching we have
$SRO_{\gamma_0}(\Phi)=STRO_{\delta_0}(f)$. Moreover
\begin{align}\label{dformula}
d(U_{\gamma_0},U^{\sigma} \times U^{\sigma})&=\#\mathrm{coker}\left[H^1_{\mathrm{ab}}(\A_F/F,U^{\sigma} \times U^{\sigma}) \to H^1_{\mathrm{ab}}(\A_F/F,U^{\sigma}\times U^{\sigma})\right]=1\\
d(G_{\delta_0},G^{\sigma} \times G^{\theta})&=\# \mathrm{coker}\left[  H^1_{\mathrm{ab}}(\A_F/F,U^{\sigma} \times G^{\theta}) \to H^1_{\mathrm{ab}}(\A_F/F,G^{\sigma} \times G^{\theta})\right] =1 \nonumber
\end{align}
by the fact that $\gamma_0$ and $\delta_0$ are relatively elliptic and relatively $\tau$-elliptic, respectively, and \cite[Corollaire 1.9.3]{Lab}.  By \cite[Corollaire 1.7.4]{Lab}, we have
\begin{align}\label{tau-formula}
\tau(U^{\sigma} \times U^{\sigma})&=\frac{\# H^1_{\mathrm{ab}}(\A_F/F, U^{\sigma} \times U^{\sigma})}{\#\ker^1_{\mathrm{ab}}(F,U^{\sigma} \times U^{\sigma})}=\frac{4}{\#\ker^1_{\mathrm{ab}}(F,U^{\sigma} \times U^{\sigma})}\\ \tau(G^{\sigma} \times G^{\theta})&=\frac{\#H^1_{\mathrm{ab}}(\A_F/F,G^{\sigma} \times G^{\theta})}{\#\ker^1_{\mathrm{ab}}(F,G^{\sigma} \times G^{\theta})}=\frac{2}{\#\ker^1_{\mathrm{ab}}(F,G^{\sigma} \times G^{\theta})} \nonumber
\end{align}
Here we are using the fact that $H^1_{\mathrm{ab}}(\A_F/F,\GL_n)=1$ and $H^1_{\mathrm{ab}}(\A_F/F,H)=2$ if $H$ is a (nonsplit) unitary group (see \cite[Lemma 1.2.1(i)]{HarLab} for the latter statement).

  Since $G^{\sigma}$ is an inner form of a general linear group, the Hasse principle is valid for it.  On the other hand, $G^{\theta}$ and $U^{\sigma}$ are unitary groups, so the Hasse principle is valid for them as well \cite[Lemma 1.2.1(i)]{HarLab}, so $\ker^1_{\mathrm{ab}}(F,U^{\sigma} \times U^{\sigma})=\ker^1_{\mathrm{ab}}(F,G^{\sigma} \times G^{\theta})=1$.  In view of \eqref{dformula} and \eqref{tau-formula}, this completes the proof of the proposition.
\end{proof}

\section{Relative trace formulae}
\label{sec-rtf}

\subsection{A simple relative trace formula} \label{ssec-sRTF}

For $f \in C_c^{\infty}(G(\A_F))$ and a cuspidal unitary automorphic representation $\Pi$ of $G(\A_F)$, Arthur has shown \cite[Lemma 4.5, Lemma 4.8]{A}
that there is a (unique) function $K_{\Pi(f^1)}(x,y) \in L_0^2(G(F) \backslash {}^1G(\A_F) \times G(F) \backslash {}^1G(\A_F))$ \index{$K_{\Pi(f^1)}(x,y)$} that is smooth in $x$ and $y$ separately with $L^2$-expansion
\begin{align} \label{expansion}
K_{\Pi(f^1)}(x,y)=\sum_{\phi \in \mathcal{B}(\Pi)} \Pi(f^1){\phi}(x)\overline{{\phi}(y)}.
\end{align}
Here $\mathcal{B}(\Pi)$ \index{$\mathcal{B}(\Pi)$} is an orthonormal basis of the $\Pi$-isotypic subspace $V_{\Pi} \leq L^2_0(G(F) \backslash {}^1G(\A_F))$ with respect to the pairing
\begin{align} \label{ortho-pair}
V_{\Pi} \times V_{\Pi} &\lto \CC\\
\nonumber (\phi_1,\phi_2) &\longmapsto \int_{G(F) \backslash {}^1G(\A_F)} \phi_1(y) \overline{{\phi}_2(y)}dy,
\end{align}
with $dy$ induced by the Tamagawa measure.
We emphasize that this expansion \eqref{expansion}, in general, is only convergent in the $L^2$ sense.
Following \cite{Hahn}, we define
\begin{align}
TRT(\Pi(f^1))&=\iint_{G^{\sigma}(F) \backslash {}^2G^{\sigma}(\A_F) \times G^{\theta}(F) \backslash {}^2G^{\theta}(\A_F)} K_{\Pi(f^1)}(x,y)dxdy.
\end{align} \index{$TRT(\Pi(f^1))$}

\noindent
The integral is absolutely convergent by \cite[\S 2, Proposition 1]{AGR}.
The following simple relative trace formula is proved in \cite{Hahn} via a modification of the
argument used to prove the usual simple trace formula:

\begin{thm} \label{thm-rel-trace-formula} Let $f=f_{v_1} \otimes f_{v_2}\otimes f^{v_1v_2} \in C_c^{\infty}(G(\A_F))$ be a factorable function such that
\begin{itemize}
\item $f_{v_1}$ is $F$-supercuspidal.
\item $f_{v_2}$ is supported on relatively $\tau$-regular elliptic elements of $G(F_{v_2})$.
\end{itemize}
Then
\begin{align}\label{rtf}
TRT_e(f):=\sum_{\delta}\tau(G_{\delta}) TRO_{\delta}(f)
&=
\sum_{\Pi} TRT(\Pi(f^1))
\end{align}
where the sum on the left is over a set of representatives for the relatively $\tau$-regular elliptic classes in $G(F)$ and the sum on the right
is over a set of representatives for the equivalence classes of cuspidal automorphic representations $\Pi$  of ${}^1G(\A_F)$.
\end{thm}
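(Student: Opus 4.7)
The plan is the standard two-way expansion of a kernel, pushed through the simplifications afforded by the two local hypotheses on $f$. Consider the kernel
\[
K_f(x,y) := \sum_{\delta \in G(F)} f^1(x^{-1} \delta y), \qquad x,y \in {}^1 G(\A_F),
\]
and integrate it against $dx\, dy$ over $G^\sigma(F) \backslash {}^2 G^\sigma(\A_F) \times G^\theta(F) \backslash {}^2 G^\theta(\A_F)$. The integrand has two natural expansions, geometric and spectral, and the aim is to show that each is absolutely convergent and reduces, under the hypotheses, to the corresponding side of \eqref{rtf}.

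For the geometric side, the first step is to argue that the support condition on $f_{v_2}$ forces the inner sum over $\delta$ to be supported on relatively $\tau$-regular elements of $G(F)$: indeed, if $f^1(x^{-1} \delta y) \neq 0$, then at $v_2$ the element $x_{v_2}^{-1} \delta_{v_2} y_{v_2}$ is relatively $\tau$-regular, a property preserved by $G^\sigma \times G^\theta$-translation and descending to $\delta$ itself. The $F$-supercuspidality at $v_1$ then kills all non-relatively-$\tau$-elliptic contributions via a parabolic-descent argument (a vanishing statement for twisted relative orbital integrals of supercuspidal functions along proper parabolics, which is the relative analogue of the classical vanishing result and is carried out in \cite{Hahn}). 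With only relatively $\tau$-regular elliptic $\delta$ remaining, unfolding along the double coset decomposition $G(F) = \bigsqcup_\delta G^\sigma(F) \delta G^\theta(F) / G_\delta(F)$ expresses the geometric integral as $\sum_\delta \mathrm{vol}(G_\delta(F) \backslash {}^2 G_\delta(\A_F))\, TRO_\delta(f)$, which equals $TRT_e(f)$ by the identification $\mathrm{vol}(G_\delta(F) \backslash {}^2 G_\delta(\A_F)) = \tau(G_\delta)$ (using relative $\tau$-ellipticity, as in \S \ref{ssec-st-geom-exp}) and the definition of $f^1$ in \eqref{fones} that handles the quotient by $\widetilde{A} \backslash A_G^\sigma \times A_G^\theta$.

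For the spectral side, the orthogonal decomposition of $L^2(G(F) \backslash {}^1 G(\A_F))$ into cuspidal, residual, and continuous parts reduces the problem to showing that $F$-supercuspidality of $f_{v_1}$ implies $R(f^1)$ annihilates the orthogonal complement of the cuspidal subspace. Granting this, $K_f(\cdot,\cdot)$ agrees (in the $L^2$ sense, and pointwise after smoothing by Arthur's results recalled before the theorem) with $\sum_\Pi K_{\Pi(f^1)}$ over cuspidal $\Pi$, and the absolute convergence of \cite[\S 2, Proposition 1]{AGR} (applied uniformly in $\Pi$ together with the trace-class property of $R(f^1)$ on the cuspidal spectrum) permits termwise integration, producing $\sum_\Pi TRT(\Pi(f^1))$. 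The principal obstacle, as in the classical simple trace formula, is justifying convergence and the interchange of integration with the two expansions; both issues are sidestepped by the local hypotheses at $v_1$ and $v_2$, which force the kernel to be supported on a sufficiently small geometric locus and its spectral expansion to involve only the cuspidal spectrum, so that no truncation is needed.
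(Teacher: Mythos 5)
Note first that the paper does not actually prove Theorem \ref{thm-rel-trace-formula}: it is quoted from \cite{Hahn}, where it is established ``via a modification of the argument used to prove the usual simple trace formula.'' Your sketch reproduces the standard skeleton of such an argument --- two-sided expansion of the kernel $K_f$, with the local hypotheses used to simplify each side --- so the overall route is the right one, and there is no competing in-paper proof to contrast it with.

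There is, however, a miscue in how you distribute the two hypotheses. You claim that the support of $f_{v_2}$ forces the geometric sum onto relatively $\tau$-regular $\delta$, and that the $F$-supercuspidality at $v_1$ is then needed, via a parabolic-descent vanishing for twisted relative orbital integrals, to further cut down to relatively $\tau$-elliptic $\delta$. In fact the support condition on $f_{v_2}$ already handles both on its own: if $x_{v_2}^{-1}\delta y_{v_2}$ lies in the support of $f_{v_2}$, then $\delta$ is relatively $\tau$-regular and relatively $\tau$-elliptic in $G(F_{v_2})$ (both properties are invariant under translation by $G^{\sigma} \times G^{\theta}$), and both descend to $\delta \in G(F)$: regularity because it is a condition on $\dim G_{\delta}$ computed over $\bar{F}$, ellipticity because an $F$-torus containing a nontrivial $F$-split subtorus also contains an $F_{v_2}$-split one, so anisotropy of $C_{\delta\delta^{-\theta},G}^{\tau}/Z_G$ over $F_{v_2}$ forces it over $F$. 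The role of the supercuspidality at $v_1$ is concentrated on the spectral side, making $R(f^1)$ annihilate the orthogonal complement of the cuspidal subspace (so the kernel collapses to a finite sum of cuspidal $K_{\Pi(f^1)}$), and on the convergence that lets one integrate the untruncated kernel at all; invoking it to kill non-elliptic geometric contributions is at best redundant and quietly reaches for a harder vanishing lemma than the theorem's hypotheses actually hand you. Your closing paragraph treats the absolute convergence of the geometric double integral and the justification of term-by-term spectral integration as ``sidestepped'' by the local hypotheses, but these are precisely where the substance of the argument in \cite{Hahn} lies; a complete proof would need to establish these rather than assert them.
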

Here we say that $f_v$ is
\textbf{$F$-supercuspidal} if $f_v$ has zero integral along the unipotent radical of any proper parabolic of $G_{F_v}$ that is
\emph{defined over} $F$; i.e.~is the base change to $F_v$ of a parabolic subgroup of $G$.
As usual, we allow $\tau$ to be trivial in the theorem above.
By convention, an automorphic representation of ${}^1G(\A_F)$ is the restriction to ${}^1G(\A_F)$ of an automorphic representation of $G(\A_F)$, and we consider two such to be equivalent if they are equivalent as representations of ${}^1G(\A_F)$.

Let $K_{\infty} \leq G(F_{\infty})$ be a maximal compact subgroup.  We note that if $\Pi$ is cuspidal and $f$ is $K_{\infty}$-finite or $\Pi(f^1)$ has finite rank then we have
\begin{align} \label{if-k-finite}
TRT(\Pi(f^1))=\sum_{\phi \in \mathcal{B}(\Pi)}\mathcal{P}_{G^{\sigma}}(\Pi(f^1)\phi)\overline{\mathcal{P}_{G^{\theta}}(\phi)}
\end{align}
where the sum is over an orthonormal basis $\mathcal{B}(\Pi)$ of the $\Pi$-isotypic subspace of $L^2_0(G(F) \backslash {}^1G(\A_F))$ consisting of smooth vectors (see \eqref{period} for the definition of $\mathcal{P}_{G^{\sigma}}$).
  For any $f \in C_c^{\infty}(G(\A_F))$, if $TRT(\Pi(f^1))$ is nonzero then $\Pi$ is both $G^{\sigma}$ and $G^{\theta}$-distinguished.

\subsection{Comparison}

Upon combining Proposition \ref{prop-form-compar} and Theorem \ref{thm-rel-trace-formula}, we obtain the following proposition:

\begin{prop} \label{prop-spect-compar}  Suppose that $\Phi=\otimes_v'\Phi_v \in C_c^{\infty}(U(\A_F))$
and $f=\otimes_v' f_v\in C_c^{\infty}(G(\A_F))$ are factorable, and satisfy the following conditions:
\begin{itemize}
\item $\Phi_v$ matches $f_v$ for all $v$.
\item $\Phi_{\infty}$ and $f_{\infty}$ satisfy conditions (1-2) of Proposition \ref{prop-form-compar}.
\item There is a finite place $v_1$ such that $\Phi_{v_1}$ is $F$-supercuspidal.
\item There is a place $v_2$ of $F$ such that $f_{v_2}$ is $F$-supercuspidal.
\item There is a place $v_3$ of $F$ such that  $\Phi_{v_3}$ is supported on relatively regular elliptic semisimple elements.
\item There is a place $v_4$ of $F$ such that $f_{v_4}$ is supported on relatively $\tau$-regular elliptic semisimple elements.
\end{itemize}
Under the above assumptions, we have
$$
\sum_{\pi} RT(\pi(\Phi^1))=2\sum_{\Pi} TRT(\Pi(f^1)),
$$
where the sums are over equivalence classes of cuspidal automorphic representations $\pi$ of $U(\A_F)$ and $\Pi$ of ${}^1G(\A_F)$, respectively.  \qed
\end{prop}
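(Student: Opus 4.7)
The plan is to apply the simple relative trace formula of Theorem \ref{thm-rel-trace-formula} on each of the two sides to identify the spectral sums with the corresponding geometric expansions, and then to invoke Proposition \ref{prop-form-compar} to compare those two geometric expansions.

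On the twisted side, the hypotheses that $f_{v_2}$ is $F$-supercuspidal and $f_{v_4}$ is supported on relatively $\tau$-regular elliptic semisimple elements of $G(F_{v_4})$ are exactly what is needed to invoke Theorem \ref{thm-rel-trace-formula}, yielding
\begin{equation*}
TRT_e(f) = \sum_{\Pi} TRT(\Pi(f^1)).
\end{equation*}
On the untwisted side I apply the same theorem in the case where the role of $\tau$ is played by the trivial automorphism: the ambient group is taken to be $U$, equipped with the involution $\sigma$ coming from $\Gal(E/F)$. The hypotheses on $\Phi_{v_1}$ ($F$-supercuspidal) and $\Phi_{v_3}$ (supported on relatively regular elliptic semisimple elements) provide exactly the required input, giving
\begin{equation*}
RT_e(\Phi) = \sum_{\pi} RT(\pi(\Phi^1)).
\end{equation*}

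Once these two simple trace formulae are in place, the proposition reduces to the geometric identity $RT_e(\Phi) = 2\, TRT_e(f)$, which is the content of Proposition \ref{prop-form-compar}. Its hypotheses hold by assumption: $\Phi_v$ matches $f_v$ at every place $v$, and the infinity components $\Phi_\infty, f_\infty$ are assumed to be of the product form built from Lefschetz and Euler--Poincar\'e functions required in items (1)--(2) of Proposition \ref{prop-form-compar}.

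The substantive work was already carried out in proving Proposition \ref{prop-form-compar}: the stability of the archimedean test functions forces the prestabilization \eqref{RTe-formula} to collapse onto its $\kappa = 1$ contribution so that only stable relatively regular elliptic classes survive, while the Hasse principle together with Tamagawa number computations produces the constant $2 = \tau(U^\sigma \times U^\sigma)/\tau(G^\sigma \times G^\theta)$. Given these inputs, combining the three displayed identities yields the desired equality at once; the only care required here is the bookkeeping needed to match the hypotheses of Theorem \ref{thm-rel-trace-formula} against the correct group and involution on each of the two sides, which is why four separate local conditions on $\Phi$ and $f$ appear in the statement.
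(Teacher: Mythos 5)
Your proposal reproduces the paper's own (implicit) proof exactly: the paper's preamble to the proposition states that it follows by "combining Proposition \ref{prop-form-compar} and Theorem \ref{thm-rel-trace-formula}," which is precisely what you do — apply the simple relative trace formula to each side (once with $\tau$ nontrivial, once with $\tau$ trivial, matching the four local hypotheses to the correct group) and then substitute the geometric comparison $RT_e(\Phi) = 2\,TRT_e(f)$. The bookkeeping of which local condition feeds which instance of Theorem \ref{thm-rel-trace-formula} is handled correctly.
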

\noindent
Note that we do not require that the places $v_i$ be distinct.  Here $RT(\pi(\Phi^1))$ is defined to be $TRT(\Pi(f^1))$ in the ``$\tau=1$'' case.

\section{Application}

\label{ssec-apps}

 For this entire section we will place ourselves in the following special case of the construction exposed in the previous sections.
 Let $E/F$ be a quadratic extension of totally real fields and let $M/F$ be a CM extension.
Let $U^{\sigma}$ be a unitary group over $F$ as in \eqref{unitary-gp} and $U:=\mathrm{Res}_{E/F}U^{\sigma}$.  We let $\sigma$ be the automorphism of $U$ induced by the generator of $\Gal(E/F)$, which we will also denote by $\sigma$.  The groups $G^{\sigma}=\mathrm{Res}_{M/F}U^{\sigma}$ and $G=\mathrm{Res}_{M/F}U$ are isomorphic to inner forms of $\Res_{M/F}\GL_n$ and $\Res_{ME/F}\GL_n$, respectively, for some $n$.

When we refer to the base change map below, we will mean the (partially defined)
functorial lifting with respect to the map of $L$-groups
$$
b:{}^LU \lto {}^LG
$$
induced by the natural inclusion $U \to G$ (see \cite{HarLab}).

\begin{defn} \label{defn-wbc} Let $\pi$ be a cuspidal automorphic representation of $U^{\sigma}(\A_E)=U(\A_F)$.  We say that a cuspidal automorphic representation $\Pi$ of $G^{\sigma}(\A_E)=G(\A_F)$ is a \textbf{weak base change}
of $\pi$ if $\Pi_w$ is the base change of $\pi_w$ for all places $w$ of $E$ satisfying the following:
\begin{itemize}
\item $w$ is infinite,
\item $ME/E$ is split at $w$, or
\item $\pi_w$ and $U^{\sigma}_{E_w}$ are unramified.
\end{itemize}
\end{defn}
By strong multiplicity one for $G$ \cite{Badu}, if a weak base change of $\pi$ exists then it is unique.  Suppose that $\pi$ and $\pi'$ are cuspidal and both admit weak base changes to $G(\A_F)$.  If $\pi$ and $\pi'$ are moreover \textbf{nearly equivalent}, i.e. $\pi_v \cong \pi_{v}'$ for almost all places $v$, then their weak base changes are obviously equal.

\subsection{Statement of theorem} \label{ssec-st-thms}

To state the main theorem of this section, it is convenient to introduce a definition.  Let $K_{U\infty} \leq
U(F_{\infty})$ be a maximal compact subgroup and let $V$ be a representation of $U(F_{\infty})$.
Let $\mathfrak{u}:=\mathrm{Lie}(U_{F_{\infty}}) \otimes_{\RR}\CC$ be the complexification of the real Lie group
 $U(F_{\infty})$.

\begin{defn} \label{cohom-defn} An automorphic representation $\pi$ of $U(\A_F)$ has \textbf{nonzero cohomology with
coefficients in $V$} if $H^*(\mathfrak{u},K_{U\infty};\pi_{\infty} \otimes V)$ is not identically zero.
\end{defn}

We have the following theorem:

\begin{thm} \label{main-thm} Let $\pi$ be a cuspidal automorphic representation of $U^{\sigma}(\A_E)=U(\A_F)$ that admits a weak base change $\Pi$ to $G(\A_F)$.
Suppose that $\pi$ satisfies the following conditions:
\begin{enumerate}
\item There is a finite-dimensional representation $V$ of $U_{F_{\infty}}$ such that $\pi$ has
nonzero cohomology with coefficients in $V$.
\item There is a finite place $v_1$ of $F$ totally split in $ME/F$ such that $\pi_{v_1}$ is supercuspidal.
\item There is a finite place $v_2 \neq v_1$ of $F$ totally split in $ME/F$ such that $\pi_{v_2}$ is in the discrete series.
\item For all places $v$ of $F$ such that $ME/F$ is ramified and $M/F$, $E/F$ are both nonsplit at $v$ the weak base change $\Pi$ of $\pi$ to $G(F)$ has the property that $\Pi_v$ is relatively $\tau$-regular.
\end{enumerate}

If the automorphic representation $\Pi$ is both $G^{\sigma}$
and $G^{\theta}$-distinguished then there is a cuspidal automorphic representation $\pi'$ of $U^{\sigma}(\A_E)=U(\A_F)$ that is $U^{\sigma}$-distinguished and
nearly equivalent to $\pi$.  Moreover, we can take $\pi'$ to have nonzero cohomology with coefficients in $V$.
\end{thm}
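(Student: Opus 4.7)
The plan is to apply Proposition \ref{prop-spect-compar} with test functions engineered so that the spectral contribution on the $G$-side isolates $\Pi$ and is nonzero, thereby forcing a nonvanishing relative trace on the $U$-side concentrated on representations nearly equivalent to $\pi$. I first construct the test function $f = \otimes'_v f_v$ on $G(\A_F)$. At archimedean places, I take $f_\infty = f_1 \times f_2$ of the product form demanded by condition (1) of Proposition \ref{prop-form-compar}, so that $f_1^{-\tau} * f_2 = f_{L,\Res_{M_\infty/F_\infty}V,\tau}$. At $v_1$, since $v_1$ splits completely in $ME/F$ and $\pi_{v_1}$ is supercuspidal, I use Lemma \ref{lem-M-split-match} to take $f_{v_1} = \Phi_{1,v_1} \times \Phi_{2,v_1}$ so that $\Phi_{1,v_1} * \Phi_{2,v_1}^{-\sigma}$ is a pseudocoefficient of $\pi_{v_1}$ and is $F$-supercuspidal; this separates out $\Pi$ (as it annihilates representations whose base change is not in the global packet of $\Pi$ at $v_1$). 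At $v_2$, I use the discrete series assumption similarly to obtain $f_{v_2}$ supported on relatively $\tau$-regular elliptic semisimple elements and picking out the discrete series factor. At the places $v$ where $ME/F$ is ramified but neither $M/F$ nor $E/F$ splits, I use assumption (4) together with the notion of relative $\tau$-regularity from \S \ref{sec-sph} to choose $f_v$ supported on relatively $\tau$-regular semisimple elements admitting norms with $\Theta_{\Lambda_v}(f_v) \neq 0$ for a nonzero $\Lambda_v$ realizing the $G^\sigma(F_v) \times G^\theta(F_v)$-distinction of $\Pi_v$. At the remaining (unramified, split or $E$-split) finite places, I use spherical Hecke functions of the form prescribed by Corollary \ref{fl-e-spl} or Corollary \ref{cor-H-split-matching}.

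Next I produce a matching $\Phi = \otimes'_v \Phi_v$ on $U(\A_F)$. At archimedean places, I take $\Phi_\infty = \Phi_1 \times \Phi_2$ satisfying condition (2) of Proposition \ref{prop-form-compar}, so $\Phi_1^{-1}*\Phi_2 = c_\infty f_{EP,V}$. At unramified split places, matching comes from Corollary \ref{fl-e-spl} and Corollary \ref{cor-H-split-matching}. At the finitely many remaining ``bad'' places (including $v_1$, $v_2$ and the ramified places handled above), Theorem \ref{thm-weak-match} produces a matching $\Phi_v$ supported near the relatively $\tau$-regular semisimple support of $f_v$; the construction automatically makes $\Phi_{v_1}$ supercuspidal and $\Phi_{v_3}$ supported on relatively regular elliptic semisimple elements, as required by Proposition \ref{prop-spect-compar}. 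By Proposition \ref{prop-spect-compar} we then have
\begin{equation*}
\sum_{\pi'} RT(\pi'(\Phi^1)) = 2 \sum_{\Pi'} TRT(\Pi'(f^1)).
\end{equation*}

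Next I show the right-hand side is nonzero. By the choices of $f$ at $v_1$ and at the unramified places (via appropriate Hecke operators tuned to the Satake parameters of $\Pi$), the only $\Pi'$ contributing to the sum lies in the near-equivalence class of $\Pi$; by strong multiplicity one for $G$ (cited in \S \ref{ssec-apps}), this forces $\Pi' = \Pi$. Since $\Pi$ is assumed both $G^\sigma$- and $G^\theta$-distinguished, expansion \eqref{if-k-finite} shows $TRT(\Pi(f^1)) = \sum_{\phi \in \mathcal{B}(\Pi)} \mathcal{P}_{G^\sigma}(\Pi(f^1)\phi)\overline{\mathcal{P}_{G^\theta}(\phi)}$, and by choosing $f$ at the archimedean and remaining ramified places to hit a vector whose $G^\sigma$- and $G^\theta$-periods are simultaneously nonzero (possible by the $G^\sigma$- and $G^\theta$-distinction together with the relative $\tau$-regularity at the bad ramified places), we get $TRT(\Pi(f^1)) \neq 0$. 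Hence the left-hand side is nonzero, so some cuspidal $\pi'$ of $U(\A_F)$ has $RT(\pi'(\Phi^1)) \neq 0$. In particular, $\pi'$ is $U^\sigma$-distinguished, and the matching Hecke data at unramified split places forces $\pi'$ to be nearly equivalent to $\pi$. Finally, the Euler–Poincaré structure of $\Phi_\infty$ forces the nonvanishing spectral contribution to come from an archimedean component with nonzero $(\mathfrak{u},K_{U\infty})$-cohomology in $V$, giving the cohomological conclusion.

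The main obstacle will be Step 3: ensuring the spectral sum on the $G$-side can be arranged to isolate $\Pi$ (rather than a larger near-equivalence class) while preserving $TRT(\Pi(f^1)) \neq 0$. This requires a delicate interplay between strong multiplicity one for $G$, the existence of pseudocoefficients isolating $\pi_{v_1}$ up to near-equivalence on the $U$-side, and the freedom in $f$ at the ramified places (guaranteed by assumption (4)) to avoid accidental cancellation of the period pairing. A secondary subtlety is verifying that every hypothesis of Proposition \ref{prop-spect-compar} is simultaneously met by our $(\Phi,f)$, in particular the $F$-supercuspidal and relatively regular elliptic support conditions at the distinguished places $v_1,\ldots,v_4$; this relies crucially on $v_1,v_2$ being totally split in $ME/F$ so that the local matching via Lemma \ref{lem-M-split-match} preserves both supercuspidality and elliptic support when transferred through the correspondence.
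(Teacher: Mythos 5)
Your overall skeleton matches the paper's: isolate $\Pi$ on the $G$-side of Proposition \ref{prop-spect-compar} via Hecke separation and strong multiplicity one, then show $TRT(\Pi(f^1))\neq 0$ to force a nonvanishing $RT(\pi'(\Phi^1))$ on the $U$-side. But there is a real gap precisely at the step you yourself flag as ``the main obstacle,'' and your sketch of how to overcome it does not work.

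You propose to conclude $TRT(\Pi(f^1))\neq 0$ by ``choosing $f$ at the archimedean and remaining ramified places to hit a vector whose $G^{\sigma}$- and $G^{\theta}$-periods are simultaneously nonzero.'' There are two problems. First, $G^{\sigma}$- and $G^{\theta}$-distinction separately give you vectors $\phi_1$ and $\phi_2$ with $\mathcal{P}_{G^{\sigma}}(\phi_1)\neq 0$ and $\mathcal{P}_{G^{\theta}}(\phi_2)\neq 0$; one has to argue (as the paper does, via a short polarization: if $\mathcal{P}_{G^{\sigma}}(\phi_2)=\mathcal{P}_{G^{\theta}}(\phi_1)=0$ then $\phi_1+\phi_2$ works) that a single pure tensor $\phi_0$ has both periods nonzero. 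You never produce such a $\phi_0$. Second and more seriously, $f_\infty$ cannot be chosen freely: it is pinned down (up to a Dixmier--Malliavin decomposition) to produce the Lefschetz function $f_{L,\Res_{M_\infty/F_\infty}V_1,\tau}$, by condition (1) of Proposition \ref{prop-form-compar}. Likewise $f_{v_1}$ must be $F$-supercuspidal and $f_{v_2}$ must have relatively $\tau$-regular elliptic support. So at all of $S_0=\infty\cup\{v_1,v_2\}$ you cannot ``choose $f$ to hit $\phi_0$.'' What you need --- and what is absent from your proposal --- is the multiplicity-one input of Lemma \ref{lem-lforms}: the period pairing $\ell_2(f_{S_0})=\sum_{a_i}\mathcal{P}_{G^{\sigma}}(\phi_0^{S_0}\otimes\Pi(f_{S_0(ME)}^1)a_i)\,\mathcal{P}_{G^{\theta}}(\overline{\phi_0^{S_0}\otimes a_i})$ is a $(G^{\sigma}(F_{S_0}),G^{\theta}(F_{S_0}))$-invariant linear form on $\mathcal{H}_{\Pi_{S_0}}$, hence proportional (with constant $c$) to the explicit trace functional $\ell_1(\Pi_1(f_1)\otimes\Pi_1^{\vee}(f_2))=\prod_{v\in S_0}\mathrm{tr}\,\Pi_{1v}(f_{1v}*f_{2v}^{-\tau})$. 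One first makes the constant nonzero by choosing $f_{S(ME)}$ so that $\Pi(f_{S(ME)})$ is a projection onto $\phi_{0,S(ME)}$ (using Jacobson density plus assumption (4), which guarantees the relevant spherical matrix coefficient can be tested on a function supported on relatively $\tau$-regular elements admitting norms), and only then evaluates $\ell_1$: at $\infty$ via \cite[Lemme 4.2]{LabCM} on the Lefschetz function, at $v_1$ via the truncated-matrix-coefficient construction of Lemma \ref{lem-supercusp}, at $v_2$ via Lemma \ref{lem-sqint}. Without this factorization-through-uniqueness, there is no way to certify nonvanishing of the period pairing for the constrained test functions.

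The same uniqueness lemma is also what underlies the cohomological conclusion, which you assert but do not derive: Lemma \ref{lem-cohomol} shows that any $\Pi'$ (resp.\ $\pi'$) with nonzero contribution must be cohomological, by comparing the invariant linear form given by the period pairing to the trace functional against the Lefschetz function, and invoking the defining property of Lefschetz functions. You cannot simply say ``the Euler--Poincar\'e structure of $\Phi_\infty$ forces'' this; the period functional does not obviously factor through a trace, and the bridge is again Lemma \ref{lem-lforms}. So the proposal has the right frame and the right list of local tools, but the core nonvanishing argument and the cohomological restriction both require the spherical multiplicity-one ingredient, which your proposal neither names nor replaces with anything.
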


  Theorem \ref{main-thm} will be proven later in this section.  Combining it with the work of Jacquet, Lapid and their collaborators, and Flicker and his collaborators, we obtain the following corollary:

\begin{cor} \label{main-cor} Assume that $U^{\sigma}$ is quasi-split or more generally that $G^{\theta}$ and $G^{\sigma}$ are quasi-split. Let $\pi$ be a cuspidal automorphic
representation of $U(\A_F)$ satisfying conditions (1)-(4) of Theorem
\ref{main-thm}.
The representation $\pi$ admits a weak base change $\Pi$ to $\GL_n(\A_{ME})$. If the partial Asai $L$-function $L^S(s,\Pi;r)$ has a pole at $s=1$ then some cuspidal automorphic representation
$\pi'$ of $U(\A_F)$ nearly equivalent to $\pi$ is $U^{\sigma}$-distinguished.
 Moreover, we can take $\pi'$ to have nonzero cohomology
with coefficients in $V$.
\end{cor}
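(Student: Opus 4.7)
The plan is to verify the two distinction hypotheses of Theorem \ref{main-thm} directly from the hypothesis that $L^S(s,\Pi;r)$ has a pole at $s=1$, after which the corollary becomes an immediate application. The existence of the weak base change $\Pi$ of $\pi$ to $\GL_n(\A_{ME})$ follows from the cohomological assumption (1) together with the local hypotheses (2) and (3), by the argument of \cite[Theorem 3.1.4]{HarLab}, exactly as indicated in the remarks following Theorem \ref{intro-thm2}.

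First, I would appeal to Flicker's Rankin-Selberg integral representation of the partial Asai $L$-function \cite{FlickerDist} (together with the meromorphic continuation result of \cite{FlickerZin}). The presence of a pole of $L^S(s,\Pi;r)$ at $s=1$ then has two consequences that are by now standard: the representation $\Pi$ is distinguished by $G^{\sigma} \cong \Res_{M/F}\GL_n$ (which is quasi-split by assumption), and the conjugate self-duality relation $\Pi^{\vee} \cong \Pi^{\sigma}$ holds.

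Second, because $\Pi$ is a weak base change from $U$, strong multiplicity one on $\GL_n(\A_{ME})$ (cf.~\cite{Badu}) forces $\Pi^{\tau} \cong \Pi$. Since $\theta = \sigma \circ \tau$, combining this with $\Pi^{\vee} \cong \Pi^{\sigma}$ gives $\Pi^{\theta\vee} \cong (\Pi^{\sigma\vee})^{\tau} \cong \Pi^{\tau} \cong \Pi$. The characterization of $G(\A_F)$-representations distinguished by a quasi-split unitary group, due to Jacquet, Lapid, and their collaborators \cite{JacquetKlII}, \cite{Jacquetqs}, identifies such distinction with precisely the condition $\Pi^{\theta\vee} \cong \Pi$; hence $\Pi$ is $G^{\theta}$-distinguished.

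Having verified that $\Pi$ is simultaneously $G^{\sigma}$- and $G^{\theta}$-distinguished, Theorem \ref{main-thm} produces a cuspidal automorphic representation $\pi'$ of $U(\A_F)$ nearly equivalent to $\pi$ that is $U^{\sigma}$-distinguished and has nonzero cohomology with coefficients in $V$, which is what was to be shown. The main obstacle is not in the corollary itself but in Theorem \ref{main-thm}, whose proof rests on the twisted relative trace formula comparison developed in the preceding sections; the corollary merely packages Flicker's analytic input and the Jacquet-Lapid representation-theoretic characterization so as to supply the hypotheses of that theorem.
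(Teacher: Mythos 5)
Your proposal is correct and follows essentially the same route as the paper's proof of Corollary \ref{main-cor}: the weak base change exists by \cite{HarLab}, the pole of the Asai $L$-function gives $G^{\sigma}$-distinction (Flicker, Flicker--Zinoviev) and hence $\Pi^{\sigma}\cong\Pi^{\vee}$, strong multiplicity one gives $\Pi^{\tau}\cong\Pi$, and combining these yields $\Pi^{\theta\vee}\cong\Pi$, so the Jacquet--Lapid characterization supplies $G^{\theta}$-distinction and Theorem \ref{main-thm} applies. The only detail present in the paper and omitted in your write-up is the remark that $ME/L$ (with $L=(ME)^{\langle\theta\rangle}$) splits at every infinite place, which is needed to invoke Jacquet's results at the archimedean places; the rest is identical.
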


We require the following lemma for the proof of Corollary \ref{main-cor} and also below in the proof of Theorem \ref{main-thm}:

\begin{lem} \label{lem-2nd-stab} Suppose that $\Pi$ is a cuspidal automorphic representation of $G(\A_F)$.
 If $\Pi$ is $G^{\theta}$-distinguished, then $\Pi \cong \Pi^{\theta\vee}$.  If $\Pi$ is $G^{\sigma}$-distinguished,
 then $\Pi^{\sigma} \cong \Pi^{\vee}$.
\end{lem}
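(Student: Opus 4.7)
My plan is to reduce both statements to Flicker's theorem \cite{FlickerDist} on distinguished representations and the Asai $L$-function; the two statements are structurally identical, so I would treat them in parallel.

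First, I would observe that each of the pairs $(G, G^\sigma)$ and $(G, G^\theta)$ is (an inner form of) a pair of the shape $(\mathrm{Res}_{K/K'}\GL_n, \GL_n)$ attached to a quadratic extension $K/K'$ whose nontrivial Galois automorphism is $\sigma$ or $\theta$ respectively. Explicitly, on $\A_F$-points $(G, G^\sigma)$ becomes $(U^\sigma(\A_{ME}), U^\sigma(\A_M))$ for the quadratic extension $ME/M$ with Galois generator $\sigma$, while $(G, G^\theta)$ becomes the analogous pair for the extension $ME/(ME)^{\langle\theta\rangle}$ with Galois generator $\theta$. Since $U^\sigma$ is an inner form of $\GL_n$, each pair is (an inner form of) Flicker's setup.

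Next, given a cuspidal $\phi \in V_\Pi$ with nonzero $G^\theta$-period I would unfold the period following Flicker (via the Whittaker/Fourier expansion and a Rankin--Selberg type integral), identifying it with the residue at $s=1$ of the Asai $L$-function $L(s, \Pi, r_\theta)$. Distinction therefore forces this $L$-function to have a pole at $s=1$, and Flicker's theorem then characterizes such representations: the pole is equivalent to the self-duality $\Pi^\theta \cong \Pi^\vee$, which is exactly $\Pi \cong \Pi^{\theta\vee}$. The parallel argument with $\sigma$ replacing $\theta$ yields $\Pi^\sigma \cong \Pi^\vee$, proving the second statement.

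The main obstacle is transporting Flicker's unfolding and $L$-function analysis from the split $\GL_n$ setting of \cite{FlickerDist} to the inner forms of $\GL_n$ appearing here. In the quasi-split case (as in Corollary \ref{main-cor}) it is immediate; in the general inner form setting one either reruns the unfolding directly on the inner form or appeals to the Jacquet--Langlands correspondence together with the compatibility of distinction under transfer.
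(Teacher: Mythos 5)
Your proposal misidentifies the nature of the pair $(G,G^{\theta})$, and this is fatal for the first half of the lemma.  While $G^{\sigma}=\mathrm{Res}_{M/F}U^{\sigma}$ is indeed (an inner form of) $\mathrm{Res}_{M/F}\GL_n$, so that $(G,G^{\sigma})$ is of Flicker--Asai type, the group $G^{\theta}$ is \emph{not} a general linear group: writing $L=(ME)^{\langle\theta\rangle}$, $G^{\theta}$ is a form of $\mathrm{Res}_{L/F}$ of a unitary group attached to the quadratic extension $ME/L$ (this is exactly what is used later in the proof of Corollary \ref{main-cor}, where $G^{\theta}$-distinction of $\Pi$ is translated into distinction by ``the'' quasi-split unitary group with respect to $ME/L$).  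Consequently a $G^{\theta}$-period of a cusp form on $G$ is a \emph{unitary-group} period, and there is no Rankin--Selberg unfolding to an Asai $L$-function for such periods.  The implication ``$\Pi$ is $G^{\theta}$-distinguished $\Rightarrow \Pi\cong\Pi^{\theta\vee}$'' is instead a theorem of Jacquet about unitary-group distinction of representations of $\GL_n$ over a quadratic extension, proved via the relative trace formula; this is what the paper cites (\cite[\S 3]{JacquetKlII}), and it has no $L$-function avatar of the type you describe.

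For the second statement ($G^{\sigma}$-distinction $\Rightarrow \Pi^{\sigma}\cong\Pi^{\vee}$) your framework is sound in principle, but the paper takes a different and shorter route: it proves the isomorphism locally place by place (trivially at split places via the argument of Proposition \ref{prop-first-stab}, and via Prasad's local result \cite[Corollary 2]{Prasad} at inert unramified places) and then invokes strong multiplicity one.  This sidesteps both the global unfolding and the inner-form issue that you flagged, since the local analysis is only needed at places where everything is split or unramified, where $G$ and $G^{\sigma}$ are genuinely general linear groups.  The global-Asai route you sketch also requires more than you allow for: distinction gives nonvanishing of the period, which is related to the pole of the Asai $L$-function only after some care (partial $L$-function, archimedean factors, and the distinguished/$\omega$-distinguished dichotomy); and the converse direction ``pole $\Rightarrow$ $\Pi^{\sigma}\cong\Pi^{\vee}$'' that you invoke is not the same as ``distinction $\Rightarrow$ $\Pi^{\sigma}\cong\Pi^{\vee}$''.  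So even for the $\sigma$-half, the local-global argument is both more elementary and more directly applicable.
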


\begin{proof} Both of these are well-known.  For the first, see \cite[\S 3]{JacquetKlII}.  For the second, assume that $\Pi$ is $G^{\sigma}$-distinguished.  For places $v$ of $F$ split in $E/F$, it is trivial to check that $\Pi^{\sigma}_v \cong \Pi^{\vee}_v$ using an analogue of the proof of Proposition \ref{prop-first-stab} given below.  If $v$ is inert (and unramified) in $E/F$, the fact that $\Pi^{\sigma}_v \cong \Pi^{\vee}_v$ is \cite[Corollary 2]{Prasad} (see also \cite{FlickerDist}).  Thus $\Pi^{\sigma} \cong \Pi^{\vee}$ by strong multiplicity one.
\end{proof}

We also require the following proposition in the proof of Theorem \ref{main-thm}:

\begin{prop} \label{prop-first-stab} Let $\Xi$ be the set of places of $F$ that split in $E/F$,
let $\pi$ be a cuspidal automorphic representation of $U(\A_F)$.  If $\pi$ is $U^{\sigma}$-distinguished, then
$\pi_{\Xi}^{\vee} \cong (\pi_{\Xi})^{\sigma}$.
\end{prop}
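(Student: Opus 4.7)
The plan is to reduce the global assertion to a statement at each place $v \in \Xi$ separately, and then unwind what local $U^{\sigma}(F_v)$-distinction amounts to when $v$ splits in $E/F$.

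First I would observe that global $U^{\sigma}(\A_F)$-distinction of $\pi$ implies the existence of a nonzero $U^{\sigma}(F_v)$-invariant linear functional on each local factor $\pi_v$. Indeed, the period $\mathcal{P}_{U^{\sigma}}$ defines a nonzero element of $\mathrm{Hom}_{U^{\sigma}(\A_F)}(\pi,\CC)$; writing $\pi = \otimes_w' \pi_w$ and choosing a pure tensor $\otimes_w x_w$ on which the period does not vanish, the formula $\ell_v(y_v) := \mathcal{P}_{U^{\sigma}}\bigl(y_v \otimes (\otimes_{w \neq v}x_w)\bigr)$ furnishes a nonzero $U^{\sigma}(F_v)$-invariant functional on $\pi_v$ for each $v$.

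Next, fix $v \in \Xi$. The splitting of $E/F$ at $v$ gives $E \otimes_F F_v \cong F_v \oplus F_v$, with $\sigma$ interchanging the factors, and hence an identification $U(F_v) \cong U^{\sigma}(F_v) \times U^{\sigma}(F_v)$ under which $\sigma$ swaps factors and $U^{\sigma}(F_v) \hookrightarrow U(F_v)$ is the diagonal embedding. Correspondingly, there is a decomposition $\pi_v \cong \pi_v^{(1)} \otimes \pi_v^{(2)}$ into irreducible admissible representations of $U^{\sigma}(F_v)$, and $\pi_v^{\sigma} \cong \pi_v^{(2)} \otimes \pi_v^{(1)}$. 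The existence of $\ell_v$ says that $\mathrm{Hom}_{U^{\sigma}(F_v)}(\pi_v^{(1)} \otimes \pi_v^{(2)},\CC) \neq 0$, which by tensor--Hom adjunction is the same as $\mathrm{Hom}_{U^{\sigma}(F_v)}(\pi_v^{(1)},(\pi_v^{(2)})^{\vee}) \neq 0$. By Schur's lemma (applied to these irreducible admissible representations) this gives $\pi_v^{(1)} \cong (\pi_v^{(2)})^{\vee}$, and therefore
\begin{equation*}
\pi_v^{\vee} \cong (\pi_v^{(1)})^{\vee} \otimes (\pi_v^{(2)})^{\vee} \cong \pi_v^{(2)} \otimes \pi_v^{(1)} \cong \pi_v^{\sigma}.
\end{equation*}

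Taking the restricted tensor product over all $v \in \Xi$ then yields $\pi_{\Xi}^{\vee} \cong \pi_{\Xi}^{\sigma}$, as desired. There is no real obstacle here; the only subtlety is the passage from the global invariant functional to a nonzero local one, which is handled by the pure-tensor trick above and does not require any factorizability statement for the period itself.
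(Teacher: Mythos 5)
Your proof is correct and follows essentially the same route as the paper: fix a pure tensor (equivalently a factorable vector) with nonzero period, decompose $\pi_v$ via the splitting of $E/F$ at $v$, and read off $\pi_v^{(1)} \cong (\pi_v^{(2)})^{\vee}$ from the resulting nonzero $U^{\sigma}(F_v)$-invariant pairing and irreducibility. The only difference is cosmetic: the paper frames the local invariant as a bilinear pairing $V_{\pi'_{1,v}} \otimes V_{\pi'_{2,v}} \to \CC$ rather than, as you do, a linear functional on $\pi_v$ followed by tensor--Hom adjunction.
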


\begin{proof}
We may assume, without loss of generality, that the $\phi_0$ in the space of $\pi$ with nonzero period over $U^{\sigma}(\A_F)$ is
factorable.  Write $\phi_0=\otimes_v \phi_{0,v}$, where the tensor
product is over all places of $F$.  For  $v \in \Xi$, choose an isomorphism $U_{F_v} \cong U^{\sigma}_{F_v} \times U^{\sigma}_{F_v}$ intertwining $\sigma$ with $(x,y) \mapsto (y,x)$.  Using this isomorphism, we can and do decompose
\begin{align} \label{loc-isom5}
\pi_v \cong \pi_{1,v}' \otimes \pi_{2,v}'
\end{align}
for some admissible representations $\pi_{i,v}'$ of $U^{\sigma}(F_v)$.
Thus
$$
\pi_v^{\sigma} \cong \pi_{2,v}' \otimes \pi_{1,v}'.
$$

For
a fixed $v \in \Xi$, let $V_{\pi_{1,v}'}$, $V_{\pi_{2,v}'}$, and
$V_{\pi}$ be the spaces of the representations $\pi_{1,v}',\pi_{2,v}'$, and $\pi$, respectively. Consider the bilinear pairing
\begin{align*}
V_{\pi_{1,v}'} \otimes V_{\pi_{2,v}'} &\hookrightarrow V_{\pi} \lto
\CC.
\end{align*}
Here the first map is $(\phi_1,\phi_2) \mapsto (\otimes_{v' \neq v}
\phi_{0,v'})\otimes (\phi_1 \otimes \phi_2)$ and the second is $\phi \mapsto \mathcal{P}_{U^{\sigma}}(\phi)$.
This bilinear pairing is $U^{\sigma}(F_v)$-equivariant, and is nonzero by
hypothesis.    By the irreducibility of $\pi_v$, we conclude that
$$
\pi_{2,v} \cong \pi_{1,v}^{\vee}
$$
and thus
$$
\pi_v^{\sigma} \cong \pi_v^{\vee}.
$$
\end{proof}

We  give the proof of Corollary \ref{main-cor} now:

\subsection{Proof of Corollary \ref{main-cor}} \label{ssec-main-cor}
We assume the notation and hypotheses of Corollary \ref{main-cor}. Let $\Pi$ be the weak base change of $\pi$; it exists by the proof of \cite[Theorem 3.1.4]{HarLab}.  Since $\Pi$ is a weak base change we have  $\Pi_v\cong \Pi^{\tau}_v$ for almost all places $v$.
By strong multiplicity one we conclude that
$\Pi
\cong \Pi^{\tau}$.  View $\Pi$ as an automorphic representation of $\Res_{ME/M}\GL_n(\A_M)$.
Let $r:{}^{L}\mathrm{Res}_{ME/M} \GL_n \to \GL_{n^2}(\CC)$ be the Asai
representation (see \cite[\S 6]{Ramakrishnan} or \cite{FlickerTwTen} for the
definition of this representation).  By assumption the partial Asai $L$-function
$$
L^S(s,\Pi;r)
$$
has a pole at $s=1$.  Here $S$ is a finite set of places of $M$ containing the infinite places and all finite places
where $ME/M$ or $\Pi_v$ is ramified.  We note that we are using the fact that $L^S(s,\Pi;r)$ admits
a meromorphic continuation to a closed right half-plane containing $s=1$, a fact established by
Flicker and Flicker-Zinoviev \cite{FlickerTwTen} \cite[Theorem]{FlickerZin} using the
Langlands-Shahidi  method \cite{Shahidi} and an adaptation of the Rankin-Selberg method of Jacquet,
Piatetski-Shapiro, and Shalika \cite{JPSS}. As a biproduct of this method,
Flicker and Flicker-Zinoviev also establish in \cite[Theorem]{FlickerZin} that if $L^S(s,\Pi;r)$ has
a pole at $s=1$, then $\Pi$ is distinguished by $\GL_{n/M}$.  This implies that $\Pi^{\sigma} \cong \Pi^{\vee}$ by Lemma \ref{lem-2nd-stab}.

Writing $L$ for the subfield of $ME$ fixed by $\theta:=\sigma \circ \tau$,
note that $ME/L$ splits at all infinite places.  Since $\Pi^{\sigma} \cong \Pi^{\vee}$ and $\Pi^{\tau} \cong \Pi$ we have $\Pi^{\theta \vee} \cong \Pi$. Thus, applying results of Jacquet (\cite[Theorem 3 and 4]{JacquetKlII} and \cite{Jacquetqs})
we conclude that $\Pi$ is distinguished by ``the'' quasi-split unitary group in $n$-variables with respect to $ME/L$.

The corollary now follows from Theorem \ref{main-thm}. \qed

\subsection{Preparations for the proof of Theorem \ref{main-thm}}
\label{ssec-preps}
We isolate three steps in
the proof of Theorem \ref{main-thm} in the following lemmas.  If the reader so
desires, (s)he can skip this subsection and refer back to it as needed during the
following subsection.

For the purpose of stating a lemma we develop some notation.  Assume that we are in the biquadratic case of \S \ref{ssec-basic-notat} and let $v$ be a place of $F$ split in $E/F$.
Suppose $\Pi_v$  is an irreducible unitary (admissible) representation of
$G(F_v)$.  We agree to realize it on a Hilbert space $V_{\Pi_v}$.  Write
$$
\mathcal{H}_{\Pi_{v}}:=\mathrm{Im}(C_c^{\infty}(G(F_v)) \to \mathrm{End}_{\mathrm{HS}}(V_{\Pi_{v}})),
$$
where the map sends $f$ to $\Pi_{v}(f)$.  Here $\mathrm{End}_{\mathrm{HS}}(V_{\Pi_{v}})$ is the space of Hilbert-Schmidt operators on $V_{\Pi_v}$. For any $x,y \in G(F_v)$ we have linear left multiplication by $x$ and right multiplication by $y$ maps
$$
L_x,R_y:\mathrm{End}_{\mathrm{HS}}(V_{\Pi_{v}}) \lto \mathrm{End}_{\mathrm{HS}}(V_{\Pi_{v}})
$$
given by $L_x(T):= \Pi_v(x^{-1}) \circ T$ and $R_y(T):=T \circ \Pi_v(y^{-1})$.

\begin{lem} \label{lem-lforms} The space of linear forms
$$
\ell:\mathcal{H}_{\Pi_{v}} \lto \CC
$$
satisfying $L_x(\ell)=R_y(\ell)=\ell$ for $(x,y) \in G^{\sigma} \times G^{\theta}(F_v)$ is at most one-dimensional.
\end{lem}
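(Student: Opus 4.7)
Since $v$ splits in $E/F$ and we are in the biquadratic case, $E \otimes_F F_v \cong F_v \oplus F_v$, and consequently $ME \otimes_F F_v \cong M_v \oplus M_v$ with $M_v := M \otimes_F F_v$. This produces an isomorphism
$$
G(F_v) \tilde{\to} G^{\sigma}(F_v) \times G^{\sigma}(F_v)
$$
under which $\sigma$ swaps the two factors and $\tau$ acts on each factor through the generator of $\Gal(M/F)$. A direct computation identifies $G^{\sigma}(F_v)$ with the diagonal subgroup and gives $G^{\theta}(F_v) = \{(h,h^{\tau}):h \in G^{\sigma}(F_v)\}$. Since $\Pi_v$ is irreducible, this splitting produces a factorization $\Pi_v \cong \pi_1 \otimes \pi_2$ with $\pi_1,\pi_2$ irreducible admissible representations of $G^{\sigma}(F_v)$.

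The plan is to rewrite the space of linear forms in the lemma as a Hom space and reduce to Schur's lemma. The standard matrix coefficient construction identifies $\mathcal{H}_{\Pi_v}$, as a $(G(F_v) \times G(F_v))$-module, with the algebraic tensor product $V_{\Pi_v}^{\mathrm{sm}} \otimes V_{\Pi_v^{\vee}}^{\mathrm{sm}}$, where $L_x$ acts through $\Pi_v$ on the first factor and $R_y$ acts through $\Pi_v^{\vee}$ on the second. Bi-invariance under $G^{\sigma}(F_v) \times G^{\theta}(F_v)$ then translates, via the adjunction
$$
\mathrm{Hom}_{H_1 \times H_2}(V_1 \otimes V_2,\CC) \cong \mathrm{Hom}_{H_1}(V_1,\mathrm{Hom}_{H_2}(V_2,\CC)),
$$
into the dimension bound
$$
\dim \mathrm{Hom}_{G^{\sigma}(F_v) \times G^{\theta}(F_v)}(V_{\Pi_v}^{\mathrm{sm}} \otimes V_{\Pi_v^{\vee}}^{\mathrm{sm}},\CC) \leq \dim \mathrm{Hom}_{G^{\sigma}(F_v)}(V_{\Pi_v}^{\mathrm{sm}},\CC) \cdot \dim \mathrm{Hom}_{G^{\theta}(F_v)}(V_{\Pi_v^{\vee}}^{\mathrm{sm}},\CC).
$$

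I then bound each factor using Schur's lemma. For the first, $G^{\sigma}(F_v)$ acts diagonally on $\pi_1 \otimes \pi_2$, so
$$
\mathrm{Hom}_{G^{\sigma}(F_v)}(\pi_1 \otimes \pi_2,\CC) \cong \mathrm{Hom}_{G^{\sigma}(F_v)}(\pi_1,\pi_2^{\vee})
$$
has dimension at most one. For the second, the identification $(h,h^{\tau}) \mapsto h$ from $G^{\theta}(F_v)$ to $G^{\sigma}(F_v)$ shows that the action of $G^{\theta}(F_v)$ on $\pi_1^{\vee} \otimes \pi_2^{\vee}$ coincides with the diagonal action of $G^{\sigma}(F_v)$ on $\pi_1^{\vee} \otimes (\pi_2^{\vee})^{\tau}$, where $(\pi_2^{\vee})^{\tau}$ denotes the irreducible admissible representation $h \mapsto \pi_2^{\vee}(h^{\tau})$. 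Schur's lemma again gives a bound of one, and the product is one.

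The only step requiring any care beyond these formal manipulations is the identification of $\mathcal{H}_{\Pi_v}$ with the algebraic tensor product of smooth vectors, which is standard for irreducible admissible representations of reductive groups over local fields (it amounts to the fact that $\Pi_v(C_c^{\infty}(G(F_v)))$ exhausts the finite-rank operators with smooth range and co-range). Apart from this, the entire argument is a direct consequence of Schur's lemma once the splitting of $v$ in $E/F$ has been exploited, and I expect no further obstacle.
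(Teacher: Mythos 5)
Your argument is correct and proceeds by essentially the same route as the paper: exploit the splitting of $v$ in $E/F$ to factor $\Pi_v \cong \pi_1 \otimes \pi_2$ over $G^{\sigma}(F_v) \times G^{\sigma}(F_v)$, and reduce the one-dimensionality claim to two applications of Schur's lemma. The main presentational difference is that the paper dualizes $\End_{\mathrm{HS}}(V_{\Pi_v})$ to the completed Hilbert tensor product $V_{\Pi_v} \boxtimes V_{\Pi_v}^{\vee}$ and locates $\ell$ among the invariant vectors there, whereas you identify $\mathcal{H}_{\Pi_v}$ with the algebraic tensor product of smooth vectors and apply the Hom-tensor adjunction. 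Your version has the virtue of making the final Schur-lemma step explicit where the paper leaves it implicit.

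One caveat worth flagging: the parenthetical justification that $\Pi_v(C_c^{\infty}(G(F_v)))$ exhausts the finite-rank smooth operators is exact only for nonarchimedean $v$. At an archimedean place the operators $\Pi_v(f)$ are typically trace class but of infinite rank, so $\mathcal{H}_{\Pi_v}$ properly contains $V_{\Pi_v}^{\mathrm{sm}} \otimes V_{\Pi_v^{\vee}}^{\mathrm{sm}}$, and the restriction of an invariant form $\ell$ to the smooth finite-rank subspace need not a priori determine $\ell$ on all of $\mathcal{H}_{\Pi_v}$. Since the lemma is invoked in the paper precisely at $v \in S_0 \supset \infty$, this does require a small patch, for instance by passing to the $K_{\infty}$-finite Hecke algebra (which is in fact what the applications use, as the test functions at infinity are chosen $K_{\infty}$-finite). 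The paper's own proof has a parallel unstated hypothesis, namely that $\ell$ be continuous for the Hilbert-Schmidt topology so that it can be identified with an invariant vector in the completed tensor product; so the two proofs are at the same level of rigor and are salvageable in the same way. The underlying idea is identical.
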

A linear form as in the lemma is known as a \textbf{$(G^{\sigma}(F_v),G^{\theta}(F_v))$-invariant linear form}.
\begin{proof}
The natural isomorphism
$$
\End_{\mathrm{HS}}(V_{\Pi_{v}})^{\vee} \cong V_{\Pi_{v}} \boxtimes V_{\Pi_{v}}^{\vee}
$$
is $G(F_v) \times G(F_v)$-equivariant.  On the right we are taking the completed (Hilbert space) tensor product.
A linear form $\ell$ as in the lemma is sent to
\begin{align} \label{fixedsspace}
 (V_{\Pi_{v}})^{G^{\sigma}(F_v) } \boxtimes (V_{\Pi_{v}}^{\vee})^{G^{\theta}(F_v)}
\end{align}
under this isomorphism.  Since $\Pi_{v} \cong \Pi_{1v} \otimes \Pi_{2v}$ for some
admissible representations $\Pi_{iv}$ of $G^{\sigma}(F_v)$, it follows that \eqref{fixedsspace} is at most one dimensional.
 This proves that the space of linear forms satisfying the hypotheses of the lemma is at most one dimensional.
\end{proof}

We now record a lemma on supercusp forms.  Let $v$ be a finite
place of $F$ that splits completely in $ME/F$.  Thus we have isomorphisms
\begin{align} \label{uisom3}
U_{F_v} &\cong (U^{\sigma})^2_{F_v}\\
\nonumber G_{F_v} &\cong U_{F_v}^2 \cong (U^{\sigma})^4_{F_v}
\end{align}
intertwining $\sigma$ with $(x,y) \mapsto (y,x)$ (resp. $(x,y,z,w) \mapsto
(z,w,x,y)$) and $\tau$ with $(x,y,z,w) \mapsto (y,x,w,z)$.
Let $\pi_v$ be a unitary
admissible representation of $U(F_v)$ satisfying $\pi_v^{\vee} \cong
\pi_v^{\sigma}$; thus we can and do decompose
$$
\pi_v \cong \pi_v' \otimes \pi'^{\vee}_v
$$
for some admissible representation $\pi_v'$ of $U^{\sigma}(F_v)$ using
\eqref{uisom3}. Let $\Pi_v$ be the base change of $\pi_v$ to
$G(F_v)$. We can and do factor
$$
\Pi_v=\pi_v \otimes \pi_v^{\vee} \cong \pi_{v}' \otimes \pi'^{\vee}_v
\otimes \pi'^{\vee}_v \otimes \pi'_v
$$
with respect to the second line of \eqref{uisom3}.  We also write $\Pi_v'=\pi_v'
\otimes \pi'^{\vee}_v$.

Let $H$ be a reductive $F$-group.  As above, we say that a function $f \in C^{\infty}_c(H(F_v))$ is
$F$-supercuspidal if the integral of $f$ along the $F_v$-points of the unipotent radical of any $F$-rational proper parabolic subgroup of $H$ vanishes.

\begin{lem} \label{lem-supercusp} If $\pi_v'$ is supercuspidal, then
there are matching functions
$$
f=f_1 \times f_2 \in
C_c^{\infty}(G^{\sigma} \times G^{\sigma}(F_v)) \cong C_c^{\infty}(G(F_v))
$$
and $\Phi \in
C_c^{\infty}(U(F_v))$, both $F$-supercuspidal, such that
$$
\mathrm{tr}\left( \Pi_v'(f_1^{-\tau}*f_2) \circ \tau \right) \neq 0.
$$
\end{lem}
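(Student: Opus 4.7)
The plan is to exploit the full splitting of $v$ in $ME/F$ to reduce the construction to explicit tensor-product manipulations on $U^{\sigma}(F_v)$, where the supercuspidality of $\pi_v'$ provides an ample supply of $F$-supercuspidal matrix coefficients.

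First, under the isomorphism \eqref{uisom3}, $G^{\sigma}(F_v) \cong U^{\sigma}(F_v)^2$ with $\tau$ acting as the swap of the two factors. Writing $f_i = \phi_i \times \psi_i$ with $\phi_i, \psi_i \in C_c^{\infty}(U^{\sigma}(F_v))$, a direct computation gives
$$
f_1^{-\tau} * f_2 = (\psi_1^{-1} * \phi_2) \times (\phi_1^{-1} * \psi_2),
$$
so that $\Pi_v'(f_1^{-\tau} * f_2) = \pi_v'(\psi_1^{-1}*\phi_2) \otimes \pi_v'^\vee(\phi_1^{-1}*\psi_2)$ as an operator on $V_{\Pi_v'} = V_{\pi_v'} \otimes V_{\pi_v'^\vee}$.

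Second, because $\Pi_v$ is the local base change of $\pi_v$, there is a canonical intertwiner $\Pi_v^{\tau} \cong \Pi_v$, whose restriction to the $G^{\sigma}$-factor furnishes the operator denoted $\tau$ in the statement.  Via the elementary identity $\mathrm{tr}((A\otimes B)\circ \mathrm{sw}) = \mathrm{tr}(AB)$ for the swap on a tensor square, the trace $\mathrm{tr}(\Pi_v'(f_1^{-\tau}*f_2)\circ \tau)$ reduces to a single trace on $V_{\pi_v'}$ of a composition built from the $\phi_i, \psi_i$ and the canonical pairing $V_{\pi_v'}\otimes V_{\pi_v'^\vee}\to\CC$.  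Since $\pi_v'$ is supercuspidal, I may choose $\phi_1,\psi_1,\phi_2,\psi_2$ to be $F$-supercuspidal matrix coefficients of $\pi_v'$ so that the building blocks $\pi_v'(\psi_1^{-1}*\phi_2)$ and $\pi_v'^\vee(\phi_1^{-1}*\psi_2)$ become rank-one operators paired nontrivially by the intertwiner, making the trace visibly nonzero.

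For the matching function, I apply Lemma \ref{lem-M-split-match}: since $v$ splits in $M/F$, the product $f := f_1 \times f_2 \in C_c^{\infty}(G(F_v))$ split matches $\Phi := f_1 * f_2^{-\sigma} \in C_c^{\infty}(U(F_v))$, where $\sigma$ denotes the involution on $U$ coming from $E/F$.  By Lemma \ref{lem-sn}, split matching agrees with matching in the sense of Definition \ref{defn-match} once one restricts to the relatively semisimple set, which is the only locus where our $F$-supercuspidal functions contribute.  Both $f$ and $\Phi$ are $F$-supercuspidal because every $F$-parabolic of $G$ (resp.~$U$) corresponds to an $ME$-parabolic (resp.~$E$-parabolic) of $U^{\sigma}$, whose unipotent radical at $v$ factors as a product of unipotent radicals of $F_v$-parabolics of $U^{\sigma}$; the integral of an $F_v$-supercuspidal matrix coefficient of $\pi_v'$ over each such unipotent radical vanishes, and this vanishing is preserved by tensor product and convolution.

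The main obstacle is the trace computation: one must track the canonical intertwiner realizing $\Pi_v^{\tau}\cong \Pi_v$ through the isomorphism $V_{\Pi_v'}\cong V_{\pi_v'}\otimes V_{\pi_v'^\vee}$ and verify that a judicious choice of rank-one matrix coefficients makes the resulting scalar trace actually nonzero rather than accidentally cancelling.  The remaining pieces, namely matching and $F$-supercuspidality, follow essentially formally from the split-place fundamental lemmas of \S\ref{sec-matching} together with the defining vanishing property of supercuspidal matrix coefficients.
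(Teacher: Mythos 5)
There is a genuine gap in your matching step, stemming from conflating the two different product decompositions of $G(F_v)$ available at a place totally split in $ME/F$.

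The product $f = f_1 \times f_2 \in C_c^{\infty}(G^{\sigma} \times G^{\sigma}(F_v)) \cong C_c^{\infty}(G(F_v))$ in the Lemma uses the \emph{$E$-split} decomposition $G_{F_v} \cong G^{\sigma}_{F_v} \times G^{\sigma}_{F_v}$ (with $\sigma$ swapping the factors); the $f_i$ live on $G^{\sigma}(F_v) = \Res_{M/F}U^{\sigma}(F_v)$.  But Lemma~\ref{lem-M-split-match} is stated in the setting of \S~\ref{ssec-fl-first-case}, where the relevant decomposition is the \emph{$M$-split} one $G_{F_v} \cong H_{F_v} \times H_{F_v}$ (with $\tau$ swapping the factors), and the product $\Phi_1 \times \Phi_2$ there has $\Phi_i \in C_c^{\infty}(H(F_v)) = C_c^{\infty}(U(F_v))$.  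These are different decompositions of $G(F_v) \cong U^{\sigma}(F_v)^4$, swapped by different involutions, so your $f_1 \times f_2$ is not in the form to which Lemma~\ref{lem-M-split-match} applies.  Correspondingly, the claimed matching function $\Phi := f_1 * f_2^{-\sigma}$ does not land in $C_c^{\infty}(U(F_v))$: the $f_i$ are functions on $G^{\sigma}(F_v) = \Res_{M/F}U^{\sigma}(F_v)$, which is a different $F_v$-group from $U(F_v) = \Res_{E/F}U^{\sigma}(F_v)$, and $\sigma$ in fact acts trivially on $G^{\sigma}$.  (If one re-brackets by further factoring $f_i = \phi_i \times \psi_i$ so that the $M$-split product is $(\phi_1\times\phi_2) \times (\psi_1\times\psi_2)$ with $\phi_1\times\phi_2 \in C_c^{\infty}(U(F_v))$, and then applies Lemma~\ref{lem-M-split-match} to \emph{this}, the output differs from your $f_1*f_2^{-\sigma}$.)

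The paper instead invokes Proposition~\ref{prop-E-split-match}, which is the tool built for the $E$-split decomposition: it reduces the comparison of relative $\kappa$-orbital integrals of $\Phi_1 \times \Phi_2 \in C_c^{\infty}(U(F_v))$ and $f_1 \times f_2 \in C_c^{\infty}(G(F_v))$, with $\Phi_i \in C_c^{\infty}(U^{\sigma}(F_v))$ and $f_i \in C_c^{\infty}(G^{\sigma}(F_v))$, to an ordinary (twisted) $\kappa$-orbital integral comparison between $\Phi_1 * \Phi_2^{-1}$ on $U^{\sigma}(F_v)$ and $f_1 * f_2^{-\tau}$ on $G^{\sigma}(F_v)$.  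The paper's choice is also simpler than yours: it takes only one slot to be a truncated diagonal matrix coefficient $\Phi_1^{-1}$ of $\pi_v'$, setting $f_1 = \Phi_1^{-1} \times \mathrm{ch}_{K'_{Uv}}$, $f_2 = \mathrm{meas}(K'_{Uv})^{-2}\mathrm{ch}_{K'_{Uv}} \times \mathrm{ch}_{K'_{Uv}}$, and $\Phi = \Phi_1^{-1} \times \mathrm{ch}_{K'_{Uv}}$.  A single supercuspidal factor already forces $F$-supercuspidality of $f$ and $\Phi$ (since the unipotent integral over any $F$-parabolic of $G$ or $U$ at $v$ factors through the vanishing integral of $\Phi_1^{-1}$), and it makes $\Pi_v'(f_1^{-\tau} * f_2)$ essentially a rank-one projection whose twisted trace is visibly nonzero — sidestepping the concern at the end of your proposal that a choice of four independent matrix coefficients might accidentally cancel in the trace.
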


\begin{proof}
Let $\Phi_{1}$ be a truncated diagonal matrix coefficient of $\pi'_v$ \cite[\S 1.9]{HarLab}.  We simply
let
\begin{align*}
f_1&=\Phi_1^{-1} \times \mathrm{ch}_{K_{Uv}'} \\
f_2&=\mathrm{meas}(K_{Uv}')^{-2}\mathrm{ch}_{K_{Uv}'} \times \mathrm{ch}_{K_{Uv}'}\\
\Phi&=\Phi_1^{-1} \times \mathrm{ch}_{K_{Uv}'}
\end{align*}
for a sufficiently small compact open subgroup $K_{Uv}' \leq U^{\sigma}(F_v)$ such that $\Phi_1 \in C_c^{\infty}(U^{\sigma}(F_v)//K_{Uv}')$.  The functions $f_1 \times f_2$ and $\Phi$ match by Proposition \ref{prop-E-split-match}.
\end{proof}

We require an analogous lemma in the discrete series case:

\begin{lem} \label{lem-sqint}
If $\pi_v'$ is in the discrete series,
then
there are matching functions
$$
f=f_1 \times f_2 \in
C_c^{\infty}(G^{\sigma} \times G^{\sigma}(F_v))\cong C_c^{\infty}(G(F_v))
$$
and
$\Phi \in
C_c^{\infty}(U(F_v))$, supported on the relatively $\tau$-regular elliptic
subset of $G(F_v)$  and the relatively regular elliptic subset of
$U(F_v)$, respectively, such that
$$
\mathrm{tr}\left( \Pi_v'(f_1^{-\tau}*f_2 \circ \tau)\right) \neq 0.
$$

\end{lem}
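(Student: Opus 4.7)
The plan is to mimic the proof of Lemma \ref{lem-supercusp}, replacing the truncated diagonal matrix coefficient of the supercuspidal $\pi_v'$ there by a bump function supported on the regular elliptic set of $U^{\sigma}(F_v)$ that still has non-zero trace against $\pi_v'$. First I would produce $\Phi_1 \in C_c^{\infty}(U^{\sigma}(F_v))$ supported in a small open neighborhood $\mathcal{U}$ of a regular elliptic point $\gamma_0$, with $\mathcal{U}$ contained in the (open) regular elliptic locus, such that $\mathrm{tr}(\pi_v'(\Phi_1)) \neq 0$. Existence of such a $\Phi_1$ will be justified via the Harish-Chandra character $\Theta_{\pi_v'}$: since $\pi_v'$ is in the discrete series it admits pseudo-coefficients whose orbital integrals are supported on the elliptic regular set, so $\Theta_{\pi_v'}$ cannot vanish identically there. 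Pick $\gamma_0$ where $\Theta_{\pi_v'}$ is non-zero, shrink $\mathcal{U}$ so that $\Theta_{\pi_v'}$ is a non-zero constant on it (using local constancy of characters on the regular semisimple set), and take $\Phi_1$ to be any non-negative bump function on $\mathcal{U}$ of positive integral.

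Next I would set $f_1$, $f_2$, $\Phi$ by exactly the same formulas as in Lemma \ref{lem-supercusp}, with a compact open subgroup $K_{Uv}' \leq U^{\sigma}(F_v)$ small enough that $\Phi_1 \in C_c^{\infty}(U^{\sigma}(F_v)//K_{Uv}')$. The matching of $f_1 \times f_2$ with $\Phi$ then follows verbatim from the argument already given via Proposition \ref{prop-E-split-match}. The non-vanishing of the twisted trace is established by a routine convolution calculation: $f_1^{-\tau}*f_2$ simplifies, using the bi-$K_{Uv}'$-invariance of $\Phi_1$ and of $\mathrm{ch}_{K_{Uv}'}$, to $\mathrm{ch}_{K_{Uv}'} \times \Phi_1$ in $C_c^{\infty}(G^{\sigma}(F_v)) \cong C_c^{\infty}(U^{\sigma}(F_v)\times U^{\sigma}(F_v))$. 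Under the identification $\Pi_v' = \pi_v'\otimes\pi_v'^{\vee}$ with $\tau$ acting by the swap, a brief computation in the model $V_{\Pi_v'} \cong V_{\pi_v'}\otimes V_{\pi_v'^{\vee}}$ shows that the twisted trace $\mathrm{tr}(\Pi_v'(f_1^{-\tau}*f_2)\circ\tau)$ equals a non-zero scalar multiple of $\mathrm{tr}(\pi_v'(\Phi_1))$, hence is non-zero.

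The step I expect to be the main obstacle is checking the support conditions on $f$ and $\Phi$. Under the split-place isomorphisms $U(F_v) \cong U^{\sigma}(F_v)^2$ and $G(F_v) \cong U^{\sigma}(F_v)^4$, relative regular semisimplicity of $\gamma=(\gamma_1,\gamma_2)$ reduces to regular semisimplicity of $\gamma_1\gamma_2^{-1}$ in $U^{\sigma}(F_v)$, and relative $\tau$-regular semisimplicity of $\delta=(\delta_1,\delta_2,\delta_3,\delta_4)$ reduces to an analogous condition on an alternating product of the $\delta_i$; the corresponding statements for ellipticity follow from the fact that centralizers decompose accordingly. The task is to show that the set $\mathrm{supp}(\Phi_1)^{-1}\cdot K_{Uv}'$ (and the analogous product set arising from $f$) is contained in the regular elliptic subset of $U^{\sigma}(F_v)$. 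This follows because the regular elliptic locus is open and stable under inversion, so for $K_{Uv}'$ chosen sufficiently small the required inclusion holds; the only subtlety is to keep track of all four coordinate factors contributing to $f$ and to verify that $K_{Uv}'$ can be taken small enough uniformly.
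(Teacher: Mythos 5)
Your proposal is correct and follows essentially the same route as the paper's proof: choose a regular elliptic $\gamma_0$ where the Harish-Chandra character $\Theta_{\pi_v'}$ is nonzero, take $\Phi_1$ a small bi-$K_{Uv}'$-invariant bump near it, reuse the test-function formulas from Lemma \ref{lem-supercusp}, invoke Proposition \ref{prop-E-split-match} for matching, and use openness of the (relatively $\tau$-)regular elliptic loci under the split-place isomorphisms to verify the support conditions. The only cosmetic difference is that the paper justifies the existence of $\gamma_0$ by citing \cite[Proposition 5.5]{RogGLn}, whereas you argue via pseudo-coefficients, which is an equally valid path to the same non-vanishing fact.
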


\begin{proof}
Let $G^{re}(F_v) \subset G(F_v)$ (resp.
$U^{re}(F_v)\subset U(F_v)$) denote the subset of relatively
$\tau$-regular elliptic elements (resp.~regular elliptic elements).  By
definition, $G^{re}(F_v)$ is the preimage of the set
$$
\{(x,y,y^{-1},x^{-1})\in (U^{\sigma})^4(F_v) \cong G(F_v):xy \textrm{ is
elliptic regular}\}\subset S(F_v)
$$
under the map $g \mapsto g g^{-\theta}$.   Similarly, $U^{re}(F_v)$ is the preimage
of the set
$$
\{(x,x^{-1}) \in (U^{\sigma})^2(F_v) \cong U(F_v) : x \textrm{ is elliptic regular}\}
\subset Q(F_v)
$$
under the map $g \mapsto g g^{-\sigma}$.  It follows that both
$G^{re}(F_v)\subset G(F_v)$ and
$U^{re}(F_v)\subset U(F_v)$ are open and intersect arbitrarily small
neighborhoods of the identity.

Let $\Theta_{\pi_v'}$ be the character of $\pi_v'$.  Using a well-known result of Harish-Chandra, we
view $\Theta_{\pi_v'}$ as a locally constant function on the elliptic regular set of $U^{\sigma}(F_v)$
and as a locally integrable function on all of $U^{\sigma}(F_v)$.  Since
$\pi_v'$ is in the discrete series, there is a regular elliptic $\gamma_0 \in U^{\sigma}(F_v)$ such that
$\Theta_{\pi_v'}(\gamma_0) \neq 0$ \cite[Proposition 5.5]{RogGLn}.
Note $(\gamma_0,1,1,1) \in U^{\sigma}(F_v)^4 \cong G(F_v)$ is relatively $\tau$-elliptic regular.  By the openness
statement above, we can  and do choose a sufficiently small compact open subgroup $K_{Uv}' \leq U^{\sigma}(F_v)$ and a function $\Phi_1 \in
C_c^{\infty}(U^{\sigma}(F_v))$ supported in the elliptic regular set of $U^{\sigma}(F_v)$ such that
$$
f_1 \times f_2=(\Phi_{1}^{-1} \times \mathrm{ch}_{K_{Uv}'}) \times \mathrm{ch}_{K_{Uv}'} \times \mathrm{ch}_{K_{Uv}'}
$$
is supported in $G^{re}(F_v)$ and $\mathrm{tr}(\pi_{v}'(\Phi_1)) \neq 0$.  Here we are using the fact that $\Theta_{\pi_v'}$ is a locally constant
function on the elliptic regular set of $U^{\sigma}(F_v)$.  By Proposition \ref{prop-E-split-match} the functions $f_1 \times f_2$ and $\Phi:=\Phi_1^{-1} \times \mathrm{ch}_{K_{Uv}'}$ match, and hence $f_1 \times f_2$ and
 $\Phi:=\Phi_1^{-1} \times \mathrm{ch}_{K_{Uv}'}$ satisfy the requirements of the lemma.
\end{proof}

\subsection{Proof of Theorem \ref{main-thm}: Separating Hecke characters}
\label{ssec-part1}
In this subsection we use our trace formula identity Proposition \ref{prop-spect-compar} together with an adaptation of the argument of
\cite[\S 3]{JacquetLai} (see also \cite[\S 13]{Hakim})
to reduce the proof of Theorem \ref{main-thm} to a nonvanishing statement that will be proved in the following subsection.
We assume the hypotheses of Theorem \ref{main-thm}.

 By assumption, $\pi$ has nonzero cohomology with coefficients in $V$.  In view of
Proposition \ref{prop-first-stab} this implies that there is a representation $V_1$ of $U^{\sigma}_{F_{\infty}}$ such that $V  \cong V_1 \boxtimes V_1^{\vee}$.
Let $K_{\infty} \leq G(F_{\infty})$ (resp.~$K_{U\infty} \leq U(F_{\infty})$) be a maximal compact subgroup.  We claim that we can choose $f_{\infty} \in G(F_{\infty})$ and $\Phi_{\infty} \in C_c^{\infty}(U(F_{\infty}))$ that match, are right $K_{\infty}$ and right $K_{U\infty}$-finite, respectively, and satisfy the hypotheses (1-2) of Proposition \ref{prop-form-compar} for the representations $V_1$.  To see this we recall that Lefschetz functions are finite under the left and right action of the relevant maximal compact subgroup \cite[Proposition 8.4]{BLS}.  
Thus it follows from the Dixmier-Malliavin lemma \cite{DM} that we can choose right-$K^{\sigma}_{\infty}:=K_{\infty} \cap G^{\sigma}(F_{\infty})$-finite $f_1,f_2\in C_c^{\infty}(G^{\sigma}(F_{\infty}))$ such that $f_1^{-\tau}*f_2=f_{L,\mathrm{Res}_{M_{\infty}/F_{\infty}}V_1,\tau}$.  A similar argument with $G(F_{\infty})$ replaced by $U(F_{\infty})$ together with Proposition \ref{prop-form-compar} implies our claim.
We henceforth assume $\Phi_{\infty}$ and $f_{\infty}$ satisfying the conclusion of our claim.

Now let
\begin{align*}
\Phi&=\Phi_{\infty} \otimes \Phi^{\infty} \in C_c^{\infty}(U(\A_F))\\
f&=f_{\infty} \otimes f^{\infty} \in C_c^{\infty}(G(\A_F))
\end{align*}
be factorable functions satisfying the hypotheses of Proposition \ref{prop-spect-compar}.  Such functions exist
by Lemma \ref{lem-M-split-match}, Proposition \ref{prop-E-split-match}, Theorem \ref{thm-weak-match}, Lemma \ref{lem-supercusp} and Lemma \ref{lem-sqint}.
Thus by Proposition \ref{prop-spect-compar} we have
\begin{align} \label{sp-id}
\sum_{\pi'} RT(\pi'(\Phi^1))
=2\sum_{\Pi'} TRT(\Pi'(f^1)).
\end{align}
 Here the sum on the left is over equivalence classes of cuspidal automorphic representations of ${}^1U(\A_F)=U(\A_F)$ and the sum on the right is over equivalence classes of cuspidal automorphic representations of ${}^1G(\A_F)$.  Let $S$ be a finite set of places containing all infinite places and all places where $\pi$ is ramified.  Choose compact open subgroups $K_U^{S} \leq U(\A_F^{S})$, $K^S \leq G(\A_F^S)$ such that $K^S_{Uv}$ and $K_v^S$ are hyperspecial for all $v \not \in S$ and $\pi^S$ (resp. $\Pi^S$) contains the unit representation of $K^S_U$ (resp. $K^S$).  We assume that $f^S \in C_c^{\infty}(G(\A_{F}^S)//K^S)$ and $\Phi^S \in C_c^{\infty}(U(\A_F^S)//K^S)$; then \eqref{sp-id} implies the following identity:
\begin{align} \label{sp-id2}
\sum_{\pi'} \mathrm{tr}(\pi'(\Phi^S))RT(\pi'(\Phi^1_S\mathrm{ch}_{K^S_U}))=
2\sum_{\Pi'} \mathrm{tr}(\Pi'(f^S))TRT(\Pi'(f^1_S\mathrm{ch}_{K^S}))
\end{align}
(compare \cite[\S 3(2)]{JacquetLai}).  The reason \eqref{sp-id} simplifies to \eqref{sp-id2} is simply that if $\pi'_v$ (resp. $\Pi'_v$) is unramified then it contains a unique fixed vector under the hyperspecial subgroup $K_{Uv}$ (resp. $K_v$).

We have the following lemma:
\begin{lem} \label{lem-cohomol} Any $\Pi'$ contributing a nonzero summand to the right of \eqref{sp-id2}
 has nonzero cohomology with coefficients in $\mathrm{Res}_{M_{\infty}/F_{\infty}}V$.  Any $\pi'$ contributing a nonzero summand to the left of
 \eqref{sp-id2} has nonzero cohomology with coefficients in $V$.
\end{lem}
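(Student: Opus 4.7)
The plan is to exploit the archimedean components $\Phi_\infty$ and $f_\infty$, which by construction encode Euler-Poincar\'e and Lefschetz data and therefore detect only the cohomological part of the spectrum. I treat the two assertions in parallel, focusing on $\pi'$; the case of $\Pi'$ is analogous with the Euler-Poincar\'e function replaced by the Lefschetz function.

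First I would use that both $E/F$ and $ME/M$ split at every archimedean place, since $E$ and $F$ are totally real and $M$ and $ME$ are CM. Every infinite place of $F$ is therefore in the split setting of \S \ref{ssec-split-match}, and so $\pi'_\infty$ factors as $\pi'_{1,\infty} \boxtimes \pi'_{2,\infty}$. By Proposition \ref{prop-first-stab} applied at $\infty$, the $U^\sigma$-distinction of $\pi'$ (necessary for $RT(\pi'(\Phi^1)) \neq 0$) forces $\pi'_{2,\infty} \cong (\pi'_{1,\infty})^\vee$; moreover the discussion preceding Proposition \ref{prop-form-compar} gives $V \cong V_1 \boxtimes V_1^\vee$ for an irreducible representation $V_1$ of $U^\sigma_{F_\infty}$. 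Since the test functions are $K_\infty$-finite by the Dixmier--Malliavin construction recorded just before \eqref{sp-id}, the analogue of \eqref{if-k-finite} for $RT$ applies and expresses $RT(\pi'(\Phi^1_S\mathrm{ch}_{K^S_U}))$ as a sum of period-pairings $\mathcal{P}_{U^\sigma}(\pi'(\Phi^1_S\mathrm{ch}_{K^S_U})\phi)\overline{\mathcal{P}_{U^\sigma}(\phi)}$ over an orthonormal basis $\mathcal{B}(\pi')$.

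Next I would unfold the archimedean period. Since $\mathcal{P}_{U^\sigma}$ integrates over the diagonal copy of $U^\sigma$ inside $U \cong U^\sigma \times U^\sigma$, at infinity it is the canonical $U^\sigma(F_\infty)$-invariant pairing $\pi'_{1,\infty} \otimes (\pi'_{1,\infty})^\vee \to \CC$. Summing against this pairing over the basis $\mathcal{B}(\pi')$ and invoking Proposition \ref{prop-E-split-match} to convert the archimedean double integration against $\Phi_1 \times \Phi_2$ into a trace of a convolution, the archimedean factor of each nonzero summand of $RT(\pi'(\Phi^1_S\mathrm{ch}_{K^S_U}))$ becomes, up to nonzero measure-theoretic constants, a scalar multiple of $\mathrm{tr}(\pi'_{1,\infty}(\Phi_1^{-1}*\Phi_2))$. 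By hypothesis (2) of Proposition \ref{prop-form-compar} this equals $c_\infty\,\mathrm{tr}(\pi'_{1,\infty}(f_{EP,V}))$, and the defining property of the Euler-Poincar\'e function \cite{BLS} is that $\mathrm{tr}(\pi'_{1,\infty}(f_{EP,V}))$ equals the Euler characteristic of the relative Lie algebra cohomology of $\pi'_{1,\infty} \otimes V_1$. Nonvanishing of this trace therefore forces this cohomology to be nonzero, and then the K\"unneth formula (using that the cohomology of $(\pi'_{1,\infty})^\vee \otimes V_1^\vee$ is dual to that of $\pi'_{1,\infty} \otimes V_1$) yields $H^*(\mathfrak{u},K_{U\infty};\pi'_\infty \otimes V) \neq 0$, which is the claim for $\pi'$.

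For the $\Pi'$ assertion the same outline applies, unfolding the archimedean $G^\sigma \times G^\theta$ double-period to produce a Lefschetz-type trace of the form $\mathrm{tr}(\Pi'_\infty(f_{L,\Res_{M_\infty/F_\infty}V,\tau}) \circ \tau)$. By \cite[Proposition 8.4]{BLS} this equals the Lefschetz number of $\tau$ on the appropriate relative Lie algebra cohomology of $\Pi'_\infty \otimes \Res_{M_\infty/F_\infty}V$, and nonvanishing of a Lefschetz number forces nonvanishing of the underlying cohomology. The main obstacle I anticipate is the archimedean unfolding in the previous paragraph: one must reconcile the various involution conventions ($\Phi^{-1}$ versus $\Phi^{-\sigma}$, and their $f$-analogues with $\tau$) that distinguish Lemma \ref{lem-M-split-match} from Proposition \ref{prop-E-split-match}, verify that the scalar produced by the unfolding matches the convolution identities of Proposition \ref{prop-form-compar}, and, in the twisted case, confirm that the $\tau$-intertwiner emerging from the $G^\theta$-period matches the $\tau$ built into the Lefschetz function. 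These checks are mechanical but finicky.
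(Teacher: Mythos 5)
The overall strategy you have in mind — that the archimedean test functions are Euler--Poincar\'e/Lefschetz, hence nonvanishing of the archimedean contribution forces cohomology — is exactly the right one, and your observations about $V \cong V_1 \boxtimes V_1^\vee$ and the use of Proposition \ref{prop-first-stab} (resp.~Lemma \ref{lem-2nd-stab} in the $\Pi'$ case) track the paper. The gap is in the step you call the ``archimedean unfolding.'' You try to convert the period-pairing sum $\sum_{\phi}\mathcal{P}_{U^{\sigma}}(\pi'(\Phi^1)\phi)\overline{\mathcal{P}_{U^{\sigma}}(\phi)}$ at infinity into $\mathrm{tr}\bigl(\pi'_{1,\infty}(\Phi_1^{-1}*\Phi_2)\bigr)$ by ``invoking Proposition \ref{prop-E-split-match}.'' That proposition is a purely geometric-side statement: it relates relative and twisted relative $\kappa$-orbital integrals at $E$-split places to ordinary $\kappa$-orbital integrals on $H^\sigma$. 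It says nothing about periods, distributions, or spectral transfer, and cannot be used to compute the archimedean factor of a spectral bilinear form. You are also implicitly assuming that the global period $\mathcal{P}_{U^\sigma}$ factors as a product of local invariant functionals with the archimedean factor equal to the canonical pairing on $\pi'_{1,\infty}\otimes(\pi'_{1,\infty})^\vee$; this factorization is not automatic and is precisely the content that must be justified.

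The tool you need — and the one the paper uses — is Lemma \ref{lem-lforms}: the space of $(G^{\sigma}(F_\infty),G^{\theta}(F_\infty))$-invariant linear forms on $\mathcal{H}_{\Pi'_{\infty}}$ (resp.~$(U^{\sigma}(F_\infty),U^{\sigma}(F_\infty))$-invariant forms on $\mathcal{H}_{\pi'_{\infty}}$) is at most one-dimensional. Both the period-sum functional (with the nonarchimedean test data fixed) and the Lefschetz-trace functional $\Pi_1(f_1)\otimes\Pi_2(f_2)\mapsto \mathrm{tr}\bigl(\Pi_1(f_1^{-\tau}*f_2)\circ\tau\bigr)$ are invariant forms of this kind, so they must be proportional. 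The nonvanishing of the first (which is what ``contributes a nonzero summand to \eqref{sp-id2}'' means, after inserting the factorization from Proposition \ref{prop-first-stab} or Lemma \ref{lem-2nd-stab}) then forces the nonvanishing of the Lefschetz trace, and the defining property of the Lefschetz function \cite[Proposition 8.4]{BLS} gives nonzero cohomology. Your proof would go through if you replaced the appeal to Proposition \ref{prop-E-split-match} with this uniqueness argument; as written the unfolding step is not supported.
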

\begin{proof} Choose an isomorphism $G(F_{\infty}) \cong G^{\sigma}(F_{\infty}) \times G^{\sigma}(F_{\infty})$ equivariant with respect to $\tau$ and intertwining $\sigma$ with $(x,y) \mapsto (y,x)$.
 Write $\Pi_{\infty}'=\Pi_1 \otimes \Pi_2$ for some representations $\Pi_i$ of $G^{\sigma}(F_{\infty})$ using this isomorphism.
For a fixed $f_{S}^{\infty} \in C_c^{\infty}(G(F_S^{\infty}))$, consider the linear forms
\begin{align*}
\ell_i:\mathcal{H}_{\Pi_{\infty}} &\lto \CC\\
(\Pi_1(f_1),\Pi_2(f_2)) &\longmapsto \mathrm{tr}\left( \Pi_{1}(f_1^{-\tau}*f_2 \circ \tau )\right)\\
(\Pi_1(f_1),\Pi_2(f_2)) & \longmapsto \sum_{\phi \in \mathcal{B}(\Pi')^{K_S}} \mathcal{P}_{G^{\sigma}}(\Pi_1 \times \Pi_2((f_1 \times f_2)^1)\Pi'(f_S^{\infty})\phi) \overline{\mathcal{P}_{G^{\theta}}(\phi)}.
\end{align*}
They are both $(G^{\sigma}(F_v),G^{\theta}(F_{\infty}))$-invariant and hence equal up to a constant multiple (possibly zero) by Lemma \ref{lem-lforms}.  Thus the first assertion of the lemma follows from the defining property of Lefschetz functions \cite[Proposition 8.4]{BLS} and Lemma \ref{lem-2nd-stab}.  The proof of the second assertion is similar; one uses Proposition \ref{prop-first-stab} instead of Lemma \ref{lem-2nd-stab}.
\end{proof}

By the lemma, the collection of $\Pi'$ on the right of \eqref{sp-id2} is finite in a sense independent of $f^S \in C_c^{\infty}(G(\A_F^S)//K^S)$ for fixed $f_S$ by our assumption on
$f_{\infty}$. Indeed, the sum can be thought of as being over automorphic
representations contributing to the cohomology of a locally symmetric space depending only on $f_S$ with coefficients in a fixed local system depending only on
$f_{\infty}$ by Lemma \ref{lem-cohomol}.
Using the supply of matching $f^S$ and $\Phi^S$ provided by Corollary \ref{cor-H-split-matching} and Corollary \ref{fl-e-spl}
we separate strings of Hecke eigenvalues ``outside $S$'' in \eqref{sp-id2} to arrive at the following refined identity:
\begin{align} \label{sp-id5}
\sum_{\pi'\atop{\pi'^S \cong \pi^S}} \mathrm{tr}(\pi'(\Phi^S))RT(\pi'(\Phi^1_S\mathrm{ch}_{K^S_U}))= 2 \mathrm{tr}(\Pi(f^S))TRT(\Pi(f_S^1\mathrm{ch}_{K^S})).
\end{align}
Here we are using Lemma \ref{lem-2nd-stab} and strong multiplicity one for $G(\A_F)$ to isolate the contribution of $\Pi$ on the right hand side.  We are also using the fact that when $ME/F$, $U$, and $G$ are unramified at a finite place $v$ then the base change map from irreducible admissible unramified representations of $U(F_v)$ to irreducible admissible unramified representations of $G(F_v)$ is injective \cite[Corollary 4.2]{Minguez}.

Let $S(ME)$ be the set of finite places of $F$ such that $ME/F$ is ramified and both $E/F$ and $M/F$ are nonsplit.  We let $S_0=\infty \cup \{v_1,v_2\}$ and $S_0(ME)=S(ME) \cup S_0$. Enlarging $S$ if necessary, we assume that $S_0(ME)  \subseteq S$.
To complete the proof of the theorem, we show that the left side of \eqref{sp-id5} is nonzero for some $f \in C_c^{\infty}(G(\A_{F}))$ satisfying the various conditions we have placed earlier.
In view of Theorem  \ref{thm-weak-match}, it suffices to show that upon enlarging $S$ if necessary we can choose $f_{S}^{\infty}$ so that
$$
TRT(\Pi(f_S^1\mathrm{ch}_{K^S})) \neq 0
$$
where $f_{S(ME)}$ is supported on relatively $\tau$-regular semisimple elements admitting norms, $f_{v_1}$ is $F$-supercuspidal and matches an $F$-supercuspidal $\Phi_{v_1}$ and $f_{v_2}$ is supported on relatively $\tau$-regular elliptic semisimple elements and matches a function $\Phi_{v_2}$ supported on relatively  regular elliptic semisimple elements.
This is done in the following subsection.

\subsection{Proof of Theorem \ref{main-thm}: Nonvanishing}

We assume all of the notation and conventions of the previous section and the hypotheses of Theorem \ref{main-thm}.  In particular, $\Pi$ is both $G^{\sigma}$ and $G^{\theta}$-distinguished.  We prove the following proposition:
\begin{prop} With notation as in \S \ref{ssec-part1}, upon possibly enlarging $S$ we can choose a function $f_S^{\infty} \in C_c^{\infty}(G(F_{S}^{\infty}))$ so that
$$
TRT(\Pi(f_S^1\mathrm{ch}_{K^S})) \neq 0
$$
where $K^S \leq G(\A_F^S)$ is a hyperspecial subgroup, $f_{S(ME)}$ is supported on relatively $\tau$-regular semisimple elements admitting norms, $f_{v_1}$ is $F$-supercuspidal and matches an $F$-supercuspidal $\Phi_{v_1}$, and  $f_{v_2}$  is supported on
relatively $\tau$-regular elliptic semisimple elements and matches a function $\Phi_{v_2} \in C_c^{\infty}(U(F_{v_2}))$ supported on relatively regular elliptic semisimple elements.
\end{prop}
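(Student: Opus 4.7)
My plan is to reduce the nonvanishing of the global distribution $f_S^\infty \mapsto TRT(\Pi(f_S^1 \mathrm{ch}_{K^S}))$ to local nonvanishing statements at each place of $S$, then verify each local statement using the hypotheses of Theorem \ref{main-thm} together with the results already established in \S \ref{sec-sph} and \S \ref{ssec-preps}. First I would factor the global distribution: since $\Pi$ is both $G^\sigma$- and $G^\theta$-distinguished by hypothesis, the global periods $\mathcal{P}_{G^\sigma}$ and $\mathcal{P}_{G^\theta}$ are nonzero on $V_\Pi$, and restriction to factorable vectors in $V_\Pi \cong \widehat{\bigotimes}_v V_{\Pi_v}$ realizes each of them as a product of local invariant linear forms $\lambda_v^\sigma$ and $\lambda_v^\theta$, each nonzero at every place. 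Grouping terms in the expansion
\[
TRT(\Pi(f^1)) = \sum_{\phi \in \mathcal{B}(\Pi)} \mathcal{P}_{G^\sigma}(\Pi(f^1)\phi)\,\overline{\mathcal{P}_{G^\theta}(\phi)}
\]
along a factorable basis, the right side separates into a product $c \prod_v \Theta_{\Lambda_v}(f_v^1)$ of local spherical characters attached to $\Lambda_v := \lambda_v^\sigma \otimes \overline{\lambda_v^\theta}$, where $c \neq 0$.

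Next I would verify local nonvanishing at each place, with the required support. At the unramified places $v \notin S$, $f_v^1 = \mathrm{ch}_{K_v}$ acts on the one-dimensional subspace of $K_v$-fixed vectors in $V_{\Pi_v}$, and $\Theta_{\Lambda_v}(\mathrm{ch}_{K_v}) = \lambda_v^\sigma(\phi_v^0)\,\overline{\lambda_v^\theta(\phi_v^0)}$ for the spherical vector $\phi_v^0$, which is nonzero because both $\lambda_v^\sigma$ and $\lambda_v^\theta$ are nonzero and (after a Dixmier--Malliavin-type argument together with Lemma \ref{lem-lforms} at the split places) the spherical vector cannot lie in the kernel of a nonzero invariant form in this setting. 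At $v_1$, Lemma \ref{lem-supercusp} furnishes matching $F$-supercuspidal $f_{v_1}$ and $\Phi_{v_1}$ with $\Theta_{\Lambda_{v_1}}(f_{v_1}^1) \neq 0$ (here $f_{v_1}^1 = f_{v_1}$ since $v_1$ is finite); at $v_2$, Lemma \ref{lem-sqint} supplies matching functions with nonzero trace, supported on the relatively $\tau$-regular elliptic and relatively regular elliptic sets, respectively. At places $v \in S(ME)$, hypothesis (4) states that $\Pi_v$ is relatively $\tau$-regular, so by the definition in \S \ref{sec-sph} there is $f_v \in C_c^\infty(G(F_v))$ supported on the set of relatively $\tau$-regular elements of $G(F_v)$ admitting norms such that $\Theta_{\Lambda_v}(f_v) \neq 0$. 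At the remaining places in $S$ (those added when enlarging $S$ to obtain factorable matching test functions), $ME/F$ is unramified with $E/F$ split or $M/F$ split, so Corollary \ref{cor-H-split-matching} or Corollary \ref{fl-e-spl} provides matching test functions with nonzero local spherical character. For the archimedean place, the choice of $f_\infty$ fixed in \S \ref{ssec-part1} is a Lefschetz function of $\mathrm{Res}_{M_\infty/F_\infty}V_1$; its trace against $\Pi_\infty$ is nonzero by the defining property of Lefschetz functions \cite[Proposition 8.4]{BLS} together with the fact that $\Pi_\infty$ has nonzero cohomology with coefficients in $\mathrm{Res}_{M_\infty/F_\infty}V$ (this is built into the compatibility between $V$ and $V_1$ in \S \ref{ssec-part1}), and Lemma \ref{lem-lforms} (applied at the split archimedean places) identifies this trace with $\Theta_{\Lambda_\infty}(f_\infty^1)$ up to a nonzero scalar.

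Assembling these local pieces and taking the product, we obtain $TRT(\Pi(f_S^1 \mathrm{ch}_{K^S})) \neq 0$ with all required support conditions. The main obstacle I anticipate is the identification, at the places $v \in S(ME)$ where neither $E/F$ nor $M/F$ splits, of the local spherical character $\Theta_{\Lambda_v}$ inherited from the global factorization with a spherical matrix coefficient in the sense of \S \ref{sec-sph}; the definition of relatively $\tau$-regular is phrased for all nonzero invariant linear forms, so once we know $\Lambda_v \neq 0$ (which follows because the corresponding global period is nonzero), the requisite test function exists. The secondary technical nuisance is passing between $f_v$ and $f_v^1$ at the finite places, which is harmless: on $C_c^\infty(G(F_v))$ the map $f_v \mapsto f_v^1$ is the identity since the centers involved are trivial at finite places.
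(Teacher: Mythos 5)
Your proposal takes a related but genuinely different route, and the route has a gap. You want to Euler-factorize the global distribution $TRT(\Pi(f^1))$ as a product $c\prod_v\Theta_{\Lambda_v}(f_v^1)$ of local spherical characters, using putative factorizations of $\mathcal{P}_{G^\sigma}$ and $\mathcal{P}_{G^\theta}$ into local invariant forms $\lambda_v^\sigma$, $\lambda_v^\theta$. Such a factorization requires $\dim\mathrm{Hom}_{G^\sigma(F_v)}(\Pi_v,\CC)\le 1$ and $\dim\mathrm{Hom}_{G^\theta(F_v)}(\Pi_v,\CC)\le 1$ at \emph{every} place, including the places in $S(ME)$ where neither $E/F$ nor $M/F$ splits; without this a global invariant form need not split as a product of local ones. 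The paper neither proves nor cites local multiplicity one at the nonsplit places, and its argument is designed to avoid it: Lemma \ref{lem-lforms} is stated and used only for places $v$ split in $E/F$, and a Dixmier--Malliavin argument does not produce uniqueness at the inert places. A secondary issue is the assertion that ``the spherical vector cannot lie in the kernel of a nonzero invariant form'' --- this is false in general, and becomes true in the present situation only \emph{after} one has the factorization together with a choice of reference pure tensor with nonzero period and spherical components outside $S$, so the step is circular as written.

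The paper's own proof sidesteps both issues by a projection trick. It first produces one pure tensor $\phi_0$ with $\mathcal{P}_{G^\sigma}(\phi_0)\mathcal{P}_{G^\theta}(\phi_0)\ne 0$ (observing that among $\phi_1$, $\phi_2$, and $\phi_1+\phi_2$ one must have both periods nonzero), enlarges $S$ so that $\phi_0$ is spherical outside $S$, and chooses test functions (via Jacobson density together with the split-place matching results) so that $\Pi(f_S^{S_0(ME)})$ and then $\Pi(f_{S(ME)})$ project onto the relevant local components of $\phi_0$. This isolates a functional $\Theta$ on $C_c^\infty(G(F_{S(ME)}))$ which is automatically a spherical matrix coefficient in the sense of \S\ref{sec-sph} at each $v\in S(ME)$; hypothesis (4), which is phrased for \emph{all} nonzero $\Lambda$ precisely so as not to depend on uniqueness, then yields a test function with the required support. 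Only at the split places $S_0=\infty\cup\{v_1,v_2\}$ does the paper invoke Lemma \ref{lem-lforms}, comparing the global functional $\ell_2$ to a product of local traces $\ell_1$; the final local nonvanishing comes from \cite[Lemme 4.2]{LabCM}, Lemma \ref{lem-supercusp}, and Lemma \ref{lem-sqint}, much as you say. So your place-by-place strategy is close in spirit, but the global-to-local reduction you propose needs a multiplicity-one input that the paper does not supply, whereas the paper's projection-plus-$\tau$-regularity argument avoids needing it.
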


\noindent This proposition completes the proof of Theorem \ref{main-thm} as noted at the end of the previous subsection.

\begin{proof}
 We claim that we can choose a pure tensor $\phi_0 \in V_{\Pi}$ such that the restriction of the two forms
\begin{align*}
\mathcal{P}_{G^{\sigma}}(\cdot):V_{\Pi} \lto \CC \quad \textrm{ and } \quad
\mathcal{P}_{G^{\theta}}(\cdot):V_{\Pi} \lto \CC
\end{align*}
to $\phi^{S}_0 \otimes V_{\Pi,S}$ are nonzero.
Indeed, by assumption, there are vectors $\phi_1,\phi_2 \in V_{\Pi}$ such that $\mathcal{P}_{G^{\sigma}}(\phi_1) \neq 0$ and $\mathcal{P}_{G^{\theta}}(\phi_2) \neq 0$.  If either $\mathcal{P}_{G^{\sigma}}(\phi_2) \neq 0$ or $\mathcal{P}_{G^{\theta}}(\phi_1) \neq 0$ then we are done, otherwise $\mathcal{P}_{G^{\sigma}}(\phi_1+\phi_2)
\mathcal{P}_{G^{\theta}}(\phi_1+\phi_2) \neq 0$.  Thus there is a vector $\phi_0 \in V_{\Pi}$ such that $\mathcal{P}_{G^{\sigma}}(\phi_0)\mathcal{P}_{G^{\theta}}(\phi_0) \neq 0$.  We may assume that $\phi_0$ is a pure tensor, which implies the claim.

Enlarging $S$ if necessary, we choose a function
$f_{S}^{S_0(ME)} \in C_c^{\infty}(G(F_S^{S_0(ME)}))$ such that $\Pi(f_S^{S_0(ME)})$
is the orthogonal projection onto $\phi_S^{S_0(ME)}$; this is possible by the Jacobson density theorem and the matching statements Lemma \ref{lem-M-split-match} and Proposition \ref{prop-E-split-match}.

With this choice, for an orthonormal basis $\mathcal{B}(\Pi)$ of the $\Pi$-isotypic subspace of $L_0^2(G(F) \backslash {}^1G(\A_F))$ we have
\begin{align} \label{inftyv1-stuff}
&\sum_{\phi \in \mathcal{B}(\Pi)^{{K}^S}} \mathcal{P}_{G^{\sigma}}(\Pi'(f^1)\phi) \overline{\mathcal{P}_{G^{\theta}}(\phi)}=
\mathrm{tr}(\Pi(f^S)) \sum_{a_i}\mathcal{P}_{G^{\sigma}}(\phi^{S_0(ME)}_0 \otimes \Pi'(f_{S_0(ME)}^1)a_{i})
\mathcal{P}_{G^{\theta}}(\overline{\phi^{S_0(ME)}_0 \otimes a_{i}})
\end{align}
where the sum is over an orthonormal basis
$\mathcal{B}(\Pi)$ of $V_{\Pi_{S_0(ME)}}$ with respect to the Hermitian pairing
\begin{align*}
(\textrm{ },\textrm{ } ):V_{\Pi_{S_0(ME)}} \times V_{\Pi_{S_0(ME)}} &\lto \CC
\end{align*}
given by
\begin{align*}
(\psi_1,\psi_2) &\longmapsto \int_{G(F) \backslash {}^1G(\A_F)}
\phi_0(g^{S_0(ME)})\otimes \psi_1(g_{S_0(ME)} ) \overline{\phi_0(g^{S_0(ME)})
\otimes \psi_2(g_{S_0(ME)})} dg d\bar{g},
\end{align*}
where $dg$ is the Tamagawa measure.  Note that only finitely many of the $a_i$ terms will have a nonzero contribution to
\eqref{inftyv1-stuff}.

For the moment let $f_{S_0}$ be chosen so that $\Pi_{S_0}(f_{S_0})$ is the projection to the space spanned by $\phi_{0S_0}$.  Consider the linear functional
\begin{align}
\Theta:C_c^{\infty}(G(F_{S(ME)})) &\lto \CC\\
f_{S(ME)} &\longmapsto \sum_{a_i}\mathcal{P}_{G^{\sigma}}(\phi^{S_0}_0 \otimes \Pi(f_{S_0(ME)}^1)a_{i})
\mathcal{P}_{G^{\theta}}(\overline{\phi^{S_0}_0 \otimes a_{i}}). \nonumber
\end{align}
For each $v \in S(ME)$ this defines a spherical matrix coefficient of $\Pi_v$ in the sense of \S \ref{sec-sph}.  It is nonzero because we can choose $f_{S(ME)} \in C_c^{\infty}(G(F_{S(ME)}))$ such that $\Pi(f_{S(ME)})$ is the projection onto the space spanned by $\phi_{0S(ME)}$.  By assumption, $\Pi_v$ is relatively $\tau$-regular for $v \in S(ME)$, so we can and do choose $f_{S(ME)} \in C_c^{\infty}(G(F_{S(ME)}))$ that is supported in the set of $\tau$-regular semisimple elements in $G(F_{S(ME)})$ that admit norms
and such that $\Theta(f_{S(ME)}) \neq 0$.  This $f_{S(ME)}$ admits a matching function $\Phi_{S(ME)} \in C_c^{\infty}(H(F_{S(ME)}))$ by Theorem \ref{thm-weak-match} above.

We are left with choosing $f_{S_0}$.
Fix an isomorphism
\begin{align} \label{useful-isom}
G(F_{S_0}) \tilde{\lto} G^{\sigma}(F_{S_0}) \times G^{\sigma}(F_{S_0})
\end{align}
intertwining $\sigma$ with $(x,y) \mapsto (y,x)$ and $\tau$
with $(x,y) \mapsto (x^{\tau},y^{\tau})$.  By Lemma \ref{lem-2nd-stab} we can and do factor
$$
\Pi_{0S_0} \cong \Pi_{1S_0} \otimes \Pi_{1S_0}^{\vee}
$$
with respect to \eqref{useful-isom} for some irreducible admissible representation $\Pi_{1S_0}$
of $G^{\sigma}(F_{S_0})$.  The map
\begin{align*}
\ell_1:\mathcal{H}_{\Pi_{0S_0}} &\lto \CC\\
\Pi_{1v}(f_1) \otimes \Pi_{1v}^{\vee}(f_2) &\longmapsto \prod_{v \in S_0}\mathrm{tr}\left(\Pi_{1v}(f_{1v}*f_{2v}^{-\tau} )\right)
\end{align*}
is $(G^{\sigma}(F_{S_0}),G^{\theta}(F_{S_0}))$-invariant in the sense of \S \ref{ssec-preps}, and clearly not identically zero.

Notice the linear functional
\begin{align*}
\ell_{2}: \mathcal{H}_{\Pi_{S_0}} &\lto \CC\\
\Pi(f_{S_0}) &\longmapsto \sum_{a_i}\mathcal{P}_{G^{\sigma}}(\phi^{S_0}_0 \otimes \Pi(f_{S_0(EF)}^1)a_{i})
\mathcal{P}_{G^{\theta}}(\overline{\phi^{S_0}_0 \otimes a_{i}})
\end{align*}
is also $(G^{\sigma}(F_{S_0}),G^{\theta}(F_{S_0}))$-invariant.  By Lemma \ref{lem-lforms} we have
that
$$
\ell_{2}=c\ell_1
$$
for some constant $c \in \CC$.  By our choice of $f_{S(ME)}$ above the linear functional $\ell_2$ is not identically zero and hence we have that $c \neq 0$.

Choosing a factorable function
$$
f_{S_0}^{\infty}=f_{1} \times f_{2} \in C_c^{\infty}(G^{\sigma} \times G^{\sigma}(F_{S_0}^{\infty})),
$$
we have
\begin{align} \label{nonzero}
\ell_2(f_{S_0})=
c\left(\mathrm{tr}(\Pi_{1\infty}(f_{L,\mathrm{Res}_{M_{\infty}/F_{\infty}}V_1,\tau} \circ \tau )) \right)
\left(\prod_{v \in S_0 \setminus \infty}\mathrm{tr}\left(\Pi_{1v}(f_{1v}*f_{2v}^{-\tau} \circ \tau)\right)\right)
\end{align}
with $V_1$ as in \S \ref{ssec-part1}.
 We now show that we can choose a
test function $f=f_{1} \times f_2$ satisfying the conditions at $\infty$, $v_1$, and $v_2$ stipulated in the statement
of the proposition  such that \eqref{nonzero} is
nonzero; this will complete the proof of the proposition.

We work place by place.  The factor corresponding to the infinite places is nonzero by \cite[Lemme 4.2]{LabCM}.  Lemma \ref{lem-supercusp} takes care of $v=v_1$, and Lemma \ref{lem-sqint}
takes care of $v=v_2$.
\end{proof}

\section*{Acknowledgments}
The first named author appreciates the time Erez Lapid spent pointing out serious mistakes in
an earlier version of this work and for thoughtful comments as the new version emerged.
He would like to thank Chris Skinner for support throughout this project, and Herv\'e Jacquet, David Whitehouse, and Sophie Morel for useful conversations.  He also thanks  Heekyoung Hahn for help with editing.  The second author thanks Dinakar Ramakrishnan, Elena Mantovan, Kimball Martin and David Whitehouse for helpful conversations and advice.  Both authors
thank the referees for thorough readings of the paper and their suggestions and corrections, some of which were incorporated.


\printindex

\end{document}